\newcommand{\XXX}{{\mathcal X}}
\newcommand{\FFF}{{\mathcal F}}
\newcommand{\LLL}{{\mathcal L}}
\renewcommand{\epsilon}{\varepsilon}
\def\les{\lesssim}
\def\eps{\varepsilon}
\renewcommand*{\div}{\ensuremath{\mathrm{div\,}}}
\newcommand{\norm}[1]{\left \| #1 \right\|} 
\newcommand{\snorm}[1]{\bigl\| #1 \bigr\|} 
\newcommand{\abs}[1]{\left|#1\right|}
\newcommand{\sabs}[1]{\bigl|#1\bigr|}
\newcommand{\RR}{\mathbb R}
\newcommand{\OO}{\mathcal O}
\newcommand{\RSZ}{\mathcal R}
\renewcommand*{\tilde}{\widetilde}
\renewcommand*{\bar}{\overline}
\newcommand{\Ncal}{{\mathsf{N}}}
\newcommand{\Tcal}{{\mathsf{T}}}
\newcommand{\Vcal}{{\mathsf{v}}}
\newcommand{\Jcal}{{\mathsf{J}}}
\newcommand{\acal}{{\mathsf{a}}}
\newcommand{\bcal}{{\mathsf{b}}}
\newcommand{\mcal}{{\mathsf{m}}}
\newcommand{\xcal}{{\text{x}}}
\newcommand{\tcal}{{\text{t}}}
\newcommand{\EE}{\mathcal E}
\newcommand{\PP}{\mathcal P}
 \newcommand*{\Id}{\ensuremath{\mathrm{Id\,}}}
\newtheorem{theorem}{Theorem}[section]
\newtheorem{lemma}[theorem]{Lemma}
\newtheorem{proposition}[theorem]{Proposition}
\newtheorem{corollary}[theorem]{Corollary}
\theoremstyle{definition}
\newtheorem{definition}[theorem]{Definition}
\newtheorem{remark}[theorem]{Remark}
\numberwithin{equation}{section}
\def\p{\partial}
\def\p{\partial}
\def\f1r{{\frac{1}{r}}  }
\def\ckg{\check \gamma}
\def\modckg{{ \abs{\ckg}} }
\def\tt{\Tcal}
\def\nn{\Ncal}
\def\mrg{\mathring{\zeta}}
\def\mru{\mathring{u}}
\def\mrs{\mathring{\sigma}}
\def\gui{\p^\gamma U_i}
\def\guj{\p^\gamma U_j}
\def\gs{\p^\gamma S}
\def\comm#1#2{{\llbracket#1,#2\rrbracket}}
\def\tu{\tilde{u} }
\def\trho{\tilde{\rho}}
\def\XX{{\scriptstyle \mathcal{X} }}
\def\pw{\Phi_{\scriptscriptstyle W}}
\def\pz{\Phi_{\scriptscriptstyle Z}}
\def\pa{\Phi_{\scriptscriptstyle U}}
\def\pu{\Phi_{\scriptscriptstyle U}}
\def\pwy{\pw^{\scriptscriptstyle y_0}}
\def\pzy{\pz^{\scriptscriptstyle y_0}}
\def\pay{\pa^{\scriptscriptstyle y_0}}
\def\te{\tilde{e}}
\def\tu{\tilde{u}}
\def\trho{\tilde{\rho}}
\def\ww{\texttoptiebar{w}}
\title{{\bf Formation of point shocks  for 3D compressible Euler}}
\author{
{ \small {\bf Tristan Buckmaster}}\thanks{\footnotesize Department of Mathematics, 
Princeton University, Princeton, NJ 08544,
 \href{buckmaster@math.princeton.edu}{buckmaster@math.princeton.edu}}
\and  
{\small {\bf Steve  Shkoller}}\thanks{\footnotesize Department of Mathematics, UC Davis, Davis, CA 95616, \href{shkoller@math.ucdavis.edu}{shkoller@math.ucdavis.edu}.}
\and 
{\small {\bf Vlad Vicol}}\thanks{\footnotesize Courant Institute of Mathematical Sciences, New York University, New York, NY 10012, \href{vicol@cims.nyu.edu}{vicol@cims.nyu.edu}.}
}
\date{} %
\begin{document}

\maketitle

\begin{abstract}
We consider the 3D isentropic compressible Euler equations with the ideal gas law. 
We provide a   constructive 
proof of shock formation from  smooth initial datum of finite energy, with no vacuum regions, with {\it nontrivial vorticity} present at the shock, 
and under {\it  no symmetry assumptions}.   We  prove that for 
an open set of Sobolev-class initial data which are a small $L^ \infty $ perturbation of a constant state,  there exist smooth solutions to the Euler equations which form a 
{\em generic} stable shock in finite time. The blow up time and location  can be explicitly computed, and solutions at the blow up time  are smooth except for {\em a single point}, where they are of cusp-type with H\"{o}lder  $C^ {\sfrac{1}{3}}$ regularity. Our proof is based on the use of modulated self-similar variables that are used to
enforce a number of constraints on the blow up profile, necessary  to establish   global existence and asymptotic stability in self-similar variables.
\end{abstract}

\renewcommand{\baselinestretch}{0.75}\normalsize
\setcounter{tocdepth}{1}

\tableofcontents
\renewcommand{\baselinestretch}{1.0}\normalsize

\section{Introduction}
 A fundamental problem in the analysis of nonlinear partial differential equations concerns the finite-time breakdown of smooth solutions and the
nature of the singularity that creates this breakdown.  In the context of gas dynamics and the compressible Euler equations which  model those dynamics,
the classical singularity is a shock.
When the initial disturbance to a constant state is sufficiently strong,  created for example by explosions,  supersonic projectiles, or a
 kingfisher shot out of a cannon,  violent pressure changes  lead to a progressive self-steepening of the wave, which ends in a shock.   

Our main goal is to give a detailed characterization of this shock formation process leading to the first singularity,  for the isentropic compressible Euler
equations in three space
dimensions.   Specifically, we shall give a precise description of the  initial data from which  smooth solutions to the Euler equations evolve, steepen, and
form a stable {\it generic} shock in finite time, in which the gradient of velocity  and gradient of density become infinite at a single point, while
the velocity, density, and vorticity remain bounded.
 In the process, we shall provide the exact blow up time, blow up location, and regularity
of the three-dimensional generic blow up profile.   Away from this single blow up point, the solution remains smooth.

Let us now introduce the mathematical description.
The three-dimensional isentropic compressible Euler equations are written as
\begin{subequations}
\label{eq:Euler}
\begin{align}
\partial_\tcal (\rho u) + \operatorname{div}_\xcal  (\rho u \otimes u) + \nabla_{\!\xcal} p(\rho) &= 0 \,,  \label{eq:momentum} \\
\partial_\tcal \rho  +  \operatorname{div}_\xcal  (\rho u)&=0 \,,  \label{eq:mass}
\end{align}
\end{subequations}
where   $\xcal=(\xcal_1,\xcal_2,\xcal_3) \in \mathbb{R}^3  $ and $\tcal \in \mathbb{R}  $ are the space and time
coordinates, respectively.  The unknowns are the velocity vector field 
 $u :\mathbb{R}^3  \times \mathbb{R}  \to \mathbb{R}^3$, the strictly positive density scalar field $\rho: \mathbb{R}^3 \times \mathbb{R}  \to \mathbb{R}  _+$, 
and the pressure $p: \mathbb{R}^3  \times \mathbb{R}  \to \mathbb{R}_+$, which is defined by the ideal gas law
$$
 p(\rho) = \tfrac{1}{\gamma} \rho^\gamma\,, \qquad \gamma >1 \,.
$$
The sound speed $c(\rho) = \sqrt{ \sfrac{\p p}{\p \rho} }$ is then given by 
$c= \rho^ \alpha $ where  $\alpha = \frac{\gamma-1}{2}$.
The Euler equations \eqref{eq:Euler}  are a system of conservation laws:  \eqref{eq:momentum} is the conservation of momentum and
\eqref{eq:mass}  is conservation of mass.   Defining the scaled sound speed by $\sigma= {\frac{1}{\alpha}} \rho^\alpha$,  
\eqref{eq:Euler} can be equivalently written as the system
\begin{subequations}
\label{eq:Euler2}
\begin{align}
\p_\tcal u + (u \cdot \nabla_{\!\xcal}) u + \alpha \sigma  \nabla_{\!\xcal} \sigma&=0 \,,  \label{eq:momentum2} \\
\partial_\tcal \sigma + (u \cdot \nabla_{\!\xcal}) \sigma  + \alpha \sigma \operatorname{div}_\xcal u&=0 \,.  \label{eq:mass2}
\end{align}
\end{subequations}
We let $\omega= \operatorname{curl}_{\xcal} u$ denote the vorticity vector and we shall refer to the vector $ \zeta=\tfrac{\omega }{\rho} $ as the 
{\it specific vorticity}, which satisfies the vector transport equation
\begin{align} 
\p_\tcal \zeta + (u \cdot \nabla_{\!\xcal }) \zeta - \left( \zeta  \cdot \nabla_{\!\xcal} \right) u =0 \,.
\label{svorticity}
\end{align} 

Our proof of shock formation relies upon a transformation of 
the problem from the original space-time variables $(\xcal,\tcal)$ to modulated self-similar space-time coordinates $(y,s)$, and on a change of unknowns from $(u,\sigma)$ to a set of geometric Riemann-like variables $(W,Z,A)$ in the self-similar coordinates. The singularity model is  characterized by the behavior near $y=0$ of the stable, stationary solution  $\bar W= \bar W(y_1,y_2,y_3)$ (described in Section \ref{sec:wbar} and shown in Figure \ref{fig:wbar}) of the 3D self-similar Burgers equation
\begin{align}
 -\tfrac 12 \bar W + \left( \tfrac{3}{2} y_1+ \bar W \right) \partial_{y_1} \bar W+ \tfrac{1}{2} y_2  \partial_{y_2} \bar W   + \tfrac{1}{2} y_3 \partial_{y_3} \bar W     = 0 \,. 
 \label{eq:Burgers:self:similar}
\end{align}
For a fixed $T$,  the vector $v=(v_1,v_2,v_3)$ given by
$$
v_1(\xcal_1,  \xcal_2,\xcal_3 ,\tcal) =  (T- \tcal)^ {\frac{1}{2}} \bar W\left( \frac{\xcal_1}{(T-\tcal)^ {\frac{3}{2}} }, \frac{ \xcal_2}{(T-\tcal)^ {\frac{1}{2}} } , 
\frac{ \xcal_3}{(T-\tcal)^ {\frac{1}{2}} }\right) \,, \ \
v_2 \equiv0 \,, \ \ v_3 \equiv 0   \,,
$$
 is the solution of the 3D Burgers equation in original variables, $\p_\tcal v +(v \cdot \nabla_\xcal ) v =0$, forming a shock at a single point at time $\tcal=T$.
An explicit computation shows that the Hessian matrix $\p_{y_1} \nabla_y ^2 \bar W|_{y=0}$ is strictly positive definite.   This property ensures that the
blow up profile $\bar W$ is {\it generic} in the sense described by Christodoulou in equation (15.2) of \cite{Ch2007}.   This genericity condition, in turn, provides 
stability of the shock profile for solutions to the Euler equations as we will explain in detail below.    
\begin{wrapfigure}{r}{0.3\textwidth}
\vspace{-0in}
\includegraphics[scale = 0.4]{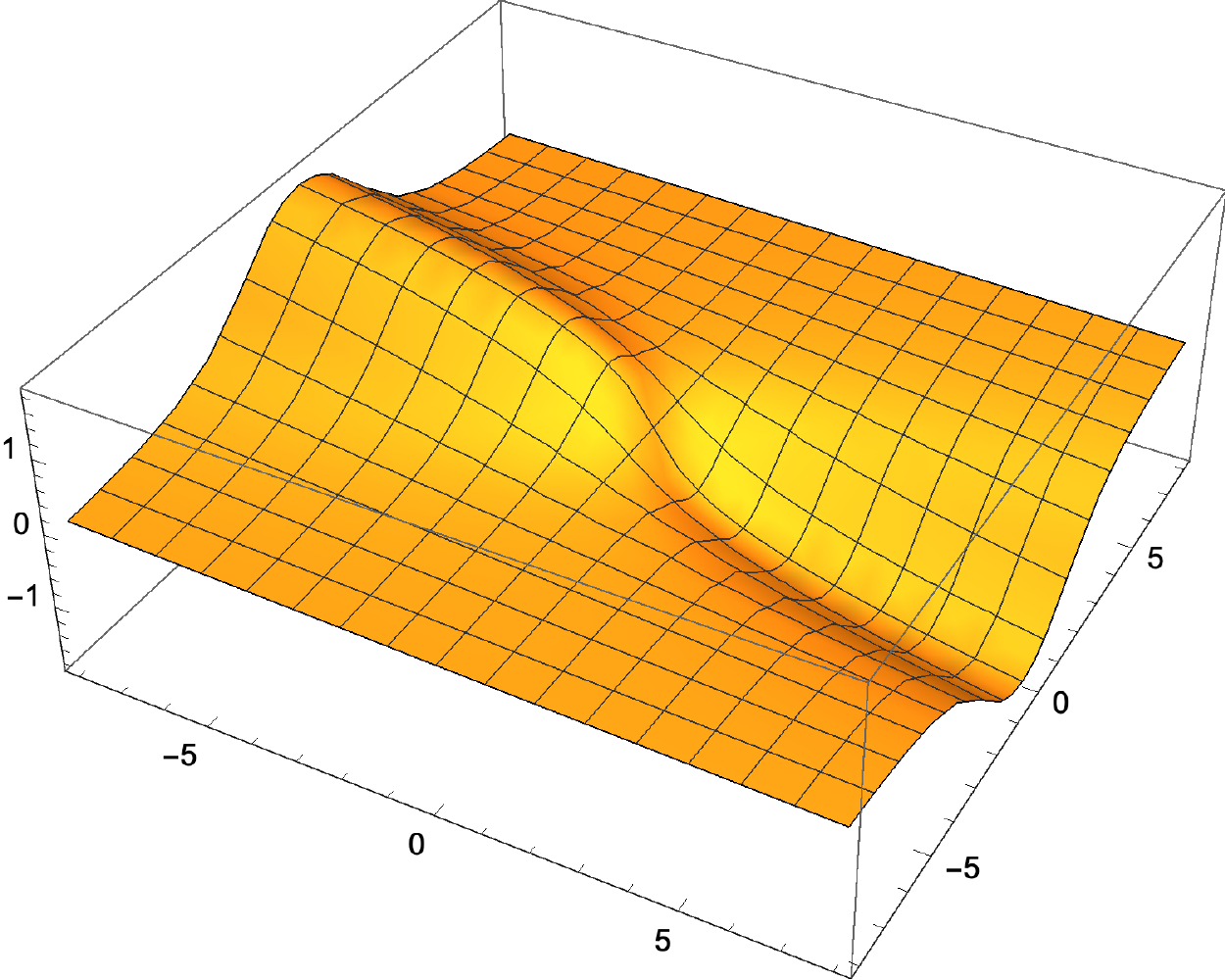}
\vspace{-.2in}
\caption{\footnotesize{The stable generic shock profile (shown in 2D).}}\label{fig:wbar}
\end{wrapfigure}

A precise description of  shock formation necessitates explicitly defining the set of initial data which lead to a finite-time  singularity, or shock. Additionally, from 
the initial datum alone, one has to be able to infer the following properties of the solution at the first shock:  
(a) the geometry of the {\em shock set}, i.e., to classify whether  the first singularity occurs along either a point,  multiple points,  a line, or along a surface;
(b) the  {\em  precise regularity} of the solution at the blow up time;  
(c) the explicitly computable {\em space-time location of the first singularity}; 
(d) the {\em stability} of the shock.
For the last condition (d), by stability, we mean that for any small, smooth, and {\em generic} (meaning outside of any symmetry class) perturbation of the given initial data, 
the  Euler dynamics yields a smooth solution which self-steepens and shocks in finite time with the same shock set geometry, with a  shock location that is a small
perturbation, and with the same shock regularity; that is, properties  (a)--(c) are stable.

As an example, the solution $\bar W$ shown in Figure \ref{fig:wbar} is stable: the shock occurs at a single point, and any small generic perturbation of $\bar W$ (as
we will prove) also develops a shock at only a single point, and with  the same properties as those satisfied by $\bar W$.  On the other hand, a simple plane wave solution of
the Euler equations that travels along the $\xcal_1$ axis and is constant in $(\xcal_2,\xcal_3)$ produces a finite-time shock along an entire plane, but a small perturbation of
this simple plane wave solution can produce a very different shock geometry (any of the sets from condition (a) are possible).
Our main result can be roughly stated as  follows:
\begin{theorem}[\bf Rough statement of  the main theorem]
For an open set of smooth initial data without vacuum, with nontrivial vorticity, and with a maximally negative gradient of size $\OO({\sfrac{1}{\eps}} )$, for $\eps>0$ sufficiently small, there exist smooth solutions of the 3D Euler equations \eqref{eq:Euler} which form a shock singularity within time $\OO(\eps)$. 
The first singularity occurs at a single point in space, whose location can be explicitly computed, along with the precise time at which it occurs.
The blow up profile is shown to be a cusp with $C^ {\sfrac{1}{3}} $ regularity, and the singularity is given by an asymptotically self-similar shock profile which is stable with
respect to the $H^{k}(\RR^3)$ topology for $k\ge 18$.
\end{theorem}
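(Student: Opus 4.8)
The plan is to prove shock formation by transforming the Euler system \eqref{eq:Euler2} into modulated self-similar variables, and then establishing global-in-self-similar-time existence and asymptotic stability of a perturbation of the stationary profile $\bar W$ solving \eqref{eq:Burgers:self:similar}. The first step is the change of variables. I would introduce Riemann-type variables $W = u\cdot e_1 + \sigma$, $Z = u\cdot e_1 - \sigma$, along with the transverse components $A = (u\cdot e_2, u\cdot e_3)$, so that the $W$-equation is, to leading order, a forced Burgers-type transport equation (the characteristic family that steepens), while $Z$ and $A$ satisfy transport equations with no self-steepening and which will remain $\OO(\eps)$. Then I would rescale space and time by time-dependent modulation functions: a blow-up time $\tau(t)$, a blow-up location $\xi(t)$, a rotation/tilt of the shock direction, and a parameter $\kappa(t)$ tracking the amplitude of $\partial_{y_1}W$ at the origin, writing $y_1 \sim (\xcal_1-\xi_1)/(\tau-t)^{3/2}$ and $y_2,y_3 \sim (\xcal_2-\xi_2)/(\tau-t)^{1/2}$, with a logarithmic time $s = -\log(\tau-t)$, so $s\to\infty$ corresponds to the shock. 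The modulation functions are chosen to impose normalization constraints at $y=0$ — fixing $W(0,s)$, $\nabla_y W(0,s)$, and the direction of steepest descent — which removes the unstable and neutral modes of the linearization about $\bar W$.

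The second step is the a priori estimate / bootstrap. I would set up a large bootstrap scheme: a collection of pointwise (weighted, in $y$) bounds on $\tilde W = W - \bar W$, $Z$, $A$, their low-order derivatives in self-similar coordinates, together with high-order energy estimates in $H^k$ for $k\ge 18$, and ODE bounds on the modulation functions (e.g. $\dot\tau$, $\dot\xi$, $\dot\kappa$ small). The heart of the argument is to show that the transport structure of the rescaled $W$-equation has a damping term — coming from the $-\tfrac12 \bar W$ and the expanding self-similar drift — which, combined with the genericity condition that $\partial_{y_1}\nabla_y^2 \bar W|_{y=0}$ is positive definite (ensuring the profile is a nondegenerate cusp and the only obstructions are the finitely many modulated symmetries), yields closed decay estimates: $\|\tilde W(\cdot,s)\|$ in the low norms decays like $e^{-\eta s}$ for some $\eta>0$, and $\|Z\|, \|A\|$ stay $\OO(\eps)$ with improved decay in the relevant weighted norms. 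The high-order energy estimates need a careful commutator analysis: differentiating the transport equation $k$ times produces a potentially bad term $k(\partial_{y_1}W)\partial^k W$, but because $\partial_{y_1}\bar W \ge -1$ with equality only at $y=0$, and $\partial_{y_1}\tilde W$ is small, this is controlled by the damping for $k$ large enough — this is exactly why the threshold $k\ge 18$ appears. The vorticity/transverse estimates require that $\zeta$ (via \eqref{svorticity}) and $A$ are transported without amplification and that the source terms coupling them to $W$ are lower order in the self-similar scaling.

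The third step is to close the modulation ODEs: using the normalization constraints and the equation evaluated at $y=0$, I would derive a closed system of ODEs for $(\tau,\xi,\kappa,\dots)$, show it has a solution on $[s_0,\infty)$ with the modulation parameters converging, and in particular that $\tau(t)\to T_*$ for an explicitly computable $T_*$ and $\xi(t)\to \xcal_*$, giving the blow-up time and location. Finally, I would translate back: since $W$ in self-similar variables converges to (a small perturbation of) $\bar W$ while $(\tau-t)\to 0$, in physical variables $\partial_{\xcal_1}W \to -\infty$ at the single point $(\xcal_*,T_*)$ at rate $(\tau-t)^{-1}$, while $u,\sigma,\omega$ stay bounded; and evaluating $\bar W\big((\xcal_1-\xcal_*)/(T_*-\tcal)^{3/2},0,0\big)$ along $\tcal = T_*$ gives the $C^{1/3}$ cusp profile from the cubic vanishing $\bar W(y_1,0,0)\sim y_1^{1/3}$ near $y_1=0$. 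Stability in $H^k$ follows because the bootstrap scheme only used that the initial data lie in an open $H^k$-neighborhood of the self-similar-variable profile (equivalently, a small $L^\infty$ perturbation of a constant state with one maximally negative gradient), with no symmetry assumed.

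The main obstacle I expect is closing the high-order energy estimates while simultaneously controlling the modulation variables: the commutator term $k(\partial_{y_1}W)\partial^k W$ competes directly against the self-similar damping, so the whole scheme hinges on a delicate quantitative balance (choosing $k$ large, the weights in $y$, the size of the bootstrap constants, and $\eps$ all in the right order), together with showing the forcing from $Z$, $A$, and the modulation ODEs does not destroy this balance. A secondary difficulty is handling the degeneracy of the self-similar coordinate change near $y=0$ (the anisotropic $(T-\tcal)^{3/2}$ versus $(T-\tcal)^{1/2}$ scaling) when transferring weighted estimates back and forth between physical and self-similar variables, and ensuring the support/finite-energy and no-vacuum conditions are propagated.
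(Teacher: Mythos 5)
Your overall framework---Riemann variables, modulated self-similar coordinates, a bootstrap combining weighted pointwise bounds with $H^k$ energy estimates, escape estimates for the Lagrangian trajectories, and closed ODEs for the modulation parameters---is exactly the architecture of the paper's proof, and your heuristics for the blowup time, location, and $C^{1/3}$ cusp are correct in spirit. But there is a concrete gap in the modulation scheme. You enumerate $\tau$, $\xi\in\RR^3$, a rotation (two angles $n_2,n_3$), and $\kappa$---the $7$-dimensional subgroup of Euler symmetries the paper retains. However, to make $\tilde W=W-\bar W$ vanish to \emph{third} order at $y=0$---which is what the genericity condition $\nabla^2\p_1\bar W(0)>0$ makes accessible and what the weighted estimates along escaping trajectories in Section~\ref{sec:Lagrangian} depend on---you need $1+3+6=10$ pointwise constraints: $W(0,s)=0$, $\nabla W(0,s)=(-1,0,0)$, $\nabla^2 W(0,s)=0$. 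Your $7$ modulations leave the three entries of $\check\nabla^2 W(0,s)$ unconstrained, and these directions are \emph{anti-damped}: in \eqref{eq:grad:2:check:W} the damping coefficient at $y=0$ is $\tfrac12+\beta_\tau\Jcal\p_1W\approx\tfrac12-1=-\tfrac12$, so an uncontrolled $\check\nabla^2W(0,s)$ grows like $e^{s/2}$ and immediately breaks the bootstrap. The paper's fix is the additional ``sheep shear'' transform \eqref{x-sheep}, a quadratic-in-$\check x$ shift $x_1=\tilde x_1-\tfrac12\phi_{\nu\gamma}(t)\tilde x_\nu\tilde x_\gamma$ whose coefficients $\phi_{\nu\gamma}$ furnish the missing $3$ modulations and are dynamically chosen to kill $\check\nabla^2W(0,s)$. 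Crucially these are \emph{not} classical Euler symmetries; they exist because in self-similar coordinates the blowup is a shear flow in $x_1$, and they are unlikely to be found by inspecting the Lie group of Euler alone.

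Two further points you would have to supply. The $\dot H^k$ energy estimate must be carried out in the symmetric-hyperbolic variables $(U,S)$ of \eqref{US-euler-ss}, not in $(W,Z,A)$ directly: the principal-order quasilinear terms only cancel under the integration by parts leading to \eqref{energy0}, and the Riemann system \eqref{euler-ss} does not exhibit this structure manifestly; working purely in $(W,Z,A)$ would leave an uncontrolled top-order commutator. Separately, the pointwise bound on $\p_1 A$ cannot be closed from the $A$-equation because of derivative loss; the paper recovers $\p_1 A$ algebraically from the specific vorticity, the sound speed, and transverse quantities (Lemma~\ref{lem:remarkable:sheep:structure}), using the transport bound on $\mrg$ established via its geometric components $\mrg\cdot\Ncal,\,\mrg\cdot\Tcal^\nu$. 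Your observation that $\zeta$ is transported without amplification is the right intuition, but the explicit algebraic reconstruction of $\p_1A$ from $\Omega$ is the step that makes it usable. Finally, a smaller imprecision: $\tilde W=W-\bar W$ does not decay in $s$; the bootstrap only keeps it $\OO(\eps^{1/11})$ in a weighted $L^\infty$ norm, and the limit $\lim_{s\to\infty}W(\cdot,s)=\bar W_{\mathcal A}$ lies in a $10$-parameter family of stationary self-similar Burgers profiles, generically distinct from $\bar W$ itself.
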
 
A precise statement of the main result will be given below as Theorem \ref{thm:main}

\subsection{Prior results on shock formation for the Euler equations}
In one space dimension, the isentropic Euler equations are an example of a  $2 \times 2$ system of conservation laws, which can be written in terms of the
Riemann invariants $z= u - \sfrac{c}{\alpha }$ and $w=u+\sfrac{c}{\alpha } $ introduced in \cite{Ri1860}; the functions $z$ and $w$ are constant along the characteristics of the two wave speeds $ \lambda_1 = u-c$ and $ \lambda _2 = u + c$.
Using Riemann invariants,  Lax~\cite{Lax1964} proved that finite-time shocks can form from smooth data for general  $2 \times 2$ genuinely nonlinear 
hyperbolic systems.   The proof showed that the derivative of $w$ must become infinite in finite time, but the nature of the proof did not permit for any
classification of the type of shock that forms.    Generalizations and improvements of Lax's result were obtained by John~\cite{John1974},  Liu~\cite{Li1979}, and  Majda~\cite{Ma1984}, for the 1D Euler equations.   Again, these proofs showed that either a slope becomes
 infinite in finite time or that (equivalently) the distance between nearby characteristics approaches zero, but we note that a precise description of the
 shock was not given.   See the book of Dafermos \cite{Da2010} for a more extensive bibliography of 1D results.
 
For the 3D Euler equations,  Sideris \cite{Si1985} formulated a proof by contradiction (based on virial identities) that $C^1$  regular solutions to \eqref{eq:Euler} have a finite lifespan; in particular, he showed that $\OO( \exp(1/ \eps ))$  is an upper bound for the lifespan (of 3D flows) for
data of size $ \eps $.  The proof, however, did not reveal the type of singularity that develops, but rather, that some  finite-time breakdown of smooth
solutions must occur.   

The first proof of shock formation for the compressible Euler equations in the multi-dimensional setting  was given by
Christodoulou \cite{Ch2007} for  relativistic fluids and with the restriction of {\it irrotational} flow, and later by Christodoulou-Miao \cite{ChMi2014} for 
non-relativistic, irrotational flow.\footnote{For the restricted shock development problem, in which the Euler solution is continued past the time of first singularity but vorticity production is neglected, see the discussion in Section 1.6 of \cite{Ch2019}.}  This geometric
method uses an eikonal function (see also  \cite{ChKl1993}, \cite{KlRo2003}), whose level sets correspond to characteristic surfaces; 
it is shown that in finite time, the distance between nearby characteristics tends to zero.  For irrotational flows, the isentropic Euler equations can be written as a scalar 
second-order quasilinear wave equation.  The first results  on shock formation for 2D quasilinear wave equations which do not satisfy Klainerman's null condition  
\cite{Kl1984} were established by Alinhac \cite{Al1999a,Al1999b}, wherein a detailed
description of the blow up was provided.   
The first proof of shock formation
for fluid flows with vorticity   was given by Luk-Speck  \cite{LuSp2018}, for the 2D isentropic Euler equations.   Their proof uses Christodoulou's geometric framework 
and develops new methods to study the vorticity transport.   In  \cite{Ch2007,ChMi2014,LuSp2018}, solutions are constructed which are 
small perturbations  of  simple plane waves.   It is shown that there exists at least one point in spacetime where a
shock must form, and a bound is given for this blow up time; however, since the construction of the shock solution is a perturbation
of a simple plane wave, there are numerous possibilities for the type of singularity that actually forms.   In particular, their  method of proof does not   distinguish 
between these different scenarios.
To be precise,  a simple plane wave solution of the 2D isentropic
Euler equations that travels along the $\xcal_1$ axis and is constant in $\xcal_2$ produces a finite-time shock along a line, but a small  perturbation of
this simple plane wave solution can produce a very different singular set, with blow up occurring on different  spatial sets such as one point, 
multiple points,  or a line.  

In our earlier work~\cite{BuShVi2019}, we considered solutions to the  2D isentropic Euler equations with $\OO(1)$ vorticity and with azimuthal symmetry.  
Using modulated self-similar variables,  we provided the first construction of shock solutions that completely classify the shock profile: the shock is an asymptotically 
self-similar,  stable, a generic 1D blow up profile, with  explicitly computable blow up time and location, 
and with a  precise description of the $C^ {\sfrac{1}{3}} $ H\"{o}lder regularity of the shock.  Azimuthal symmetry allowed us to use transport-type $L^ \infty $ bounds
which simplified the technical nature of the estimates,  but the proof already contained some of the fundamental ideas required to study the full 3D Euler equations with no  symmetry assumptions.

\subsection{The variables used in the analysis and strategy of the proof} 
We now introduce the variables  used in the analysis of shock formation.  For convenience we first rescale time $\tcal \mapsto t$, as described in \eqref{eq:time:rescale}. Associated to certain modulation functions (described in Section~\ref{sec:modulation} below), are a succession of transformations for both the independent variables and the dependent variables.   In order to dynamically align
the blow up direction with the $e_1$ direction,  a time-dependent rotation and translation are made in \eqref{eq:tilde:x:def} which maps $\xcal$ to $\tilde x$, with $u$, $\sigma$,
and $\zeta$ transformed to $\tilde u$, $\tilde \sigma$, and $\tilde \zeta$ via \eqref{eq:tilde:u:def} and \eqref{vort00}.   Fundamental to the analysis of stable shock formation,
we make a further coordinate transformation $\tilde x \mapsto x$ given by \eqref{x-sheep}; this mapping  modifies the $\tilde x_1$ variable by
a  function $ f(\tilde x_2, \tilde x_3, t) = \tfrac{1}{2} \phi_{\nu\gamma}(t) \tilde x_\nu\tilde x_\gamma  $ which is quadratic in space and dynamically modulated by $\phi_{\nu\gamma}(t)$.   The parameterized surface $( f(\tilde x_2, \tilde x_3, t), \tilde x_2, \tilde x_3)$ can be viewed as describing the steepening shock front near $x=0$, and provides
a time-dependent orthonormal basis along the surface, given by the vectors the unit normal vector $\Ncal(\check x, t)$ and the two unit tangent vectors $ \Tcal^2(\check x, t)$, and $ \Tcal^3(\check x, t)$ defined in \eqref{normal} and \eqref{tangent}.  Together with the coordinate transformation $\tilde x \mapsto x$, the functions
$\tilde u$, $\tilde \sigma$, and $\tilde \zeta$ are transformed to $\mathring u$, $\mathring \sigma$, and $\mathring \zeta$ using \eqref{usigma-sheep} and 
\eqref{sv-sheep}.    Moreover, the Riemann variables $w= \mathring u \cdot \Ncal + \mathring \sigma$ and  $z= \mathring u \cdot \Ncal - \mathring \sigma$, as well as
the tangential components of velocity $a_\nu= \mathring u \cdot \Tcal^\nu$ are introduced in \eqref{tildeu-dot-T}.

Finally, we map $(x, t)$ to the modulated self-similar coordinates $(y,s)$ using the transformation \eqref{eq:y:s:def}.   The variables $\mathring u$, $\mathring \sigma$, and 
$\mathring \zeta$ are mapped to their self-similar counterparts $U$, $S$, and $\Omega$ via \eqref{U-trammy}, \eqref{S-trammy},  and \eqref{svort-trammy}, 
while $w$, $z$, and $a_\nu$ are mapped to the self-similar variables $e^{-\frac s2} W + \kappa $, $Z$, and $A_\nu$ in \eqref{eq:ss:ansatz}. 

As a consequence of this sequence of coordinate and variable changes, the Euler equations in the original variables~\eqref{eq:Euler2} for the unknowns $(u(\xcal,\tcal),
\sigma(\xcal,\tcal))$ become the self-similar evolution \eqref{US-euler-ss} for the unknowns $(U(y,s),S(y,s))$.  Of crucial importance for our analysis is the evolution of the 
self-similar Riemann type variables $(W(y,s),Z(y,s),A(y,s))$ in \eqref{euler-ss}, which encode the full Euler dynamics in view of \eqref{eq:UdotN:S}. The key insight to our analysis is that the self-similar Lagrangian trajectories associated to the $W$ equation escape exponentially fast towards spatial infinity if their starting label is at a fixed (small) distance away from the blowup location $y=0$, whereas the Lagrangian trajectories for $Z$ and $A$ escape towards infinity independently of their starting label, spending at most an $\OO(1)$ time near $y=0$. This exponential escape towards infinity is what allows us to transfer information about spatial decay of various derivatives of $W$ into integrable temporal decay for several damping and forcing terms, when viewed in Lagrangian coordinates. As opposed to our earlier work~\cite{BuShVi2019}, these pointwise estimates for $(W,Z,A)$ do not close by themselves, as there is a loss of a $\check\nabla $ derivatives when the equations are analyzed in $L^\infty$. This difficulty is overcome by using the energy structure of the 3D compressible Euler system, which translates into a favorable $\dot{H}^k$ estimate for the self-similar variables $(U,S)$, for $k$ sufficiently large (e.g.~$k\geq 18$ is sufficient).

Coupled to the $(W,Z,A)$ evolution we have a nonlinear system of 10 ODEs which describe the evolution of our 10 dynamic modulation variables 
$\kappa, \tau, n_2, n_3, \xi_1,\xi_2,\xi_3, \phi_{22},\phi_{23}, \phi_{33}$, whose role is to dynamically enforce constraints for $W, \nabla W$ and 
$\nabla^2 W$ at $y=0$, cf.~\eqref{eq:constraints}.

For all $s<\infty$, or equivalently, $t<T_*$, the above described transformations are explicitly invertible. Therefore, our main result, Theorem~\ref{thm:main}, is a 
direct consequence of Theorem~\ref{thm:SS},   which establishes the global-in-self-similar-time stability of the solution $(W,Z,A)$, in a suitable topology near the blowup
profile  $(\bar W,0,0)$, along with the  stability of the 10 ODEs for the modulation parameters. In turn,  this is achieved by a standard bootstrap argument: fix an 
initial datum with certain {\em quantitative properties}; 
then postulate that these properties worsen by a factor of at most $K$, for some sufficiently large constant $K$; to conclude the proof, we a-posteriori show that 
in fact the solutions' quantitative properties worsen by a factor of at most $K/2$.  Invoking local well-posedness of smooth solutions~\cite{Ma1984} and 
continuity-in-time, we then close the bootstrap argument, yielding global-in-time solutions bounded by $K/2$.
 
The global existence of solutions $(W,Z,A)$ in self-similar variables, together with the  stability of the $\bar W$, leads  to a precise description of the blow up 
of a certain directional derivative of $w$.  
For the dynamic modulations functions  mentioned above, the function $\tau(t)$ converges to the blow up time $T_*$, the vector $\xi(t)$ converges to the blow up
location $\xi_*$, and the normal vector $\Ncal(t, \cdot )$ converges to $\Ncal_*$ as $t \to T_*$.    Moreover, we will show that
\begin{align}
(\Ncal(t, \xi_2(t), \xi_3(t)) \cdot \nabla_{\! \xcal})w(\xi(t), t) = e^s \p_{y_1}W(0,s)  = -\tfrac{1}{\tau(t) -t}  \to - \infty  \qquad \text{ as } \qquad t\to T_*  \,.
\label{tall:people:can:t:drink}
\end{align}
Thus, it is only the directional derivative of $w$ in the $\Ncal$ direction that blows up as $t\to T_*$, while  the tangential directional derivatives
$(\Tcal^2(t, \xi_2(t), \xi_3(t)) \cdot \nabla_{\! \xcal})w(\xi(t), t) $ and $(\Tcal^3(t, \xi_2(t), \xi_3(t)) \cdot \nabla_{\! \xcal})w(\xi(t), t) $ remain uniformly bounded as $t \to T_*$. Additionally, we prove that the directional derivative  $\Ncal(t, \xi_2(t), \xi_3(t)) \cdot \nabla_{\! \xcal}$ of $z$ and $a$ remain uniformly bounded as $t\to T_*$. Thus, \eqref{tall:people:can:t:drink} shows that the wave profile steepens along the $\Ncal$ direction, leading to a single point shock at the space time location $(\xi_*,T_*)$.

\subsection{Modulation variables and the geometry of shock formation}
\label{sec:modulation}
The symmetries of the 3D Euler equations lead to dynamical instabilities in 
the space-time vicinity of the shock, which are amplified when considering self-similar variables~\cite{EgFo2009}. 
Our analysis relies crucially on the  size of this invariance group. We recall that the 3D Euler equations
are invariant under the 10  dimensional Lie group of  Galilean transformations consisting of  rotations,  translations, and rigid motions of spacetime,  as well as the
2 dimensional group of rescaling symmetries. Explicitly, given a time shift $\tcal_0 \in \RR$, a space shift $\xcal_0 \in \RR^3$, a velocity shift (Galilean boost) $v_0\in \RR^3$, a rotation matrix $R \in SO(3)$, a hyperbolic scaling parameter $\lambda \in \RR_+$, a temporal scaling parameter $\mu \in \RR_+$, and a solution $(u,\sigma)$ of the 3D compressible Euler system \eqref{eq:Euler2}, where as before $\sigma = (\sfrac{1}{\alpha}) \rho^\alpha$, the pair of functions
\begin{align*}
u_{\rm new}(\xcal,\tcal) &=   \frac{1}{\mu} R^T u\left( \frac{R(\xcal- \xcal_0 - \tcal v_0)}{\lambda}, \frac{\tcal-\tcal_0}{\lambda \mu} \right) +  v_0  \\
\sigma_{\rm new}(\xcal,\tcal) &= \frac{1}{\mu} \sigma\left(\frac{R(\xcal- \xcal_0- \tcal v_0)}{\lambda} , \frac{\tcal-\tcal_0}{\lambda \mu} \right)
\end{align*}
also solve the 3D Euler system \eqref{eq:Euler2}, and hence, these transformations define  the  12  dimensional group of symmetries of the 3D Euler equations.
For simplicity we sacrifice $5$ of these $12$  of these degrees of freedom: we fix a temporal rescaling  since we choose to prove that an initial slope of size (negative) 
$\sfrac 1\eps$ causes a blowup in time $\eps + \OO(\eps^2)$ (just as for the 1D Burgers equation);  
we discard the degree of freedom provided by hyperbolic scaling since it is not necessary for our analysis to fix the determinant of $\p_{y_1} \nabla_{\! y}^2 W$ to be 
constant in time; we also only utilize  two of the three degrees of freedom in the rotation matrix $R\in SO(3)$  since we choose a particular basis for the plane orthogonal 
to the shock direction; lastly, we discard two Galilean boosts as we do not need to modulate $A_\nu(0,s)$ to be constant  in time. This leaves us with a 7
dimensional group of symmetries which we use at the precise shock location. Additionally, since in self-similar coordinates  our blow up is modeled by the shear flow in the $x_1$ direction, using a quadratic-in-$\check x$ shift function, we are also able to modulate translational instabilities away from the shock in the directions orthogonal to the shock.

A fundamental aspect of our analysis is to show that  there is a correspondence between the instabilities of the Euler solution  and the symmetries discussed above.  Thus,  in order to develop a theory of stable shock  formation, it is of paramount importance to be able to {\it modulate} away these instabilities.  This idea was successfully used in~\cite{Merle96,MeRa05,MeRaSz2018}  in the context of the Schr\"odinger equation,  and in~\cite{MeZa97} for the nonlinear 
heat equation.  We also note here recent applications of modulated self-similar blowup techniques in fluid dynamics:~\cite{DaMa19,CoGhMa2018,CoGhIbMa18} for the Prandtl equations and \cite{El19,ChHo19,ElGhMa19} for the incompressible 3D Euler equation with axisymmetry.

In the aforementioned works, the role of the modulation variables is to enforce certain orthogonality conditions which prohibit the self-similar dynamics from evolving
toward the unstable directions of a suitably defined weighted energy space. Rather than enforcing orthogonality conditions, we shall instead 
employ a generalization of  the idea that we previously introduced in~\cite{BuShVi2019} in the setting of the 2D Euler equations with azimuthal symmetry, in
which the modulation functions are used to dynamically enforce {\em pointwise constraints} at precisely the blow up location for a Riemann-type function $W$.  For the
2D Euler equations with azimuthal symmetry, we required only three modulation functions to enforce constraints on $W$ and its first two derivatives.
In the 3D case considered herein, for which no symmetry assumptions are imposed, the 7 remaining invariances of 3D Euler correspond to 7  
modulation functions $\kappa,\tau \in \RR$, $\xi  \in \RR^3$, $\check n \in \RR^2$,
 whose role is to enforce 7 pointwise constraints 
for a 3D Riemann-type function $W(y,s)$ and its first-order and second-order partial derivatives at $y=0$.  We describe the one-to-one correspondence between symmetries and  pointwise constraints at $y=0$ as follows:
\begin{itemize}[itemsep=2pt,parsep=2pt,leftmargin=.15in]
\item The amplitude of the Riemann variable $W$  is modulated via the unknown 
$\kappa(\tcal)$  by a Galilean boost of the type $(\kappa(\tcal),0,0)$, whose role is to enforce the constraint $W(0,s) = 0$.
\item The time-shift invariance of the equations is modulated via    the unknown $\tau(\tcal)$, which allows us to precisely compute the time at which the shock occurs. 
This modulation function enforces the constraint $\p_1W(0,s) = -1$.
\item The invariance of the equations under the remaining two dimensional orthogonal rotation symmetry group is modulated via the modulation vector
$\check n(\tcal) = (n_2(\tcal),n_3(\tcal))$, allowing us to precisely compute the direction of the shock and its orthogonal plane. This modulation vector enforces the constraint 
$\check \nabla_{\! y} W(0,s) = 0$.
\item The space-shift invariance of the equations is modulated via  the vector $\xi(t)$, thereby allowing us to precisely compute the location of the shock. Dynamically, 
the modulation vector $\xi$ enforces the constraint $\p_1 \nabla W_{\! y}(0,s) = 0$.
\end{itemize} 
The remaining 3 modulation functions  $\phi_{22}(\tcal), \phi_{23}( \tcal), \phi_{33}(\tcal)\!\in\! \RR$ 
which correspond to $(x_2,x_3)$-dependent spatial shifts, are used to enforce the constraint  $\check \nabla_{\! y}^2 W(0,s) = 0$.
Geometrically, these 3 functions modulate the second fundamental form of the shock profile in the directions orthogonal to the shock
direction.

\subsection{Outline}

The remainder of the paper is structured as follows:
\begin{itemize}[itemsep=2pt,parsep=2pt,leftmargin=.15in]
\item In Section~\ref{sec:variables}, we describe the changes of variables which transform the Euler system from its original form \eqref{eq:Euler} to its modulated 
self-similar version in Riemann-type variables~\eqref{euler-ss}. Certain tedious aspects of this derivation are postponed to Appendix~\ref{sec:derivation}.  Herein,  we 
also introduce the self-similar Lagrangian flows used for the remainder of the paper, we define the self-similar blow up profile $\bar W$ and collect its principal properties, 
and we record the evolution equations for  higher-order derivatives of the $(W,Z,A)$ variables.

\item In Section~\ref{sec:results},  we state the assumptions on the initial datum in the original space-time variables and then state (in full detail) the main result of our paper, Theorem~\ref{thm:main}.   We emphasize that  the set of assumptions on the initial datum stated here is not the most general.  Instead, in Theorem~\ref{thm:open:set:IC},
we show that the set of allowable initial data can be taken from an open neighborhood in the $H^{18}$ topology near that datum described in Theorem~\ref{thm:main}. 
In this section,   we also state the self-similar version of our main result, Theorem~\ref{thm:SS}.

\item In Section~\ref{sec:bootstrap}, we state the pointwise self-similar bootstrap assumptions  which  imply Theorem~\ref{thm:SS}, as discussed above. Note that these 
bootstraps are strictly worse than the initial datum assumptions discussed in Section~\ref{sec:results}.  We also state a few  consequences of our bootstrap assumptions, chief among which is the global in time $\dot{H}^k$ energy estimate of Proposition~\ref{cor:L2}, whose proof is postponed to Section~\ref{sec:energy}.

\item In Section~\ref{sec:constraints}, we show how the dynamic constraints of $W, \nabla W$ and $\nabla^2 W$ at $(0,s)$ translate precisely into a system of 10 coupled 
nonlinear ODEs for the time-dependent modulation parameters $\kappa, \tau, n_\nu, \xi_i, \phi_{\nu \mu}$, given by polynomials and  rational functions with coefficients obtained from the derivatives of the functions $(W,Z,A)$ evaluated at $y=0$, cf.~\eqref{eq:dot:phi:dot:n} and \eqref{eq:dot:kappa:dot:tau}.

\item In Section~\ref{sec:dynamic:closure}, we improve  the bootstrap assumptions \eqref{eq:speed:bound} and \eqref{eq:acceleration:bound}  for our dynamic modulation variables. The analysis in this section crucially uses the explicit formulas derived earlier in Section~\ref{sec:constraints}. 

\item In Section~\ref{sec:preliminary}, we collect a number of technical estimates to be used later in the proof. These include bounds for the $y_1$ velocity components $(g_W, g_Z,g_U)$ defined in \eqref{eq:g:def}, the $y_\nu$ velocity components $(h_W, h_Z,h_U)$ given by \eqref{eq:h:def}, the $(W,Z,A)$ forcing terms from \eqref{eq:F:def}, and also  the forcing terms arising in the evolution of $\tilde W = W- \bar W$.

\item In Section~\ref{sec:Lagrangian}, we close the bootstrap on the spatial support of our solutions, cf.~\eqref{eq:support}. Additionally,  prove a number of  
{\em Lagrangian estimates} which are fundamental to our analysis in $L^\infty$ or weighted $L^\infty$ spaces for the $(W,Z,A)$ system. We single out 
Lemma~\ref{lem:escape} which proves that  trajectories of the (transport velocity of the) $W$ evolution, which start a small distance away from the origin, escape exponentially fast towards infinity.
Additionally,   Lemma~\ref{lem:phiZ}  proves that the flows of the transport velocities in the $Z$ and $U$ equations,  are swept towards infinity independently of
their starting point, and spend very little time near $y=0$.

\item In Section~\ref{sec:vorticity:sound},  we establish pointwise estimates on the self-similar specific vorticity $\mrg$ and the scaled sound speed $S$.
 The  bounds on $\mrg$  rely on the  structure of the equations satisfied by the geometric
components $\mrg \cdot \Ncal$, $\mrg \cdot \Tcal^2$, and $\mrg \cdot \Tcal^3$.

\item In Section~\ref{sec:Z:A}, we improve the bootstrap assumptions for 
$Z $ and $A$ stated in \eqref{eq:Z_bootstrap} and \eqref{eq:A_bootstrap}. The most delicate argument  required is for the bound of $\p_1 A$; we note in Lemma~\ref{lem:remarkable:sheep:structure} that this vector may be computed from the specific vorticity vector, the sound speed, and quantities which were already 
bounded   in view of our bootstrap assumptions. 

\item In Section~\ref{sec:W},  we improve on the bootstrap assumptions for $W$ and $\tilde W$, cf.~\eqref{eq:W_decay} and \eqref{eq:bootstrap:Wtilde}--\eqref{eq:bootstrap:Wtilde3:at:0}.
This analysis takes advantage of the forcing estimates established in Section~\ref{sec:preliminary} and the Lagrangian trajectory estimates of Section~\ref{sec:Lagrangian}.

\item In Section~\ref{sec:energy},  we give the proof of the $\dot{H}^k$ energy estimate stated earlier in 
Proposition~\ref{cor:L2}. As opposed to the analysis  which precedes this section and which relied on pointwise estimates for the $(W,Z,A)$ system, for the energetic 
arguments presented here,  it is convenient to work directly with the self-similar velocity variable $U$ and the scaled sound speed $S$, whose evolution is given 
by \eqref{US-short} and whose derivatives satisfy~\eqref{US-L2}. It is here that the good energy structure of the Euler system is fundamental. In our proof,
we use a weighted Sobolev norm to account for binomial coefficients, and appeal to some interpolation inequalities collected in Appendix~\ref{sec:interpolation}.

\item In Section~\ref{sec:conclusion}, we use the above established bootstrap estimates to conclude the proofs of Theorem~\ref{thm:SS}, and as a consequence of
Theorem~\ref{thm:main}. Herein,  we provide the definition of the blow up time and location, establish the H\"older $\sfrac 13$ regularity of the solution at the first 
singular time, and show that the vorticity is nontrivial at the shock.   Moreover, we establish convergence to an  asymptotic profile,  proving that $\lim_{s \to \infty } W(y,s) = 
\bar W_ \mathcal{A} (y)$ for all fixed $y$, where $\bar W_ \mathcal{A}$ denotes a stable stationary solution of the self-similar 3D Burgers equation.   The 
ten-dimensional  family of such solutions, parameterized by a symmetric $3$-tensor $ \mathcal{A} $,  is constructed in Proposition \ref{prop-stationary-burgers} 
of  Appendix \ref{sec:delay}.
Additionally,  we give a detailed proof of the statement that the set of initial conditions for which Theorem~\ref{thm:main} holds contains an open neighborhood in the $H^{18}$ topology, as claimed in Theorem~\ref{thm:open:set:IC}.
\end{itemize}

\section{Self-similar shock formation} 
\label{sec:variables}
Prior to stating the main theorem (cf.~Theorem~\ref{thm:main} below), we describe how starting from the 3D Euler equations \eqref{eq:Euler} for the unknowns $(u,\rho)$, 
which are functions of the spatial variable $\xcal \in \RR^3$ and of the time variable $\tcal \in I \subset \RR$, we  arrive at the equations for the modulated self-similar 
Riemann variables $(W,Z, A_\nu)$, which are functions of  $y \in \RR^3$ and $s\in [-\log \eps,\infty)$. This change of variables is performed in the following three 
subsections, with some of the computational details provided in  Appendix~\ref{sec:derivation}.

\subsection{A time-dependent  coordinate system} 
\label{sec:time:dependent:coord}
In this section we switch coordinates, from the original space variable $\xcal$ to a new space variable $\tilde x$, which is obtained from a rigid body rotation and 
a translation.   It is  convenient for our subsequent analysis to perform and $\alpha$-dependent rescaling of time, by letting 
\begin{align}
\tcal \mapsto  \tfrac{1+ \alpha }{2} \tcal = t\, .
\label{eq:time:rescale}
\end{align}
Throughout the rest of the paper we abuse notation and denote the  time variable defined in \eqref{eq:time:rescale} still by $t$.

In order to align our coordinate system with the orientation of the developing shock, we introduce a time dependent unit normal vector\footnote{Frequently we will use the notation $\check n$ to denote the last two coordinates of a vector $n=(n_1,n_2,n_3)$, i.e. $\check n = (n_2,n_3)$.}
\[
n =n(t) = (n_1(t), n_2(t),n_3(t)) = (n_1(t),\check n(t)),
\] 
with $\abs{\check n}^2 = \abs{n_2}^2+ \abs{n_3}^2 \ll 1$,  so that $n_1 = \sqrt{1 -n_2^2 - n_3^2} = \sqrt{1 - |\check n|^2}$ is close to $1$. Associated with these parameters we introduce the skew-symmetric matrix $\tilde R$ whose first row is the vector $(0,-n_2,-n_3)$, first column is $(0,n_2,n_3)$, and has $0$ entries otherwise. In terms of $\tilde R$ we define the rotation matrix
\begin{align}
R &=   R(t)  
= \Id + \tilde R(t) + \frac{1 - e_1 \cdot n(t) }{\abs{e_1 \times n(t)}^2} \tilde R^2(t)  
\label{eq:R:def}
\end{align}
whose purpose is to rotate the unit vector $e_1$ onto the vector $n(t)$. Since $R \in SO(3)$, we have that
the vectors $\{R(t) e_1, R(t) e_2, R(t)  e_3\}$ form a time dependent orthonormal basis for $\RR^3$, and for convenience we sometimes write  $\te_i  = R e_i$ for $i\in\{1,2,3\}$. Geometrically, the vectors $\{\te_2,\te_3\}$ span the plane orthogonal to the shock direction $n$, and we will for ease of notation denote $n = \te_1$.

It is convenient at this stage to record the formula for the time derivative of $R(t)$. One may verify that 
\begin{align}
 \dot{R}(t) = \dot{n}_2(t) R^{(2)}(t) + \dot{n}_3(t) R^{(3)}(t)
 \label{eq:R:dot:def}
\end{align}
where the matrices $R^{(2)}$ and $R^{(3)}$ are defined explicitly in \eqref{eq:R:2:def} and \eqref{eq:R:3:def}. For compactness of notation it is convenient to define the {\em skew-symmetric matrix} $\dot Q = \dot R^T R$, written out in components as
\begin{align}
\dot{Q}_{ij} 
= \dot{R}_{ki} R_{k j} = \dot{n}_2  R^{(2)}_{ki} R_{kj} + \dot{n}_3 R^{(3)}_{ki} R_{kj} = \dot{n}_2 Q^{(2)}_{ij} + \dot{n}_3 Q^{(3)}_{ij}
\label{eq:Q:def}
\end{align}
where the skew-symmetric matrices $Q^{(2)}$ and $Q^{(3)}$ are stated explicitly in \eqref{eq:Q2:def} and \eqref{eq:Q3:def}, respectively.

In addition to the vector $\check n(t)$, which determines the rotation matrix $R(t)$, we also define a time dependent shift vector  
\begin{align*}
\xi = \xi(t) = (\xi_1(t),\xi_2(t),\xi_3(t)) = (\xi_1(t), \check \xi(t)) \,.
\end{align*}
The point $\xi(t) \in \RR^3$ dynamically tracks the location of the developing shock.

In terms of $R(t)$ and $\xi(t)$ we introduce the new position variable
\begin{align}
 \tilde x = R^T(t) (\xcal-\xi(t))
 \label{eq:tilde:x:def}
\end{align}
and the rotated velocity and rescaled sound speed as
\begin{align}
\tu ( \tilde x ,t) =   R^T(t)   u(\xcal ,t) \, ,
\qquad 
\tilde \sigma ( \tilde x ,t) =    \sigma(\xcal,t) \, .
 \label{eq:tilde:u:def}
\end{align}
From \eqref{eq:tilde:x:def} and \eqref{eq:tilde:u:def}, after a short computation  detailed in Appendix~\ref{app:time:dependent:coord} below, we obtain that 
the Euler equations \eqref{eq:Euler-coordinates} are written as
\begin{subequations}
\label{eq:new:Euler}
\begin{align}
 \tfrac{1+ \alpha }{2} \p_t \tilde u - \dot Q\tilde u + \Big( ( \tilde v  + \tilde u)\cdot \nabla_{\! \tilde x} \Big) \tilde u 
 + \alpha \tilde \sigma   \nabla_{\! \tilde x}  \tilde \sigma  &=0\\
  \tfrac{1+ \alpha }{2}\partial_t \tilde \sigma + \Big(( \tilde v  + \tilde u) \cdot \nabla_{\! \tilde x}\Big)\tilde \sigma +  \alpha   \tilde \sigma  \div_{\! \tilde x} \tilde u&=0
\end{align}
\end{subequations}
where 
$$
\tilde v(\tilde x,t) := \dot Q  \tilde x  - R^T \dot \xi \,,
$$
the matrix $\dot Q$ is given by \eqref{eq:Q:def}, and the matrix $R(t)$ and vector $\xi(t)$ are yet to be determined.

Similarly, defining the rotated {\em specific vorticity} vector  $\tilde \zeta$ by
\begin{equation}\label{vort00}
\tilde \zeta ( \tilde x ,t) =   R^T(t)   \zeta(\xcal ,t) 
 \,,
\end{equation} 
we have that $\tilde \zeta$ is a solution of  
\begin{align}
 \tfrac{1+ \alpha }{2} \p_t \tilde \zeta - \dot Q \tilde \zeta + \Big( ( \tilde v + \tilde u)\cdot \nabla_{\! \tilde x}\Big)\tilde \zeta - \Big(\tilde \zeta \cdot \nabla_{\! \tilde x}\Big) \tilde u &= 0 \,. \label{tgvorticity}
\end{align}
Deriving \eqref{tgvorticity} from \eqref{svorticity} fundamentally uses that  $\dot Q$ is skew-symmetric.

\begin{remark}[Notation]
It will be convenient to denote the last two components of a three-component vector $v$ simply as $\check v$. For instance, the 
gradient operator may be written as $\nabla = (\p_1, \p_2, \p_3) = (\p_1, \check \nabla)$ and the velocity vector as $\tu = (\tu_1, \tu_2, \tu_3) = (\tu_1, \check \tu)$.
 Moreover, for a $3\times 3$ matrix $R$, we will denote by $\check R$ the matrix whose first column is set to $0$.    We will also use the Einstein summation convention, in
which repeated Latin indices are summed from $1$ to $3$, and repeated Greek indices are summed from $2$ to $3$.   We shall denote
a partial derivative $\p_{\tilde x_j} F$ by $F,_j$  and $\p_{\tilde x_\nu}$  will be denoted simply 
by $F,_\nu$. We note that the $\cdot,_j$ derivative notation shall always denote a derivative with respect to $\tilde x$. 
\end{remark}

\subsection{Coordinates adapted to the shock}
\label{sec:Riemann:sheep}
We shall next introduce one further coordinate transformation 
that will allow us to modulate $\check {\tilde x}$-dependent shifts, and 
simultaneously parameterize the steepening shock front by a quadratic profile.
Specifically, coordinates $\tilde x$ will be transformed to new coordinates $x$,  so that with respect to $x$, the local parabolic geometry near the 
steepening shock is flattened. The new coordinate satisfies  $\check x = \check{\tilde x}$.  

In order to understand the geometry of the shock, we define a time-dependent parameterized surface over the $ \tilde x_2$-$\tilde x_3$ plane by
\begin{equation}\label{surface}
\left( f(\tilde x_2, \tilde x_3, t), \tilde x_2, \tilde x_3\right) 
\end{equation} 
where the function $f \colon \RR^2 \times [-\tfrac{\eps }{2} ,T_*) \to \RR^2$ is a spatially  quadratic {\em modulation function} defined as
\begin{align}
 f(\check{\tilde x},t) = \tfrac{1}{2} \phi_{\nu\gamma}(t) \tilde x_\nu\tilde x_\gamma  \,. \label{def_f}
\end{align}
The coefficients $\phi_{\nu\gamma}(\tcal)$ are symmetric with respect to the indices $\nu$ and $\gamma$, and their time evolution plays a crucial role in our proof.  
A derivative with respect to $\tcal$ is denoted as  as
\begin{align}
 \dot f(\check{\tilde x},t) =\tfrac{1}{2} \dot\phi_{\nu\gamma}(t) \tilde x_\nu\tilde x_\gamma  \,. \label{eq:dt:f}
\end{align}

Associated to the parameterized surface \eqref{surface}, we define the unit-length tangent  vectors 
\begin{align} 
\Tcal^2 =  \left( \tfrac{f,_2}{\Jcal}, 1 - \tfrac{(f,_2)^2}{\Jcal(\Jcal+1)} \,,  \tfrac{- f,_2f,_3}{\Jcal(\Jcal+1)}\right) \, , \qquad 
\Tcal^3 =   \left( \tfrac{f,_3}{\Jcal},  \tfrac{- f,_2f,_3}{\Jcal(\Jcal+1)} \,,  1 - \tfrac{(f,_3)^2}{\Jcal(\Jcal+1)} \right) \,,  
\label{tangent}
\end{align} 
and the unit-length normal vector
\begin{align} 
\Ncal = \Jcal^{-1} (1, -f,_2, -f,_3) \, , \label{normal}
\end{align} 
where
$$
\Jcal = ( 1+  |f,_2|^2+ |f,_3|^2)^ {\frac{1}{2}}  \,.
$$
It is easy to verify that $(\nn, \tt^2,\tt^3)$ form an orthonormal basis and that $\Ncal \times \Tcal^2= \Tcal^3$ and $\Ncal \times \Tcal^3= -\Tcal^2$.   With respect to the
parameterized quadratic surface $(f(\check{\tilde x}), \check{\tilde x})$, the second fundamental form is given by the 2-tensor $\Jcal ^{-1} \phi_{\nu \gamma}(t)$, and hence
the modulation functions  $\phi_{\nu \gamma}(t)$ are dynamically measuring the curvature of the steepening shock front.

Using the function $f(\tilde x_2,\tilde x_3,t)$ we now introduce a   new transformation that we call
the {\it sheep shear transform}.    The new space coordinate $x$ is defined as
\begin{equation}\label{x-sheep}
x_1 = \tilde x_1 - f(\tilde x_2,\tilde x_3,t)\,, \qquad x_2 = \tilde x_2\,, \qquad x_3 = \tilde x_3 \,,
\end{equation}
so that the surface defined in \eqref{surface} is now flattened. Note that we are only modifying the $\tilde x_1$ coordinate, and since $\Ncal, \Jcal, \Tcal$ are independent of $\tilde x_1$, these functions are not affected by the sheep shear transform. We write $f(\check x,t)$ instead of $f(\check{\tilde x},t)$ and the similar notation overload is used for $\Ncal, \Jcal$, and $\Tcal$.

In terms of this new space variable $x$, the velocity field and the rescaled sound speed are redefined as
\begin{subequations} 
\label{usigma-sheep}
\begin{align} 
\mru(x ,t) & =\tilde u (\tilde x, t) = \tilde u (x_1+ f(x_2,x_3,t),x_2,x_3,t)\,, \\
\mrs(x ,t) & =\tilde \sigma (\tilde x, t)  = \tilde \sigma  (x_1+ f(x_2,x_3,t),x_2,x_3,t)\,. \label{sigma-sheep}
\end{align} 
\end{subequations} 
Before stating the equations obeyed by $\mru$ and $\mrs$, which involve many $\alpha$-dependent parameters, for the sake of brevity, we introduce the notation
\begin{align} 
\beta_1 = \beta_1(\alpha)= \tfrac{1}{1+\alpha}, \qquad \beta_2 = \beta_2(\alpha)=\tfrac{1-\alpha}{1+\alpha}, \qquad \beta_3 = \beta_3(\alpha)= \tfrac{\alpha}{1+\alpha}\,,
\label{eq:various:beta}
\end{align} 
where  
$\beta_i = \beta_i(\alpha)$ are fixed parameters of our problem. Note that for $\alpha>0$ (i.e. $\gamma>1$) we have $0\leq \beta_1,\beta_2,\beta_3 < 1$.

With the notation introduced in \eqref{usigma-sheep} and \eqref{eq:time:rescale},  the system \eqref{eq:new:Euler} may be written as
\begin{subequations}
\label{Euler_sheep}
\begin{align}
& \p_t \mru  - 2\beta_1\dot Q \mru + 2\beta_1  (- \tfrac{\dot f}{2\beta_1} +\Jcal  v \cdot \nn + \Jcal \mru \cdot \nn  ) \p_1\mru  
 +2\beta_1(v_\nu + \mru_\nu)\p_\nu \mru   
+ 2\beta_3 \mrs( \Jcal \Ncal   \p_1 \mrs +   \delta^{\cdot \nu}    \p_\nu \mrs )=0 \,, \\
&
 \partial_t \mrs + 2 \beta_1 ( - \tfrac{\dot f}{2\beta_1} + \Jcal v \cdot \nn + \Jcal \mru \cdot \nn  )\p_1\mrs   + 2\beta_1(v_\nu + \mru_\nu)\p_\nu \mrs
 + 2\beta_3 \mrs\left( \p_1 \mru \cdot \nn \Jcal + \p_\nu \mru_\nu \right) =0 \,,
\end{align}
\end{subequations}
where in analogy to \eqref{usigma-sheep} we have denoted
\begin{align}
v(x,t) =  \tilde v (\tilde x, t ) = \tilde v (x_1+ f(x_2,x_3,t),x_2,x_3,t)\,.
\label{eq:v:x:t:def}
\end{align}
In particular, note that $v_i(x,t) = \dot Q_{i1}(x_1 + f(\check x,t)) + \dot Q_{i\nu} x_\nu - R_{ji} \dot \xi_j$. 
Similarly, we define the sheared version of the rotated specific vorticity vector by
\begin{equation}\label{sv-sheep}
\mrg(x,t) = \tilde \zeta(\tilde x,t) = \tilde \zeta (x_1+ f(x_2,x_3,t),x_2,x_3,t)\,, 
\end{equation} 
so that the equation \eqref{tgvorticity} becomes
\begin{align} 
&\p_t \mrg - 2\beta_1\dot Q \mrg 
+ 2 \beta_1 ( - \tfrac{\dot f}{2\beta_1} + \Jcal  v \cdot \Ncal + \Jcal \mru \cdot \Ncal) \p_1 \mrg + 2 \beta_1 ( v_\nu + \mru_\nu) \p_\nu \mrg  -2\beta_1 \Jcal \Ncal \cdot \mrg \p_1 \mru  - 2 \beta_1 \mrg_\nu \p_\nu \mru  =0 \,.
\label{svorticity_sheep}
\end{align} 

\subsection{Riemann variables adapted to the shock geometry}
The Euler system \eqref{Euler_sheep} has a surprising geometric structure which is discovered by introducing Riemann-type variables. For this purpose, we switch from the unknowns $(\mru,\mrs)$ to the Riemann variables $(w,z,a)$ defined by
\begin{align} 
w = \mru \cdot \Ncal + \mrs  \,, \qquad  z =  \mru   \cdot \Ncal - \mrs   \,, \qquad  a_\nu = \mru \cdot \Tcal^\nu
\label{tildeu-dot-T}
\end{align} 
so that
\begin{align} 
\mru \cdot \Ncal  = \tfrac{1}{2} ( w+ z) \,, \qquad  \mrs = \tfrac{1}{2} (  w-   z) \,.  \label{tildeu-dot-N}
\end{align} 
The Euler sytem \eqref{Euler_sheep} can be written in terms of the new variables $(  w,   z,  a_2,   a_3)$ as
\begin{subequations}
\label{eq:Euler-riemann-sheep}
\begin{align}
&\p_t  w +  \left(2 \beta_1 (-\tfrac{\dot f}{2\beta_1}+\Jcal v \cdot \nn) + \Jcal w + \beta_2 \Jcal  z  \right) \p_1  w
+ \left(2 \beta_1 v_\mu + w \Ncal _\mu - \beta_2 z \Ncal _\mu  + 2\beta_1 a_\nu \Tcal^\nu_\mu\right) \p_\mu w \notag \\
&
\quad = - 2\beta_3 \mrs \Tcal^\nu_\mu \p_\mu a_\nu +  2 \beta_1 a_\nu \Tcal^\nu_i   \dot{\Ncal_i} + 2 \beta_1  \dot Q_{ij}  a_\nu \Tcal^\nu_j \Ncal _i  
+2 \beta_1  \left( v_\mu + \mru\cdot \Ncal  \Ncal _\mu +  a_\nu \Tcal^\nu_\mu\right)  a_\gamma \Tcal^\gamma_i 
\Ncal_{i,\mu}  \notag\\
&\quad \qquad 
- 2\beta_3 \mrs  (a_\nu \Tcal^\nu_{\mu,\mu}
+  \mru\cdot \Ncal  \Ncal_{\mu,\mu} )  \,, \label{alrightalrightalright}\\
&\p_t  z +  \left(2\beta_1 (-\tfrac{\dot f}{2\beta_1} + \Jcal v \cdot \nn)   +\beta_2 \Jcal  w + \Jcal z \right) \p_1  z
+ \left(2 \beta_1  v_\mu + \beta_2  w  \Ncal_\mu
+z \Ncal _\mu + 2 \beta_1  a_\nu \Tcal^\nu_\mu\right) \p_\mu z \notag \\
&
\quad = 2\beta_3 \mrs \Tcal^\nu_\mu \p_\mu a_\nu
+2 \beta_1   a_\nu \Tcal^\nu_i   \dot{\Ncal_i} +2 \beta_1  \dot Q_{ij}  a_\nu \Tcal^\nu_j \Ncal _i  
+2 \beta_1  \left( v_\mu +  \mru\cdot \Ncal  \Ncal _\mu +  a_\nu \Tcal^\nu_\mu\right)  a_\gamma \Tcal^\gamma_i \Ncal_{i,\mu}  \notag\\
&\quad  \qquad 
+ 2\beta_3 \mrs  (a_\nu \Tcal^\nu_{\mu,\mu}
+  \mru\cdot \Ncal  \Ncal_{\mu,\mu} )\,, 
\\
&\p_t  a_\nu +  \left(2 \beta_1(-\tfrac{\dot f}{2\beta_1}+\Jcal v\cdot \nn) +\beta_1 \Jcal w + \beta_1 \Jcal z   \right) \p_1  a_\nu 
+ 2 \beta_1  \left(v_\mu + {\tfrac{1}{2}} ( w+ z) \Ncal _\mu + a_\gamma \Tcal^\gamma_\mu \right)  \p_\mu a_\nu \notag\\
&\quad  = -2\beta_3\mrs \Tcal^\nu_\mu  \p_\mu \mrs
+   2\beta_1  \left(   \mru\cdot \Ncal  \Ncal_i +  a_\gamma \Tcal^\gamma_i\right) \dot\Tcal^\nu_i  
 +   2\beta_1  \dot Q_{ij} \left( (\mru\cdot \Ncal  \Ncal _j  +  a_\gamma  \Tcal^\gamma_j \right) \Tcal^\nu _i   \notag \\
 &\quad  \qquad 
  +  \beta_1 \left( v_\mu + \mru\cdot \Ncal \Ncal _\mu 
 +  2a_\gamma \Tcal^\gamma_\mu\right)  \left(  \mru\cdot \Ncal \Ncal_i +  a_\gamma \Tcal^\gamma_i\right) \Tcal^\nu_{i,\mu}\,.
 \end{align}
\end{subequations}
At this stage we comment on the temporal transformation \eqref{eq:time:rescale}: its purpose is to ensure that the coefficient of $w \p_1 w$ in \eqref{alrightalrightalright},
 when evaluated at $\check x=0$, is equal to $1$, in analogy to the 1D Burgers equation.

\subsection{Modulated self-similar variables}
\label{sec:s:s:variables}

In order to study the formation of shocks in the Riemann-form of the Euler equations~\eqref{eq:Euler-riemann-sheep}, we introduce the following (modulated) 
self-similar variables:
\begin{subequations}
\label{eq:y:s:def}
\begin{align}
 s &= s(t)=-\log(\tau(t)-t) \, ,\\
  y_1 &=   y_1(  x_1,t)=\frac{  x_1}{(\tau(t)-t)^{\frac32}} =   x_1 e^{\frac{3s}{2}} \,, \\   
  y_j &=   y_j(  x_j ,t)=\frac{  x_j}{(\tau(t)-t)^{\frac12}} =   x_j e^{\frac s2}\,, \qquad \mbox{for} \qquad j\in \{2,3\}  
 \,.
\end{align}
\end{subequations}
Note the different scaling of the first component $  y_1$ versus the vector of the second and third components $\check {  y}$. 
We have the following useful identities: 
\begin{align*}
\tau-t =e^{-s}\,, \ \tfrac{ds}{dt}= (1-\dot \tau)e^s\,, \ 
\partial_{  x_1}   y_1 = e^{\frac32s}\,, \  \partial_t y_1 =  \tfrac{3(1-\dot\tau)}{2}   y_1 e^s\,,  \ 
\partial_{  x_\gamma}   y_\nu = e^{\frac s2} \delta_{\gamma \nu}\, \  \partial_t   y_\nu  = \tfrac{1-\dot\tau}{2}  y_\nu e^s \,.
\end{align*}

\subsection{Euler equations in modulated self-similar variables}
\label{sec:s:s:equations}
Using the self-similar variables $y$ and $s$, we rewrite the functions $w$, $z$ and $a_\nu$ defined in \eqref{tildeu-dot-T} as 
\begin{subequations}
\label{eq:ss:ansatz}
\begin{align}
w(  x,t)&=e^{-\frac s2}  W(  y,s)+\kappa (t) \,, \label{w_ansatz}  \\
z(  x,t)&=  Z(  y,s) \,,  \label{z_ansatz}   \\
a_\nu(  x ,t)&= A_\nu (  y,s) \, , \label{a_ansatz}
\end{align}
\end{subequations}
where $\kappa(t)$ is a modulation function whose dynamics shall be given below.
We also change the function $v$ defined in \eqref{eq:v:x:t:def} to self-similar coordinates by letting $v(x,t) = V(y,s)$, so that 
\begin{equation}\label{def_V}
V_i(y,s) =  \dot Q_{i1} \left( e^{-\frac{3s}{2}}y_1 +  {\tfrac 12 e^{-s} \phi_{\nu\mu} y_\nu y_\mu} \right)+e^{-\frac s2} \dot Q_{i\nu}y_\nu  - R_{ji} \dot \xi_j \, .
\end{equation} 
 Next, we derive the system of equations obeyed by $W,  Z$, and $A$. We introduce the notation
 \[\beta_\tau = \beta_\tau(t) = \tfrac{1}{1-\dot \tau(t)}\,.\] 
With the self-similar change of coordinates \eqref{eq:y:s:def}--\eqref{eq:ss:ansatz}, the Euler system~\eqref{eq:Euler-riemann-sheep} becomes
\begin{subequations} 
\label{euler-ss}
\begin{align} 
( \p_s- \tfrac{1}{2} )  W 
+  \left( g_{ W}  + \tfrac{3}{2}   y_1  \right)  \p_{1}  W
+  \left(  h_{W}^\mu + \tfrac{1}{2} y_\mu \right) \p_{\mu} W  
&=   {F}_{ W} - e^{-\frac s2} \beta_\tau \dot \kappa  \label{eq:euler:ss:a} \\
\p_s  Z + \left( g_{ Z}+\tfrac{3}{2}  y_1   \right) \p_{1}  Z 
+  \left(  h_{Z}^\mu + \tfrac{1}{2} y_\mu \right) \p_{\mu} Z  &= {F} _{ Z} \label{eq:euler:ss:b}\\
\p_s  A_\nu + \left( g_{U}+ \tfrac{3}{2} y_1 \right) \p_{1}   A_\nu
+  \left(  h_{U}^\mu + \tfrac{1}{2} y_\mu \right) \p_{\mu} A_\nu &= {F}_{A \nu}  \label{eq:euler:ss:c}
\end{align} 
\end{subequations} 
where we have introduced the notation 
\begin{subequations}
\label{eq:g:def}
\begin{align}
g_{ W}&=  \beta_\tau \Jcal W + \beta_\tau e^{\frac s2} \left( -  \dot f +  \Jcal \left(\kappa  + \beta_2  Z + 2\beta_1    V \cdot \nn \right)  \right) = \beta_\tau  \Jcal W +  {G}_{ W} \label{eq:gW}  \\
g_{ Z}&=   \beta_2 \beta_\tau \Jcal  W + \beta_\tau e^{\frac s2} \left(  -  \dot f +   \Jcal \left(\beta_2 \kappa  +   Z  + 2\beta_1  V \cdot \nn \right)       \right)
=\beta_2 \beta_\tau \Jcal W + G_{ Z} \label{eq:gZ}\\
g_{U}&=  \beta_1 \beta_\tau \Jcal W + \beta_\tau e^{\frac s2} \left( -  \dot f +     \Jcal   \left(\beta_1 \kappa  +  \beta_1 Z + 2  \beta_1 V \cdot \nn  \right)   \right)      
= \beta_1 \beta_\tau \Jcal  W  + G_{U}  \label{eq:gA}
\end{align}
\end{subequations}
for the terms in the $y_1$ transport terms,
\begin{subequations}
\label{eq:h:def}
\begin{align}
 h_W^\mu&= 
  \beta_\tau e^{-s} \Ncal_\mu W + \beta_\tau  e^{-\frac s2}  \left( 2    \beta_1  V_\mu + \Ncal_\mu  \kappa  - \beta_2 \Ncal_\mu Z +2 \beta_1 A_\gamma \Tcal^\gamma_\mu \right)  
\,  \label{eq:hW} \\
 h_Z^\mu&= 
 \beta_\tau \beta_2 e^{-s} \Ncal_\mu W + \beta_\tau  e^{-\frac s2}  \left(2    \beta_1  V_\mu + \beta_2 \Ncal_\mu  \kappa + \Ncal_\mu Z + 2 \beta_1 A_\gamma \Tcal^\gamma_\mu \right)  
\,  \label{eq:hZ} \\
 h_{U}^\mu&=   \beta_\tau \beta_1 e^{-s} \Ncal_\mu W +
\beta_\tau  e^{-\frac s2}  \left(2    \beta_1  V_\mu + \beta_1 \Ncal_\mu  \kappa + \beta_1 \Ncal_\mu Z + 2 \beta_1 A_\gamma \Tcal^\gamma_\mu \right)  
\,  \label{eq:hA} 
\end{align}
\end{subequations}
for the terms in the $\check y$ transport terms,
and the forcing terms are written as
\begin{subequations}
\label{eq:F:def}
\begin{align}
 {F}_{ W}&= - 2 \beta_3 \beta_\tau S \Tcal^\nu_\mu \partial_{\mu} A_\nu
 +  2 \beta_1 \beta_\tau  e^{-\frac s2}   A_\nu \Tcal^\nu_i \dot{\Ncal}_i
 +  2 \beta_1 \beta_\tau  e^{-\frac s2} \dot Q_{ij} A_\nu \Tcal^\nu_j \Ncal_i \notag \\
 & \quad
 + 2 \beta_1\beta_\tau e^{-\frac s2}\left(V_\mu +\Ncal_\mu  U \cdot \nn +  A_\nu \Tcal^\nu_\mu \right) A_\gamma \Tcal^\gamma_i \Ncal_{i,\mu}
 - 2 \beta_3 \beta_\tau e^{-\frac s2} S \left( A_\nu \Tcal^\nu_{\mu,\mu} + U\cdot \Ncal \Ncal_{\mu,\mu} \right)
 \label{eq:FW:def}\\
 {F}_{ Z}&=  2   \beta_3 \beta_\tau e^{-\frac s2} S \Tcal^\nu_\mu \partial_{\mu} A_\nu
  +  2 \beta_1 \beta_\tau  e^{-s}   A_\nu \Tcal^\nu_i \dot{\Ncal}_i 
 +  2 \beta_1 \beta_\tau  e^{-s} \dot Q_{ij} A_\nu \Tcal^\nu_j \Ncal_i \notag \\
 & \quad
 + 2\beta_1 \beta_\tau e^{-s}\left(V_\mu +\Ncal_\mu  U \cdot \nn +  A_\nu \Tcal^\nu_\mu \right) A_\gamma \Tcal^\gamma_i \Ncal_{i,\mu} + 2 \beta_3 \beta_\tau e^{-s} S \left( A_\nu \Tcal^\nu_{\mu,\mu} + U\cdot \Ncal \Ncal_{\mu,\mu} \right)  
 \label{eq:FZ:def}\\
 {F}_{ A\nu}&= - 2  \beta_3 \beta_\tau e^{-\frac s2}  S T^\nu_\mu \p_{\mu} S
 + 2 \beta_1 \beta_\tau e^{-s} \left( U\cdot\Ncal \Ncal_i + A_\gamma \Tcal^\gamma_i\right) \dot{\Tcal}^\nu_i 
 + 2\beta_1 \beta_\tau e^{-s} \dot{Q}_{ij} (  U\cdot \Ncal \Ncal_j + A_\gamma \Tcal^\gamma_j ) \Tcal^\nu_i \notag\\
 &\quad 
 + 2\beta_1 \beta_\tau e^{-s} \left(V_\mu + U\cdot \Ncal \Ncal_\mu + A_\gamma \Tcal^\gamma_\mu\right) \left(U\cdot \Ncal \Ncal_i + A_\gamma \Tcal^\gamma_i \right) \Tcal^\nu_{i,\mu}
 \label{eq:A:def}\,.
\end{align}
\end{subequations}
Here and throughout the paper we are using the notation $\varphi_{,\mu}=\p_{x_\mu}\varphi$, and $\p_\mu \varphi = \p_{y_\mu} \varphi$.

In \eqref{eq:F:def} we have also used the self-similar variants of $\mru$ and $\mrs$ defined by
\begin{subequations} 
\label{trannies}
\begin{align}
\mru( x,t)&= U(  y,s) \,, \label{U-trammy}   \\
\mrs( x,t)&=  S(  y,s) \,, \label{S-trammy}
\end{align}
\end{subequations} 
so that 
\begin{align}
U \cdot \Ncal = \tfrac 12 \left( \kappa + e^{-\frac s2} W + Z\right) \qquad \mbox{and}\qquad  S = \tfrac 12 \left( \kappa + e^{-\frac s2} W - Z\right) \,.
\label{eq:UdotN:S}
\end{align}
From \eqref{Euler_sheep}, \eqref{eq:y:s:def}, \eqref{U-trammy}, \eqref{S-trammy} we deduce that $(U,S)$ are solutions of
\begin{subequations} 
\label{US-euler-ss}
\begin{align} 
 &\p_s U_i - 2 \beta_1 \beta_\tau e^{-s} \dot Q_{ij} U_j + (g_{U} + \tfrac{3}{2} y_1) \p_{y_1} U_i + (h^\nu_A + \tfrac{1}{2} y_\nu) \p_{\nu} U_i \notag \\
 & \qquad\qquad\qquad\qquad\qquad\qquad
+ 2\beta_\tau \beta_3 \Jcal \Ncal_i e^{\frac s2}S  \p_{1} S + 2 \beta_\tau\beta_3 \delta^{i\nu} e^{-\frac s2} S \p_{\nu} S=0 \, ,
 \\
 &\p_s S + (g_{U} + \tfrac{3}{2} y_1) \p_{1} S + (h^\nu_A + \tfrac{1}{2} y_\nu) \p_{\nu} S 
 +2\beta_\tau\beta_3 e^{\frac s2}  S \p_{1}U \cdot \nn \Jcal + 2\beta_\tau\beta_3 e^{-\frac s2} S \p_{\nu} U_\nu =0 \,.
\end{align} 
\end{subequations} 

Finally, we defined the self-similar variant of the specific vorticity via  
\begin{align} 
\mrg( x,t)&= \Omega(  y,s) \,.  \label{svort-trammy}
\end{align}

\subsection{Transport velocities, vorticity components, and Lagrangian flows}
Upon writing the 3D transport velocities in \eqref{euler-ss} as the vector fields
\begin{subequations} 
\label{eq:transport_velocities}
\begin{align} 
\mathcal{V} _W &= \left(  g_W  + \tfrac{3}{2} y_1 \,,     h_W^2 + \tfrac{1}{2} y_2\,,     h_W^3 + \tfrac{1}{2} y_3 \right) \,, \\
\mathcal{V} _Z & = \left(  g_Z  + \tfrac{3}{2} y_1  \,,     h_Z^2 + \tfrac{1}{2} y_2\,,     h_Z^3 + \tfrac{1}{2} y_3 \right) \,, \\
\mathcal{V} _U & = \left(  g_U  + \tfrac{3}{2} y_1\,,     h_U^2 + \tfrac{1}{2} y_2\,,     h_U^3 + \tfrac{1}{2} y_3 \right) \label{V_U} \, ,
\end{align} 
\end{subequations} 
the system \eqref{euler-ss} may be written as
\begin{subequations} 
\begin{align*} 
 \p_s W- \tfrac{1}{2}  W  + (\mathcal{V} _W \cdot \nabla) W  &= {F} _W  \,, \\
\p_s Z + (\mathcal{V}_Z \cdot \nabla) Z &={F} _Z \,,   \\
\p_s A_\nu +  (\mathcal{V} _U \cdot \nabla) A_\nu &={F}_{A\nu}  \,,
\end{align*} 
\end{subequations} 
where the gradient is taken with respect to the $y$ variable.
The system  \eqref{US-euler-ss} takes the form
\begin{subequations} 
\label{US-short}
\begin{align} 
\p_s U_i  + (\mathcal{V} _U \cdot \nabla) U_i
 + 2\beta_\tau \beta_3 S \left( \Jcal \Ncal_i e^{\frac s2} \p_{y_1} S + \delta^{i\nu} e^{-\frac s2} \p_{y_\nu} S\right)
 &= 2\beta_\tau \beta_1 e^{-s} \dot Q_{ij} U_j \, ,
 \\
 \p_s S + (\mathcal{V} _U \cdot \nabla) S +2\beta_\tau\beta_3 S \left( e^{\frac s2}    \p_{y_1} U \cdot \Ncal \Jcal +  e^{-\frac s2}   \p_{y_\nu} U_\nu\right)& =0 \, .
 \label{S-ss}
\end{align} 
\end{subequations}

Having defined the transport velocities, we now
 define associated  Lagrangian flows by
\begin{subequations} 
\label{flows}
\begin{gather} 
\p_s \pw (y,s) \!  = \!\mathcal{V}_W(\pw(y,s), s) \,,  \    \p_s \pz (y,s) \!  =\! \mathcal{V}_Z(\pz(y,s), s) \,, \   \p_s \pa (y,s)  \! =\! \mathcal{V}_U(\pa(y,s), s) \,, \\
\pw(y,s_0)=y\,,  \ \ \pz(y,s_0)=y\,, \ \ \pa(y,s_0)=y \,.
\end{gather}
\end{subequations} 
for  $s_0 \geq -\log\eps$.  With $\Phi$ denoting either $\pw$, $\pz$, or $\pu$, we shall denote trajectories emanating from a point $y_0$ at time $s_0$ by
\begin{align} 
\Phi^{y_0}(s)=\Phi (y_0,s) \text{ with }\Phi (y_0,s_0)=y_0 \,. \label{traj}
\end{align}

\subsection{The globally self-similar solution of 3D Burgers}\label{sec:wbar}
We recall (cf.~\cite{CaSmWa1996}) that 
\begin{align}
 W_{\rm 1d}(y_1) = \left(- \frac {y_1}{2} + \left(\frac{1}{27} + \frac{y_1^2}{4}\right)^{\frac 12}\right)^{\frac 13} - \left( \frac {y_1}{2} + \left(\frac{1}{27} + \frac{y_1^2}{4}\right)^{\frac 12} \right)^{\frac 13} \, ,
 \label{eq:barW1d:def}
\end{align} 
is the stable globally self-similar solution of the 1D Burgers equation. 
We define 
 $$
{\mathcal B}(\check y)=\frac{1}{1+|\check y|^2} = \frac{1}{1+ y_2^2 + y_3^2} = {\mathcal B}(y_2,y_3)\,.
$$
Then, as done in two dimensions by Collot, Ghoul, and Masmoudi~\cite{CoGhMa2018}, we have that 
 \begin{align}
\bar W(y) = \frac{1}{{\mathcal B}^{\frac12}(\check y)}   W_{\rm 1d} ({\mathcal B}(\check y)^{\frac32} y_1) = \frac{1}{{\mathcal B}^{\frac 12}(y_2,y_3)}   W_{\rm 1d} ({\mathcal B}(y_2,y_3)^{\frac 32} y_1 ) = \bar W(y_1,y_2,y_3)
 \label{eq:barW:def}
 \end{align} 
is an example of a stable self-similar solution to 3D Burgers equation
\begin{align}
 -\tfrac 12 \bar W + \left( \tfrac{3}{2} y_1+ \bar W \right) \partial_{1} \bar W+ \tfrac{1}{2} y_\mu  \partial_{\mu} \bar W     = 0 \, ,
 \label{eq:barW:dx}
\end{align}
with an explicit representation given by \eqref{eq:barW:def}. As will be explained in Section \ref{s:CSS}, in order to establish the asymptotic profile for $W(y,s)$,
a solution to \eqref{eq:euler:ss:a},  we shall construct the ten-dimensional family of stable self-similar solutions to 3D Burgers of which \eqref{eq:barW:def} is one example.

\subsubsection{Properties of $\bar W$}
We will make use of the fact that the Hessian matrix of $\p_1 \bar W$ at the origin $y=0$ is given by 
\begin{align} 
\label{eq:Hess_py1_barW}
\nabla^2 \partial_{1} \bar W(0) = \left[
\begin{matrix} 
6 & 0 & 0\\
0 & 2 & 0 \\
0 & 0 & 2 \\
\end{matrix}\right] 
\end{align} 
and that the bounds  
\[
-1 \le \partial_{1} \bar W \le 0 \,, \qquad  0 \le \abs{\check \nabla \bar W}  \le  \tfrac{3}{5}  \, ,
\]
hold.
We introduce the weight function 
\begin{align}
\eta(y) = 1 + y_1^2 + \abs{\check y}^6 \, ,
\label{eq:eta:def}
\end{align}
which has the property that $\eta^{\frac 16}$ (and its derivatives) accurately captures the asymptotic growth rate of $\bar W$ (and its derivatives) as $\abs{y}\to \infty$. For the $\p_1 W$ estimate the Taylor series at the origin has to be analyzed more carefully, and for this function we use  the modified weight function
\begin{align}
\tilde \eta(y) = 1+ y_1^2 + \abs{\check{y}}^2  + \abs{\check y}^6 \, .
\label{eq:tilde:eta:def}
\end{align}
With this notation, we note that the function $\bar W$ satisfies the weighted $L^\infty$ estimates
\begin{align}
\label{eq:bar:W:properties}
\snorm{\eta^{-\frac 16} \bar W}_{L^\infty} \leq 1, \ \     
\snorm{{\tilde \eta}^{\frac 13} \p_1 \bar W}_{L^\infty} \leq 1,  \ \      
\snorm{\check \nabla \bar W}_{L^\infty} \leq \tfrac 23,  \ \   
\snorm{\eta^{\frac 13} \p_1   \nabla \bar W}_{L^\infty} \leq \tfrac 34,  \ \   
\snorm{\eta^{\frac 16} \check \nabla^2 \bar W}_{L^\infty} \leq \tfrac 34.
\end{align}

\subsubsection{Genericity condition}
In view of \eqref{eq:Hess_py1_barW}, the matrix $\nabla^2 \partial_{1} \bar W(0)$ is positive definite and satisfies the genericity condition
\begin{align} 
\nabla^2 \partial_{1} \bar W(0)  >0 \,. \label{genericity}
\end{align} 
The condition \eqref{genericity} is equivalent to the  non-degeneracy condition (15.2)  described by Christodoulou in \cite{Ch2007},
and so  $\bar W$ is an example of a generic shock profile.  In particular,   Proposition 12 of Collot-Ghoul-Masmoudi \cite{CoGhMa2018} proves that
the linear operator obtained by linearizing the self-similar  2D Burgers equation about the 2D version of $\bar W$ is spectrally stable.

\subsection{Evolution of higher order derivatives}
\subsubsection{Higher-order derivatives for the $(W,Z,A)$-system}

We now record, for later usage, the equations obeyed by $\partial^\gamma$ applied to $W$, $Z$ and $A$, when $\abs{\gamma} \geq 1$. For a multi-index $\gamma \in {\mathbb N}_0^3$, we write $\gamma = (\gamma_1, \check \gamma) = (\gamma_1, \gamma_2,\gamma_3)$. Then, for $|\gamma|\geq 1$, applying $\p^\gamma$ to \eqref{euler-ss}, we arrive at the differentiated system
\begin{subequations} 
\label{euler_for_Linfinity}
\begin{align}
\left( \p_s + \tfrac{3\gamma_1 + \gamma_2 + \gamma_3-1}{2} + \beta_\tau \left(1 + \gamma_1  {\bf 1}_{\gamma_1\geq 2}   \right) \Jcal \p_1 W \right)\p^\gamma W   +  \left( \mathcal{V}_W \cdot \nabla\right) \p^\gamma W &= F^{(\gamma)}_W \,,
\label{euler_for_Linfinity:a}
\\
\left( \p_s + \tfrac{3\gamma_1 + \gamma_2 + \gamma_3}{2} + \beta_2 \beta_\tau  \gamma_1  \Jcal \p_1 W \right)\p^\gamma Z   +  \left( \mathcal{V}_Z \cdot \nabla\right) \p^\gamma Z &= F^{(\gamma)}_Z  \,,
 \label{euler_for_Linfinity:b}
\\
\left( \p_s + \tfrac{3\gamma_1 + \gamma_2 + \gamma_3}{2} + \beta_1 \beta_\tau  \gamma_1 \Jcal \p_1 W \right)\p^\gamma A_\nu   +  \left( \mathcal{V}_U \cdot \nabla\right) \p^\gamma A_\nu &= F^{(\gamma)}_{A\nu} \,,
 \label{euler_for_Linfinity:c}
\end{align}
\end{subequations}
where the forcing terms are given by
\begin{align}
\label{eq:F:W:def}
F^{(\gamma)}_W 
&= \p^\gamma F_W 
- \sum_{0\leq \beta < \gamma} {\gamma \choose \beta}  \left(\p^{\gamma-\beta}G_W \p_1 \p^\beta W + \p^{\gamma-\beta} h_{W}^\mu \p_\mu \p^\beta W\right) \notag\\
&\quad - \beta_\tau {\bf 1}_{|\gamma|\geq 3}\sum_{\substack{1\leq |\beta| \leq |\gamma|-2 \\ \beta\le\gamma}} {\gamma \choose \beta}  \p^{\gamma-\beta} (\Jcal W)   \p_1\p^\beta W  
- \beta_\tau {\bf 1}_{|\gamma|\geq 2}\sum_{\substack{ |\beta| = |\gamma|-1 \\ \beta\le\gamma, \beta_1 = \gamma_1}} {\gamma \choose \beta}  \p^{\gamma-\beta} (\Jcal W)    \p_1\p^\beta W 
\end{align}
for the $\p^\gamma W$ evolution, and by
\begin{subequations}
\label{eq:F:ZA:def}
\begin{align}
F^{(\gamma)}_Z
&= \p^{\gamma} F_Z 
- \sum_{0\leq \beta < \gamma} {\gamma \choose \beta}  \left(\p^{\gamma-\beta}G_Z \p_1 \p^\beta Z + \p^{\gamma-\beta}h_Z^\mu \p_\mu \p^\beta Z\right) \notag\\
&\quad - \beta_2 \beta_\tau {\bf 1}_{|\gamma|\geq 2}\sum_{\substack{0\leq |\beta| \leq |\gamma|-2 \\ \beta\le\gamma}} {\gamma \choose \beta}  \p^{\gamma-\beta} (\Jcal W)   \p_1\p^\beta Z  
- \beta_2 \beta_\tau  \sum_{\substack{ |\beta| = |\gamma|-1 \\ \beta\le\gamma, \beta_1 = \gamma_1}} {\gamma \choose \beta}  \p^{\gamma-\beta} (\Jcal W)    \p_1\p^\beta Z 
\\
F^{(\gamma)}_{A\nu}
&= \p^\gamma F_{A\nu} 
- \sum_{0\leq \beta < \gamma} { \gamma \choose \beta}   \left(\p^{\gamma-\beta}G_{U} \p_1 \p^\beta A_\nu + \p^{\gamma-\beta} h_{U}^\mu \p_\mu \p^\beta A_\nu\right) \notag\\
&\quad - \beta_1 \beta_\tau  {\bf 1}_{|\gamma|\geq 2} \sum_{\substack{0\leq |\beta| \leq |\gamma|-2 \\ \beta\le\gamma}} {\gamma \choose \beta}  \p^{\gamma-\beta} (\Jcal W)   \p_1\p^\beta A_\nu
- \beta_1 \beta_\tau  \sum_{\substack{ |\beta| = |\gamma|-1 \\ \beta\le\gamma, \beta_1 = \gamma_1}} {\gamma \choose \beta}  \p^{\gamma-\beta} (\Jcal W)    \p_1\p^\beta A_\nu
\end{align}
\end{subequations}
for the $\p^\gamma Z$ and $\p^\gamma A_\nu$ evolutions.
In \eqref{euler_for_Linfinity} we have extracted only the leading order damping terms on the left side of the equations. Indeed, note that the forcing terms defined above contain terms which are proportional to $\p^\gamma(W,Z,A)$. However, because the factors in front of these terms decay exponentially in $s$, we have included them in the force.

\subsubsection{Higher-order derivatives for $\tilde W$}
Additionally, it is useful to consider the evolution of
\begin{align}
\label{eq:tilde:W:def} 
\tilde W(y,s) = W(y,s) - \bar W(y)
\end{align}
and its derivatives. For the case of no derivatives, we have
\begin{align}
  &\p_s \tilde W+(\beta_\tau \Jcal \p_1 \bar W- \tfrac{1}{2})  \tilde W  + (\mathcal{V} _W \cdot \nabla) \tilde W\notag \\
 &\qquad\qquad= F _W - e^{-\frac s2} \beta_\tau \dot \kappa +   ((\beta_\tau \Jcal -1)\bar W-G_W)\p_1 \bar W- h_W^\mu \p_\mu \bar W
 =: \tilde{F}_W \, .\label{eq:tilde:W:evo} 
\end{align}
For $\abs{\gamma} \geq 1$, applying $\p^\gamma$ to \eqref{eq:tilde:W:evo}, we obtain that the function $\p^\gamma \tilde W$ obeys
\begin{align}\label{eq:p:gamma:tilde:W:evo}
\left( \p_s + \tfrac{3\gamma_1 + \gamma_2 + \gamma_3-1}{2} + \beta_\tau \Jcal  \left(\p_1 \bar W + \gamma_1  \p_1 W \right) \right)\p^\gamma \tilde W   +  \left( \mathcal{V}_W \cdot \nabla\right) \p^\gamma \tilde W &= \tilde F^{(\gamma)}_W
\end{align}
where the forcing terms $\tilde F^{(\gamma)}_W$ are given by
\begin{align}
\tilde F^{(\gamma)}_W 
&= \p^\gamma \tilde F_W 
- \sum_{0\leq \beta < \gamma} {\gamma \choose \beta}  \left(\p^{\gamma-\beta}G_W \p_1 \p^\beta \tilde W + \p^{\gamma-\beta} h_{W}^\mu \p_\mu \p^\beta \tilde W+\beta_{\tau}\p^{\gamma-\beta} (\Jcal \partial_1\bar W)   \p^\beta \tilde W  \right) \notag\\
&\quad - \beta_\tau {\bf 1}_{|\gamma|\geq 2}\sum_{\substack{1\leq |\beta| \leq |\gamma|-2 \\ \beta\le\gamma}} {\gamma \choose \beta}  \p^{\gamma-\beta} (\Jcal W)   \p_1\p^\beta \tilde W  
- \beta_\tau \sum_{\substack{ |\beta| = |\gamma|-1 \\ \beta\le\gamma, \beta_1 = \gamma_1}} {\gamma \choose \beta}  \p^{\gamma-\beta} (\Jcal W)    \p_1\p^\beta \tilde W \label{eq:p:gamma:tilde:F}\,.
\end{align}

 \section{Main results}
\label{sec:results}
\subsection{Data in physical variables}
\label{sec:data:real}
We set the initial time to be  $t_0 = -\eps$, which corresponds to $\tcal_0 = - \frac{2}{1+\alpha} \eps$,  and we first define the initial conditions for the modulation variables.  We define
\begin{align} 
\kappa_0 := \kappa(-\eps)  \,, \ \ \ \ \tau_0:=\tau(-\eps)=0\,,  \ \ \ \  \xi_0:=\xi(-\eps)=0\,, \ \ \ \ \check n_0:= \check n(-\eps) =0\,, \ \ \ \  \phi_0:= \phi(-\eps)  \,,
\label{mod-ic}
\end{align} 
where 
\begin{align} 
\kappa_0 >1\,, \quad \abs{\phi_0} \le \eps \,.
\label{ic-kappa0-phi0}
\end{align} 
We note that $\kappa_0$ is a given parameter of the problem, while $\phi_0$ will be chosen suitably in terms of the initial datum via \eqref{eq:phi:0:def}.
Next, we define the initial value for the  function $f$ as
\[
f_0(\check \xcal)= \tfrac 12 {\phi_0}_{\nu\mu} \xcal_\nu \xcal_\mu \, ,
\] 
and according to \eqref{tangent} and \eqref{normal}, we define the orthonormal basis $(\Ncal_0,\Tcal_0^2,\Tcal_0^3)$ by
\begin{subequations}
\label{geom000}
\begin{align}
\Ncal_0 &=  \Jcal_0^{-1} (1, -f_{0,_2}, -f_{0,_3}), \qquad \mbox{where} \qquad \Jcal_0 = ( 1+  |f_{0,_2}|^2+ |f_{0,_3}|^2)^ {\frac{1}{2}}, 
\label{geom00}\\
\Tcal^2_0 &=  \left( \tfrac{f_{0,_2}}{\Jcal_0}, 1 -  \tfrac{(f_{0,_2})^2}{\Jcal_0(\Jcal_0+1)} \,,  \tfrac{- f_{0,_2}f_{0,_3}}{\Jcal_0(\Jcal_0+1)}\right) \, , \qquad \mbox{and} \qquad
\Tcal^3_0 =  \left( \tfrac{f_{0,_3}}{\Jcal_0},  \tfrac{- f_{0,_2}f_{0,_3}}{\Jcal_0(\Jcal_0+1)} \,,  1  -  \tfrac{(f_{0,_3})^2}{\Jcal_0(\Jcal_0+1)} \right).
\label{geom0}
\end{align} 
\end{subequations}
As a consequence of \eqref{ic-kappa0-phi0} and  \eqref{geom000}, we see that
\begin{align} 
\abs{ \Ncal_0 - e_1} \le \eps\,, \ \ \ \  \abs{ \Tcal_0^\nu - e_\nu} \le \eps \,.   \label{gerd}
\end{align} 
From \eqref{mod-ic}, \eqref{eq:tilde:x:def}, and \eqref{x-sheep}, we have  that at $t=-\eps$, the sheared variable $x$ is given by 
\begin{align} 
x_1 = \xcal_1-  f_0(\check \xcal)\,,\quad x_2 = \xcal_2\,, \quad  x_3 = \xcal_3 \,.   \label{ic-cov}
\end{align}

The remaining initial  conditions  are for the velocity field and the density (which yields the rescaled sound speed):
$$
u_0(\xcal) := u(\xcal,-\eps), \quad \rho_0(\xcal) := \rho(\xcal, -\eps) \,, \quad \sigma_0 := \tfrac{\rho_0^ \alpha }{\alpha } \,.
$$
According to \eqref{usigma-sheep} and \eqref{tildeu-dot-T} (see also \eqref{eq:tilde:Riemann}) we introduce the initial datum for our Riemann-type variables in both the $\xcal$ and the $x$ variables:
\begin{subequations}
\label{eq:tilde:wza:0}
\begin{align} 
\tilde w_0(\xcal)& :=   u_0(\xcal) \cdot \Ncal_0(\check \xcal) + \sigma_0(\xcal) =: w_0(x) \,, \\
\tilde z_0(\xcal)& :=   u_0(\xcal) \cdot \Ncal_0(\check \xcal) - \sigma_0(\xcal)  =: z_0(x) \,, \\
\tilde a_{0\nu}(\xcal)& :=  u_0(\xcal) \cdot \Tcal^\nu(\check \xcal) =: a_{0\nu}(x)\,.
\end{align} 
\end{subequations}
It is more convenient (and equivalent in view of \eqref{eq:tilde:wza:0}) to state the initial datum assumptions in terms of the functions $(\tilde w_0, \tilde z_0, \tilde a_0)$, instead of the standard variables $u_0$ and $\sigma_0$.

First, we assume that the support of the initial data  $(\tilde w_0 - \kappa_0, \tilde z_0, \tilde a_0)$, defined in \eqref{eq:tilde:wza:0},  is  contained in the  set $\XX_0$, 
given by
\begin{align}
\XX_0 =\left\{\abs{\xcal_1}\leq  \tfrac{1}{2}  \eps^ {\frac{1}{2}}  ,\abs{\check \xcal}\leq  \eps^ {\frac{1}{6}}  \right\}\,.
\label{eq:support-init-x}
\end{align}
This condition is equivalent to requiring that $u_0 \cdot \Ncal_0 - \frac{\kappa_0}{2}$, $\sigma_0 - \frac{\kappa_0}{2}$, and $u_0 \cdot \Tcal^\nu$ are compactly 
supported in $\XX_0$. In view of the coordinate transformation \eqref{ic-cov} and the bound \eqref{ic-kappa0-phi0}, the functions of $x$ defined in \eqref{eq:tilde:wza:0}, 
namely $(  w_0,   z_0,   a_0)$, have spatial support contained in the set  $\left\{\abs{x_1}\leq   \tfrac{1}{2} \eps^ {\frac{1}{2}}  
+ \eps  ,\abs{\check x}\leq  \eps^ {\frac{1}{6}}\right\} \subset \left\{\abs{x_1}\leq   \eps^ {\frac{1}{2}}   ,\abs{\check x}\leq  \eps^ {\frac{1}{6}} \right\}$.  This larger set 
corresponds to the set $ \XXX(0)$ (defined in \eqref{eq:support}) under the transformation \eqref{eq:y:s:def}.

The function $\tilde w_0(\xcal)$ is chosen such that 
\begin{subequations} 
 \label{IC-setup}
\begin{align} 
&\text{the minimum (negative) slope of $\tilde w_0$ occurs in the $e_1$ direction} \,, \\
&\partial_{\xcal_1} \tilde w_0 \text{ attains its global minimum at } \xcal=0\,, 
\end{align} 
\end{subequations} 
and
\begin{align} 
\nabla_{\!\xcal} \p_{\xcal_1} \tilde w_0 (0)=0\,, \label{grad-d1-w0} 
\end{align} 
and moreover that 
\begin{align}
\tilde w_0(0) = \kappa_0\,,\qquad \partial_{\xcal_1} \tilde w_0(0) = -{\tfrac{1}{\eps}} \,, \qquad  \check\nabla_{\xcal} \tilde w_0(0) =0 \,.
\label{eq:w0:power:series}
\end{align}
Additionally we shall require that $w_0$ satisfies a number of weighted estimates, and that it is close  to a rescaled version of $\bar W$. For this purpose, we introduce the rescaled blow up profile with respect to the coordinate  $x$,  defined by
\begin{align} 
\bar w_\eps(x) := \eps^ {\frac{1}{2}} \bar W \left( \eps^ {-\frac{3}{2}} x_1, \eps^ {-\frac{1}{2}} \check x \right) \,,
\label{eq:bar:w:eps}
\end{align} 
and we set  
$$
 \ww_0(\xcal) := \tilde w_0(\xcal) - \bar w_\eps(\xcal_1 - f_0(\check \xcal),\check \xcal ) = w_0(x) - \bar w_\eps(x) =  \eps^ {\frac{1}{2}}  \tilde W(y, -\log \eps) + \kappa_0 \,.
$$ 
We assume that 
for $\xcal$ such that $\sabs{( \eps^{-\frac 32} \xcal_1, \eps^{-\frac 12} \check \xcal)} \leq 2 \eps^{-\frac{1}{10}}$, the following bounds hold:
\begin{subequations}
\label{love-fosters}
\begin{alignat}{2}
 \abs{\ww_0(\xcal) - \kappa_0} &\leq \eps^{\frac{1}{10}} \left( \eps^3 +   \xcal_1^2 +   \abs{\check \xcal}^6 \right)^ {\frac{1}{6}}   \,,
\label{eq:tilde:w0:zero:derivative}
\\
  \abs{\p_{ \xcal_1} \ww_0(\xcal)} &\leq \eps^{\frac{1}{11}} \left( \eps^3 +  \xcal_1^2 + \abs{\check \xcal}^6 \right)^ {-\frac{1}{3}} \,,
\label{eq:tilde:w0:p1=1}
\\
 \abs{\check \nabla_{\! \xcal} \ww_0(\xcal)} &\leq  \tfrac{1}{2}   \eps^{\frac{1}{12}} \,.
\label{eq:tilde:w0:check=1}
\end{alignat}
\end{subequations}
Furthermore, for   $\xcal$ such that  $\sabs{( \eps^{-\frac 32} \xcal_1, \eps^{-\frac 12} \check \xcal)} \leq 1$, we assume the fourth-derivative estimates
\begin{align} 
  \abs{\p_\xcal^\gamma \ww_0(\xcal)}& \le  \tfrac{1}{2}  \eps^{\frac{5}{8} - {\frac{1}{2}} (3\gamma_1 +\gamma_2+\gamma_3)} \qquad  \mbox{ for } \abs{\gamma}=4 \,,
 \label{eq:tilde:w0:4:derivative}
\end{align} 
while at $\xcal=0$, we assume that
\begin{align} 
    \abs{\p_\xcal^\gamma \ww_0(0)}& \le \tfrac{1}{2}   \eps^{ 1- {\frac{1}{2}} (3\gamma_1+\gamma_2+\gamma_3)- \frac{4}{2k-7}}  \qquad  \mbox{ for } \abs{\gamma}=3 \,.
 \label{eq:tilde:w0:3:derivative}
\end{align} 
For $\xcal \in  \XX_0$  such that  $\sabs{( \eps^{-\frac 32} \xcal_1, \eps^{-\frac 12} \check \xcal)} \geq \frac 12 \eps^{-\frac{1}{10}}$   we assume that  
 \begin{subequations}
 \label{eq:w0:far:out}
\begin{alignat}{2}
 \abs{\tilde w_0(\xcal) - \kappa_0} &\leq ( 1+ \eps^ {\frac{1}{11}} )  \left( \eps^3 +   \xcal_1^2 +   \abs{\check \xcal}^6 \right)^ {\frac{1}{6}}  \,,
\label{eq:w0:zero:derivative}
\\
  \abs{\p_{\xcal_1} \tilde w_0(\xcal)} &\leq ( 1+ \eps^ {\frac{1}{12}} )  \left( \eps^3 +  \xcal_1^2 + \abs{\check \xcal}^6 \right)^ {-\frac{1}{3}} \,,
\label{eq:w0:p1=1}
\\
 \abs{\check \nabla_{\! \xcal} \tilde w_0(\xcal)} &\leq \tfrac{2}{3} + \eps^{\frac{1}{13}}  \,.
\label{eq:w0:check=1}
\end{alignat}
\end{subequations}
Finally, we assume that for all $\xcal \in \XX_0$, the second derivatives of $w_0$ satisfy 
 \begin{subequations}
 \label{eq:w0:pipi}
\begin{align}
 \abs{ \p_{\xcal_1}^2 \tilde w_0(\xcal)} &\leq \eps^{-\frac{3}{2}}  \left( \eps^3 +  \xcal_1^2 + \abs{\check \xcal}^6 \right)^ {-\frac{1}{3}}  \,,
\label{eq:w0:p1p1}
\\
 \abs{ \p_{\xcal_1} \check\nabla_{\xcal} \tilde w_0(\xcal) } &\leq  \tfrac{1}{2} \eps^{-\frac{1}{2}}  \left( \eps^3 +  \xcal_1^2 + \abs{\check \xcal}^6 \right)^ {-\frac{1}{3}}  \,,
\label{eq:w0:p1p2}
\\
 \abs{ \check\nabla_{\xcal}^2 \tilde w_0(\xcal) } &\leq   \tfrac{1}{2} \left( \eps^3 +  \xcal_1^2 + \abs{\check \xcal}^6 \right)^ {-\frac{1}{6}}   \,,
\label{eq:w0:p2p2}
 \end{align}
 \end{subequations}
 and moreover at $\xcal=0$ we assume that
 \begin{align} 
 \abs{\check\nabla_{ \! \xcal}^2 \tilde w_0(0)} \le 1 \,.  \label{check-two-w0}
 \end{align}

For the initial conditions of  $\tilde z_0$ and $\tilde a_0$  we assume that 
\begin{align} 
&\abs{\tilde z_0(\xcal)} \le \eps\,, \qquad  \abs{\p_{\xcal_1}\tilde z_0(\xcal)} \le 1 \,, \qquad \abs{ \check\nabla_{\!\xcal } \tilde z_0(\xcal)} \le  \tfrac{1}{2} \eps^ {\frac{1}{2}} \,, \notag \\
&\abs{ \p_{\xcal_1}^2 \tilde z_0(\xcal)} \le \eps^ {-\frac{3}{2}} \,, \qquad 
\abs{ \p_{\xcal_1} \check\nabla_{\!\xcal}\tilde  z_0(\xcal)} \le \tfrac{1}{2} \eps^ {-\frac{1}{2}} \,, \qquad 
\abs{  \check\nabla_{\!\xcal}^2 \tilde z_0(\xcal)} \le    \tfrac{1}{2}     \,,  \label{eq:z0:ic}
\end{align} 
and\footnote{The bound for  $\p_{\xcal_1} a_0$ in \eqref{eq:a0:ic} can be replaced by a bound that depends on $\kappa_0$, thus permitting arbitrarily large initial
vorticity to be specified.} 
\begin{align} 
&\abs{\tilde a_0(\xcal)} \le \eps\,,\qquad  \abs{\p_{\xcal_1}\tilde a_0(\xcal)} \le 1 \,, \qquad   \abs{ \check\nabla_{\!\xcal} \tilde a_0(\xcal)} \le   \tfrac{1}{2}  \eps^ {\frac{1}{2}} \,, \qquad 
\abs{  \check\nabla_{\!\xcal}^2 \tilde  a_0(\xcal)} \le  \tfrac{1}{2} \,.  \label{eq:a0:ic}
\end{align} 
For the initial specific vorticity, we assume that
\begin{align} 
 \norm{ \tfrac{\operatorname{curl}_{\xcal} u_0(\xcal)}{\rho_0(\xcal)}}_{L^ \infty } \le 1  \,.  \label{eq:svort:IC}
\end{align} 
Lastly, for the Sobolev norm of the initial condition  we assume that for a fixed $k$ with $k\geq 18$ we have
\begin{align}
\sum_{\abs{\gamma}=k} 
\eps^2 \snorm{ \p_{\xcal}^\gamma   \tilde w_0}_{L^2}^2 + \snorm{ \p_{\xcal}^\gamma  \tilde z_0}_{L^2}^2 + \snorm{ \p_{\xcal}^\gamma  \tilde a_0}_{L^2}^2    \le \tfrac 12
\eps^{ {\frac{7}{2}}-(3\gamma_1+\abs{\check\gamma} )} \,. 
  \label{Hk-xland}
\end{align}
We note cf.~\eqref{ic-cov} that the map $x = \xcal - (f_0(\check \xcal),0,0)$ is an $\OO(\eps)$ perturbation of the identity map, and that for any $n\geq 0$, by \eqref{ic-kappa0-phi0} and the support property \eqref{eq:support-init-x} we have $\norm{f_0}_{C^n} \leq \norm{f_0}_{C^2} \leq 2\eps$.  Additionally, from the previous assumptions we have $\norm{\tilde w_0}_{L^2(\XX_0)} + \norm{\tilde z_0}_{L^2(\XX_0)} + \norm{\tilde a_0}_{L^2(\XX_0)} \leq \eps^{\frac 12}$. 
Thus, by appealing to the definition \eqref{eq:tilde:wza:0}, the Fa\'a di Bruno formula, and Sobolev interpolation, we deduce from \eqref{Hk-xland} that
\begin{align}
\sum_{\abs{\gamma}=k} 
\eps^2 \snorm{ \p_x^\gamma   w_0}_{L^2}^2 + \snorm{ \p_x^\gamma   z_0}_{L^2}^2 + \snorm{ \p_x^\gamma   a_0}_{L^2}^2    \le 
\eps^{ {\frac{7}{2}}-(3\gamma_1+\abs{\check\gamma} )} 
  \label{Hk-xland-twist}
\end{align}
holds, upon taking $\eps$ to be sufficiently small in terms of $k$.

At this stage it is convenient to define the coefficients $\phi_{0 \nu\mu}$ from \eqref{mod-ic}. From the change of variables, \eqref{ic-cov} and the fact that $\check \nabla f_0 ( 0) = 0$, we have that
\begin{align} 
\p_{x_\nu} \p_{x_\mu} w_0(0) = \p_{\xcal_\nu} \p_{\xcal_\mu} \tilde w_0(0) + \p_{\xcal_1} w_0(0) \phi_{0 \nu \mu}\,.  \label{phi0-cond}
\end{align} 
In order that our initial data at the blow up location behaves just as the blow up profile $\bar W$ (in self-similar coordinates) at the blow up point, we shall
insist that $ \check\nabla_{\! x}^2 w_0(0) =0$. From the identity \eqref{phi0-cond} and using the second equality in \eqref{eq:w0:power:series},  we achieve this by setting
\begin{align}
\phi_{0\nu\mu} =    \eps \p_{\xcal_\nu}  \p_{\xcal_\mu} \tilde w_0(0) \,.
\label{eq:phi:0:def}
\end{align}
Hence, the condition \eqref{check-two-w0} automatically implies \eqref{ic-kappa0-phi0}.

We note that in view of \eqref{eq:tilde:wza:0}, \eqref{eq:support-init-x}, \eqref{eq:w0:zero:derivative}, \eqref{eq:z0:ic}, the fact that $\abs{\bar W(y)} \leq \eta^{\frac 16}(y)$, which implies $\abs{\bar w_\eps(x)} \leq (\eps^3 + x_1^2 + \abs{\check x}^6)^{\frac 16}$, and the identity $\frac{2}{\alpha} \rho_0^\alpha(\xcal)  = \kappa_0 + (\tilde w_0(\xcal) - \kappa_0) - \tilde z_0(\xcal)$, we have that 
\[
\tfrac{2}{\alpha} \rho_0^\alpha(\xcal) \geq \kappa_0 - (1 +\eps^{\frac{1}{11}}) (\eps^3 + x_1^2 + \abs{\check x}^6)^{\frac 16} - \eps \geq \kappa_0 - (1 +\eps^{\frac{1}{11}}) (3\eps)^{\frac 16} - \eps \geq \kappa_0 - 3 \eps^{\frac 16}
\]
for all $\xcal \in \RR^3$; that is, upon taking $\eps$ to be sufficiently small in terms of $\kappa_0$, we have that the initial density is strictly positive.

\subsection{Statement of the main theorem in physical variables}

\begin{theorem}[\bf Formation of shocks for Euler] \label{thm:main}
Let $\gamma>1$, $ \alpha = {\tfrac{\gamma-1}{2}}$.   There exist a sufficiently large $\kappa_0 = \kappa_0(\alpha) > 1$,  and a sufficiently small 
$\eps = \eps(\alpha,\kappa_0) \in (0,1)$ such that the following holds. 

\underline{Assumptions on the initial data.} Let $u_0(\xcal)$ and $\rho_0(\xcal)$ denote the initial data for the  Euler equations \eqref{eq:Euler}, let $\sigma_0 = \frac{\rho_0^\alpha}{\alpha}$ and $\omega_0 = \operatorname{curl}_\xcal u_0$.  The modulation functions have initial conditions given by \eqref{mod-ic}, where $\phi_0$ is given by \eqref{eq:phi:0:def}. Define $(\Ncal_0, \Tcal_0^2,\Tcal^3_0)$ by \eqref{geom000}  and $(\tilde w_0, \tilde z_0, \tilde a_{0\nu})$ by \eqref{eq:tilde:wza:0}.  Assume that $(\tilde w_0 - \kappa_0, \tilde z_0, \tilde a_0)$
are supported in the set $\XX_0$ defined \eqref{eq:support-init-x}, and   
 that $u_0\in H^k$ and $\rho_0 \in H^k$ for a fixed $k \ge 18$.  
Furthermore 
suppose that the functions $\tilde w_0$, $\tilde z_0$, $\tilde a_0$, and $\omega_0$ satisfy the conditions \eqref{ic-kappa0-phi0}--\eqref{Hk-xland}.

\underline{Shock formation for the 3d Euler equations.}\,  
There exists a time $T_* = \OO(\eps^2)$ and a unique solution $(u,\rho) \in C([-\eps,T_*);H^k)\cap C^1([-\eps,T_*);H^{k-1})$ to  \eqref{eq:Euler} which blows up in an
 asymptotically self-similar fashion  at time $T_*$, at a single point  $\xi_* \in \mathbb{R}^3  $. By letting
$(\Ncal(t), \Tcal^2(t), \Tcal^3(t))$  be defined by \eqref{tangent}  and \eqref{normal}, with  the new space variable $\tilde x = \tilde x(t)$ defined by \eqref{eq:tilde:x:def}, and with $(\tilde u,\tilde \sigma)$ given by \eqref{eq:tilde:u:def}, where $\sigma = \tfrac{\rho^ \alpha }{\alpha }$, we let 
\begin{align} 
\tilde w =\tilde u \cdot \Ncal  + \tilde \sigma\,, \ \ \tilde z= \tilde u \cdot \Ncal  - \tilde \sigma\,, \ \  \tilde{a}_\nu = \tilde u \cdot \Tcal^\nu  \,, \label{laphroaig18}
\end{align} 
as functions of $(\tilde x, t)$. Then, the following results hold:
\begin{itemize}[itemsep=2pt,parsep=2pt,leftmargin=.15in]
\item The blow up time $T_* = \OO(\eps^2)$ and the blow up location  $\xi_*  = \OO( \epsilon )$ are explicitly computable, with $T_*$ defined by the condition
$\int_{-\eps}^{T_*} (1-\dot \tau(t)) dt = \eps $ and with the blow up location given by
$\xi_*  = \lim_{t\to T_*} \xi(t)$.  The amplitude modulation function satisfies $\abs{\kappa_*-\kappa_0} = \OO(\eps^{\frac 32})$ where $\kappa_*  = \lim_{t\to T_*} \kappa(t)$.

\item For each $t \in [-\eps, T_*)$, we have $
\sabs{\Ncal(\check {\tilde x},t) - \Ncal_0(\check \xcal)} +\sabs{\Tcal^\nu(\check {\tilde x},t) - \Tcal_0^\nu(\check \xcal)}  = \OO(\eps) \,.$

\item We have $\sup_{t\in[-\eps, T_*)} \left( \norm{\tilde u \cdot \Ncal - {\tfrac{1}{2}} \kappa_0 }_{ L^\infty }+\norm{\tilde u \cdot \Tcal^\nu}_{ L^\infty }+
\norm{\tilde \sigma - \tfrac{1}{2} \kappa_0}_{ L^\infty } +\| \omega\|_{ L^\infty } \right) \les 1$.
\item There holds $\lim_{t \to T_*} \Ncal \cdot \nabla_{\! \tilde x} \tilde w(\xi(t),t) = -\infty $ and  $\frac{1}{2(T_*-t)} \leq  \norm{\Ncal \cdot \nabla_{\! \tilde x} \tilde w(\cdot,t)}_{L^\infty} \leq \frac{2}{T_*-t}$ as  $t \to T_*$.
\item At the time of blow up, $\tilde w( \cdot , T_*)$ has a cusp-type singularity with  $C^ {\sfrac{1}{3}} $ H\"{o}lder regularity.
\item We have that only the $\p_\Ncal$ derivative of $\tilde u \cdot \Ncal$ and $\tilde \rho$ blow up, while the other first order derivatives remain uniformly bounded:
\begin{subequations} 
\label{lagavulin16}
\begin{align} 
&\quad \lim_{t\to T_*} \Ncal \cdot \nabla_{\! \tilde x} (\tilde u \cdot \Ncal) (\xi(t), t) = \lim_{t\to T_*} \Ncal \cdot \nabla_{\! \tilde x} \tilde \rho ( \xi(t), t) = -\infty \,, \\
&  \sup_{t\in[-\eps,T_*)}  \| \Tcal^\nu \cdot \nabla_{\! \tilde x}\tilde \rho(\cdot , t)\|_{ L^ \infty } +  \| \Tcal^\nu \cdot \nabla_{\! \tilde x} \tilde u(\cdot , t)\|_{ L^ \infty } + 
\|\Ncal \cdot \nabla_{\! \tilde x} (\tilde u \cdot \Tcal^\nu)(\cdot , t)\|_{ L^ \infty}  \les 1 \,.
\label{damn-thats-nice}
\end{align} 
\end{subequations} 
\item Let $\p_t X(\xcal, t) = u(X(\xcal, t), t)$ with $X(\xcal, -\eps)=\xcal$ so that $X(\xcal, t)$ is the Lagrangian flow. Then there exists constants $c_1$, $c_2$ such that
$ c_1 \le \abs{ \nabla _{\! \xcal} X(\xcal, t) } \le c_2$ for all $t\in [-\eps, T_*)$.
\item The density remains uniformly bounded from below and satisfies 
$$
\snorm{ \tilde \rho^ \alpha ( \cdot , t) -   \tfrac{\alpha }{2} \kappa_0}_{ L^ \infty } \le \alpha  \eps^ {\sfrac{1}{8}}  \ \ \text{ for all } \ t\in[-\eps, T_*]  \,.
$$
\item The vorticity satisfies $\snorm{\omega(\cdot, t) }_{L^\infty}  \le C_0 \snorm{\omega(\cdot,-\eps)}_{L^\infty}$   for all 
$t\in[-\eps, T_*] $ for a universal constant $C_0$, and if 
$\abs{\omega(\cdot, -\eps)} \geq c_0 >0$ on the set  $B(0,2 \eps^{\sfrac 34})$ then at  the blow up location $\xi_*$ there is  nontrivial vorticity, and moreover
\begin{align*}
\abs{\omega(\cdot, T_*)} \geq \tfrac{c_0}{C_0} \qquad \mbox{on the set} \qquad    B(0, \eps^{\sfrac 34})  \,.
\end{align*}
\end{itemize} 
\end{theorem}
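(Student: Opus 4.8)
The plan is to derive Theorem~\ref{thm:main} from its self-similar counterpart, Theorem~\ref{thm:SS}. Since the chain of coordinate and unknown changes of Section~\ref{sec:variables} --- the rotation/translation \eqref{eq:tilde:x:def}, the sheep shear \eqref{x-sheep}, and the modulated self-similar rescaling \eqref{eq:y:s:def}--\eqref{eq:ss:ansatz} --- is explicitly invertible for every $s<\infty$, i.e.\ every $t<T_*$, it suffices to prove global-in-$s$ existence and asymptotic stability of $(W,Z,A)$ near $(\bar W,0,0)$ together with stability of the $10$ modulation ODEs. I would run a continuity/bootstrap argument: fix initial data satisfying \eqref{ic-kappa0-phi0}--\eqref{Hk-xland}, postulate that the solution obeys a set of quantitative bounds (weighted $L^\infty$ bounds for $W,\tilde W$ and their derivatives with the weights $\eta,\tilde\eta$; bounds for $Z,A$ and derivatives; the support property $\XXX(s)$; bounds on the modulation variables and their time derivatives; and the global $\dot H^k$ energy bound of Proposition~\ref{cor:L2}) worsened by a factor $K$, then show a posteriori that they in fact hold with $K/2$. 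Invoking local well-posedness \cite{Ma1984} and continuity in time closes the bootstrap and yields solutions for all $s$, proving Theorem~\ref{thm:SS}; the open-neighborhood claim Theorem~\ref{thm:open:set:IC} then follows since every data assumption is open in $H^{18}$.

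\textbf{Closing the bootstrap.} Inside the bootstrap I would proceed in the order of Sections~\ref{sec:constraints}--\ref{sec:energy}. First, differentiating the pointwise constraints \eqref{eq:constraints} in $s$ and using \eqref{euler-ss} produces a closed system of $10$ ODEs for $(\kappa,\tau,\check n,\xi,\phi)$ whose right-hand sides are rational functions of the derivatives of $(W,Z,A)$ at $y=0$; the bootstrap bounds then show $\dot\tau,\dot\kappa,\dot n,\dot\xi,\dot\phi$ are small, so $\tau\to T_*$, $\xi\to\xi_*$, $\check n\to\check n_*$, $\kappa\to\kappa_*$ with the claimed rates, and the ODE bootstrap improves. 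Next I would prove the Lagrangian estimates: Lemma~\ref{lem:escape} (trajectories of $\mathcal V_W$ starting a fixed small distance from $y=0$ escape exponentially fast to spatial infinity) and Lemma~\ref{lem:phiZ} (trajectories of $\mathcal V_Z,\mathcal V_U$ are swept to infinity independently of their start), which also close the support bootstrap. Integrating the differentiated equations \eqref{euler_for_Linfinity} and \eqref{eq:p:gamma:tilde:W:evo} along these flows, the damping coefficients $\tfrac{3\gamma_1+\gamma_2+\gamma_3-1}{2}+\beta_\tau\Jcal(\cdots)\p_1 W$ --- favorable by the genericity \eqref{genericity} and the sign $-1\le\p_1\bar W\le0$ --- combined with the exponential escape, convert the spatial decay of $\bar W$ into integrable-in-$s$ forcing, improving the $W,\tilde W$ and $Z$ bounds. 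For $A$, the direct transport estimate fails at top order; following Lemma~\ref{lem:remarkable:sheep:structure} I would instead express $\p_1 A$ through the specific vorticity $\Omega$, the sound speed $S$, and already-controlled quantities, which first requires the pointwise control of $\Omega$ via its geometric components $\Omega\cdot\Ncal,\Omega\cdot\Tcal^\nu$ and of $S$ (Section~\ref{sec:vorticity:sound}).

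\textbf{The main obstacle.} The crux --- and the genuinely new difficulty relative to \cite{BuShVi2019} --- is that the weighted $L^\infty$ scheme for $(W,Z,A)$ loses a $\check\nabla$ derivative and so does not close by itself. The remedy is to couple it to the $\dot H^k$ energy estimate of Proposition~\ref{cor:L2}: working directly with $(U,S)$ via \eqref{US-short} and \eqref{US-L2}, using the symmetrizable-hyperbolic (good energy) structure of the Euler system, a weighted Sobolev norm to absorb binomial coefficients, and the interpolation inequalities of Appendix~\ref{sec:interpolation}, one obtains a global-in-$s$ $\dot H^k$ bound for $k\ge18$ that supplies exactly the top-order information the pointwise scheme is missing. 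Making this coupling airtight --- tracking which norm feeds which and ruling out circularity --- is where the bulk of the work lies; everything else becomes routine once the escape lemmas and the vorticity/sound-speed structure are in place.

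\textbf{Back to physical variables.} Finally I would translate back. Defining $T_*$ by $\int_{-\eps}^{T_*}(1-\dot\tau)\,dt=\eps$ gives $s\to\infty\iff t\to T_*$, and $\xi_*=\lim_{t\to T_*}\xi(t)$, $\kappa_*=\lim_{t\to T_*}\kappa(t)$ with $|\kappa_*-\kappa_0|=\OO(\eps^{\sfrac32})$. The blow-up identity \eqref{tall:people:can:t:drink} gives $\Ncal\cdot\nabla_{\!\tilde x}\tilde w(\xi(t),t)=e^s\p_1W(0,s)=-\tfrac{1}{\tau-t}\to-\infty$ with the two-sided bound on $\|\Ncal\cdot\nabla_{\!\tilde x}\tilde w\|_{L^\infty}$, while the bootstrap $L^\infty$ bounds on $W,\check\nabla W,Z,A,\Omega,S$ keep $\tilde u\cdot\Ncal,\tilde\sigma,\omega$ and all tangential first derivatives uniformly bounded, yield the Jacobian bounds $c_1\le|\nabla_{\!\xcal}X|\le c_2$, and give the lower bound on the density. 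The $C^{\sfrac13}$ cusp at $T_*$ follows from the scaling $y_1=x_1 e^{\sfrac{3s}{2}}$, the ansatz $w=e^{-\sfrac s2}W+\kappa$, the convergence $W(\cdot,s)\to\bar W_{\mathcal A}$, and the far-field asymptotics $W_{\rm 1d}(y_1)\sim-\sgn(y_1)|y_1|^{\sfrac13}$ read off from \eqref{eq:barW1d:def}, where $\mathcal A$ is the limit of $\tfrac12\nabla^2\p_1 W(0,s)$ and $\bar W_{\mathcal A}$ is the corresponding stationary solution from Proposition~\ref{prop-stationary-burgers}. For the vorticity, \eqref{svorticity} together with the Jacobian bounds makes $|\omega|$ along Lagrangian trajectories comparable to its initial value up to a universal constant $C_0$, so $|\omega_0|\ge c_0$ on $B(0,2\eps^{\sfrac34})$ forces $|\omega(\cdot,T_*)|\ge\tfrac{c_0}{C_0}$ on $B(0,\eps^{\sfrac34})$, completing the proof.
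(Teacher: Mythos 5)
Your proposal is an accurate summary of the paper's overall strategy --- reduction to the self-similar system, bootstrap argument with pointwise and $\dot H^k$ energy estimates coupled, the Lagrangian escape lemmas, the vorticity/sound-speed route to $\p_1 A$, and the translation back to $(\xcal,\tcal)$ variables --- and this is indeed how the paper concludes Theorem~\ref{thm:main} from Theorem~\ref{thm:SS}. Two of your shortcuts, however, would not survive scrutiny.

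First, the claim that Theorem~\ref{thm:open:set:IC} ``follows since every data assumption is open in $H^{18}$'' is explicitly false: the paper notes, just before Theorem~\ref{thm:open:set:IC}, that the compact-support condition \eqref{eq:support-init-x} and the pointwise constraints \eqref{IC-setup}--\eqref{eq:w0:power:series} at $\xcal=0$ \emph{preclude} the admissible data set from containing a non-trivial open set. Relaxing them requires the actual proof of Theorem~\ref{thm:open:set:IC}: a near/far cutoff and finite speed of propagation to handle the support, and an inverse-function-theorem argument exploiting the rotation and translation symmetries to absorb the pointwise constraints into a rigid-motion coordinate change. Second, your $C^{\sfrac 13}$ argument --- via $W(\cdot,s)\to\bar W_{\mathcal A}$ plus the far-field asymptotics of $W_{\rm 1d}$ --- has a gap: pointwise convergence gives nothing at the H\"older level. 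The paper's Proposition~\ref{prop:Holder} instead proves a \emph{uniform-in-$t$} bound on $\|w(\cdot,t)\|_{C^{1/3}}$, directly from the bootstrap weights $\abs{\p_1 W}\le \tilde\eta^{-1/3}(y/2)$ and $\abs{\check\nabla W}\le 1$ together with the support constraint $\abs{\check y}\les e^{s/2}$, which is what makes the transverse piece $e^{-s/3}\abs{\check y-\check y'}^{2/3}$ bounded; the regularity at $T_*$ then follows by passing to the limit. You also mis-state the parametrization of the limit profile: $\mathcal A$ is the full symmetric $3$-tensor $\mathcal A_\alpha=\lim_{s\to\infty}\p^\alpha W(0,s)$ for $\abs{\alpha}=3$ (ten parameters), not merely the limit of $\tfrac12\nabla^2\p_1 W(0,s)$, which would miss the $\check\nabla^3 W(0,s)$ components.
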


We note that the  support property \eqref{eq:support-init-x} on the initial data  as well as the  conditions 
\eqref{IC-setup}--\eqref{eq:w0:power:series} preclude the set of  initial data satisfying the hypothesis of Theorem \ref{thm:main} from containing a non-trivial  open set in the
$H^k$ topology.   However, using the symmetries of the Euler equations, these conditions may be relaxed in order to prove the following:
\begin{theorem}[\bf Open set of initial conditions]
\label{thm:open:set:IC} 
Let $\tilde{\mathcal F}$ denote the set of initial data satisfying the hypothesis of Theorem~\ref{thm:main}.
There exists an open neighborhood of $\tilde{\mathcal F}$ in the $H^k$ topology, denoted by ${\mathcal F}$, such that for any initial data to the Euler equations taken from ${\mathcal F}$, the  conclusions of Theorem \ref{thm:main} hold. 
\end{theorem}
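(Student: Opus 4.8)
The plan is to separate the hypotheses of Theorem~\ref{thm:main} into those that are open in the $H^k$ topology (given a normalization) and those that are not, and to remove each obstruction of the second kind using either the symmetry group of the Euler equations or finite speed of propagation. As a first step we note that the bounds \eqref{ic-kappa0-phi0}, \eqref{love-fosters}, \eqref{eq:tilde:w0:4:derivative}, \eqref{eq:tilde:w0:3:derivative}, \eqref{eq:w0:far:out}, \eqref{eq:w0:pipi}, \eqref{check-two-w0}, \eqref{eq:z0:ic}, \eqref{eq:a0:ic}, \eqref{eq:svort:IC}, and \eqref{Hk-xland}, together with the strict positivity of $\rho_0$, are \emph{open} conditions: each is a pointwise inequality on at most four derivatives, or a (weighted) $L^2$ or $L^\infty$ bound, with tolerance either $\OO(1)$ or a fixed positive power of $\eps$. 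Since $k\ge 18$ yields the embeddings $H^k(\RR^3)\hookrightarrow C^{k-2}(\RR^3)$ and the map $(u_0,\rho_0)\mapsto(\tilde w_0,\tilde z_0,\tilde a_0,\omega_0)$ is locally Lipschitz in $H^k$, there is a threshold $\delta=\delta(\alpha,\kappa_0,k,\eps)>0$ such that any datum within $H^k$-distance $\delta$ of a reference datum $d_0\in\tilde{\mathcal F}$ still obeys all of these bounds once $\kappa_0$ and $\eps$ are permitted to shift by $\OO(\delta)$, which is harmless: $\kappa_0+\OO(\delta)$ is still admissibly large and $\eps+\OO(\delta)$ still admissibly small. (The one implicit point is that $\tilde w_0$ depends on $\Ncal_0$, which depends on $\phi_0$, which is defined from $\nabla^2\tilde w_0(0)$ by \eqref{eq:phi:0:def}; as $\phi_0=\OO(\eps)$ this fixed-point is solved by a contraction for $\eps$ small, and its solution is Lipschitz in the datum.)

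The hypotheses that fail to be open are \eqref{IC-setup}, \eqref{grad-d1-w0}, \eqref{eq:w0:power:series}, and the support condition \eqref{eq:support-init-x}; the first three fix exactly the gauge of the translational and rotational symmetries of Euler, together with the \emph{definitions} of $\kappa_0$ and $\eps$. Given a perturbed datum $d$, the genericity condition \eqref{genericity} --- strict positive definiteness of $\nabla^2\partial_1\bar W(0)$ --- ensures via the implicit function theorem that $\partial_{\xcal_1}\tilde w_0$ still attains a nondegenerate global minimum, at a unique point $\xcal_*(d)=\OO(\delta)$ and in a unique direction $n_*(d)=e_1+\OO(\delta)$, both depending continuously on $d$. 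Applying the fixed (time-independent) symmetry transformation consisting of the small rotation carrying $n_*(d)$ to $e_1$ and the translation by $-\xcal_*(d)$ puts the transformed datum on the submanifold where \eqref{IC-setup} and \eqref{grad-d1-w0} hold (the latter and $\check\nabla_{\xcal}\tilde w_0(0)=0$ being first-order conditions at the minimum), whereupon the remaining normalizations in \eqref{eq:w0:power:series} are just the definitions of the problem parameters, with $\tilde w_0(0)$ and $-1/\partial_{\xcal_1}\tilde w_0(0)$ playing the roles of $\kappa_0$ and $\eps$ and differing from the reference values by $\OO(\delta)$. This transformation is $\OO(\delta)$-close to the identity and carries Euler solutions to Euler solutions, so it carries the conclusions of Theorem~\ref{thm:main} to the same conclusions with correspondingly transformed blow-up data $(\xi_*,T_*,\Ncal_*,\kappa_*)$.

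It remains to enforce \eqref{eq:support-init-x}. Here we use that the shock forms at $(\xi_*,T_*)$ with $T_*=\OO(\eps^2)$ and $\xi_*=\OO(\eps)$, and that throughout $s\in[-\log\eps,\infty)$ the blow-up region, pulled back to physical variables, stays inside $\XX_0$ while its backward domain of dependence down to $t=-\eps$ is enlarged only by $\OO(\sqrt{\kappa_0}\,\eps)$, hence still lies inside $\XX_0$ for $\eps$ small. Given the normalized datum $d$, define $\hat d$ by cutting the deviations $\tilde w_0-\kappa_0$, $\tilde z_0$, $\tilde a_0$ off outside $\XX_0$ with a fixed smooth cutoff supported in $\XX_0$ and equal to $1$ on a slightly smaller set; since $d_0$ already equals the background constant state outside $\XX_0$ and the cutoff only reduces the (already small, far-field) deviations, a direct check against \eqref{eq:w0:far:out}, \eqref{eq:w0:pipi}, \eqref{Hk-xland} and their analogues for $z_0,a_0$ shows $\hat d$ still satisfies \emph{all} hypotheses of Theorem~\ref{thm:main} when $\delta$ is small in terms of a power of $\eps$. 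Theorem~\ref{thm:main} then yields a solution $\hat u$ from $\hat d$ forming the shock at $(\xi_*,T_*)$ with all the stated properties. Since $\hat d=d$ on $\XX_0$, which contains the backward domain of dependence of the blow-up region over $[-\eps,T_*]$, finite speed of propagation gives that the solution $u$ from $d$ coincides with $\hat u$ there; away from it $u$ is, over the short time $\OO(\eps^2)$, a small $H^k$ perturbation of a solution that is smooth away from $\xi_*$ up to and including $T_*$, so $u$ exists on $[-\eps,T_*)$ and inherits from $\hat u$ every conclusion of Theorem~\ref{thm:main}. Undoing the normalizing symmetry transfers these conclusions to the original perturbation, and $\mathcal F:=\bigcup_{d_0\in\tilde{\mathcal F}}\{\,d:\|d-d_0\|_{H^k}<\delta\,\}$ is the desired open neighbourhood of $\tilde{\mathcal F}$.

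I expect the bookkeeping in the last step to be the main obstacle: one must confirm that the backward domain of dependence of the blow-up region remains inside $\XX_0$ for \emph{all} self-similar times, and that the cut-off datum $\hat d$ still obeys the sharp weighted estimates \eqref{love-fosters}--\eqref{eq:w0:far:out} and \eqref{Hk-xland} --- which is exactly what determines how small $\delta$ must be taken relative to $\eps$. The symmetry-normalization step is conceptually softer, but still requires care to be reconciled with the circular dependence of $\tilde w_0$ on $\phi_0$.
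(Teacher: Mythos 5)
Your proposal follows the same two-pronged strategy as the paper: the $H^k$-open inequality constraints are left alone, the pointwise normalizations at $\xcal=0$ are absorbed into a rotation and translation constructed by the inverse/implicit function theorem (the paper's version inverts the maps $\Psi_{u_0,\sigma_0}$ and $\Phi_{u_0,\sigma_0}$ of \eqref{eq:map:to:invert} and \eqref{eq:map:to:invert2}, solving first for the direction $(\mcal_2,\mcal_3)$ and then for the base point $\xcal_0$), and the compact-support hypothesis is replaced by a cutoff-plus-finite-speed argument.

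The one place where the write-up stays at the level of an announcement is the final claim that the solution $u$ from the un-cut datum $d$ exists on all of $[-\eps,T_*)$ and equals $\hat u$ on the backward domain of dependence. Finite speed of propagation and uniqueness only give coincidence for as long as \emph{both} solutions exist, so continuation of $u_{\rm total}$ up to $T_*$ has to be proved, not read off. The paper does this by writing $(u^\flat,\sigma^\flat)=(u_{\rm total},\sigma_{\rm total})-(u^\sharp,\sigma^\sharp)$, observing that every term of the resulting system \eqref{eq:Euler22} carries a $u^\flat$ or $\sigma^\flat$ factor, and closing a Gr\"onwall estimate for $\snorm{(u^\flat,\sigma^\flat)}_{H^{k-1}}$ whose linear coefficient is $\mathcal M_{k,\eps}:=\sup_{[-\eps,T_*)}\snorm{(u^\sharp,\sigma^\sharp)}_{H^k(\XX_2^c)}$, which is finite precisely because the singularity is localized at $\xi_*$ well inside $\XX_2$. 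Since $\mathcal M_{k,\eps}$ is polynomially large in $\eps^{-k}$ (the scale fixed by \eqref{Hk-xland}), the admissible perturbation size is dictated by \eqref{eq:how:small:is:small?} and is \emph{exponentially} small in $\eps^{-1}$, not ``a power of $\eps$'' as you write; that imprecision does not affect the conclusion (the theorem asserts an open neighbourhood with no claim about its size), but it should be corrected, and the $\OO(1)$ wave-speed bound needed to confine the support of $(u^\flat,\sigma^\flat)$ to $\XX_2^c$ is itself part of the same bootstrap rather than a free-standing input to the finite-speed argument.
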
 
The proofs of  Theorems~\ref{thm:main} and~\ref{thm:open:set:IC} are given in Section~\ref{sec:conclusion}.
We remark that Theorem~\ref{thm:main} is a direct consequence of Theorem \ref{thm:SS}, stated below, which establishes the stability of the self-similar profile $\bar W$ under a suitable open set of perturbations.

\subsection{Data in self-similar variables}
The initial datum assumptions in the $\xcal$ variable  made in Section~\ref{sec:data:real} imply certain properties of the initial datum in the self-similar coordinates $y$. 
In this subsection,  we  provide a list of these properties.

First, we see that at the initial self-similar time, which is given as
$s= -\log \eps$ since by \eqref{mod-ic} we have $\tau_0=0$, the self-similar variable $y$ is defined by \eqref{eq:y:s:def} as
\begin{align}
\label{eq:ss:y:initial}
y_1 = \eps^{-\frac 32} x_1 = \eps^{-\frac 32}\left(\xcal_1 - f_0(\check \xcal)\right)\, ,
\qquad \mbox{and} \qquad
\check y = \eps^{-\frac 12} \check x = \eps^{-\frac 12} \check \xcal\,.
\end{align}
Second, we use \eqref{eq:ss:ansatz},  \eqref{mod-ic}, and \eqref{eq:tilde:wza:0}, to define $W(\cdot,-\log\eps), Z(\cdot,-\log \eps)$, and $A_\nu(\cdot,-\log \eps)$ as
\begin{align}
W(y,-\log \eps) = \eps^{-\frac 12} \left( \tilde w_0(\xcal) - \kappa_0 \right)\,,
\qquad
Z(y,-\log \eps) = \tilde z_0 (\xcal)\, ,
\qquad
A_\nu(y,-\log \eps) = \tilde a_{0\nu} (\xcal) \, .
\label{eq:ss:WZA:initial}
\end{align}

Next, from \eqref{ic-kappa0-phi0}, \eqref{ic-cov} and the fact that $(\tilde w_0 - \kappa_0, \tilde z_0, \tilde a_0)$ are supported in the set $\XX_0$ defined in \eqref{eq:support-init-x}, we deduce that the initial data for $(W, Z,A)$ is supported in the set $\XXX_0$, given by 
\begin{align}
\XXX_0 =\left\{\abs{y_1}\leq   \eps ^{-1} ,\abs{\check y}\leq  \eps^{-\frac 13} \right\}\,.
\label{eq:support-init}
\end{align}
The factor of $\frac 12$ present in \eqref{eq:support-init-x} allows us to absorb the shift of $\xcal_1$ by $f_0(\check \xcal)$. 

Next, let us consider the behavior of $W$ at $y=0$, which corresponds to $\xcal = 0$. By \eqref{grad-d1-w0}, \eqref{eq:w0:power:series}, \eqref{phi0-cond}, \eqref{eq:phi:0:def}, and \eqref{eq:ss:WZA:initial} we deduce that 
\begin{align}
W(0,-\log \eps) = 0\, , 
\quad 
\p_1 W(0,-\log \eps) = -1\,,
\quad 
\check \nabla W(0,-\log \eps) = 0 \,,
\quad
\nabla^2 W(0,-\log \eps) = 0\,.
\label{eq:fat:cat}
\end{align}
These constraints on $W$ at $y=0$ will be shown to persist throughout the self-similar Euler evolution.

At this stage, we introduce a sufficiently large parameter $M = M(\alpha,\kappa_0)\geq 1$. In terms of $M$ and $\eps$, we define a small length scale $\ell$ and a 
large length scale $\LLL$  by 
\begin{subequations}
\begin{align}
\ell &= (\log M)^{-5}\,,
\label{eq:ell:choice}
\\
\LLL &=\eps^{-\frac{1}{10}}  \, .
\label{eq:pounds}
\end{align}
\end{subequations}
Note that  $M$ is  independent of $\eps$.
The   region $\abs{y}\leq \ell$   denotes a  Taylor series region, where $W$ is essentially dominated by its series expansion at $y=0$, while the annular region 
$\ell \leq \abs{y} \leq \LLL$ denotes a region where  $W$ and $\nabla W$ closely resemble $\bar W$ and $\nabla \bar W$.

For the initial datum of $\tilde W = W - \bar W$ given, in view of \eqref{eq:ss:WZA:initial},  by
\[
\tilde W (y,-\log\eps)= W(y,-\log \eps) - \bar W(y) = \eps^{-\frac 12} \left(  \ww_0(\xcal) - \kappa_0 \right)\, ,
\]  
it follows from \eqref{love-fosters}, along with \eqref{ic-kappa0-phi0}, \eqref{ic-cov}, \eqref{eq:support-init-x}, and \eqref{eq:ss:y:initial}  that for $\abs{y} \leq \LLL$ we have
\begin{subequations}
\label{ss-ic-1}
\begin{alignat}{2}
\eta^{-\frac 16}(y) \abs{\tilde W(y,-\log \eps)} &\leq \eps^{\frac{1}{10}} 
\label{eq:tilde:W:zero:derivative}
\\
\eta^{\frac 13}(y) \abs{\p_1 \tilde W(y,-\log \eps)} &\leq \eps^{\frac{1}{11}}
\label{eq:tilde:W:p1=1}
\\
\abs{\check \nabla \tilde W(y,-\log \eps)} &\leq \eps^{\frac{1}{12}}
\label{eq:tilde:W:check=1}
\,,
\end{alignat}
\end{subequations}
where we recall that $\eta(y) = 1 + y_1^2 + \abs{\check y}^6$, and the partial derivatives are taken with respect to the $y$ variable.
Similarly, we have from \eqref{eq:tilde:w0:4:derivative}, the chain rule, and the fact that $\ell \ll 1$,   that for $\abs{y} \leq \ell$,
\begin{align}
 \abs{\p^\gamma \tilde W(y,-\log \eps)} &\leq \eps^{\frac 18} \qquad  \mbox{ for } \abs{\gamma}=4
\label{eq:tilde:W:4:derivative}
\, ,
\end{align}
while from \eqref{eq:tilde:w0:3:derivative} we deduce that at $y=0$, we have
\begin{align}
\abs{\p^\gamma \tilde W(0,-\log \eps)} \leq \eps^{\frac 12- \frac{4}{2k-7}}
\qquad \mbox{ for }  \abs{\gamma}=3 \, .
\label{eq:tilde:W:3:derivative:0}
\end{align}
For $y$ in the region  $\{ \abs{y} \geq \LLL \} \cap \XXX_0$, from \eqref{eq:w0:far:out}, \eqref{eq:ss:y:initial}, and \eqref{eq:ss:WZA:initial},
we deduce that
\begin{subequations}
\begin{align}
 \eta^{-\frac{1}{6}}(y)\abs{W(y,-\log \eps)} &\leq  1+ \eps^ {\frac{1}{11}} 
 \label{eq:rio:de:caca:1}
\\
  \eta^{\frac 13}\left(y\right) \abs{\p_1 W(y,-\log \eps)} &\leq  1 +  \eps^{\frac{1}{12}} 
   \label{eq:rio:de:caca:2}
\\
\abs{\check \nabla W(y,-\log \eps)} &\leq   \tfrac 34 
 \label{eq:rio:de:caca:3}
\end{align}
\end{subequations}
while for the second derivatives of $W$, globally for all $y \in \XXX_0$ we obtain from \eqref{eq:w0:pipi}, \eqref{eq:ss:y:initial}, and \eqref{eq:ss:WZA:initial} that 
\begin{subequations}
\begin{align}
\eta^{\frac 13}(y) \abs{\p^\gamma W(y,-\log \eps)} &\leq 1 \qquad  \mbox{ for }   \gamma_1 \geq 1 \mbox{ and } \abs{\gamma}=2 
\label{eq:W:gamma=2:p1}
\\
\eta^{\frac 16}(y) \abs{\check\nabla^2 W(y,-\log \eps)} &\leq 1
\label{eq:W:gamma=2}
\,.
 \end{align}
 \end{subequations}

\begin{remark}
A comment regarding the introduction of the parameter $\LLL$ is in order. By \eqref{ss-ic-1} we know that $W$ and $\nabla W$ closely track  $\bar W$ and $\nabla \bar W$ for all $y$ such that $\abs{y} \leq \LLL = \eps^{-\frac{1}{10}}$. But the functions $\bar W$ and $\check \nabla \bar W$ do not decay as $\abs{y}\to\infty$ (we only have the bounds \eqref{eq:bar:W:properties} available), and thus neither do $W$ and $\check \nabla W$. At first sight this may seem contradictory with the fact that \eqref{eq:support-init} imposes that $W$ is supported in the set $\XXX(0)$. However, no contradiction ensues: we have chosen $\LLL$ to be a {\em sufficiently small}  power of $\eps^{-1}$ exactly in order to leave enough distance from the boundary of the set $\{y \colon \abs{y} \leq \LLL\}$ to the boundary of the set $\XXX(0)^c$, so that $W$ and $\check \nabla W$ have enough room to attain their compact support.
\end{remark}
 
For the initial conditions of $Z$ and $A$ we   deduce from \eqref{eq:support-init-x}, \eqref{eq:z0:ic}, \eqref{eq:a0:ic}, \eqref{eq:ss:y:initial}, and \eqref{eq:ss:WZA:initial} that
\begin{align}
 \abs{\partial^{\gamma} Z(y,-\log \eps)} &
\leq  \begin{cases}
  \eps^{\frac 32},  &\mbox{if } \gamma_1\geq 1\mbox{ and } \abs{  \gamma}=1,2\\
  \eps , & \mbox{if } \gamma_1=0\mbox{ and } \abs{\check \gamma} =0,1,2
\end{cases} \,, \label{eq:Z_bootstrap:IC}\\
 \abs{\partial^{\gamma} A(y,-\log \eps)}&
\leq  \begin{cases}
  \eps^{\frac 32},  &\mbox{if } \gamma_1= 1\mbox{ and } \abs{\check \gamma}=0\\
  \eps , & \mbox{if } \gamma_1=0\mbox{ and } \abs{\check \gamma}=0,1,2 
\end{cases} \,. \label{eq:A_bootstrap:IC}
\end{align}
For the initial specific vorticity in self-similar variables, we have that  
\begin{align} 
 \| \Omega_0\|_{L^ \infty } \le 1  \,.  \label{eq:svort:IC:SS}
\end{align}
Lastly, for the Sobolev norm of the initial condition,  we deduce from \eqref{Hk-xland-twist}, \eqref{eq:ss:y:initial}, and \eqref{eq:ss:WZA:initial} that
\begin{align}
\eps \snorm{W( \cdot , -\log \eps)}_{\dot H^k}^2 + \snorm{Z( \cdot , -\log \eps)}_{\dot H^k}^2 + \snorm{A( \cdot , -\log \eps)}_{\dot H^k}^2  \leq  \eps 
\label{eq:data:Hk}
\end{align}
for all $k \ge 18$.

\subsection{Statement of the main theorem in self-similar variables and asymptotic stability}

\begin{theorem}[Stability and shock formation in self-similar variables] 
\label{thm:SS}
Let $\gamma>1$, $ \alpha = {\tfrac{\gamma-1}{2}}$.  Let  $\kappa_0 = \kappa_0(\alpha)>1$ be sufficiently large.  Consider the system of equations \eqref{euler-ss} for 
$(W,Z,A)$.
Suppose that at initial (self-similar) time $s=-\log \eps$, the initial data $(W_0,Z_0,A_0) = (W,Z,A)|_{s=-\log\eps}$ are supported in the set $\XXX_0$, defined
in \eqref{eq:support-init}, and satisfy the conditions \eqref{eq:fat:cat}--\eqref{eq:data:Hk}.   In addition, let the modulation functions have initial conditions
which satisfy \eqref{mod-ic}--\eqref{ic-kappa0-phi0}.

Then, there exist  a sufficiently large $M=M(\alpha, \kappa_0)\ge 1$,   and a sufficiently small 
$\eps = \eps(\alpha,\kappa_0,M) \in (0,1)$, and unique global-in-time solutions $(W,Z,A)$ to \eqref{euler-ss}; moreover,     $(W,Z,A)$ are supported in the time-dependent cylinder $\XXX(s)$  defined in \eqref{eq:support},
  $(W,Z,A) \in C([-\log\eps, + \infty ); H^k)\cap C^1([-\log\eps, + \infty ); H^{k-1})$ for $k\ge 18$, and we have
\begin{align*}
 \snorm{W( \cdot , s)}_{\dot H^k}^2 + e^{s}  \snorm{Z( \cdot , s)}_{\dot H^k}^2 + e^{s} \snorm{A( \cdot , s)}_{\dot H^k}^2
 &\leq  \lambda ^{-k} e^{-s-\log\eps} +  (1 -  e^{-s-\log\eps}  ) M^{4k}   \,,
\end{align*}
for a constant $\lambda=\lambda(k) \in (0,1)$.
The Riemann function $W(y,s)$ remains   close to the generic and stable self-similar blow up profile $\bar W$; upon defining the weight function
$\eta(y) = 1 + y_1^2 + \abs{\check y}^6$, we have that 
the perturbation $\tilde W = W - \bar W$ satisfies
$$ \abs{ \tilde W(y,s)} \leq \eps^{\frac{1}{11}} \eta^{\frac 16}(y) \,, \ \ 
\abs{\p_1 \tilde W(y,s)} \leq \eps^{\frac{1}{12}} \eta^{-\frac 13}(y)\,, \ \ 
\abs{\check \nabla \tilde W(y,s)} \leq \eps^{\frac{1}{13}} \,, $$
for all $\abs{y} \le \eps^{-\frac{1}{10}}$ and $s \ge -\log\eps$. Furthermore, $\p^\gamma \tilde W(0,s) =0$ for all $\abs{\gamma} \le 2$, and the bounds \eqref{eq:tildeW_decay2} and \eqref{eq:bootstrap:Wtilde3:at:0} hold. Additionally,   $W(y,s)$ satisfies the bounds given  in  \eqref{eq:W_decay} and \eqref{eq:W:GN}.
  
The limiting function  $\bar W_{\mathcal A}(y) = \lim_{s \to + \infty } W(y,s)$
is  a well-defined {\it blow up profile}, with the following properties:
\begin{itemize}[itemsep=2pt,parsep=2pt,leftmargin=.15in]
\item $\bar W_{\mathcal A}$ is a  $C^\infty$ smooth solution to the self-similar 3D Burgers equation~\eqref{eq:Burgers:self:similar}, which satisfies the bounds \eqref{eq:W_decay} and \eqref{eq:W-L2}.
\item $\bar W_{\mathcal A}(y)$ satisfies the same  genericity condition as $\bar W$ given by \eqref{genericity}.
\item $\bar W_{\mathcal A}$ is uniquely determined by the $10$ parameters: ${\mathcal A}_\alpha = \lim_{s\to\infty} \p^\alpha W(0,s)$ with $\abs{\alpha}=3$.
\end{itemize}

The amplitude of the functions $Z$ and $A$ remains $\OO(\eps)$ for all $s \ge -\log\eps$, while for each $ \abs{\gamma}\le k$, $\p^\gamma Z( \cdot , s) \to 0$ and 
$\p^\gamma A( \cdot , s) \to 0$ as $s \to + \infty $, and $Z$ and $A$ satisfy the bounds \eqref{eq:Z_bootstrap} and \eqref{eq:A_bootstrap}.

The scaled sound speed $S(y,s)$ in self-similar variables satisfies
$$
\snorm{S( \cdot , s) - \tfrac{\kappa_0}{2} }_{ L^ \infty } \le \eps^ {\frac{1}{8}}  \ \ \text{ for all } \ s \ge -\log\eps  \,,
$$
and for a universal constant $C_0$,  the specific vorticity $\Omega(y,s)$ in self-similar variables satisfies
$$
\tfrac{1}{C_0} \norm{\Omega_0(y_0)}^2 \leq \abs{\Omega(\pu^{y_0}(s),s)}^2  \leq C_0\abs{\Omega_0(y_0)}^2 \,,
$$
where $\pay$ is defined in \eqref{traj}.
\end{theorem}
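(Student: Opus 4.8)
The plan is to deduce Theorem~\ref{thm:SS} from a bootstrap argument for the self-similar system~\eqref{euler-ss}, coupled to the system of ten nonlinear ODEs for the modulation functions $\kappa,\tau,\check n,\xi,\phi$. First I would invoke local well-posedness for smooth solutions of the Euler equations~\cite{Ma1984}: since the transformations of Section~\ref{sec:variables} are explicitly invertible for $s<\infty$, this produces a solution $(W,Z,A)\in C([-\log\eps,s_1);H^k)\cap C^1([-\log\eps,s_1);H^{k-1})$ together with $C^1$ modulation functions on a maximal self-similar time interval, and the task is to show $s_1=+\infty$ with uniform bounds. To this end I would postulate the pointwise bootstrap assumptions of Section~\ref{sec:bootstrap}: the support confinement~\eqref{eq:support}, the weighted $L^\infty$ bounds~\eqref{eq:W_decay}, \eqref{eq:bootstrap:Wtilde}, \eqref{eq:Z_bootstrap}, \eqref{eq:A_bootstrap} for $W$, $\tilde W$, $Z$, $A$ and finitely many of their derivatives, the speed and acceleration bounds~\eqref{eq:speed:bound}--\eqref{eq:acceleration:bound} for the modulation ODEs, and the global $\dot H^k$ bound of Proposition~\ref{cor:L2}. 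Each of these is assumed to worsen the initial-data bounds by a fixed large constant, and the goal is to recover all of them with the constant halved on $[-\log\eps,s_1)$; since these inequalities are open and continuity-in-time holds, closing them forces $s_1=+\infty$ and yields every quantitative claim of the theorem.

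The closure is carried out hierarchically, following the paper's outline. From the persistence of the constraints $\p^\gamma W(0,s)=0$, $|\gamma|\le2$, together with $W(0,s)=0$ and $\p_1 W(0,s)=-1$, I would first derive in Section~\ref{sec:constraints} the explicit ODE system~\eqref{eq:dot:phi:dot:n}, \eqref{eq:dot:kappa:dot:tau}, whose right-hand sides are rational functions of the low-order derivatives of $(W,Z,A)$ at $y=0$; inserting the bootstrap bounds then produces the improved speed and acceleration estimates of Section~\ref{sec:dynamic:closure}. With these in hand, the forcing and transport-velocity estimates of Section~\ref{sec:preliminary} follow, and then the Lagrangian analysis of Section~\ref{sec:Lagrangian}: the escape estimate of Lemma~\ref{lem:escape} shows that $\mathcal V_W$-trajectories issued from $|y_0|\gtrsim\ell$ reach spatial infinity exponentially fast, while Lemma~\ref{lem:phiZ} shows that the $\mathcal V_Z$- and $\mathcal V_U$-trajectories are swept to infinity independently of their starting label, lingering only $\OO(1)$ time near $y=0$. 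This escape mechanism converts the spatial decay of $\bar W$ and its derivatives into integrable temporal decay of the damping and forcing terms in Lagrangian coordinates, which drives the closure of the pointwise bootstraps for $\Omega$ and $S$ (Section~\ref{sec:vorticity:sound}, using the structure of the equations for $\Omega\cdot\Ncal$, $\Omega\cdot\Tcal^2$, $\Omega\cdot\Tcal^3$), for $Z$ and $A$ (Section~\ref{sec:Z:A}), and for $W$ and $\tilde W$ (Section~\ref{sec:W}), the latter by integrating~\eqref{euler_for_Linfinity:a} and~\eqref{eq:p:gamma:tilde:W:evo} along $\pw$. The most delicate of these is the bound for $\p_1 A$, which by Lemma~\ref{lem:remarkable:sheep:structure} is reconstructed from the already-controlled specific vorticity and sound speed rather than estimated directly from its transport equation.

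The main obstacle is that these $L^\infty$ and weighted-$L^\infty$ estimates lose one $\check\nabla$ derivative, so the pointwise analysis does not close on its own. I would resolve this as the paper indicates: establish the global $\dot H^k$ energy estimate of Proposition~\ref{cor:L2} (Section~\ref{sec:energy}) by working directly with the velocity $U$ and the sound speed $S$, whose evolution~\eqref{US-short} and differentiated form~\eqref{US-L2} retain the good energy structure of the Euler system, using a weighted Sobolev norm to absorb binomial coefficients and the interpolation inequalities of Appendix~\ref{sec:interpolation}; then recover the missing top-order $\check\nabla$ control by Gagliardo--Nirenberg interpolation between the $\dot H^k$ bound and the pointwise bounds. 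This is the step where $k\ge18$ is used, and it is the principal novelty relative to the azimuthally symmetric argument of~\cite{BuShVi2019}, where the transport-type $L^\infty$ bounds closed by themselves.

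Finally, in Section~\ref{sec:conclusion} I would pass to the limit $s\to\infty$. Exponential-in-$s$ decay of the amplitudes of $Z$ and $A$ and of the non-Burgers forcing terms in the $W$ equation, together with the uniform bounds and compactness, gives $W(\cdot,s)\to\bar W_{\mathcal A}$ with $\bar W_{\mathcal A}$ a $C^\infty$ solution of the self-similar 3D Burgers equation~\eqref{eq:Burgers:self:similar}; the persistence of $\p^\gamma\tilde W(0,s)=0$ for $|\gamma|\le2$ and the continuity of $\nabla^2\p_1 W(0,s)$, which stays near the positive-definite $\nabla^2\p_1\bar W(0)$, give the genericity~\eqref{genericity} for $\bar W_{\mathcal A}$, while Proposition~\ref{prop-stationary-burgers} identifies the ten-parameter family through $\mathcal A_\alpha=\lim_{s\to\infty}\p^\alpha W(0,s)$, $|\alpha|=3$. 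The sound speed bound $\norm{S(\cdot,s)-\tfrac{\kappa_0}{2}}_{L^\infty}\le\eps^{1/8}$ follows from~\eqref{eq:UdotN:S} together with the $W$ and $Z$ bounds, and the two-sided vorticity estimate is the Lagrangian transport bound for $\Omega$ along $\pu$, obtained from the specific-vorticity equation~\eqref{svorticity_sheep} in self-similar form together with the pointwise control already established for the terms on its right-hand side.
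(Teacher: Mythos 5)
Your proposal is correct and follows essentially the same approach as the paper: local well-posedness plus a continuity/bootstrap argument, with the bootstrap closed section by section in the same order (modulation ODEs, transport and forcing bounds, Lagrangian escape estimates, vorticity and sound speed, $Z$ and $A$, $W$ and $\tilde W$, and finally the $\dot H^k$ energy estimate for $(U,S)$), followed by passage to the limit $s\to\infty$ via Theorem~\ref{thm-2week-delay}. One small imprecision worth flagging: the exponential-escape mechanism of Lemmas~\ref{lem:escape} and~\ref{lem:phiZ} is used to close the $Z$, $A$, $W$, and $\tilde W$ bootstraps, but not the $\Omega$ and $S$ bounds of Section~\ref{sec:vorticity:sound} --- the sound-speed bound in Proposition~\ref{prop:sound} is an immediate algebraic consequence of~\eqref{eq:UdotN:S} and the $L^\infty$ bootstraps for $W$, $Z$, while the specific-vorticity bound in Proposition~\ref{prop:vorticity} is obtained by a direct Gr\"onwall estimate along the physical flow $\phi$ of $\Vcal$ on the finite physical time interval $[-\eps,T_*)$, exploiting only that the forcing matrix $\FFF_{ij}$ in~\eqref{ssvort-force} is uniformly bounded and that $T_*+\eps=\OO(\eps)$.
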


\section{Bootstrap assumptions}
\label{sec:bootstrap}

As discussed above, the proof of Theorem~\ref{thm:SS} consists of a bootstrap argument, which we make precise in this section.  For $M$ sufficiently large, depending on $\kappa_0$ and on $\alpha$, and for $\eps$ sufficiently small, depending on $M$, $\kappa_0$, and $\alpha$, we postulate that the modulation functions are bounded as in \eqref{mod-boot}, that $(W,Z,A)$ are supported in the set given by \eqref{eq:support}, that $W$ satisfies  \eqref{eq:W_decay},  $\tilde W$ obeys \eqref{eq:tildeW_decay}--\eqref{eq:bootstrap:Wtilde3:at:0} and $Z$ and $A$ are bounded as in \eqref{eq:Z_bootstrap} and \eqref{eq:A_bootstrap} respectively. All these bounds have explicit constants in them. Our goal in subsequent sections will be to show that the these estimates in fact hold with strictly better pre-factors, which in view of a continuation argument yields the proof of Theorem~\ref{thm:SS}.

\subsection{Dynamic variables}
For the dynamic modulation variables,  we assume that 
\begin{subequations}
\label{mod-boot}
\begin{align}
&\tfrac 12 \kappa_0 \leq  \kappa(t)  \leq  2 \kappa_0, &&\abs{\tau(t)} \leq M \eps^2, &&&\abs{\xi(t)} \leq  M^{\frac 14}  \eps   , &&&& \abs{\check n(t)} \leq   M^2\eps^{\frac 32},    &&&&& \abs{\phi(t)} \leq M^2 \eps, 
\label{eq:speed:bound} \\
&\abs{\dot \kappa(t)} \leq  M^2  e^{-\frac{s}{2}}, &&\abs{\dot \tau(t)} \leq  M e^{-s}, &&&|\dot \xi(t)| \leq  M^{\frac 14} , &&&& |\dot{\check{n}}(t)| \leq M^2 \eps^{\frac 12}, &&&&& |\dot{\phi}(t)| \leq M^2 ,
\label{eq:acceleration:bound}
\end{align}
\end{subequations}
for all $-\eps \leq t < T_*$.

From \eqref{eq:Q:def}, \eqref{eq:Q2:def}--\eqref{eq:Q3:def}, and the bootstrap assumptions \eqref{mod-boot}, we directly obtain that
\begin{align}
|\dot Q(t)|  \leq 2 M^2 \eps^{\frac 12}
\label{eq:dot:Q}
\end{align}
for all $-\eps \leq t < T_*$. Moreover, we note that as a direct consequence of the $\dot \tau$ estimate in \eqref{eq:acceleration:bound},  we have that 
\begin{align}
\abs{1-\beta_\tau} = \frac{\abs{\dot \tau}}{1-\dot\tau} \leq 2 M  e^{-s} \leq 2 M  \eps
\label{eq:beta:tau} 
\end{align}
since $\eps$ can be made sufficiently small, for all $s\geq -\log \eps$. 

\subsection{Spatial support bootstrap}
We now make the following bootstrap assumption  that  $(W,Z,A)$ have support in the $s$-dependent cylinder defined by
\begin{align}
\XXX(s):=\left\{\abs{y_1}\leq  2\eps^{\frac12}  e^{\frac 32s},\abs{\check y}\leq  2 \eps^ {\frac{1}{6}} e^{\frac{s}{2}}  \right\} \text{ for all } s \ge -\log\eps \,.
\label{eq:support}
\end{align}
Recall from \eqref{eq:eta:def} and \eqref{eq:tilde:eta:def} 
 the definition of the weight functions
\begin{align*}
\eta(y) = 1 + y_1^2 + \abs{\check y}^6 \qquad \mbox{and} \qquad \tilde \eta(y) = \eta(y) +  \abs{\check y}^2  \, .
\end{align*}
Using these, for $y \in \XXX(s)$, we have the estimate
\begin{equation}\label{e:space_time_conv}
\eta(y) \leq 40 \eps e^{3s} \qquad \Leftrightarrow \qquad \eta^{\frac{1}{3}}(y) \leq 4 \eps^{\frac 13}  e^{s}
 \end{equation}
for all $y \in \RR^3$, which allows us to convert temporal decay to spatial decay.

\subsection{$W$ bootstrap}
We postulate the following derivative estimates on $W$ 
\begin{align}
 \abs{\partial^{\gamma} W(y,s)}&
\leq   \begin{cases}
(1+ \eps^{\frac{1}{20}}) \eta^{\frac 16}(y), & \mbox{if } \abs{\gamma}  = 0\,,  \\
\tilde \eta^{-\frac 13}\left(\tfrac y2\right) {\bf 1}_{\abs{y}\leq \LLL} + 2 \eta^{-\frac 13}(y) {\bf 1}_{\abs{y}\geq \LLL}, & \mbox{if }  \gamma_1 = 1 \mbox{ and } \abs{\check \gamma}=0\,, \\
1, & \mbox{if } \gamma_1=0 \mbox{ and }  \abs{\check \gamma} = 1\,,\\
M^{\frac{1+\abs{\check \gamma}}{3}} \eta^{-\frac 13}(y), & \mbox{if }  \gamma_1 \geq 1 \mbox{ and } \abs{\gamma}=2\,, \\
M \eta^{-\frac{1}{6}}(y), & \mbox{if } \gamma_1 = 0 \mbox{ and } \abs{\check \gamma} = 2 \,.
\end{cases}\label{eq:W_decay}
\end{align}
Next, we assume that the solution $W(y,s)$ remains close to the self-similar profile $\bar W(y)$  in the topology defined by the following bounds. For this purpose, it is convenient to state bootstrap assumptions in terms of $\tilde W$, as defined in \eqref{eq:tilde:W:def}. 
For $\abs{y}\leq \LLL$, we assume that 
\begin{subequations}
\label{eq:tildeW_decay}
\begin{align}
\abs{ \tilde W(y,s)} &\leq \eps^{\frac{1}{11}} \eta^{\frac 16}(y) \,,
\label{eq:bootstrap:Wtilde} \\
\abs{\p_1 \tilde W(y,s)} &\leq \eps^{\frac{1}{12}} \eta^{-\frac 13}(y) \,,
\label{eq:bootstrap:Wtilde1} \\
\abs{\check \nabla \tilde W(y,s)} &\leq \eps^{\frac{1}{13}}  \,,
\label{eq:bootstrap:Wtilde2} 
\end{align}
\end{subequations}
where the parameter $\LLL$ is as defined in \eqref{eq:pounds}. 
Furthermore, for $\abs{y} \leq \ell$ we assume that
\begin{subequations}
\label{eq:tildeW_decay2}
\begin{align}
\abs{\p^\gamma \tilde W(y,s)} &\leq (\log M)^4 \eps^{\frac {1}{10}}\abs{y}^{4-\abs{\gamma}}+M\eps^{\frac 14} \abs{y}^{3-\abs{\gamma}} \leq 2 (\log M)^4 \eps^{\frac{1}{10}}\ell^{4-\abs{\gamma}}\,, && \mbox{for all }   \abs{\gamma} \leq 3\,,
\label{eq:bootstrap:Wtilde:near:0} \\
\abs{\p^\gamma \tilde W(y,s)} &\leq \eps^{\frac{1}{10}} (\log M)^{\abs{\check \gamma}}\,, && \mbox{for all }    \abs{\gamma} = 4\,,
\label{eq:bootstrap:Wtilde4}
\end{align}
\end{subequations}
while at $y=0$,  we assume that
\begin{align}
\abs{\p^\gamma \tilde W(0,s)} &\leq \eps^{\frac14} \,, && \mbox{for all} \qquad \abs{\gamma} = 3\,,\label{eq:bootstrap:Wtilde3:at:0}
\end{align} 
for all $s\geq -\log \eps$. 
In \eqref{eq:bootstrap:Wtilde:near:0} and \eqref{eq:bootstrap:Wtilde4}, the parameter $\ell$ is chosen  as in \eqref{eq:ell:choice}.
Note that with this choice of $\ell$,  the bounds \eqref{eq:kingfisher}, \eqref{eq:kingfisher:2}, and \eqref{eq:kingfisher:3} hold.

\begin{remark}\label{goodremark1}
In the region $\abs{y} \leq \LLL$, the first three bounds stated in \eqref{eq:W_decay} follow directly from the   properties of $\bar W$ stated in \eqref{eq:bar:W:properties}, and those of $\tilde W$ in \eqref{eq:tildeW_decay}. The bounds for $W$ and $\check \nabla W$ are immediate. The estimate for $\p_1 W$ is a bit more delicate and uses the explicit bound $ \tilde \eta^{-\frac 13}(y) + \eps^{\frac{1}{12}} \eta^{-\frac 13}(y) \leq \tilde \eta^{-1/3}(y/2)$. 
\end{remark}

\begin{lemma}[Lower bound for $\Jcal \p_1 W$]\label{lem:Jp1W}
\begin{align} 
\Jcal \p_1 W(y,s) \geq -1 \quad \mbox{and} \quad \Jcal \p_1 \bar W(y,s) \geq -1 \ \ \text{ for all } y \in \mathbb{R}^3 \,, s \ge -\log \eps    \,. \label{Jp1W}
\end{align} 
\end{lemma}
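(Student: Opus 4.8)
The plan is to prove the two inequalities in \eqref{Jp1W} separately, with the one for $\bar W$ as the model case; the only geometric input needed in both is a pointwise upper bound on $\Jcal$. Since $f,_\mu = \phi_{\mu\gamma}(t) x_\gamma$ and $x_\gamma = e^{-\frac s2} y_\gamma$ for $\gamma\in\{2,3\}$, the modulation bound $\abs{\phi(t)}\le M^2\eps$ from \eqref{eq:speed:bound} together with $s\ge-\log\eps$ gives $\abs{f,_2}^2+\abs{f,_3}^2\le 2M^4\eps^3\abs{\check y}^2$, whence, for $\eps$ small,
\[
\Jcal^2 = 1+\abs{f,_2}^2+\abs{f,_3}^2 \le 1+2M^4\eps^3\abs{\check y}^2 \le (1+\abs{\check y}^2)^2 = \mathcal{B}(\check y)^{-2} \qquad\text{for all } y\in\RR^3,\ s\ge-\log\eps,
\]
where $\mathcal{B}(\check y)=(1+\abs{\check y}^2)^{-1}$. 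On the support cylinder $\XXX(s)$ one moreover has $\abs{\check x}\le 2\eps^{\frac16}$, which upgrades this to $\Jcal\le 1+4M^4\eps^{\frac73}$ there.

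For $\bar W$ I would use the explicit representation \eqref{eq:barW:def}, namely $\p_1\bar W(y)=\mathcal{B}(\check y)\,W_{\rm 1d}'(\mathcal{B}(\check y)^{\frac32}y_1)$, together with $-1\le W_{\rm 1d}'\le0$ (the $\check y=0$ slice of the bound $-1\le\p_1\bar W\le0$ recorded after \eqref{eq:Hess_py1_barW}). This gives $\p_1\bar W(y)\ge-\mathcal{B}(\check y)$, and multiplying by $\Jcal>0$ and invoking the display above yields $\Jcal\p_1\bar W(y)\ge-\Jcal\mathcal{B}(\check y)\ge-1$, the second assertion of \eqref{Jp1W}.

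For $W$ I would first note that by the support bootstrap \eqref{eq:support}, $\p_1 W(y,s)=0$ unless $y\in\XXX(s)$, so assume $y\in\XXX(s)$ and write $W=\bar W+\tilde W$; the argument splits into three regions (with $\ell,\LLL$ as in \eqref{eq:ell:choice}, \eqref{eq:pounds}). (i) For $\abs{y}\ge\LLL=\eps^{-\frac1{10}}$, the bootstrap \eqref{eq:W_decay} gives $\abs{\p_1 W(y,s)}\le 2\eta^{-\frac13}(y)$, and since $\eta(y)\ge\tfrac18\LLL^2$ there this is $\ll1$; with $\Jcal\le2$ we get $\Jcal\abs{\p_1 W}<1$. (ii) For $\ell\le\abs{y}\le\LLL$: the function $1+\p_1\bar W(y)=(1-\mathcal{B}(\check y))+\mathcal{B}(\check y)\bigl(1+W_{\rm 1d}'(\mathcal{B}(\check y)^{\frac32}y_1)\bigr)$ is continuous, strictly positive for $y\ne0$, and tends to $1$ as $\abs{y}\to\infty$ (using $W_{\rm 1d}'=-(1+3W_{\rm 1d}^2)^{-1}\to0$, via the parametrization $y_1=-W_{\rm 1d}-W_{\rm 1d}^3$ implicit in \eqref{eq:barW1d:def}), so $\inf_{\abs{y}\ge\ell}(1+\p_1\bar W(y))=:2c_0>0$ with $c_0=c_0(M)$; then $\p_1 W(y,s)\ge-1+2c_0-\eps^{\frac1{12}}\eta^{-\frac13}(y)\ge-1+c_0$ by \eqref{eq:bootstrap:Wtilde1}, and since $\Jcal\le1+4M^4\eps^{\frac73}\le(1-c_0)^{-1}$ for $\eps$ small we conclude $\Jcal\abs{\p_1 W}\le1$. (iii) For $\abs{y}\le\ell$: because $\p_1\bar W$ depends on $y$ only through $y_1^2$ and $\abs{\check y}^2$ (by \eqref{eq:barW:def}) its Taylor expansion at the origin has vanishing first- and third-order parts, so with the Hessian \eqref{eq:Hess_py1_barW} one gets $\p_1\bar W(y)=-1+3y_1^2+\abs{\check y}^2+O(\abs{y}^4)\ge-1+\tfrac12\abs{y}^2$ for $\abs{y}\le\ell$ (valid once $M$, hence $\ell^{-1}=(\log M)^5$, is large enough); combined with $\abs{\p_1\tilde W(y,s)}\le M\eps^{\frac14}\abs{y}^2+(\log M)^4\eps^{\frac1{10}}\abs{y}^3\le\tfrac14\abs{y}^2$ from \eqref{eq:bootstrap:Wtilde:near:0} and the refined bound $\Jcal\le1+M^4\eps^3\abs{y}^2$ from the display, this gives $\Jcal\abs{\p_1 W}\le(1+M^4\eps^3\abs{y}^2)(1-\tfrac14\abs{y}^2)\le1$, hence $\Jcal\p_1 W\ge-1$.

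The main obstacle is that the inequality is saturated at $y=0$: the modulation constraints force $\p_1 W(0,s)=-1$ while $\Jcal(0,s)=1$, so there is no slack at the origin, and near it one must match the quadratic vanishing of the defect $1+\p_1 W$ — which is precisely where the positive-definiteness (genericity) of $\nabla^2\p_1\bar W(0)$ enters — against both the $O(\abs{y}^2)$ deviation of $\Jcal$ from $1$ and the $O(\abs{y}^2)$ size of $\p_1\tilde W$. Region (iii) is where this balance has to be carried out, and it closes only because the smallness parameters controlling $\phi$ and $\tilde W$ beat the fixed curvature coefficients $6,2,2$ of \eqref{eq:Hess_py1_barW}.
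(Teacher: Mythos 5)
Your proof is correct, but it takes a genuinely different route from the paper. The paper dispenses with the three-region decomposition entirely: it first sharpens the $\Jcal$ bound to $0\le\Jcal-1=\frac{\Jcal^2-1}{\Jcal+1}\le\eps e^{-s}\abs{\check y}^2$, crucially preserving the $\abs{\check y}^2$ factor rather than absorbing it; then it invokes a \emph{single} global inequality, $\min\{1+\p_1\bar W,\,1+\p_1 W\}\ge 1-\tilde\eta^{-\frac13}(y/2)\ge\frac{\abs{\check y}^2}{20(1+\abs{\check y}^2)}$, coming directly from \eqref{eq:bar:W:properties} and the $\p_1 W$ line of the bootstrap \eqref{eq:W_decay}. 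Subtracting $\abs{\Jcal-1}$ from this lower bound (using $\abs{\p_1 W}\le1$) then closes in one stroke, because both quantities are $\OO(\abs{\check y}^2)$ near $\check y=0$ and the coefficient $\eps e^{-s}$ beats $\tfrac1{20}(1+\abs{\check y}^2)^{-1}$ on the support.

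Two observations about the comparison. First, the mechanism you correctly identified as the crux — that the inequality saturates at $y=0$ and one must match quadratic vanishing — is handled by the paper entirely through the definition of the auxiliary weight $\tilde\eta(y)=1+y_1^2+\abs{\check y}^2+\abs{\check y}^6$ (introduced in \eqref{eq:tilde:eta:def} for exactly this purpose: unlike $\eta$, it carries an $\abs{\check y}^2$ term, so $1-\tilde\eta^{-\frac13}(y/2)$ decays quadratically in $\check y$). That replaces your explicit Taylor expansion of $\p_1\bar W$ with its Hessian \eqref{eq:Hess_py1_barW}, your compactness argument in the mid-range, and your large-$\abs{y}$ case. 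Second, the genuinely decisive point, which you use somewhat implicitly in region (iii) but the paper makes structural, is that $\Jcal-1$ vanishes quadratically in $\check y$ \emph{alone} — there is no $y_1$-dependence — so the matching only needs to be done in $\check y$; the paper's lower bound depends only on $\check y$ and this closes the argument uniformly. Your $\bar W$ argument (via $\p_1\bar W=\mathcal B(\check y)W_{\rm 1d}'$ and $\Jcal\le\mathcal B(\check y)^{-1}$) is actually a cleaner and more robust way to handle that half of the lemma than the paper's own bound, since it holds for all $y\in\RR^3$ without invoking the support set; but the price is that your $W$ argument is considerably longer than the paper's three lines.
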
 
\begin{proof}[Proof of Lemma~\ref{lem:Jp1W}]
By the definition of $\Jcal$ and the bootstrap assumption  \eqref{eq:speed:bound} and \eqref{eq:support}, we have 
 \[
0 \leq \Jcal - 1 = \frac{\Jcal^2 - 1}{\Jcal +1} = \frac{1}{\Jcal+1} \left( (\phi_{2\nu} e^{-\frac s2} y_\nu)^2 + (\phi_{3\nu} e^{-\frac s2} y_\nu)^2 \right) \leq  \eps  e^{-s} \abs{\check y}^2 \leq  \eps .
 \]
 Moreover, using \eqref{eq:bar:W:properties} for the function $\p_1 \bar W$ and \eqref{eq:W_decay} for $\p_1 W$,  we deduce that 
 \[
\min\left\{ 1+ \p_1 \bar W , 1+ \p_1   W \right\} \geq 1 - \tilde \eta^{-\frac 13}\left(\tfrac y2\right)  \geq \frac{\abs{\check y}^2}{20(1+\abs{\check y}^2)}
 \]
 for all $y\in \RR^3$. The last inequality follows from an explicit computation. To conclude, we write 
 \begin{align*}
\min\left\{ 1+ \Jcal \p_1 \bar W , 1+ \Jcal \p_1   W \right\} 
&\geq \min\left\{ 1+ \p_1 \bar W , 1+ \p_1   W \right\} - \abs{\Jcal -1}  \notag\\
&\geq \frac{\abs{\check y}^2}{20(1+\abs{\check y}^2)} - \eps e^{-s} \abs{\check y}^2 \geq 0\,,
 \end{align*}
 thereby finishing the proof.
\end{proof}

\subsection{$Z$ and $A$ bootstrap}
We postulate the following derivative estimates on $Z$ and $A$:
\begin{align}
 \abs{\partial^{\gamma} Z(y,s)} &
\leq  \begin{cases}
 M^{\frac{1+\abs{\check\gamma}}{2}}  e^{-\frac 32 s },  &\mbox{if } \gamma_1\geq 1\mbox{ and } \abs{  \gamma}=1,2\\
M \eps^{ \frac{2-\abs{\check \gamma}}2 } e^{-\frac {\abs{\check \gamma}}2 s}, & \mbox{if } \gamma_1=0\mbox{ and } \abs{\check \gamma} =0,1,2 \, ,
\end{cases}\label{eq:Z_bootstrap}\\
 \abs{\partial^{\gamma} A(y,s)}&
\leq  \begin{cases}
M e^{-\frac 32 s },  &\mbox{if } \gamma_1= 1\mbox{ and } \abs{\check \gamma}=0\\
M \eps^{ \frac{2-\abs{\check \gamma}}2 } e^{-\frac {\abs{\check \gamma}}2 s}, & \mbox{if } \gamma_1=0\mbox{ and } \abs{\check \gamma}=0,1,2 \, .
\end{cases}\label{eq:A_bootstrap}
\end{align}

\subsection{Further consequences of the bootstrap assumptions}
The bootstrap bounds  \eqref{mod-boot}, \eqref{e:space_time_conv}, \eqref{eq:W_decay}--\eqref{eq:bootstrap:Wtilde3:at:0}, \eqref{eq:Z_bootstrap}, and \eqref{eq:A_bootstrap} have a number of consequences, which we collect here for future reference. The first is a global in time $L^2$-based Sobolev estimate:
\begin{proposition}[$\dot H^k$ estimate for $W$, $Z$, and $A$]\label{cor:L2}
For  integers  $k \geq 18$ and for a constant $\lambda = \lambda(k)$, 
\begin{subequations} 
\begin{align}
\snorm{Z( \cdot , s)}_{\dot H^k}^2 + \snorm{A( \cdot , s)}_{\dot H^k}^2
 &\leq 2 \lambda ^{-k}e^{-s} +e^{-s} (1 -  e^{-s}\eps^{-1} ) M^{4k}   \,, \label{eq:AZ-L2} \\
 \snorm{W( \cdot , s)}_{\dot H^k}^2
 &\leq   2 \lambda^{-k} \eps^{-1} e^{-s}  +(1 -  e^{-s}\eps^{-1}) M^{4k} \,, \label{eq:W-L2} 
\end{align}
\end{subequations} 
for all $s\ge  -\log \eps$.
\end{proposition}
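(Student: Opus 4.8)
The estimates \eqref{eq:AZ-L2}--\eqref{eq:W-L2} will follow by integrating a single Gr\"onwall inequality for a weighted $\dot H^k$ energy. It is cleanest to run the argument on the velocity and rescaled sound speed $(U,S)$, whose evolution is the symmetrizable system \eqref{US-short} (resp.\ \eqref{US-L2} for its $\p^\gamma$--derivatives), rather than on $(W,Z,A)$ directly. The first step is a reduction: using the algebraic identities \eqref{eq:UdotN:S}, the control \eqref{mod-boot} on the modulation parameters (which in particular bounds all derivatives of the $\check y$-dependent geometric quantities $\Ncal,\Tcal^\nu,\Jcal$ and of $f$), and the support inclusion \eqref{eq:support}, one obtains the two-sided comparison $\|U(\cdot,s)\|_{\dot H^k}^2+\|S(\cdot,s)\|_{\dot H^k}^2 \approx e^{-s}\|W(\cdot,s)\|_{\dot H^k}^2+\|Z(\cdot,s)\|_{\dot H^k}^2+\|A(\cdot,s)\|_{\dot H^k}^2$ up to finitely many lower-order Sobolev norms which are themselves controlled by the pointwise bootstrap bounds \eqref{eq:W_decay}, \eqref{eq:tildeW_decay}, \eqref{eq:Z_bootstrap}, \eqref{eq:A_bootstrap} together with \eqref{eq:support} and interpolation (Appendix~\ref{sec:interpolation}). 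Hence it suffices to prove a differential inequality of the schematic form $\tfrac{d}{ds}\mathcal{G}_k(s)\le -\mathcal{G}_k(s) + CM^{4k}\big(e^{-s}+\eps\big)$ for the weighted energy $\mathcal{G}_k(s):=\sum_{|\gamma|=k}\mathsf{w}_\gamma\big(\|\p^\gamma U(\cdot,s)\|_{L^2}^2+\|\p^\gamma S(\cdot,s)\|_{L^2}^2\big)$, where the $\mathsf{w}_\gamma$ are binomial-type weights chosen to absorb the combinatorial factors produced by repeated differentiation; integrating this inequality from $s_0=-\log\eps$ and inserting the data bound \eqref{eq:data:Hk} then yields the two claimed bounds after undoing the comparison (the $\|W\|_{\dot H^k}^2$ part picks up the extra $e^{s}$, hence the $\eps^{-1}e^{-s}$ and the saturating $M^{4k}$ term, while the $(Z,A)$ parts retain the $e^{-s}$--decay coming from the $e^{-\sfrac s2}$-smallness of $F_Z,F_A$ and the $\OO(\eps)$ amplitude of $Z,A$).

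\textbf{Production of the differential inequality.} Apply $\p^\gamma$ with $|\gamma|=k$ to \eqref{US-short}, test the $U_i$-equation against $\mathsf{w}_\gamma\p^\gamma U_i$ and the $S$-equation against $\mathsf{w}_\gamma\p^\gamma S$, and sum. Three mechanisms appear. First, \emph{scaling damping}: the linear part $\tfrac32 y_1\p_1+\tfrac12 y_\nu\p_\nu$ of the transport operator $\mathcal{V}_U\cdot\nabla$ contributes, after integration by parts (the divergence of this linear field equals $\tfrac52$) and after commuting with $\p^\gamma$ (which produces $\tfrac{3\gamma_1+\gamma_2+\gamma_3}{2}\p^\gamma$), a coefficient $\tfrac{3\gamma_1+\gamma_2+\gamma_3}{2}-\tfrac54\ge\tfrac{k-3}{2}$ in front of $\|\p^\gamma(U,S)\|_{L^2}^2$; since $k\ge 18$ this dominates all $\OO(1)$ growth terms below and produces the $-\mathcal{G}_k$. (The genuinely quasilinear transport coefficients $g_U,h_U^\mu$ add at most $\OO(1)$ via the lower bound $\Jcal\p_1 W\ge -1$ of Lemma~\ref{lem:Jp1W} and the bootstrap bounds.) Second, \emph{good energy structure}: the dangerous terms are the pressure/divergence couplings $2\beta_\tau\beta_3 S\,\Jcal\Ncal_i e^{\sfrac s2}\p_1 S$ and $2\beta_\tau\beta_3 S\,\Jcal e^{\sfrac s2}\p_1 U\cdot\Ncal$, with their $e^{\sfrac s2}$ prefactors; at top order their contributions to the two test products combine into $\int 2\beta_\tau\beta_3 e^{\sfrac s2}S\Jcal\Ncal_i\,\p_1\!\big(\p^\gamma S\,\p^\gamma U_i\big)$, and integrating by parts in $y_1$ leaves a lower-order term, because $\p_1(e^{\sfrac s2}S\Jcal\Ncal_i)=e^{\sfrac s2}\Jcal\Ncal_i\p_1 S$ with $e^{\sfrac s2}\p_1 S=\tfrac12\p_1 W-\tfrac12 e^{\sfrac s2}\p_1 Z$ which is $\OO(1)$ by \eqref{eq:W_decay} and \eqref{eq:Z_bootstrap}; the analogous cancellation handles the $e^{-\sfrac s2}\delta^{i\nu}$ couplings. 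This is where the symmetric-hyperbolic structure of \eqref{eq:Euler2}, preserved through the rotation \eqref{eq:tilde:x:def}, the shear \eqref{x-sheep}, and the self-similar rescaling \eqref{eq:y:s:def}, is essential. Third, \emph{all remaining terms} --- the commutators $[\p^\gamma,\,g_U\p_1+h_U^\mu\p_\mu]$, the $\dot Q$ terms, and the contributions of $F$ from \eqref{eq:F:def} and of all the sub-leading pieces extracted into the forcings $F^{(\gamma)}$ --- are estimated by Cauchy--Schwarz followed by the interpolation inequalities of Appendix~\ref{sec:interpolation}: one distributes the $k$ derivatives so that at most $k-1$ fall on $(U,S)$ and the rest on coefficients whose low-order norms are bounded via \eqref{eq:W_decay}--\eqref{eq:A_bootstrap}, \eqref{mod-boot}, \eqref{eq:support}, each such term being $\le\delta\,\mathcal{G}_k+C_\delta\big(\text{lower-order }\dot H^j\text{ norms}\big)+C_\delta M^{4k}(e^{-s}+\eps)$, with $\delta$ absorbed into the damping and the lower-order norms reabsorbed by interpolation and a final use of the bootstrap bounds.

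\textbf{Conclusion and main obstacle.} Integrating $\tfrac{d}{ds}\mathcal{G}_k\le-\mathcal{G}_k+CM^{4k}(e^{-s}+\eps)$ from $s_0=-\log\eps$, using $\mathcal{G}_k(s_0)\lesssim\lambda^{-k}e^{-s_0}$ from \eqref{eq:data:Hk}, and translating back via the first-paragraph comparison (allocating the growth to the $W$ component, which carries the extra $e^{s}$, and keeping the decay on $Z,A$) gives exactly \eqref{eq:AZ-L2} and \eqref{eq:W-L2}, after a harmless adjustment of $\lambda=\lambda(k)$. The crux of the argument is the second mechanism together with the uniform-in-$k$ bookkeeping in the third: one must check that the $e^{\pm\sfrac s2}$-weighted top-order pressure terms cancel \emph{exactly} in the $L^2$ pairing after all the geometric ($\Ncal,\Tcal^\nu,\Jcal,f$) and self-similar changes of variables, and that the weights $\mathsf{w}_\gamma$ are arranged so that the sums of commutator terms telescope with constants independent of $k$ --- so that the right-hand side is genuinely of size $M^{4k}$ and does not grow faster in $k$.
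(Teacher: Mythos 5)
Your proposal follows the paper's route exactly: reduce to the $(U,S)$ variables, estimate the weighted $\dot H^k$ energy $E_k^2$ of \eqref{eq:Ek:def} (with weights $\lambda^{|\check\gamma|}$ calibrated to absorb the $k^2$ coming out of Young's inequality in the commutator sums), exploit the scaling damping of the linear drift, cancel the top-order pressure terms by integrating by parts (using that $\Jcal\Ncal_i$ is $y_1$-independent and that $e^{s/2}\p_1 S=\OO(1)$ by the bootstrap), estimate the remaining commutators by interpolation and the bootstrap bounds, integrate from $s_0=-\log\eps$, and translate back via the $\dot H^k$ equivalence of $(U,S)$ with $(W,Z,A)$. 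This is precisely Proposition~\ref{prop:L2} together with the change-of-variables argument given at the end of Section~\ref{sec:energy}.

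One quantitative point would need to be fixed for the plan to actually yield the statement. Your schematic inequality $\tfrac{d}{ds}\mathcal{G}_k\le -\mathcal{G}_k + CM^{4k}(e^{-s}+\eps)$ cannot carry the $\eps$: for $s\ge -\log\eps$ one has $\eps\ge e^{-s}$, so that term dominates and gives $\snorm{Z}^2_{\dot H^k}+\snorm{A}^2_{\dot H^k}\lesssim M^{4k}\eps$ with no $s$-decay, whereas \eqref{eq:AZ-L2} asserts $e^{-s}$-decay, which is used downstream in Lemma~\ref{eq:higher:order:ests} to close the higher-order pointwise bootstraps on $\p^\gamma Z$ and $\p^\gamma A$. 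The paper's Lemma~\ref{lem:forcing:1} verifies that every non-$E_k^2$ forcing contribution is genuinely $e^{-s}M^{4k-1}$, because each forcing term carries at least one factor of $Z$, $A$, $\check\nabla(U,S)$, $e^{-s/2}S$, or a spatial derivative of $\Ncal,\Tcal^\nu,\Jcal$, all of which contribute at least $e^{-s/2}$ decay via the bootstrap and the self-similar prefactors. Similarly, the damping must be $\ge 2$, not $1$, to integrate to the stated form; the paper gets this by first isolating on the left of \eqref{energy0} the $\gamma_1$-dependent lower-order pressure cross-coupling $2\beta_\tau(\beta_1+\beta_3+2\beta_3\gamma_1)\Jcal\p_1 W\,(\p^\gamma S)(\p^\gamma U\cdot\Ncal)$, and bounding it via $\Jcal\p_1 W\ge -1$ (Lemma~\ref{lem:Jp1W}) and $\beta_1+\beta_3=1$, which leaves the coefficient $k-6\ge 12$; that cross-term is the genuine $\gamma_1$-dependent anti-damping, rather than the transport coefficients $g_U,h_U^\mu$ which your plan points to.
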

The proof of Proposition~\ref{cor:L2}, which will be given  at the end of Section \ref{sec:energy},  relies only upon the initial data assumption \eqref{eq:data:Hk}, on the support bound \eqref{e:space_time_conv},  on  $L^ \infty $ estimates for $\p^\gamma W$ and $\p^\gamma Z$ when $\abs{\gamma} \le 2$, on $\p^\gamma A$ pointwise bounds for $\abs{\gamma}\leq 1$, and on $\check \nabla^2 A$ bounds. That is, Proposition~\ref{cor:L2} follows directly from \eqref{eq:data:Hk} and the bootstrap assumptions \eqref{mod-boot}, \eqref{e:space_time_conv},  \eqref{eq:W_decay}, \eqref{eq:Z_bootstrap},  and \eqref{eq:A_bootstrap}.  

The reason we state Proposition~\ref{cor:L2} at this stage of the analysis  is that the $\dot{H}^k$ estimates and linear interpolation yield useful information for higher order 
derivatives of $(W,Z,A)$, which are  needed in order to close the bootstrap assumptions for high order derivatives. These bounds are summarized in the following
\begin{lemma}\label{eq:higher:order:ests}
For integers $k \ge 18$, we have that
\begin{align}\label{eq:A:higher:order}
 \abs{\partial^{\gamma} A(y,s)}&
\les  \begin{cases}
 e^{-( \frac 32  - \frac{2\abs{\gamma}-1}{2k-5})s} ,  &\mbox{if } \gamma_1\geq 1\mbox{ and } \abs{ \gamma}=2,3\\
  e^{- (1 - \frac{\abs{\gamma}-1}{2k-7})s}    , & \mbox{if } \abs{\gamma}=3,4,5 \, ,
\end{cases} 
\end{align}

\begin{align}\label{eq:Z:higher:order}
 \abs{\partial^{\gamma} Z(y,s)}&
\les  \begin{cases}
 e^{- (\frac32 -\frac{3}{2k-7})s} ,  &\mbox{if } \gamma_1\geq 1\mbox{ and } \abs{ \gamma}=3\\
 e^{- (1 - \frac{\abs{\gamma}-1}{2k-7})s}    , & \mbox{if } \abs{\gamma}=3,4,5 \, ,
\end{cases} 
\end{align}
\begin{align}
 \abs{\partial^{\gamma} W(y,s)}&
\les  \begin{cases}
 e^{\frac{2s}{2k-7}} \eta^{-\frac 13}(y) ,  &\mbox{if } \gamma_1\neq 0\mbox{ and } \abs{ \gamma}=3\\
 e^{\frac{s}{2k-7}} \eta^{-\frac 16}(y) ,  &\mbox{if } \gamma_1=0\mbox{ and } \abs{ \gamma}=3 \,.
\end{cases} \label{eq:W:GN}
\end{align}
\end{lemma}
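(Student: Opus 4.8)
The plan is to obtain every bound in Lemma~\ref{eq:higher:order:ests} by Gagliardo--Nirenberg interpolation between the global-in-time $\dot H^k$ estimate of Proposition~\ref{cor:L2}, which plays the role of the high-regularity endpoint, and the low-order pointwise bootstrap bounds \eqref{eq:W_decay}, \eqref{eq:Z_bootstrap}, \eqref{eq:A_bootstrap}, which play the role of the $L^\infty$ endpoint. First I would rewrite the conclusions of Proposition~\ref{cor:L2} in a more usable form: since $0 \le 1 - e^{-s}\eps^{-1} < 1$ and $\eps^{-1}e^{-s} \le 1$ for all $s \ge -\log\eps$, the bounds \eqref{eq:W-L2} and \eqref{eq:AZ-L2} give $\snorm{W(\cdot,s)}_{\dot H^k} \les M^{2k}$ with no temporal decay, and $\snorm{Z(\cdot,s)}_{\dot H^k} + \snorm{A(\cdot,s)}_{\dot H^k} \les M^{2k} e^{-s/2}$, uniformly in $s \ge -\log\eps$; here and below $\les$ absorbs constants depending on $k$, $\alpha$, $\kappa_0$, and $M$.

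For the $A$ estimates (and, verbatim, the $Z$ estimates) I would argue as follows. Given $\gamma$ as in \eqref{eq:A:higher:order}, I factor $\p^\gamma = \p^{\gamma''}\p^\beta$, choosing $\beta$ to carry the strongest decay that is still available as a low-order bootstrap bound: if $\gamma_1 \ge 1$ I take $|\beta| = 1$ with $\beta_1 = 1$, so that $\snorm{\p^\beta A(\cdot,s)}_{L^\infty} \les M e^{-3s/2}$ by \eqref{eq:A_bootstrap} (and for the $Z$ case with $|\gamma| = 3$, $\gamma_1 \ge 1$, I take instead $|\beta| = 2$, $\beta_1 \ge 1$); if $\gamma_1 = 0$, hence $|\gamma| \ge 3$, I take $|\beta| = 2$ with $\beta_1 = 0$, $|\check\beta| = 2$, so that $\snorm{\p^\beta A(\cdot,s)}_{L^\infty} \les M e^{-s}$. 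Then I apply the Gagliardo--Nirenberg inequality on $\RR^3$ from Appendix~\ref{sec:interpolation} to $\p^\beta A$, with $|\gamma''| = |\gamma| - |\beta|$ additional derivatives and top order $k - |\beta|$, namely
\[
\snorm{\p^\gamma A(\cdot,s)}_{L^\infty} \les \snorm{\p^\beta A(\cdot,s)}_{L^\infty}^{1-\theta}\,\snorm{A(\cdot,s)}_{\dot H^k}^{\theta}\,, \qquad \theta = \tfrac{2|\gamma''|}{2(k-|\beta|)-3}\,,
\]
where I have bounded $\snorm{\nabla^{k-|\beta|}\p^\beta A}_{L^2} \le \snorm{A}_{\dot H^k}$. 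Inserting the two endpoint rates, the exponent of $e^{-s}$ produced is the convex combination $\tfrac32(1-\theta) + \tfrac12 \theta = \tfrac32 - \theta$ when $\beta_1 \ge 1$, or $1\cdot(1-\theta) + \tfrac12\theta = 1 - \tfrac{\theta}{2}$ when $\beta_1 = 0$, $|\check\beta| = 2$; a short arithmetic check shows that in each case this rate is at least as large as the one claimed in \eqref{eq:A:higher:order}--\eqref{eq:Z:higher:order}, using $k \ge 18$ to guarantee $\theta \in [0,1]$ and the admissibility of the Sobolev endpoint.

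For $W$, which does not decay in $s$, I would use the $\eta$-weighted variants of the interpolation inequalities from Appendix~\ref{sec:interpolation}. For $\gamma$ as in \eqref{eq:W:GN} I write $\p^\gamma W = \p^{\gamma''}(\p^\beta W)$ with $|\beta| = 2$ and $|\gamma''| = 1$, and use as the low endpoint the bootstrap bound $\snorm{\eta^{1/3}\p^\beta W(\cdot,s)}_{L^\infty} \les M^{(1+|\check\beta|)/3}$ when $\gamma_1 \ne 0$, respectively $\snorm{\eta^{1/6}\check\nabla^2 W(\cdot,s)}_{L^\infty} \les M$ when $\gamma_1 = 0$, both taken from \eqref{eq:W_decay}, while the high endpoint $\snorm{W(\cdot,s)}_{\dot H^k} \les M^{2k}$ carries no weight. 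The weighted Gagliardo--Nirenberg inequality then yields $\snorm{\eta^{\mu}\p^\gamma W(\cdot,s)}_{L^\infty} \les M^{C(k)}$ with $\mu = (1-\theta)\mu_0$, $\theta = \tfrac{2}{2k-7}$ and $\mu_0 \in \{\tfrac13,\tfrac16\}$. Since $\eta \ge 1$ everywhere and, on the support cylinder $\XXX(s)$, $\eta(y) \le 40\eps e^{3s}$ by \eqref{e:space_time_conv}, I rewrite $\eta^{-\mu} = \eta^{-\mu_0}\,\eta^{\theta\mu_0}$ and estimate $\eta^{\theta\mu_0} \les e^{3\theta\mu_0 s}$ on $\XXX(s)$; noting that $3\theta\mu_0 = \tfrac{6\mu_0}{2k-7}$ equals $\tfrac{2}{2k-7}$ when $\mu_0 = \tfrac13$ and $\tfrac{1}{2k-7}$ when $\mu_0 = \tfrac16$, this gives exactly the bounds \eqref{eq:W:GN}, the $M$-dependent constant being absorbed by $\les$.

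The step I expect to demand the most care is the $W$ case: one must invoke the correct weighted Gagliardo--Nirenberg inequality, since it is precisely the mismatch between the $\eta$-weighted low-order norms and the unweighted $\dot H^k$ norm that forces the small exponential growth factors $e^{cs/(2k-7)}$, and one must make the right choice of the factorization $\p^\gamma = \p^{\gamma''}\p^\beta$ and of the low-order bootstrap bound in every case so as to land on (a bound that implies) the stated exponents. Once this bookkeeping is pinned down, the $Z$ and $A$ estimates are routine.
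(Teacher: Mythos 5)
Your overall strategy---interpolating the global $\dot H^k$ bound of Proposition~\ref{cor:L2} against the low-order pointwise bootstrap bounds---is exactly the paper's strategy, and your handling of $A$ and $Z$ is essentially correct and essentially identical to the paper's. The only cosmetic difference there is that for the second case of \eqref{eq:A:higher:order} you split into $\gamma_1\ge 1$ (factor out $\p_1$) versus $\gamma_1=0$ (factor out $\check\nabla^2$), whereas the paper applies Lemma~\ref{lem:GN} to the full Hessian $\nabla^2 A$ once it has assembled $\norm{\nabla^2 A}_{L^\infty}\les M e^{-s}$ from the bootstrap bound on $\check\nabla^2 A$ and the just-proved first-case bound on $\p_1\nabla A$. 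Both work; you should, however, make explicit (as the paper does) that the claimed exponents in \eqref{eq:A:higher:order}--\eqref{eq:Z:higher:order} are deliberately weaker by $\tfrac{1}{2k-5}$ (resp.\ $\tfrac{1}{2k-7}$) than what the raw interpolation gives, because that slack, combined with $e^{-s}\le\eps$, is what absorbs the $M^{2k}$ prefactor.

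The $W$ estimate, which you yourself flagged as the delicate step, is where your argument has a genuine gap. You write $\p^\gamma W = \p^{\gamma''}\p^\beta W$, invoke an ``$\eta$-weighted variant'' of Gagliardo--Nirenberg from Appendix~\ref{sec:interpolation}, and assert $\snorm{\eta^{(1-\theta)\mu_0}\p^\gamma W(\cdot,s)}_{L^\infty}\les M^{C(k)}$ with no growth in $s$. Appendix~\ref{sec:interpolation} contains no such weighted inequality; Lemma~\ref{lem:GN} and Lemma~\ref{lem:tailored:interpolation} are purely unweighted. More importantly, the bound you claim is \emph{strictly stronger} than what the paper proves: on $\XXX(s)$ one has $\eta\les\eps e^{3s}$, so $\eta^{-(1-\theta)\mu_0}=\eta^{-\mu_0}\eta^{\theta\mu_0}\le \eta^{-\mu_0}(\eps e^{3s})^{\theta\mu_0}$, and the paper's bound is precisely $|\p^\gamma W|\les \eta^{-\mu_0}e^{3\theta\mu_0 s}$, with the exponential factor built in. Your post-processing step recovers the paper's bound from the stronger one, but you have not justified the stronger one, and it is far from obvious that any interpolation machinery delivers it.

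What the paper actually does is an explicit commutator computation: it writes $\eta^{\mu}\p^\gamma W = \p^{\gamma'}\bigl(\eta^{\mu}\p^{\gamma''}W\bigr) - \bigl(\p^{\gamma'}\eta^\mu\bigr)\p^{\gamma''}W$ with $|\gamma'|=1$, $|\gamma''|=2$, and $\gamma_1''=\min(\gamma_1,2)$; it bounds the commutator term $II$ directly from \eqref{eq:W_decay} and the pointwise derivative bounds on $\eta^\mu$; it applies the \emph{unweighted} Lemma~\ref{lem:GN} to the function $\eta^\mu\p^{\gamma''}W$, producing $|I|\les M\norm{\eta^\mu\p^{\gamma''}W}_{\dot H^{k-2}}^{2/(2k-7)}$; and it estimates the latter Sobolev norm by the Leibniz rule, \eqref{eq:special1}, Proposition~\ref{cor:L2}, and, crucially, the bound $\norm{D^j\eta^\mu}_{L^p(\XXX(s))}\les\eps^\mu e^{3\mu s}$. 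That last step, forced by the unboundedness of the weight on all of $\RR^3$ but controlled on the support cylinder, is precisely where the $e^{6\mu s/(2k-7)}$ growth enters. So the $s$-growth in \eqref{eq:W:GN} is not a post-processing artifact; it is the output of the weighted $\dot H^{k-2}$ estimate, and your ``weighted GN'' shortcut conceals rather than derives it. To fix your argument, replace the invocation of a weighted GN inequality with this commutator-plus-Leibniz computation.
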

\begin{proof}[Proof of Lemma~\ref{eq:higher:order:ests}]
First, we consider the case $\gamma_1 \geq 1$ and $\abs{\gamma} \in \{2,3\}$.
By Lemma~\ref{lem:GN} (applied to the function $\p_1 A$), \eqref{eq:A_bootstrap}, and Proposition~\ref{cor:L2},
\begin{align}
\norm{\partial^{\gamma} A}_{L^\infty} 
&\les \norm{A}_{\dot H^{k}}^{\frac{2\abs{\gamma}-2}{2k-5}} \norm{\partial_1 A}_{L^\infty}^{\frac{2k - 3 -2\abs{\gamma}}{2k-5}}
\les  \left(M^{2k} e^{-\frac s2} \right)^{\frac{2\abs{\gamma}-2}{2k-5}}  \left(M e^{-\frac32 s}\right)^{\frac{2k -3 -2\abs{\gamma}}{2k-5}}
\les M^{2k} e^{-( \frac 32  - \frac{2|\gamma|-2}{2k-5})s}\notag  \\
&
\les M^{2k} \eps^ {\frac{1}{2k-5}}   e^{-( \frac 32  - \frac{2|\gamma|-1}{2k-5})s}  \les e^{-( \frac 32  - \frac{2|\gamma|-1}{2k-5})s} \,, \label{Aest00}
\end{align}
where we have taken $\eps$ sufficiently small for the last inequality.
Similarly, for $\abs{\gamma} \in \{3,4,5\}$ we apply Lemma~\ref{lem:GN} to $\nabla^2 A$;  together,  \eqref{eq:A_bootstrap} and \eqref{Aest00} provide
bounds for $ \nabla ^2 A$, and hence we find that
\begin{align*}
\norm{\partial^{\gamma}  A}_{L^\infty}
\les \norm{A}_{\dot H^{k}}^{\frac{2\abs{\gamma}-4}{2k-7}} \norm{\nabla^2 A}_{L^\infty}^{\frac{2k -3 -2\abs{\gamma}}{2k-7}}
\les  \left(M^{2k} e^{-\frac s2} \right)^{\frac{2\abs{\gamma}-4}{2k-7}}   \left( M e^{-s}\right)^{\frac{2k -3 -2\abs{\gamma}}{2k-7}}
\les  M^{2k} e^{- (1 - \frac{\abs{\gamma}-2}{2k-7})s} \, .
\end{align*}
For the estimate of $\p^\gamma Z$,  in the case $\gamma_1 \geq 1$ and $\abs{\check \gamma} =3$, we have that
\begin{align*}
\norm{\partial^{\gamma}  Z}_{L^\infty}
&\les \norm{Z}_{\dot H^{k}}^{\frac{2}{2k-7}} \norm{\partial_1 \nabla Z}_{L^\infty}^{\frac{2k -9}{2k-7}}
\les \left(M^{2k} e^{-\frac s2} \right)^{\frac{2}{2k-7}}     \left( Me^{-\frac32 s}\right)^{\frac{2k -9}{2k-7}}
\les  M^{2k} e^{- (\frac32 -\frac{2}{2k-7})s}  \notag \\
& \les  M^{2k}  \eps^ {\frac{1}{2k-7}} e^{- (\frac32 -\frac{3}{2k-7})s}   \les e^{- (\frac32 -\frac{3}{2k-7})s} \,,
\end{align*}
where we have again absorbed  $M^{2k}$ using $\eps^ {\frac{1}{2k-7}}$.
The second estimate for $\p^\gamma Z$ in \eqref{eq:Z:higher:order} for the case that  $\abs{\gamma} \in \{3,4,5\}$ is completely analogous to the corresponding 
estimate for  $\p^\gamma A$.

We next estimate $\abs{\partial^{\gamma}W}$ for $\abs{\gamma}=3$.   To do so, we decompose $\gamma=\gamma'+\gamma''$ such that $\abs{\gamma'}=1$ 
and $\abs{\gamma''}=2$, and  further assume that $\gamma_1''=\min(\gamma_1,2)$.  In order to apply the Gagliardo-Nirenberg inequality, we rewrite 
\begin{align*}
\eta^{\mu}  \partial^{\gamma}  W 
= \eta^{\mu}  \partial^{\gamma'} \p^{\gamma''} W
=\underbrace{\partial^{\gamma'}\left( \eta^{ \mu}\;  \p^{\gamma''}W\right)}_{=:I}
-\underbrace{\partial^{\gamma'} \eta^{ \mu} \; \p^{\gamma''}W}_{=:II}
\end{align*}
and we set $\mu=\sfrac16$ for the case $\gamma_1=0$ and $\mu=\sfrac 13$ otherwise. 
Since $\abs{\p_1 \eta^\mu} \les \eta^{\mu - \frac 12}$ and $ \abs{\check \nabla \eta^\mu} \les \eta^{\mu - \frac 16}$, it immediately follows from  \eqref{eq:W_decay} that
\begin{align*}
\abs{II}\les M\,.
\end{align*}
Now we apply Lemma~\ref{lem:GN} to the function $\eta^{ \mu}  \p^{\gamma''}W$, appeal to the estimate \eqref{eq:W_decay}, and to the Leibniz rule to obtain
\begin{align*}
\abs{I} 
\les \norm{\eta^{ \mu}\p^{\gamma''}W}_{\dot H^{k-2}}^{\frac{2}{2k-7}}\norm{\eta^{\mu}\p^{\gamma''}W}_{L^{\infty}}^{\frac{2k-9}{2k-7}} 
&\les M \norm{\eta^{\mu}\p^{\gamma''}W}_{\dot H^{k-2}}^{\frac{2}{2k-7}}\, ,
\end{align*}
where we have used that $k\ge 18$ for the last inequality as is required by Proposition~\ref{cor:L2}.   We next estimate the $\dot H^{k-2}$ norm of $\eta^{\mu}\p^{\gamma''}W$.   To do so, we shall use the fact that
$W(\cdot,s)$ has support in the set $\XXX(s)$ defined in \eqref{eq:support}. 
From the Leibniz rule and \eqref{eq:special1}, we obtain
\begin{align*}
\norm{\eta^{\mu}\p^{\gamma''}W}_{\dot H^{k-2}} 
&\les \sum_{m=0}^{k-2}\norm{D^{k-m-2}\left(\eta^\mu\right)D^m\p^{\gamma''}W}_{L^2} \notag\\
&\les \sum_{m=0}^{k-2}\norm{D^{k-m-2}\left(\eta^\mu\right)}_{L^{\frac{2(k-1)}{k-2-m}}(\XXX(s))} \norm{D^m\p^{\gamma''}W}_{L^\frac{2(k-1)}{m+1}} \notag\\
&\les \sum_{m=0}^{k-2}\norm{D^{k-m-2}\left(\eta^\mu\right)}_{L^{\frac{2(k-1)}{k-2-m}}(\XXX(s))} \norm{\nabla W}_{L^\infty}^{1-\frac{m+1}{k-1}} \norm{W}_{\dot{H}^k}^{\frac{m+1}{k-1}} \, .
\end{align*}
Using \eqref{eq:W_decay} and Proposition~\ref{cor:L2}, the $W$ terms are bounded as 
\[
\norm{\nabla W}_{L^\infty}^{1-\frac{m+1}{k-1}} \norm{W}_{\dot{H}^k}^{\frac{m+1}{k-1}}  \les  M^{2k}
\]
for all $m \in \{0,\dots,k-2\}$.
Moreover, applying \eqref{e:space_time_conv}, and using that $k\geq 18$ we have 
\[\norm{D^{k-m-2}( \eta^{\mu} )}_{L^{\frac{2(k-1)}{k-m-2}}(\XXX(s))}\les \eps^{\mu} e^{3\mu s} \]
with the usual abuse of notation $L^{\frac{2(k-1)}{k-m-2}}=L^{\infty}$ for $m=k-2$. Combining the above estimates, we obtain the inequality
\begin{align*}
\abs{I} &\les M^{2k} \left( \eps^{\mu} e^{3\mu s} \right)^{\frac{2}{2k-7}} \les  e^{\frac{6\mu s}{2k-7}} 
\end{align*}
for $\eps$  sufficiently small, since $\mu \geq \frac 16$. From the above estimate the bound  \eqref{eq:W:GN} immediately follows.
\end{proof}

Finally, we note that as a consequence of the definitions \eqref{eq:UdotN:S}, the following estimates on $U\cdot \Ncal $ and $S$.

\begin{lemma}\label{lem:US_est}
For $y \in \XXX(s)$ we have
\begin{equation}
\abs{\p^\gamma U\cdot\Ncal}+\abs{\p^\gamma S}
\les  \begin{cases}
 M^{\frac 14} , & \mbox{if } \abs{\gamma}  = 0  \\
 M^{\frac{1+\abs{\check \gamma}}{3}} e^{-\frac s2}\eta^{-\frac13}(y), & \mbox{if }  \gamma_1 \geq 1 \mbox{ and } \abs{ \gamma}=1,2 \\
e^{-\frac s2}, & \mbox{if } \gamma_1=0 \mbox{ and }  \abs{\check \gamma} = 1\\
 M  e^{-\frac s2}\eta^{-\frac16}(y), & \mbox{if } \gamma_1 = 0 \mbox{ and } \abs{\check \gamma} = 2\\
e^{\left(-\frac12+\frac{3}{2k-7}\right)s} \eta^{-\frac13}(y),  &\mbox{if } \gamma_1\neq 0\mbox{ and } \abs{ \gamma}=3\\
e^{\left(-\frac12+\frac{2}{2k-7}\right)s}\eta^{-\frac16}(y) ,  &\mbox{if } \gamma_1=0\mbox{ and } \abs{ \gamma}=3
\end{cases}
\label{eq:US_est}
\end{equation}
while for $\abs{y} \leq \ell$ and $\abs{\gamma} = 4$ we have
\[
 \abs{\p^\gamma U\cdot\Ncal}+\abs{\p^\gamma S} \les  e^{- \frac{s}{2}}\,. 
\]
\end{lemma}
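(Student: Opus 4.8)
The plan is to deduce Lemma~\ref{lem:US_est} directly from the algebraic identities \eqref{eq:UdotN:S}, which express $U\cdot\Ncal$ and $S$ as the linear combinations $\tfrac12(\kappa + e^{-\frac s2}W + Z)$ and $\tfrac12(\kappa + e^{-\frac s2}W - Z)$. Since $\kappa=\kappa(t)$ is independent of $y$, for any multi-index $\gamma$ with $\abs{\gamma}\geq 1$ one has $\p^\gamma(U\cdot\Ncal)=\tfrac12(e^{-\frac s2}\p^\gamma W+\p^\gamma Z)$ and $\p^\gamma S=\tfrac12(e^{-\frac s2}\p^\gamma W-\p^\gamma Z)$, hence $\abs{\p^\gamma U\cdot\Ncal}+\abs{\p^\gamma S}\le e^{-\frac s2}\abs{\p^\gamma W}+\abs{\p^\gamma Z}$. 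The estimates will then follow by inserting the bounds for $\p^\gamma W$ and $\p^\gamma Z$ already recorded in \eqref{eq:W_decay}, \eqref{eq:Z_bootstrap}, \eqref{eq:W:GN}, and \eqref{eq:Z:higher:order}, and checking in each listed case that the $Z$-contribution is subordinate to the $W$-contribution.

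For $\abs{\gamma}=0$ I would instead use the modulation bound $\kappa\le 2\kappa_0$ from \eqref{mod-boot}, together with the support property: by \eqref{e:space_time_conv}, $\eta^{\frac16}(y)\les\eps^{\frac16}e^{\frac s2}$ on $\XXX(s)$, so \eqref{eq:W_decay} gives $e^{-\frac s2}\abs{W}\les\eps^{\frac16}$ while \eqref{eq:Z_bootstrap} gives $\abs{Z}\les M\eps$; hence $\abs{U\cdot\Ncal}+\abs{S}\les\kappa_0+\eps^{\frac16}+M\eps\les M^{\frac14}$, provided $M$ is large enough in terms of $\kappa_0$.

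For $1\le\abs{\gamma}\le 2$ with $\gamma_1\ge 1$, I would read off $\abs{\p^\gamma W}$ from \eqref{eq:W_decay} (merging the two pieces of the $\gamma_1=1$, $\abs{\check\gamma}=0$ bound via $\tilde\eta^{-1/3}(y/2)\les\eta^{-1/3}(y)$, which follows from $\tilde\eta\ge\eta$ and $\eta(y/2)\gtrsim\eta(y)$), so that $e^{-\frac s2}\abs{\p^\gamma W}$ already has the asserted size; it then remains to see that $\abs{\p^\gamma Z}$ is no larger. Here the support property is used in the opposite direction: from $\eta(y)\le 40\eps e^{3s}$ one gets $\eta^{-\frac13}(y)\gtrsim\eps^{-\frac13}e^{-s}$ on $\XXX(s)$, so the claimed bound $M^{\frac{1+\abs{\check\gamma}}{3}}e^{-\frac s2}\eta^{-\frac13}(y)$ is $\gtrsim M^{\frac{1+\abs{\check\gamma}}{3}}\eps^{-\frac13}e^{-\frac{3s}{2}}$, which dominates $\abs{\p^\gamma Z}\le M^{\frac{1+\abs{\check\gamma}}{2}}e^{-\frac{3s}{2}}$ once $\eps$ is small. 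The cases $\gamma_1=0$ with $\abs{\check\gamma}=1,2$ are identical, using the $\eta^{-\frac16}(y)\gtrsim\eps^{-\frac16}e^{-\frac s2}$ version of the same inequality; for $\abs{\gamma}=3$ one proceeds in the same way but with \eqref{eq:W:GN} and \eqref{eq:Z:higher:order}, noting that the extra factors $e^{\frac{2s}{2k-7}}$ (resp.~$e^{\frac{s}{2k-7}}$) appearing in \eqref{eq:W:GN} are exactly what is needed to absorb the slightly worse temporal rate of the corresponding $Z$-bound. Finally, for $\abs{y}\le\ell$ and $\abs{\gamma}=4$ I would split $\p^\gamma W=\p^\gamma\tilde W+\p^\gamma\bar W$, bounding $\p^\gamma\tilde W$ by \eqref{eq:bootstrap:Wtilde4} and $\p^\gamma\bar W$ by its universal supremum over $\{\abs{y}\le 1\}$ (using smoothness of $\bar W$, cf.~\eqref{eq:barW:def}), whence $e^{-\frac s2}\abs{\p^\gamma W}\les e^{-\frac s2}$, while \eqref{eq:Z:higher:order} gives $\abs{\p^\gamma Z}\les e^{-(1-\frac{3}{2k-7})s}\les e^{-\frac s2}$ since $k\ge 18$.

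No genuine obstacle arises here: the lemma is an immediate bookkeeping consequence of \eqref{eq:UdotN:S} and of bounds already in hand. The only point requiring attention is the consistent use of the support estimate \eqref{e:space_time_conv} in both directions — to convert temporal into spatial decay for the $\abs{\gamma}=0$ term, and to see that the $Z$-derivatives are subordinate to the $W$-derivatives for $\abs{\gamma}\ge 1$ — together with the standing requirement that $\eps$ be chosen small in terms of $M$ (hence $\kappa_0$ and $\alpha$) at the end.
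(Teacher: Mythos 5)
Your proposal is correct and follows the same route as the paper: differentiate the identity \eqref{eq:UdotN:S} to reduce to bounds on $e^{-s/2}\p^\gamma W$ and $\p^\gamma Z$, then invoke \eqref{eq:W_decay}, \eqref{eq:bootstrap:Wtilde4}, \eqref{eq:Z_bootstrap}, Lemma~\ref{eq:higher:order:ests}, and the support estimate \eqref{e:space_time_conv}. The only difference is that you make explicit the bookkeeping the paper leaves implicit (the split $W=\bar W+\tilde W$ for $\abs{\gamma}=4$, the two-directional use of \eqref{e:space_time_conv}, and absorbing $\kappa_0$ into $M^{1/4}$), but the proof strategy is identical.
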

\begin{proof}[Proof of Lemma~\ref{lem:US_est}]
We consider the estimates on $\p^\gamma U\cdot\Ncal$. The estimates on $\p^\gamma S$ are completely analogous. By definition \eqref{eq:UdotN:S}
\begin{align*}
\abs{\p^\gamma U\cdot\Ncal}\les \abs{\kappa} {\bf 1}_{\abs{\gamma}=0}+e^{-\frac s 2}\abs{\p^\gamma W}+\abs{\p^\gamma Z} \, .
\end{align*}
Here we used $\abs{\kappa} \leq M^{\frac 14}$. Now we simply apply \eqref{eq:W_decay}, \eqref{eq:bootstrap:Wtilde4}, \eqref{eq:Z_bootstrap}, Lemma \ref{eq:higher:order:ests} and \eqref{e:space_time_conv} to conclude.
\end{proof}

\section{Constraints and evolution of modulation variables}
\label{sec:constraints} 

\subsection{Constraints}
The shock is characterized by the following ten constraints on $W$, which we impose throughout the evolution, by suitably choosing our dynamic modulation variables
\begin{align} 
W(0,s)=0 \,, \quad  \p_{1} W(0,s)=-1\,, \quad \check \nabla W(0,s)=0\,,  \quad
\nabla^2 W(0,s)=0 \,. 
\label{eq:constraints}
\end{align} 
These constraints are maintained under the evolution by suitably choosing our ten time-dependent modulation parameters: $n_2, n_3, \xi_1,\xi_2,\xi_3, \kappa, \tau, \phi_{22},\phi_{23}$ and $\phi_{33}$.

\subsection{Evolution of dynamic modulation variables}
The ten modulation parameters at time $t=-\eps$ are defined as
\begin{align}
\kappa(-\eps) = \kappa_0, \qquad \tau(-\eps) = \xi(-\eps) = n_\mu (-\eps) =  0, \qquad  \phi_{\nu\mu}(-\eps) = \phi_{0,\nu \mu}
\label{eq:modulation:IC}
\,,
\end{align}
where $\kappa_0$ is as in \eqref{eq:w0:power:series} and $\phi_0$ is defined by \eqref{eq:phi:0:def}.
In order to determine the definition for the time derivatives of our seven modulation parameters, we will use the explicit form of the evolution equations for $W$, $\nabla W$ and $\nabla^2 W$. These are ten equations, consistent with the fact that we have ten constraints in \eqref{eq:modulation:IC}.
For  convenience, we first state these evolution equations.

\subsubsection{The evolution equations for $\nabla W$ and $\nabla^2 W$}
From \eqref{euler_for_Linfinity:a} we deduce that the evolution equations for $\nabla W$ are
\begin{subequations} 
\label{eq:gradW}
\begin{align}
\left(\partial_s + 1 + \beta_\tau \Jcal \p_1 W  \right)\p_{1}W
+ (\beta_\tau \Jcal W + G_W + \tfrac{3y_1}{2}) \partial_{11} W  + (\tfrac{y_\mu}{2} + h_W^\mu)  \p_\mu \partial_{1} W 
&= F_{W}^{(1,0,0)} \label{eq:grad:W:a}  \\
\left(\partial_s + \beta_\tau \Jcal \partial_{1} W \right) \partial_{2}W 
+ (\beta_\tau \Jcal W + G_W + \tfrac{3y_1}{2}) \partial_{12} W+ (\tfrac{y_\mu}{2} + h_W^\mu)  \p_\mu \partial_{2} W 
&=F_W^{(0,1,0)}   \label{eq:grad:W:b} \\
\left( \p_s  +\beta_\tau \Jcal \partial_{1} W \right)   \partial_{3}W 
+ (\beta_\tau \Jcal W + G_W + \tfrac{3y_1}{2}) \partial_{13} W+ (\tfrac{y_\mu}{2} + h_W^\mu)  \p_\mu \partial_{3} W 
&=F_W^{(0,0,1)}  \label{eq:grad:W:c}
\end{align} 
\end{subequations} 
where we have denoted
\begin{subequations}
\label{eq:FW:gradient}
\begin{align}
F_{W}^{(1,0,0)} &=\partial_{1} F_W -  \p_1 G_W \p_1 W  - \p_1 h_W^\mu \p_\mu W \label{eq:FW:gradient:a} \\
F_{W}^{(0,1,0)} &=\partial_{2} F_W -  \p_2 G_W \p_1 W -  \p_2 h_W^\mu \p_\mu W  \label{eq:FW:gradient:b} \\ 
F_{W}^{(0,0,1)} &=\partial_{3} F_W -  \p_3 G_W \p_1 W -  \p_3 h_W^\mu \p_\mu W  \label{eq:FW:gradient:c} \, .
\end{align}
\end{subequations}
Applying the gradient to \eqref{eq:grad:W:a}, we arrive at the evolution equation  for $\p_1 \nabla W$, given by
\begin{subequations}
\label{eq:grad:2:W}
\begin{align} 
\left(\partial_s + \tfrac52 +3 \beta_\tau \Jcal \p_{1}W \right)\p_{11} W
+ (\beta_\tau \Jcal W + G_W + \tfrac{3y_1}{2}) \partial_{111} W + (\tfrac{y_\mu}{2} + h_W^\mu) \partial_{11\mu} W
&= F_{W}^{(2,0,0)}   \label{eq:grad:2:W:a} \\
\left(\partial_s + \tfrac{3}{2} +  2 \beta_\tau  \Jcal \p_{1} W  \right)
 \p_{12} W   
+ (\beta_\tau \Jcal W + G_W + \tfrac{3y_1}{2}) \partial_{112} W + (\tfrac{y_\mu}{2} + h_W^\mu)   \partial_{12\mu} W
&= F_W^{(1,1,0)}  \label{eq:grad:2:W:b}  \\
\left(\partial_s +  \tfrac{3}{2} +  2 \beta_\tau \Jcal \p_{1} W  \right)
 \p_{13} W   
+ (\beta_\tau \Jcal W + G_W + \tfrac{3y_1}{2}) \partial_{113} W + (\tfrac{y_\mu}{2} + h_W^\mu)  \partial_{13\mu} W
&= F_W^{(1,0,1)}    \label{eq:grad:2:W:c}
\end{align} 
\end{subequations}
where
\begin{subequations}
\label{eq:FW:D1:gradient}
\begin{align}
F_{W}^{(2,0,0)} &=\partial_{11} F_W - \p_{11} G_W \p_1 W - \p_{11} h_W^\mu \p_\mu W  - 2 \p_1 G_W \p_{11} W - 2 \p_1 h_W^\mu \p_{1\mu} W 
\\
F_W^{(1,1,0)}&=   \p_{12} F_W -   \p_{12} G_W \p_1 W  - \p_{12}  h_W^\mu \p_\mu W  -  \p_1 G_W   \p_{12} W - \p_1 h_W^\mu  \p_{2\mu} W \notag\\
&\qquad - \p_2 G_W \p_{11}  W -  \p_2 h_W^\mu  \p_{1\mu}  W   - \beta_\tau \p_2 (\Jcal W)  \p_{11} W\\
F_W^{(1,0,1)}&= \p_{13} F_W - \p_{13} G_W \p_1 W - \p_{13 }h_W^\mu \p_\mu W -  \p_1 G_W \p_{13} W  - \p_1 h_W^\mu   \p_{3\mu} W \notag\\
&\qquad  - \p_3 G_W \p_{11} W -  \p_3 h_W^\mu   \p_{1\mu} W  - \beta_\tau \p_3 (\Jcal W) \p_{11} W \, .
\end{align} 
\end{subequations}
Lastly, differentiating in the $\check \nabla$ direction equations \eqref{eq:grad:2:W:b}--\eqref{eq:grad:2:W:c} we obtain the evolution equation for $\check \nabla^2 W$ 
\begin{subequations}
\label{eq:grad:2:check:W}
\begin{align} 
\left(\partial_s  + \tfrac12 +  \beta_\tau \Jcal \p_{1}W \right)\p_{22} W
+ (\beta_\tau \Jcal  W + G_W + \tfrac{3y_1}{2}) \partial_{122} W + (\tfrac{y_\mu}{2} + h_W^\mu)  \partial_{22\mu} W
&= F_{W}^{(0,2,0)}   \label{eq:grad:2:check:W:a} \\
\left(\p_s   +  \tfrac{1}{2} +   \beta_\tau \Jcal  \p_{1} W  \right)\p_{23} W   
+ (\beta_\tau \Jcal W + G_W + \tfrac{3y_1}{2}) \partial_{123} W + (\tfrac{y_\mu}{2} + h_W^\mu)  \partial_{23\mu} W
&= F_W^{(0,1,1)}  \label{eq:grad:2:check:W:b}  \\
\left( \p_s  +   \tfrac{1}{2} +    \beta_\tau \Jcal  \p_{1} W  \right)\p_{33} W   
+ (\beta_\tau \Jcal  W + G_W + \tfrac{3y_1}{2}) \partial_{133} W + (\tfrac{y_\mu}{2} + h_W^\mu)  \partial_{33\mu} W
&= F_W^{(0,0,2)}    \label{eq:grad:2:check:W:c}
\end{align} 
\end{subequations}
where
\begin{subequations}
\label{eq:FW:gradient:check:squared}
\begin{align}
F_{W}^{(0,2,0)} &=\partial_{22} F_W - \p_{22} G_W \p_1 W - \p_{22} h_W^\mu \p_\mu W  - 2 \p_2 G_W \p_{12} W - 2 \p_2 h_W^\mu \p_{2\mu} W \notag\\
&\qquad - 2 \beta_\tau \p_2 (\Jcal W) \p_{12} W \\
F_W^{(0,1,1)}&=   \p_{23} F_W -   \p_{23} G_W \p_1 W  - \p_{23}  h_W^\mu \p_\mu  W -  \p_3 G_W   \p_{12} W - \p_3 h_W^\mu  \p_{2\mu} W \notag\\
&\qquad - \p_2 G_W \p_{13}  W -  \p_2 h_W^\mu  \p_{3\mu}  W   - \beta_\tau \p_3 (\Jcal W) \p_{12} W - \beta_\tau \p_2 (\Jcal W) \p_{13} W \\
F_W^{(0,0,2)}&= \p_{33} F_W - \p_{33} G_W \p_1 W - \p_{33} h_W^\mu \p_\mu  W-  2 \p_3 G_W \p_{13} W  - \p_3 h_W^\mu  \p_{3\mu} W \notag\\
&\qquad - 2 \beta_\tau \p_3 (\Jcal W) \p_{13} W   \, .
\end{align}
\end{subequations}

\subsubsection{The functions $G_W, h_W, F_W$ and their derivatives, evaluated at $y=0$}
\label{sec:explicit:crap}
Throughout this section, for a function $\varphi(y,s)$ we denote $\varphi(0,s)$ simply as $\varphi^0(s)$.

From \eqref{def_f}--\eqref{eq:dt:f} evaluated at $\tilde x =0$, the definition of $V$ in \eqref{def_V}, the definition of $G_W$ in \eqref{eq:gW}, and the constraints in \eqref{eq:constraints}, we deduce that\footnote{Here we have used the identities: $ \Ncal_{1,\nu}^0 =  0$, and $ \Ncal_{\mu,\nu}^0 =  -\phi_{\mu\nu}$, $\Ncal_{\zeta,\mu\nu}^0 = 0$.}
\begin{subequations}
\label{Birds_are_rad}
\begin{align}
\tfrac{1}{\beta_\tau} G_W^0 &=  e^{\frac s2} \left( \kappa + \beta_2 Z^0 - 2\beta_1 R_{j1} \dot \xi_j \right) \label{eq:GW:0:a} \\
\tfrac{1}{\beta_\tau}\p_1 G_W^0 &=  \beta_2 e^{\frac s2} \p_1 Z^0 \label{eq:GW:0:b}\\
\tfrac{1}{\beta_\tau}\p_\nu G_W^0 &=    \beta_2  e^{\frac s2} \p_\nu Z^0  + 2  \beta_1   \dot Q_{1 \nu} + 2   \beta_1  R_{j \gamma} \dot \xi_j \phi_{\gamma \nu}  \label{eq:GW:0:c}\\
\tfrac{1}{\beta_\tau}\p_{11}G_W^0 &=  \beta_2  e^{\frac s2} \p_{11} Z^0\label{eq:GW:0:d} \\
\tfrac{1}{\beta_\tau}\p_{1 \nu} G_W^0 &=    \beta_2 e^{\frac s2}\p_{1\nu} Z^0   - 2  \beta_1 e^{-\frac{3s}{2}} \dot{Q}_{\gamma 1} \phi_{\gamma \nu}   \label{eq:GW:0:e} \\
\tfrac{1}{\beta_\tau}\p_{\gamma \nu} G_W^0 &=   e^{- \frac s2} \left(- \dot{\phi}_{\gamma \nu} +  \beta_2  e^s \p_{\gamma \nu} Z^0  - 2\beta_1( \dot{Q}_{\zeta  \gamma} \phi_{\zeta \nu} +  \dot{Q}_{\zeta \nu} \phi_{\zeta \gamma} +  R_{j1} \dot{\xi_j} \Ncal_{1,\gamma\nu}^0 ) + e^{-\frac s2} \tfrac{G_W^0}{ \beta_\tau} \Jcal_{,\gamma\nu}^0   \right) 
\, .\label{eq:GW:0:f}
\end{align} 
\end{subequations}
Similarly, using \eqref{def_f}--\eqref{eq:dt:f}, \eqref{eq:hW} and the constraints in \eqref{eq:constraints} we have that\footnote{Here we have used the identities:  $\Ncal_\mu^0 = 0$, $\Tcal_\mu^{\gamma,0} = \delta_{\gamma\mu}$, $\Tcal_{\mu,\nu}^{\gamma,0} = 0$, $\Ncal_{\mu,\nu \gamma}^0 = 0$,  and $\Tcal_{1,\nu \gamma}^{\zeta,0} = 0$.}
\begin{align}
\tfrac{1}{\beta_\tau} h_W^{\mu,0} &= 2 \beta_1  e^{-\frac s2} \left(  A_\mu^0  -  R_{j\mu} \dot \xi_j \right) \,. \label{eq:hj:0:a} 
\end{align} 
Then, using
\eqref{eq:FW:gradient},  \eqref{eq:FW:D1:gradient}, and \eqref{Birds_are_rad}, 
for any $\gamma \in {\mathbb N}_0^3$ with $\abs{\gamma} =1$ or $\abs{\gamma} = 2$ we have that
\begin{align*}
F_W^{(\gamma),0} = \p^\gamma F_W^0 + \p^\gamma G_W^0 \, .
\end{align*}

Lastly, appealling to \eqref{def_f}--\eqref{eq:dt:f}, \eqref{eq:FW:def},  we have the explicit expressions\footnote{Here we have used the identities: $\Ncal_{\mu,\mu}^0 = - \phi_{22} - \phi_{33}$, $\Tcal^{\nu,0}_{\mu,\mu} = 0$, $\dot \Ncal_i^0 = 0$, $\dot \Ncal_{1,\nu}^0 =0$, $\dot \Ncal_{\mu,\nu}^0 = - \dot{\phi}_{\mu\nu}$, $\Tcal^{\gamma,0}_{1,\nu} = \phi_{\gamma\nu}$, $\Tcal^{\gamma,0}_{i,\nu} \Ncal_{i,\mu}^0 = 0$, $\Tcal_i^{\gamma,0} \Ncal_{i,\mu \nu}^0 = 0$, $\Ncal_{\mu,\mu\nu}^0 = 0$, and $\dot{\Ncal}_{\zeta,\nu\gamma} = 0$.} 
\begin{subequations}
\begin{align}
\tfrac{1}{\beta_\tau} F_W^0 &= -\beta_3 \left(\kappa - Z^0\right) \p_\mu A_\mu^0  + 2\beta_1 e^{-\frac s2} \dot Q_{1\mu} A_\mu^0 - \tfrac{1}{\beta_\tau} h_W^{\mu,0} A_\zeta^0 \phi_{\zeta \mu}  \notag\\
&\quad + \tfrac 12 \beta_3  e^{-\frac s2} (\kappa - Z^0) (\kappa + Z^0) (\phi_{22}+ \phi_{33})
 \label{eq:FW:0:a}  \\
\tfrac{1}{\beta_\tau} \p_1 F_W^0 &= \beta_3 \left(e^{-\frac s2} + \p_1 Z^0\right) \p_\mu A_\mu^0 - \beta_3  \left(\kappa - Z^0\right) \p_{1\mu} A_\mu^0   + 2\beta_1  e^{-\frac s2} \dot Q_{1\mu} \p_1 A_\mu^0 \notag\\
&\quad -   \left(\tfrac{1}{\beta_\tau} h_W^{\mu,0}  \p_1 A_\zeta^0
+ 2 \beta_1  e^{-\frac s2} ( \p_1 A_\mu^0 + e^{-\frac{3s}{2}} \dot{Q}_{\mu 1})  A_\zeta^0 \right) \phi_{\zeta \mu}  \notag\\
&\quad  - \tfrac 12 \beta_3  e^{-s} 
\left( (1 + e^{\frac s2} \p_1 Z^0) (\kappa + Z^0)  + (\kappa - Z^0) (1 - e^{\frac s2}\p_1 Z^0)  \right) (\phi_{22}+ \phi_{33}) 
 \label{eq:FW:0:b} \\
\tfrac{1}{\beta_\tau} \p_\nu F_W^0&= - \beta_3 ( (\kappa - Z^0) \p_{\nu \mu} A_\mu^0 - \p_\nu Z^0 \p_\mu A_\mu^0 )
- 2 \beta_1  e^{-s} A_\mu^0 \dot{\phi}_{\mu \nu} + 2 \beta_1  e^{-\frac s2} \dot Q_{1\mu} \p_\nu A_\mu^0 
\notag\\
&\quad - 2 \beta_1 e^{-s} \dot{Q}_{\mu \zeta} A_\zeta^0   \phi_{\mu \nu}  - \beta_3 e^{-\frac s2}  Z^0   \p_\nu Z^0   (\phi_{22} + \phi_{33}) - \beta_3  e^{-s} \left(\kappa - Z^0 \right)  A_\zeta^0 \Tcal^{\zeta,0}_{\mu, \mu\nu}  
\notag\\
&\quad  - 2 \beta_1 e^{-\frac s2}  \left( (e^{-\frac s2} \dot{Q}_{\mu \nu} + \p_\nu A_\mu^0 - \tfrac 12 e^{-\frac s2} ( \kappa + Z^0) \phi_{\mu\nu}) A_\gamma^0 \right) \phi_{\gamma \mu} 
- \tfrac{1}{\beta_\tau} h_W^{\mu,0} \p_\nu A_\gamma^0  \phi_{\gamma \mu} 
\label{eq:FW:0:c}\\
\tfrac{1}{\beta_\tau} \p_{11} F_W^0 &= \beta_3  \left(e^{-\frac s2} + \p_1 Z^0\right) \p_\mu A_\mu^0 - \beta_3 \left(\kappa - Z^0\right) \p_{1\mu} A_\mu^0   + 2\beta_1  e^{-\frac s2} \dot Q_{1\mu} \p_{11} A_\mu^0 \notag\\
&\quad -  \left(2 \beta_1   e^{-\frac s2} + \tfrac{1}{\beta_\tau} h_W^{\mu,0} \right) \p_{11} A_\zeta^0 \phi_{\zeta \mu}  - 4 \beta_1   e^{-\frac s2}   ( \p_1 A_\mu^0 + e^{-\frac{3s}{2}} \dot{Q}_{\mu 1})  \p_1 A_\zeta^0  \phi_{\zeta \mu} \notag\\
&\quad  - \beta_3 e^{- \frac{s}{2}}  \left( Z^0 \p_{11} Z^0  - e^{-s} (1 - e^{s} (\p_1 Z^0)^2)  \right) (\phi_{22}+ \phi_{33}) 
 \label{eq:FW:0:d}\\
\tfrac{1}{\beta_\tau} \p_{1\nu} F_W^0 &= - \beta_3  \left( (\kappa - Z^0) \p_{1 \nu \mu} A_\mu^0 - \p_{1\nu} Z^0 \p_\mu A_\mu^0 - \p_\nu Z^0 \p_{1\mu} A_\mu^0 - (e^{-\frac s2} + \p_1 Z^0) \p_{\nu \mu} A_\mu^0 \right) \notag\\
&\quad - 2 \beta_1  e^{-s} \p_1 A_\mu^0 \dot{\phi}_{\mu \nu} + 2 \beta_1  e^{-\frac s2} \dot Q_{1\mu} \p_{1\nu} A_\mu^0  - 2 \beta_1  e^{-s} \dot{Q}_{\mu \zeta} \p_1 A_\zeta^0 \phi_{\mu \nu} 
\notag\\
&\quad   - \beta_3 e^{-\frac s2}  ( \p_1 Z^0   \p_\nu Z^0  + Z^0 \p_{1\nu} Z^0) (\phi_{22} + \phi_{33}) \notag\\
&\quad - \beta_3  e^{-s} \left( (\kappa - Z^0) \p_1 A_\zeta^0  - (e^{-\frac s2} + \p_1 Z^0) A_\zeta^0 \right) \Tcal^{\zeta,0}_{\mu, \mu\nu}  
\notag\\
&\quad  - 2 \beta_1  e^{-\frac s2} \left( (e^{-\frac s2} \dot{Q}_{\mu \nu} + \p_\nu A_\mu^0) \p_1 A_\gamma^0 + (e^{-\frac{3s}{2}} \dot{Q}_{\mu 1} + \p_1 A_\mu^0) \p_\nu A_\gamma^0 + A_\mu^0  \p_{1 \nu} A_\gamma^0 \right) \phi_{\gamma \mu} \notag\\
&\quad  - \tfrac{1}{\beta_\tau} h_W^{\mu,0} \p_{1 \nu} A_\gamma^0  \phi_{\gamma \mu} +   \beta_1  e^{-s} \left(   ( \kappa + Z^0) \p_1 A_\gamma^0 - (e^{-\frac s2} - \p_1 Z^0) A_\gamma^0   \right) \phi_{\mu\nu}    \phi_{\gamma \mu}  
\label{eq:FW:0:e}\\
\tfrac{1}{\beta_\tau} \p_{\gamma\nu} F_W^0 
&=  - 2 \beta_3   ( \p_{\nu \gamma} (S \p_\mu A_\mu))^0 - \beta_3  e^{-s} (\kappa - Z^0) \p_{\mu} A_\zeta^0 \Tcal^{\zeta,0}_{\mu,\nu\gamma} \notag\\
&\quad - 2 \beta_1  e^{-s} \p_\nu A_\mu^0 \dot{\phi}_{\mu \gamma}  - 2 \beta_1  e^{-s} \p_\gamma A_\mu^0 \dot{\phi}_{\mu \nu}  -  \beta_3 e^{-\frac s2}  \p_\gamma Z^0 \p_\nu Z^0   (\phi_{22} + \phi_{33}) \notag\\
&\quad + 2 \beta_1  e^{-\frac s2} \dot{Q}_{1\mu} \p_{\gamma \nu} A_\mu^0 - 2 \beta_1 e^{-s} \dot{Q}_{\zeta \mu} \p_\nu A_\mu^0   \phi_{\zeta\gamma}- 2 \beta_1  e^{-s} \dot{Q}_{\zeta \mu } \p_\gamma A_\mu^0   \phi_{\zeta \nu}  \notag\\
&\quad +2 \beta_1   e^{-\frac{3s}{2}} A_\mu^0 \left( \dot{Q}_{1\zeta} (\phi_{\nu \mu} \phi_{\zeta\gamma} + \phi_{\mu\gamma} \phi_{\zeta\nu} + \phi_{\nu \gamma} \phi_{\mu \zeta} +  \Tcal^{\mu,0}_{\zeta,\nu\gamma}) + \dot{Q}_{1\mu} \Ncal_{1,\nu\gamma}^0 \right) \notag\\
&\quad - \beta_3 e^{-s} \left( (\kappa - Z^0)  \p_\nu A_\zeta^0 - \p_\nu Z^0 A_\zeta^0 \right)\Tcal^{\zeta,0}_{\mu, \mu\gamma}  - \tfrac 12 \beta_3   e^{-\frac{3s}{2}} (\kappa - Z^0) (\kappa + Z^0) \Ncal^0_{\mu,\mu\nu\gamma}
\notag\\
&\quad  - 2 \beta_1 e^{-\frac s2} \left(  e^{-\frac s2} \dot{Q}_{\mu \nu} \p_\gamma A_\zeta^0 + e^{-\frac s2} \dot{Q}_{\mu \gamma} \p_\nu A_\zeta^0 + \p_{\nu\mu} A_\mu^0 A_\zeta^0 + \p_\mu A_\mu^0 \p_\nu A_\nu^0 + \p_\nu A_\mu^0 \p_\mu A_\nu^0\right) \phi_{\zeta \mu} \notag\\
&\quad + 2 \beta_1 e^{-s} \left( \p_\nu ( (U\cdot \Ncal) A_\zeta)^0 \phi_{\mu \gamma} \phi_{\zeta\mu} + \p_\gamma ( (U\cdot \Ncal) A_\zeta)^0 \phi_{\mu \nu} \phi_{\zeta\mu}\right)   - 2\beta_1 e^{-\frac{3s}{2}} A_\iota^0 A_\zeta^0 \Tcal^{\zeta,0}_{\mu, \nu \gamma} \phi_{\iota\mu}
\notag\\
&\quad - \tfrac{1}{\beta_\tau} h_{W}^{\mu,0} \p_{\nu\gamma} A_\zeta^0  \phi_{\zeta \mu} +  e^{-s} \tfrac{1}{\beta_\tau} h_W^{\mu,0} A_\iota^0 \left( \phi_{\iota\nu} \Ncal^0_{1,\mu\gamma} + \phi_{\iota \gamma} \Ncal^0_{1,\mu\nu} + \Ncal^0_{\alpha,\mu\nu\gamma} \right)
 \label{eq:FW:0:f}
\end{align} 
\end{subequations}

\subsubsection{The equations for the constraints}
\label{sec:implicit:crap}
The evolution equations for $W$, $\nabla W$ and $\nabla^2 W$ at $y=0$ yield the equations from which we will deduce the definitions of our constraints 
$\tau, \kappa, \check n, \xi$ and $\phi$. In this subsection, we collect these equations. Then we untangle their coupled nature to actually define the constraints.

At this stage is it convenient to introduce the notation 
$$
\PP_{\diamondsuit}({\mathsf{b}}_1,\ldots,{\mathsf{b}}_n  \big|  {\mathsf{c}}_1,\ldots, {\mathsf{c}}_n) \qquad  \text{ and } \qquad
\mathcal{R}_{\diamondsuit}({\mathsf{b}}_1,\ldots,{\mathsf{b}}_n  \big|  {\mathsf{c}}_1,\ldots, {\mathsf{c}}_n)
$$
to denote a linear function in the parameters $ {\mathsf{c}}_1,\ldots, {\mathsf{c}}_n$ with (bounded in $s$) coefficients  which depend on 
${\mathsf{b}}_1,\ldots,{\mathsf{b}}_n$ through smooth polynomial (for $\PP_\diamondsuit$), respectively,  rational functions (for $\mathcal{R} _\diamondsuit$), 
and on the derivatives of $Z$ and $A$ evaluated at $y=0$.  In particular, these bounds can depend on the constant $M$.
Throughout this section, we will implicitly use the bootstrap estimates \eqref{eq:Z_bootstrap} 
and \eqref{eq:A_bootstrap} to establish these uniform bounds   on the  coefficients, which in turn,  yields local well-posedness of the coupled system of  
ODE for the modulation variables.  

The subscript 
$\diamondsuit$ denotes a label, used to 
distinguish the various functions $\PP_\diamondsuit$ and  $\mathcal{R} _\diamondsuit$.  We note that all of the denominators in $ \mathcal{R} _\diamondsuit$ are
bounded from below by a universal constant.   It is important to note that the notation $\PP_\diamondsuit$ and  $\mathcal{R} _\diamondsuit$ is never
used when explicit bounds are required.

First, we evaluate the equation for $W$ at $y=0$ to obtain a definition for $\dot \kappa$. Using \eqref{eq:euler:ss:a} and \eqref{eq:constraints} we obtain that 
\begin{align}
 - G_W^0 = F_W^0 - e^{-\frac s2} \beta_\tau \dot \kappa \qquad \Rightarrow \qquad \dot \kappa = \tfrac{1}{\beta_\tau} e^{\frac s2} \left( F_W^0 + G_W^0 \right) \, .
  \label{eq:dot:kappa:1}
\end{align}
Using the above introduced notation, upon recalling the definition \eqref{eq:FW:0:a} we deduce that \eqref{eq:dot:kappa:1} may be written schematically as
\begin{align}
\dot \kappa = \PP_\kappa \left(\kappa, \phi \, \big| \,  \dot{Q}, \tfrac{1}{\beta_\tau} e^{\frac s2} h_W^{,0}, \tfrac{1}{\beta_\tau} e^{\frac s2} G_W^{0}  \right)  \, .
  \label{eq:dot:kappa:2}
\end{align}
Once we compute $h_W^{,0}$ and $G_W^{0}$ (cf.~\eqref{eq:GW:def:1}--\eqref{eq:hj:def:1} below) we will return to the formula \eqref{eq:dot:kappa:2}.

Next, we evaluate the equation for $\p_1 W$ at $y=0$ and obtain a formula for $\dot \tau$.
From \eqref{eq:grad:W:a}, \eqref{eq:FW:gradient:a}, and using that $-1+\beta_\tau = \frac{\dot\tau}{1-\dot\tau} = \dot \tau \beta_\tau$, we obtain that 
\begin{align}
 - (1-\beta_\tau) = \p_1 F_W^0 + \p_1 G_W^0  \qquad \Rightarrow \qquad \dot \tau = \tfrac{1}{\beta_\tau} \left(\p_1 F_W^0 + \p_1 G_W^0 \right) \, .
  \label{eq:dot:tau:1}
\end{align}
Using the above introduced notation, upon recalling the explicit functions \eqref{eq:GW:0:b} and \eqref{eq:FW:0:b} we deduce that \eqref{eq:dot:tau:1} may be written schematically as
\begin{align}
\dot \tau = \PP_{  \tau} \left(\kappa,\phi  \, \big| \,  e^{-2s} \dot{Q},  \tfrac{1}{\beta_\tau} h_W^{,0} \right)   \, .
  \label{eq:dot:tau:2}
\end{align}
Once we compute $h_W^{,0}$ and $G_W^{0}$ (cf.~\eqref{eq:GW:def:1}--\eqref{eq:hj:def:1} below) we will return to  \eqref{eq:dot:tau:2}.

We turn to the evolution equation for $\check \nabla W$ at $y=0$, which gives that $\dot Q_{1j}$. Note that once $\dot Q_{1j}$ is known, we can determine $\dot{\check n}$ thorough an algebraic computation; this will be done later. Evaluating \eqref{eq:grad:W:b}--\eqref{eq:grad:W:c} at $y=0$ and using \eqref{eq:FW:gradient:b}--\eqref{eq:FW:gradient:c} we obtain for $\nu \in \{2,3\}$ that
\begin{align}
F_W^{0,(0,1,0)} = F_W^{0,(0,0,1)} = 0 \qquad \Rightarrow \qquad \p_\nu F_W^0 + \p_\nu G_W^0 = 0 \, .
\label{eq:check_derivative_GW}
\end{align}
By appealing to \eqref{eq:GW:0:c} and \eqref{eq:FW:0:c}, and placing the leading order term in  $\dot Q$ on one side, we obtain
\begin{align}
\dot Q_{1 \nu} 
&= - e^{-\frac s2} \dot Q_{1\mu} \p_\nu A_\mu^0 +   e^{-s} \dot{Q}_{\mu \zeta} A_\zeta^0   \phi_{\mu \nu}  +  e^{-s} \dot{Q}_{\mu \nu} A_\zeta^0 \phi_{\zeta \mu} - \tfrac{\beta_2}{2 \beta_1} e^{\frac s2} \p_\nu Z^0 
+    e^{-s} A_\mu^0 \dot{\phi}_{\mu \nu} \notag\\
&  + \tfrac{\beta_3}{2 \beta_1} \left( (\kappa - Z^0) \p_{\nu \mu} A_\mu^0 - \p_\nu Z^0 \p_\mu A_\mu^0 \right)
 + \tfrac{\beta_3}{\beta_1} e^{-\frac s2}  Z^0   \p_\nu Z^0   (\phi_{22} + \phi_{33}) + \tfrac{\beta_3}{2 \beta_1}  e^{-s} \left(\kappa - Z^0 \right)  A_\zeta^0 \Tcal^{\zeta,0}_{\mu, \mu\nu}  
\notag\\
&   +  e^{-\frac s2}  \left( ( \p_\nu A_\mu^0 - \tfrac 12 e^{-\frac s2} ( \kappa + Z^0) \phi_{\mu\nu}) A_\gamma^0 \right) \phi_{\gamma \mu}  +  \tfrac{1}{2\beta_1\beta_\tau} h_W^{\mu,0} \p_\nu A_\gamma^0   \phi_{\gamma \mu}  -  \left(\tfrac{1}{2\beta_1\beta_\tau} e^{\frac s2} h_W^{\gamma,0} - A_\gamma^0 \right) \phi_{\gamma \nu}
\label{eq:dot:Q:1} \, .
\end{align}

We schematically write \eqref{eq:dot:Q:1} as
\begin{align}
\dot{Q}_{1\nu} = \PP_{Q,\nu} \left(\kappa, \phi  \, \big| \,  \tfrac{1}{\beta_\tau} e^{\frac s2} h_W^{,0}, e^{-s} \dot \phi, e^{-s} \dot Q \right)
\label{eq:dot:Q:2} \, . 
\end{align}
Note that once $\dot Q_{1\nu}$ is known, we can determine $\dot{n}_2$ and $\dot{n}_3$ by recalling from \eqref{eq:Q:def}, \eqref{eq:Q2:def}, \eqref{eq:Q3:def}  that
\begin{align}
\left[
\begin{matrix} 
1 + \tfrac{n_2^2}{n_1(1+n_1)} & \tfrac{n_2 n_3}{n_1(1+n_1)} \\
\tfrac{n_2 n_3}{n_1(1+n_1)} & 1 + \tfrac{n_3^2}{n_1(1+n_1)}   \\
\end{matrix}\right]  
\left[
\begin{matrix} 
\dot{n}_2  \\
\dot{n}_3 \\
\end{matrix}\right]  
=
\left(\Id + \tfrac{\check n \otimes \check n}{n_1(1+n_1)} \right) \dot{\check n}
= 
\left[
\begin{matrix} 
\dot{Q}_{12} \\
\dot{Q}_{13} \\
\end{matrix}\right]  \,,
\label{eq:dot:n:def}
\end{align}
where $n_1 = \sqrt{1- n_2^2-n_3^2}$. Since the vector $\check n$ is small (see~\eqref{eq:speed:bound} below), and the matrix on the left side is an $\OO(|\check n|^2)$ 
perturbation of the identity matrix, we obtain from \eqref{eq:dot:n:def} a definition of $\dot n$, as desired.

Next, we turn to the evolution of $\p_1 \nabla W$ at $y=0$. This constraint allows us to compute $G_W^0$ and $h_W^{\mu,0}$, which in turn allows us to express $\dot \xi$. First we focus on computing $G_W^0$ and $h_W^{\mu,0}$. Evaluating \eqref{eq:grad:2:W} at $y=0$ and using \eqref{eq:FW:D1:gradient}, for $i\in \{1,2,3\}$ we obtain
\begin{align}
G_W^0 \p_{1i1}W^0 + h_W^{\mu,0} \p_{1i\mu} W^0 &= \p_{1i} F_W^0 + \p_{1i} G_W^0 \, .
\label{eq:Hessian:emerges}
\end{align}
On the left side of the above identity we recognize the matrix  
\begin{align}
{\mathcal H}^0(s) := (\p_1\nabla^2 W)^0(s) 
\label{eq:d1W:Hessian}
\end{align}
acting on the vector with components $G_W^0$, $h_{W}^{2,0}$, and $h_{W}^{3,0}$. We will show  that the matrix ${\mathcal H}^0$  remains very close to the matrix ${\rm diag}(6,2,2)$, for all $s\geq -\log \eps$, and thus it is invertible (see~\eqref{eq:inverse:Hessian} below).
Therefore, we can express
\begin{subequations}
\begin{align}
G_W^0 &= ({\mathcal H}^0)^{-1}_{1i}  (\p_{1i} F_W^0 + \p_{1i} G_W^0) \label{eq:GW:def:1} \\
h_W^{\mu,0} &= ({\mathcal H}^0)^{-1}_{\mu i}  (\p_{1i} F_W^0 + \p_{1i} G_W^0) \label{eq:hj:def:1}\, .
\end{align}
\end{subequations}
Inspecting \eqref{eq:GW:0:d}--\eqref{eq:GW:0:e} and \eqref{eq:FW:0:d}--\eqref{eq:FW:0:e} and inserting them into \eqref{eq:hj:def:1}, we initially obtain the   dependence
$$
\tfrac{1}{\beta_\tau}  h_W^{\mu,0}  = e^{-\frac s2} \mathcal{R} _{h,\mu}\left(\kappa, \phi  \, \big| \,  e^{-s} \dot Q, e^{-2s} \dot \phi \right) - \tfrac{1}{\beta_{\tau}} h_W^{\gamma,0} ({\mathcal H}^0)^{-1}_{\mu i}  \phi_{\zeta \gamma} \p_{1i} A_\zeta^0 \, .
$$
Note that although $h_W^{,0}$ appears on both sides of the above, the dependence on the right side is paired with a factor of $e^{-s} \leq \eps$, and the functions $\phi_{\zeta\gamma}$ are themselves expected to be $\leq \eps$ for all $s\geq - \log \eps$ (cf.~\eqref{eq:speed:bound} below). This allows us to schematically write 
\begin{align}
\tfrac{1}{\beta_\tau}  h_W^{\mu,0}  = e^{-\frac s2} \mathcal{R} _{h,\mu}\left(\kappa, \phi  \, \big| \,  e^{-s} \dot Q, e^{-2s} \dot \phi \right) \,.
\label{eq:hj:def:2}
\end{align}
Returning to \eqref{eq:GW:def:1},
inspecting \eqref{eq:GW:0:d}--\eqref{eq:GW:0:e} and \eqref{eq:FW:0:d}--\eqref{eq:FW:0:e}, and using \eqref{eq:hj:def:2} we also obtain the dependence
\begin{align}
\tfrac{1}{\beta_\tau} G_W^0 =  e^{-\frac s2} \mathcal{R} _{h,\mu}\left(\kappa, \phi  \, \big| \,  e^{-s} \dot Q, e^{-2s} \dot \phi \right) \label{eq:GW:def:2}  \, .
\end{align}
 
Upon inspecting \eqref{eq:GW:0:a} and \eqref{eq:hj:0:a}, and noting the invertibility of the matrix $R$ in \eqref{eq:R:def} it is clear why \eqref{eq:GW:def:1}--\eqref{eq:hj:def:1} allow us to compute $\xi_j$. Indeed, from \eqref{eq:GW:0:a}, \eqref{eq:hj:0:a},  \eqref{eq:GW:def:1}--\eqref{eq:hj:def:1}, and the fact that $R R^T = \Id$ we deduce that 
\begin{align}
 \dot{\xi}_j 
 = R_{ji}  ( R^T \dot \xi)_i 
 = R_{j1} \left( \tfrac{1}{2\beta_1} (\kappa + \beta_2 Z^0) - \tfrac{1}{2\beta_1\beta_\tau} e^{-\frac s2} G_W^0 \right) + R_{j\mu} \left(A_\mu^0 - \tfrac{1}{2\beta_1\beta_\tau} e^{\frac s2} h_W^{\mu,0} \right) 
\label{eq:dot:xi:def:1}
\end{align}
for $j \in \{1,2,3\}$. Using \eqref{eq:hj:def:2} and \eqref{eq:GW:def:2}, we may then schematically write
\begin{align}
 \dot{\xi}_j  =\mathcal{R} _{\xi,j} \left(\kappa, \phi  \, \big| \,  e^{-s} \dot Q,  e^{-2s} \dot \phi \right) \, .
\label{eq:dot:xi:def:2} 
\end{align}

Lastly, we record the evolution of $\check \nabla^2 W$ at $y=0$. From this constraint we will deduce the evolution equations for $\phi_{jk}$. Evaluating \eqref{eq:grad:2:check:W} at $y=0$, using the definitions \eqref{eq:FW:gradient:check:squared}, we obtain 
\begin{align*}
G_W^0 \p_{1\nu\gamma} W^0  + h_W^{\mu,0} \p_{\mu \nu \gamma}W^0  = \p_{\nu\gamma} F_W^0 + \p_{\nu\gamma} G_W^0
\end{align*}
for $\nu ,\gamma \in \{2,3\}$.
Using \eqref{eq:GW:def:1} and \eqref{eq:hj:def:1} we rewrite the above identity as
\begin{align}
\p_{\nu\gamma} G_W^0 = ({\mathcal H}^0)^{-1}_{1i}  (\p_{1i} F_W^0 + \p_{1i} G_W^0)  \p_{1\nu\gamma} W^0  + ({\mathcal H}^0)^{-1}_{\mu i}  (\p_{1i} F_W^0 + \p_{1i} G_W^0)  \p_{\mu \nu \gamma}W^0  - \p_{\nu\gamma} F_W^0 \, .
\label{eq:D2:GW:def:1}
\end{align}
Note that $\dot \phi_{\nu\gamma}$ is determined  in terms of $e^{\frac s2} \p_{\nu\gamma} G_W^0$ through the first term on the right side of~\eqref{eq:GW:0:f}
\begin{align}
\dot{\phi}_{\gamma \nu} &=  -  \tfrac{1}{\beta_\tau}  e^{\frac s2}  \left(G_W^0 \p_{1\nu\gamma} W^0 + h_W^{\mu,0} \p_{\mu\nu\gamma} W^0 - \p_{\nu\gamma} F_W^0 \right)+  \beta_2   e^s \p_{\gamma \nu} Z^0 -  2\beta_1( \dot{Q}_{\zeta  \gamma} \phi_{\zeta \nu} +  \dot{Q}_{\zeta \nu} \phi_{\zeta \gamma}) \notag\\
&\quad + \left(\tfrac{1}{ \beta_\tau} e^{-\frac s2} G_W^0 - \kappa - \beta_2 Z^0\right)  \Ncal_{1,\gamma\nu}^0   + \Jcal_{,\gamma\nu}^0 \tfrac{1}{ \beta_\tau} e^{-\frac s2} G_W^0  \,,
\label{eq:dot:phi:def:1}
\end{align}
and \eqref{eq:GW:def:1} is used to determine $G_W^0$. In light of \eqref{eq:FW:0:f}, \eqref{eq:GW:def:2} and of \eqref{eq:dot:phi:def:1}, we may schematically write
\begin{align*}
\dot{\phi}_{\gamma \nu} =\mathcal{R} _{\phi,\gamma\nu} \left(\kappa, \phi  \, \big| \,  e^{-s}\dot{Q}, e^{-s} \dot \phi  \right)   - \dot Q_{\zeta\gamma} \phi_{\zeta\nu} -  \dot Q_{\zeta\nu} \phi_{\zeta\gamma}\,,
\end{align*}
which may be then combined with \eqref{eq:dot:Q:2} and \eqref{eq:hj:def:2}
to yield
\begin{align}
\dot{\phi}_{\gamma \nu} = \mathcal{R} _{\phi,\gamma\nu} \left(\kappa, \phi  \, \big| \,  e^{-s}\dot{Q}, e^{-s} \dot \phi  \right)  \,,
\label{eq:dot:phi:def:2}
\end{align}
thus spelling out the dependences of $\dot \phi$ on the other dynamic variables.
 
\subsubsection{Solving for the dynamic modulation parameters} 
The computations of the previous subsection derive implicit definitions for the time derivatives of our ten modulation parameters, in terms of these parameters themselves and of the derivatives of $Z$ and $A$ at the origin.  The goal of this subsection is to show that this system of ten coupled nonlinear ODEs has a local existence of solutions, with initial datum as given by \eqref{eq:modulation:IC}. In Section~\ref{sec:dynamic:closure} it will be then shown that the system of ODEs for the modulation parameters is in fact solvable globally in time, for all $s\geq -\log \eps$.

By combining \eqref{eq:dot:Q:2} and \eqref{eq:hj:def:2} with \eqref{eq:dot:n:def}, and recalling \eqref{eq:dot:phi:def:2} we obtain that 
\[
\dot{\phi}_{\gamma \nu} = \mathcal{R} _{\phi,\gamma\nu} \left(\kappa, \phi, \check n  \, \big| \,  e^{-s}\dot{\check n}, e^{-s} \dot \phi  \right) 
\quad \mbox{and} \quad 
\dot{n}_\nu = \mathcal{R} _{n,\nu} \left(\kappa, \phi, \check n  \, \big| \,  e^{-s}\dot{\check n}, e^{-s} \dot \phi  \right) \,.
\]
Therefore, since $e^{-s} \leq \eps$, and the functions $\PP_{\phi,\gamma\nu}$ and $\PP_{n,\nu}$ are linear in $e^{-s} \dot{\check n}$ and $e^{-s} \dot \phi$, then as long as $\kappa$, $\phi$, and $\check n$ remain bounded,  and $\eps$ is taken to be sufficiently small (in particular, for short time after $t= -\log \eps$), we may analytically solve for $\dot \phi$ and $\dot n$ as rational functions (with bounded denominators) of $\kappa, \phi$, and $\check n$, with coefficients which only depend on the derivatives of $Z$ and $A$ at $y=0$. We write this schematically as 
\begin{align}
\dot{\phi}_{\gamma \nu} = \EE_{\phi,\gamma\nu} \left(\kappa, \phi, \check n  \right) 
\quad \mbox{and} \quad 
\dot{n}_\nu = \EE_{n,\nu} \left(\kappa, \phi, \check n    \right) \,.
\label{eq:dot:phi:dot:n}
\end{align}
Here the $\EE_{\phi,\gamma\nu}(\kappa,\phi,\check n)$ and $\EE_{n,\nu}(\kappa,\phi,\check n)$ are suitable smooth functions of their arguments, as described above.
With \eqref{eq:dot:phi:dot:n} in hand, we return to \eqref{eq:dot:kappa:2} and \eqref{eq:dot:tau:2}, which are to be combined with \eqref{eq:hj:def:2}, and with \eqref{eq:dot:xi:def:2} to obtain that
\begin{align}
\dot{\kappa} = \EE_{\kappa} \left(\kappa, \phi, \check n  \right) 
\,, \qquad
\dot{\tau} = \EE_{\tau} \left(\kappa, \phi, \check n    \right) \,
\quad \mbox{and} \quad 
\dot{\xi}_j = \EE_{\xi,j} \left(\kappa, \phi, \check n    \right)\,.
\label{eq:dot:kappa:dot:tau}
\end{align}
for suitable smooth functions $\EE_{\kappa}, \EE_{\tau},$ and $\EE_{\xi,j}$ of $(\kappa,\phi,\check n)$,  with coefficients  which depend on the derivatives of $Z$ and $A$ at $y=0$. 

\begin{remark}[Local solvability]
\label{rem:local:constraints}
The system of ten nonlinear ODEs described in \eqref{eq:dot:phi:dot:n} and \eqref{eq:dot:kappa:dot:tau} are used to determine the time evolutions of our ten dynamic modulation variables. The local in time solvability of this system is ensured by the fact that $\EE_{\phi,\gamma\nu}, \EE_{n,\nu}, \EE_{\kappa},  \EE_{\tau}, \EE_{\xi,j}$ are rational functions of $\kappa, \phi, n_2$, and $n_3$, with coefficients that only depend on $\p^\gamma Z^0$ and $\p^\gamma A^0$ with $\abs{\gamma} \leq 3$, and moreover that these functions are smooth in the neighborhood of the initial values given by \eqref{eq:modulation:IC}; hence, unique $C^1$ solutions exist for a sufficiently small time.
We emphasize that these functions are explicit, once one traces back the identities in Sections~\ref{sec:explicit:crap} and~\ref{sec:implicit:crap}, which will play a crucial role in Section~\ref{sec:dynamic:closure}, when we prove the bootstrap \eqref{mod-boot}.
\end{remark}

\section{Closure of bootstrap estimates for the dynamic variables}
\label{sec:dynamic:closure}
In this section, we close the bootstrap assumptions on our dynamic modulation parameters, meaning that we establish \eqref{eq:speed:bound} and 
\eqref{eq:acceleration:bound} with constants that are better by at least a factor of $2$.

The starting point is to obtain bounds for $G_W^0$ and $h_W^{\mu,0}$, by appealing to \eqref{eq:GW:def:1}--\eqref{eq:hj:def:1}. The matrix ${\mathcal H}^0$ defined in \eqref{eq:d1W:Hessian} can be rewritten as
\begin{align*}
{\mathcal H}^0(s) = (\p_1\nabla^2 W)^0(s)  =  (\p_1\nabla^2 \bar W)^0 + (\p_1\nabla^2 \tilde W)^0(s) = {\rm diag}(6,2,2) + (\p_1\nabla^2 \tilde W)^0(s).
\end{align*}
From the bootstrap assumption \eqref{eq:bootstrap:Wtilde3:at:0}  we have that 
$\abs{(\p_1\nabla^2 \tilde W)^0(s)} \leq \eps^{\frac 14}$ for all $s\geq - \log \eps$, and thus 
\begin{align}
\abs{({\mathcal H}^0)^{-1}(s)} \leq 1
\label{eq:inverse:Hessian}
\end{align}
for all $s\geq -\log \eps$. Next, we estimate $\p_1 \nabla F_W^0$. Using \eqref{eq:FW:0:d}, \eqref{eq:FW:0:e}, the bootstrap assumptions  \eqref{eq:speed:bound}--\eqref{eq:beta:tau}, the bound \eqref{eq:Z_bootstrap}--\eqref{eq:Z:higher:order}, and the fact that $\sabs{\Tcal^{\zeta,0}_{\mu,\mu\nu} }\leq \abs{\phi}^2$, after a computation we arrive at 
\begin{align}
\abs{\p_1 \nabla F_W^0 }
&\les M \eps^{\frac 12} e^{-s} + M^2 e^{- \frac{3}{2} (1- \frac{4}{2k-5})s} + \abs{h_W^{\cdot,0}} M^3 \eps e^{- \frac{3}{2} (1- \frac{4}{2k-5})s} \, .
\label{eq:F_WZ_deriv_est:replace:1}
\end{align}
Moreover,  from \eqref{eq:GW:0:d}, \eqref{eq:GW:0:e}, \eqref{eq:speed:bound}, \eqref{eq:acceleration:bound},  the first line in \eqref{eq:Z_bootstrap}, the previously established bound \eqref{eq:F_WZ_deriv_est:replace:1}, and the fact that $k \geq 10$, that 
\begin{align}
\abs{\p_{1}\nabla G_W^{0}} + \abs{\p_{1} \nabla F_W^{0}} 
&\les e^{\frac s2} \abs{\p_1 \nabla Z^0} +  M^4 \eps^{\frac 32} e^{-\frac{3s}{2}} + e^{-s} + \eps^2  \abs{h_W^{\cdot,0}} \notag\\
& \les M e^{-s}+ \eps^2  \abs{h_W^{\cdot,0}} \,.
\label{eq:rainy:Halloween}
\end{align}
The bounds  \eqref{eq:inverse:Hessian} and \eqref{eq:rainy:Halloween}, are then inserted  into \eqref{eq:GW:def:1}--\eqref{eq:hj:def:1}. After absorbing the $\eps^2  \abs{h_W^{\cdot,0}}$ term into the left side, we obtain  to estimate
\begin{align}
 \abs{G_W^{0}(s)} + \abs{h_W^{\mu,0}(s)} \les M e^{-s}  \,.
\label{eq:GW:hW:0} 
\end{align}
The bound \eqref{eq:GW:hW:0} plays a crucial role in the following subsections.

\subsection{The $\dot \tau$ estimate}
From \eqref{eq:dot:tau:1}, the definition of $\p_1 G_W^0$ in \eqref{eq:GW:0:b},   the definition of $\p_1 F_W^0$ in \eqref{eq:FW:0:b} , the bootstrap estimates \eqref{eq:speed:bound}--\eqref{eq:beta:tau}, \eqref{eq:Z_bootstrap}, \eqref{eq:A_bootstrap}, and the previously established bound \eqref{eq:GW:hW:0}, we  obtain that 
\begin{align}
\abs{\dot \tau} &\les \abs{\p_1 G_W^0} + \abs{\p_1 F_W^0} \notag\\
&\les e^{\frac s2} \abs{\p_1 Z^0}  +  e^{-\frac s2} \abs{\check \nabla A^0} + M \abs{\check \nabla \p_1 A^0} + M^2 \eps^{\frac 12} e^{-\frac s2}\abs{\p_1 A^0} + M^2 \eps  e^{-2s} \abs{A^0} + M^3 \eps e^{-s} \notag\\ 
&\les M^{\frac 12} e^{-s} + M \eps^{\frac 12} e^{-s} + M e^{- \frac 32 (1 - \frac{2}{2k-5})s} + M^3 \eps e^{s} \notag\\
&\leq \tfrac{M}{4} e^{-s}\,, \label{tau-final}
\end{align}
where we have that $k\geq 10$, and have used a power of $M$ to absorb the implicit constant in the first inequality above. This improves the bootstrap bound for $\dot \tau$ in \eqref{eq:acceleration:bound} by a factor of $4$. Integrating in time from $-\eps$ to $T_*$, where $\abs{T_*} \leq \eps$, we also improve the $\tau$ bound in \eqref{eq:speed:bound} by a factor of $2$, thereby closing the $\tau$ boostrap.

\subsection{The $\dot \kappa$ estimate} 
From \eqref{eq:dot:kappa:1}--\eqref{eq:beta:tau}, the bound \eqref{eq:GW:hW:0}, the definition of $F_W^0$ in \eqref{eq:FW:0:a}, and the estimates \eqref{eq:Z_bootstrap} and \eqref{eq:A_bootstrap}, we deduce that 
\begin{align*}
\abs{\dot \kappa} 
&\les  e^{\frac s2} \abs{G_W^0} + e^{\frac s2} \abs{F_W^0}  
\notag\\
&\les M e^{-\frac s2} + (\kappa_0 + M\eps)  M \eps^{\frac 12} e^{-\frac s2} + M^3 \eps^{\frac 32} e^{-\frac s2} + M^4 \eps^2 e^{-\frac s2}  + e^{-\frac s2} (\kappa_0^2 + M^2 \eps^2) M^2 \eps \notag\\
&\les M e^{-\frac s2}\,.
\end{align*}
Upon using a factor of $M/2$ to absorb the implicit constant in the above estimate, we improve the $\dot \kappa$ bootstrap bound in \eqref{eq:acceleration:bound} by a factor of $2$. Integrating in time, we furthermore deduce that 
\begin{equation}\label{kappa-kappa0}
\abs{\kappa(t) - \kappa_0} \leq M^2 \eps^{\frac 32}
\end{equation} 
for all $t \in [-\eps, T_*)$, since $\abs{T_*} \leq \eps$. Upon taking $\eps$ to be sufficiently small in terms of $M$ and $\kappa_0$, we improve the $\kappa$ bound in \eqref{eq:speed:bound}.

\subsection{The $\dot \xi$ estimate}
In order to bound the $\dot \xi$ vector, we appeal to \eqref{eq:dot:xi:def:1}, to \eqref{eq:GW:hW:0}, to the $\abs{\gamma}=0$ cases in \eqref{eq:Z_bootstrap} and \eqref{eq:A_bootstrap}, and to the bound $\abs{R-\Id} \leq \eps$ which follows from \eqref{eq:R:def} and the $\abs{\check n}$ estimate in \eqref{eq:speed:bound}, to deduce that 
\begin{align}
\abs{\dot{\xi}_j} 
\les \kappa_0 + \abs{Z^0}  +  e^{-\frac s2} \abs{G_W^0} +  \abs{A_\mu^0} + e^{\frac s2} \abs{h_W^{\mu,0}} 
\les \kappa_0 + M \eps + M e^{-\frac s2} \les \kappa_0 \,, \label{xi-final}
\end{align}
upon taking $\eps$ to be sufficiently small in terms of $M$ and $\kappa_0$. The bootstrap estimate for $\dot \xi$ in \eqref{eq:acceleration:bound} is then improved by taking $M$ sufficiently large, in terms of $\kappa$, while the bound on $\xi$ in \eqref{eq:speed:bound} follows by integration in time.

\subsection{The $\dot{\phi}$ estimate}
Using \eqref{eq:dot:phi:def:1}, the fact that  $\abs{\Ncal_{1,\mu\nu}^0} + \abs{\Jcal_{,\mu\nu}^0} \les |\phi|^2$, the bootstrap assumptions \eqref{eq:speed:bound}, \eqref{eq:acceleration:bound},  \eqref{eq:bootstrap:Wtilde3:at:0}, the bounds \eqref{eq:dot:Q}, and the previously established estimate \eqref{eq:GW:hW:0}, we obtain
\[
\abs{\dot{\phi}_{\gamma \nu}}  \les   e^{\frac s2}  \left(M \eps^{\frac 14} e^{-s} + \abs{\p_{\nu\gamma} F_W^0} \right)+ e^s \abs{\p_{\gamma \nu} Z^0}  + M^4 \eps^{\frac 32} + \left(M e^{-\frac{3s}{2}} + \kappa_0 + \abs{Z^0}\right)  M^4 \eps^2  + M^5 \eps^2 e^{-\frac{3s}{2}}  \,. 
\]
Using the definition of $\check \nabla^2 F_W^0$ in \eqref{eq:FW:0:f}, appealing to the bootstrap assumptions (and their consequences) from Section~\ref{sec:bootstrap}, the previously established estimate \eqref{eq:GW:hW:0}, and the fact that $\abs{\Tcal^{\zeta,0}_{\mu, \gamma\nu}} +\abs{\Ncal_{1,\mu\nu}^0} + \abs{\Jcal_{,\mu\nu}^0} + \abs{\Ncal^0_{\zeta,\mu\nu\gamma}} \les |\phi|^2$, it is not hard to show that 
\[
 \abs{\p_{\nu\gamma} F_W^0} \les e^{-\frac s2} \,.
\]
In fact, a stronger estimate holds (cf.~\eqref{eq:F_WZ_deriv_est} below), but we shall not use this fact here. 
Combining the above two estimates with the  $Z$ bounds in \eqref{eq:Z_bootstrap}, 
we derive
\begin{align}
 \abs{\dot{\phi}_{\gamma \nu}}  \les   e^{\frac s2}  \left(M \eps^{\frac 14} e^{-s} + e^{-\frac s2} \right)+ M + M^4 \eps^{\frac 32} + \left(M e^{-\frac{3s}{2}} + \kappa_0 + \eps M \right)  M^4 \eps^2  + M^5 \eps^2 e^{-\frac{3s}{2}} \les M \,.  \label{phi-final}
\end{align}
Upon taking $M$ sufficiently large to absorb the implicit constant in the above estimate, we deduce $|\dot \phi| \leq M^2/4$, which improves the $\dot \phi$ bootstrap in \eqref{eq:acceleration:bound} by a factor of $4$. Integrating in time on $[-\eps,T_*)$, an interval of length $\leq 2 \eps$,  and using that by \eqref{check-two-w0} and \eqref{eq:phi:0:def} we have $\abs{\phi(-\log \eps)}\leq \eps$  thus improving the $\phi$ bootstrap in \eqref{eq:speed:bound} by a factor of $2$.

\subsection{The $\dot{n}$ estimate}
First we obtain estimates on $|\dot Q_{1\nu}|$, by appealing to the identity \eqref{eq:dot:Q:1}. 
Using the bootstrap assumptions \eqref{eq:speed:bound}, \eqref{eq:acceleration:bound}, \eqref{eq:Z_bootstrap}, \eqref{eq:A_bootstrap}, the estimates \eqref{eq:dot:Q} and \eqref{eq:GW:hW:0}, and the fact that $\abs{\Tcal^{\zeta,0}_{\mu, \mu\nu}} \les |\phi|^2$, we obtain 
\begin{align}
\abs{\dot Q_{1 \nu} }
&\les  M^2 \eps^{\frac 12} e^{-\frac s2}  \abs{\p_\nu A_\mu^0} +   M^4 \eps^{\frac 32} e^{-s}  \abs{A^0} + e^{\frac s2} \abs{\check \nabla Z^0} 
+ M^2   e^{-s} \abs{A^0}  
 \notag\\
& \quad +   \left(M \abs{\check \nabla^2A^0} +\abs{\check \nabla Z^0} \abs{\check \nabla A^0} \right)
 +  M^2 \eps e^{-\frac s2}  \abs{Z^0}  \abs{\check \nabla Z^0}   + M^5 \eps^2 e^{-s}  \abs{A^0} \notag\\
& \quad  +  e^{-\frac s2}  \left( ( \abs{\check \nabla A^0} + M^3 \eps e^{-\frac s2} ) \abs{A^0} \right) M^2 \eps  +  M^3 \eps e^{-s} \abs{\check \nabla A^0}
+ M^2 \eps  \left(M e^{-\frac s2} + \abs{A^0} \right) 
\notag\\
&\les  M \eps^{\frac 12} 
\label{eq:dot:Q:bootstrap:close}
\,,
\end{align}
upon taking $\eps$ sufficiently small, in terms of $M$.
Moreover, using the bootstrap assumption $\abs{\check n} \leq M \eps^{\frac 32}$, we deduce that the matrix on the left side of \eqref{eq:dot:n:def} is within $\eps$ of the identity matrix, and thus so is its inverse. We deduce from \eqref{eq:dot:n:def} and \eqref{eq:dot:Q:bootstrap:close} that 
\begin{align} 
\abs{\dot{\check n}} \leq \tfrac{M^2 \eps^{\frac 12}}{4}.   \label{n-final}
\end{align} 
upon taking $M$ to be sufficiently large to absorb the implicit constant. The closure of the $\check n$ boostrap is then achieved by integrating in time on $[-\eps, T_*)$.

\section{Preliminary lemmas} 
\label{sec:preliminary}
We begin by recording some useful bounds that will be used repetitively throughout the section.
\begin{lemma}
\label{lem:BBS}
For $y\in\XXX(s)$ and for $m\geq 0$ we have
\begin{align}
\label{e:bounds_on_garbage}
\abs{\check \nabla^m f} &+ \abs{\check \nabla^m (\Ncal- \Ncal_0 )}+\abs{\check \nabla^m (\tt^\nu -  \Tcal_0^\nu )} \notag\\
& +\abs{\check \nabla^m (\Jcal-1)} +\abs{\check \nabla^m (\Jcal^{-1}-1)}  
\les \eps M^2 e^{-\frac{m+2}{2}s } \abs{\check y}^2 \les \eps e^{-\frac{m}{2}s } \,,
\\
\abs{\check \nabla^m \dot f}&+\abs{\check \nabla^m \dot \Ncal}  \les M^2 e^{-\frac{m+2}{2}s } \abs{\check y}^2 \les \eps^{\frac 14} e^{-\frac{m}{2}s }\,.
\label{e:bounds_on_garbage_2}
\end{align}
Moreover, we have the following estimates on $V$  
\begin{equation}\label{eq:V:bnd}
\abs{\p^\gamma V} \les \begin{cases}
 M^{\frac 14}  & \mbox{if } \abs{\gamma}=0\\
 M^2 \eps^{\frac 12}  e^{-\frac32 s} & \mbox{if }\abs{ \gamma}=1\mbox{ and } \gamma_1=1\\
 M^2 \eps^{\frac 12}  e^{-\frac s2 } & \mbox{if } \abs{ \gamma}=1\mbox{ and } \gamma_1=0\\
 M^4 \eps^{\frac 32}  e^{-s}  &  \mbox{if } \abs{ \gamma}=2\mbox{ and } \gamma_1=0\\
0&  \mbox{else}
\end{cases}
\end{equation}
for all $y \in \XXX(s)$. 
\end{lemma}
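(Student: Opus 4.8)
\textbf{Proof strategy for Lemma~\ref{lem:BBS}.}
The plan is to exploit the explicit quadratic form of $f$ together with the elementary algebraic identities defining $\Ncal$, $\Tcal^\nu$, $\Jcal$, and $V$, and to convert everything into the self-similar variable $y$ using the support bootstrap \eqref{eq:support}. First I would observe that by \eqref{def_f} we have $f(\check x,t) = \tfrac12 \phi_{\nu\gamma}(t) x_\nu x_\gamma$, hence $\check\nabla^m f$ is a polynomial in $\check x$ which is nonzero only for $m \le 2$, and $\check\nabla f = \phi_{\nu\cdot} x_\nu$, $\check\nabla^2 f = \phi$. Passing to $y$-variables via $x_\nu = e^{-s/2} y_\nu$, and using that a $\check y$-derivative carries a factor $e^{-s/2}$ relative to a $\check x$-derivative, one gets $\check\nabla^m f = \OO(\abs{\phi})\, e^{-(m+2)s/2} \abs{\check y}^2$ for $m \le 2$ (and trivially for $m \ge 3$ since $f$ is quadratic, it is even smaller — in fact zero). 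The bootstrap bound $\abs{\phi(t)} \le M^2 \eps$ from \eqref{eq:speed:bound} then yields the stated bound $\les \eps M^2 e^{-(m+2)s/2}\abs{\check y}^2$; the second inequality $\les \eps e^{-ms/2}$ follows by inserting the support bound $\abs{\check y} \le 2\eps^{1/6} e^{s/2}$ from \eqref{eq:support}, which gives $e^{-(m+2)s/2}\abs{\check y}^2 \le 4 \eps^{1/3} e^{-ms/2}$, and then $\eps M^2 \eps^{1/3} \le \eps$ for $\eps$ small depending on $M$.

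Next, for $\Ncal - \Ncal_0$, $\Tcal^\nu - \Tcal_0^\nu$, $\Jcal - 1$, and $\Jcal^{-1} - 1$: from \eqref{normal}, \eqref{tangent}, the formula $\Jcal = (1 + \abs{f,_2}^2 + \abs{f,_3}^2)^{1/2}$, and their analogues \eqref{geom000} with $f_0$ in place of $f$, each of these quantities is a smooth (rational, with denominators bounded below since $\Jcal \ge 1$) function of the first derivatives $f,_\nu$ and $f_{0,\nu}$, vanishing when these derivatives vanish; so by the chain rule, $\check\nabla^m$ of any of them is controlled by $\sum$ of products of $\check\nabla^{j}f,_\nu$ and $\check\nabla^j f_{0,\nu}$ for $j \le m+1$ — that is, by $\check\nabla^{j'}f$ for $j' \le m+1$ (and the corresponding $f_0$ terms, which by $\abs{\phi_0} \le \eps$ and \eqref{eq:support-init-x} satisfy the same bound). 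The first inequality in \eqref{e:bounds_on_garbage} then follows from the $f$-estimate just established (the worst term being the one linear in $\check\nabla^{m+1}f$, giving $e^{-(m+2)s/2}\abs{\check y}^2$; mixed higher-order products are smaller because of the additional decay). The $\dot f$ and $\dot\Ncal$ bounds in \eqref{e:bounds_on_garbage_2} are proved identically, replacing $\phi$ by $\dot\phi$ and using $\abs{\dot\phi(t)} \le M^2$ from \eqref{eq:acceleration:bound} (which costs one less power of $\eps$, hence the $\eps^{1/4}$ instead of $\eps$, after absorbing $M^2 \eps^{1/3} \le \eps^{1/4}$).

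For the $V$ estimates, I would start from the explicit formula \eqref{def_V}: $V_i(y,s) = \dot Q_{i1}(e^{-3s/2}y_1 + \tfrac12 e^{-s}\phi_{\nu\mu}y_\nu y_\mu) + e^{-s/2}\dot Q_{i\nu}y_\nu - R_{ji}\dot\xi_j$. For $\abs{\gamma}=0$: on $\XXX(s)$ we have $e^{-3s/2}\abs{y_1} \le 2\eps^{1/2}$, $e^{-s}\abs{\phi}\abs{\check y}^2 \le M^2 \eps \cdot 4\eps^{1/3} \les \eps$, $e^{-s/2}\abs{\check y} \le 2\eps^{1/6}$, so those terms are all $\les \eps^{1/6}$, while $\abs{R_{ji}\dot\xi_j} \le \abs{\dot\xi} \les M^{1/4}$ by \eqref{eq:acceleration:bound}; hence $\abs{V} \les M^{1/4}$. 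For $\abs{\gamma}=1$, differentiating: $\partial_1 V_i = \dot Q_{i1} e^{-3s/2}$, so $\abs{\partial_1 V} \les \abs{\dot Q}e^{-3s/2} \les M^2 \eps^{1/2} e^{-3s/2}$ using \eqref{eq:dot:Q}; $\partial_\nu V_i = \dot Q_{i1} e^{-s}\phi_{\nu\mu}y_\mu + e^{-s/2}\dot Q_{i\nu}$, and on $\XXX(s)$ the first term is $\les M^2\eps^{1/2} \cdot M^2\eps \cdot e^{-s/2}\abs{\check y} \cdot e^{-s/2} \les M^2 \eps^{1/2} e^{-s/2}$ (absorbing extra $\eps$-powers), the second is $\les M^2 \eps^{1/2}e^{-s/2}$, giving the claimed bound. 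For $\abs{\gamma}=2$, $\gamma_1=0$: $\partial_{\nu\mu}V_i = \dot Q_{i1}e^{-s}\phi_{\nu\mu}$, so $\abs{\partial^\gamma V} \les M^2\eps^{1/2} \cdot M^2\eps \cdot e^{-s} = M^4\eps^{3/2}e^{-s}$. All remaining second derivatives (any with $\gamma_1 \ge 1$, or $\abs{\gamma}\ge 3$) vanish identically since $V$ is at most quadratic in $y$ and linear in $y_1$. \emph{The main obstacle} I anticipate is purely bookkeeping: carefully tracking, in the chain-rule expansion for the rational functions $\Ncal$, $\Tcal^\nu$, $\Jcal^{-1}$, which product of lower-order $f$-derivatives is genuinely the leading term, and verifying that all the $M$-dependent prefactors are absorbed by powers of $\eps^{1/3}$ coming from the support bound — there is no conceptual difficulty, but one must be consistent about when to invoke \eqref{eq:support} versus keeping the $\abs{\check y}^2$ weight.
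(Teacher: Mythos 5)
Your proposal is correct and follows the same route as the paper's own (very brief) proof: plug in the explicit formulas for $f$, $\Ncal$, $\Tcal^\nu$, $\Jcal$, $V$, invoke the bootstrap bounds on $\phi$, $\dot\phi$, $\dot\xi$, $\dot Q$, $R$, and use the support bound $\eqref{e:space_time_conv}$ to absorb the $\check y$-weights. Both you and the paper are a bit loose about the precise $\abs{\check y}^2$ weight in the intermediate inequality for $m\ge 1$ and for $\Ncal-\Ncal_0$ (whose tangential components scale like $\abs{\check y}$, not $\abs{\check y}^2$), but the final bound $\les \eps\, e^{-ms/2}$ (resp.\ $\les \eps^{1/4} e^{-ms/2}$), which is what is actually used downstream, is established correctly.
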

\begin{proof}[Proof of Lemma~\ref{lem:BBS}]
The estimates \eqref{e:bounds_on_garbage} follow directly from the definitions of $f$,  $\Ncal$, $\tt$ and $\Jcal$, together with the bounds on $\phi$ given in \eqref{eq:speed:bound} and the inequality \eqref{e:space_time_conv}. Similarly, \eqref{e:bounds_on_garbage_2} follows by using the $\dot \phi$ estimate in \eqref{eq:acceleration:bound}. To obtain the bound \eqref{eq:V:bnd}, we recall that $V$ is defined in \eqref{def_V}, employ the bounds on $\dot \xi$ and $\dot Q$  given by \eqref{eq:acceleration:bound} and \eqref{eq:dot:Q}, and the fact that $\abs{R-\Id} \leq 1$ which follows from \eqref{eq:speed:bound} and the definition of $R$ in \eqref{eq:R:def}.
\end{proof}

\subsection{Transport estimates}

\begin{lemma}[Estimates for $G_W$, $G_Z$, $G_{U}$, $h_W$, $h_Z$ and $h_{U}$]
\label{lemma_g} 
For $\eps>0$ sufficiently small, and $y\in \XXX(s)$, we have
\begin{align} 
&\abs{\partial^{\gamma}G_W}  \les 
\begin{cases}
M e^{-\frac s2}+ M^{\frac 12}   \abs{y_1}e^{-s}+\eps^{\frac 13} \abs{ \check y},  &\mbox{if } \abs{\gamma}=0\\
 M^2  \eps^{\frac12}, & \mbox{if } \gamma_1=0\mbox{ and } \abs{\check \gamma}=1\\
 M  e^{-\frac s2}, & \mbox{if } \gamma=(1,0,0)\mbox{ or } \abs{\gamma}=2
\end{cases}\label{e:G_W_estimates}\,, \\
&\abs{\partial^{\gamma}(G_Z+(1-\beta_2)e^{\frac s2}\kappa_0)}+\abs{\partial^{\gamma}(G_{U}+(1-\beta_1) e^{\frac s2}\kappa_0)} \les 
\begin{cases}
\eps^{\frac 12} e^{\frac s2},  &\mbox{if } \abs{ \gamma}=0\\
 M^2 \eps^{\frac12}, & \mbox{if } \gamma_1=0\mbox{ and } \abs{\check \gamma}=1\\
 M  e^{-\frac s2}, &\mbox{if } \gamma=(1,0,0)\mbox{ or } \abs{\gamma}=2
\end{cases}\label{e:G_ZA_estimates}\,, \\
& \abs{\partial^{\gamma}h_W}+\abs{\partial^{\gamma}h_Z}+\abs{\partial^{\gamma}h_{U}} \les 
\begin{cases}
 e^{-\frac s2},  &\mbox{if } \abs{ \gamma}=0\\
e^{-s}, & \mbox{if } \gamma_1=0\mbox{ and } \abs{\check \gamma}=1\\
e^{- s} \eta^{-\frac 16}(y), & \mbox{if } \gamma=(1,0,0)\mbox{ or } \abs{\gamma}=2
\end{cases}\label{e:h_estimates}\,. 
\end{align}
 Furthermore, for $\abs{\gamma} \in \{3, 4\}$ we have the lossy global estimates  
 \begin{align}
\abs{\partial^{\gamma}G_W}&\les  e^{- (\frac12 - \frac{ \abs{\gamma}-1}{2k-7})s}\,,
\label{eq:GW:lossy}\\
\abs{\partial^{\gamma}h_W}&\les   e^{-s}\,,
\label{eq:h:lossy}
 \end{align}
for all $y \in \XXX(s)$.
 \end{lemma}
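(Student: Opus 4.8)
The plan is to prove all the bounds by differentiating the explicit formulas \eqref{eq:g:def} and \eqref{eq:h:def} with the Leibniz rule and then inserting the available a priori information: the modulation bootstraps \eqref{mod-boot} (in particular $\kappa\sim\kappa_0$ and $\abs{1-\beta_\tau}\lesssim\eps$ via \eqref{eq:beta:tau}), the pointwise bounds \eqref{eq:W_decay}, \eqref{eq:Z_bootstrap}, \eqref{eq:A_bootstrap} for $W,Z,A$ and their derivatives, the estimate \eqref{eq:V:bnd} for $V$, and the geometric bounds \eqref{e:bounds_on_garbage}--\eqref{e:bounds_on_garbage_2} for $f,\dot f,\Ncal,\dot\Ncal,\Tcal^\nu,\Jcal^{\pm1}$. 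The structural fact used at every step is that $f,\dot f,\Jcal,\Ncal,\Tcal^\nu$ depend only on $(\check y,s)$, so $\partial_1$ annihilates them; this is exactly why, e.g., $\partial_1 G_W=\beta_\tau\Jcal e^{\frac s2}(\beta_2\partial_1 Z+2\beta_1\partial_1 V\cdot\Ncal)$ is far smaller than $G_W$ itself. Throughout, the support bootstrap \eqref{eq:support} together with \eqref{e:space_time_conv} (so that $\eta^{\frac13}(y)\le4\eps^{\frac13}e^s$, $\abs{\check y}\le2\eps^{\frac16}e^{\frac s2}$, $\abs{y_1}\le2\eps^{\frac12}e^{\frac{3s}2}$ on $\XXX(s)$) is what lets us absorb the $e^{\frac s2}$ prefactor carried by the $G$'s: every term carrying $e^{\frac s2}$ also carries compensating negative powers of $e^{\frac s2},e^{s}$ (from derivatives of $Z,A,V$, or from $\Jcal-1$ and $\dot f$) or positive powers of $\abs{\check y},\abs{y_1}$, and on $\XXX(s)$ the product is dominated by a small power of $\eps$ times the claimed weight.

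For $h_W,h_Z,h_U$ this mechanism suffices outright: each summand in \eqref{eq:h:def} carries $e^{-s}$ or $e^{-\frac s2}$, and after a Leibniz expansion one bounds $\partial^\beta\Ncal_\mu,\partial^\beta\Tcal^\gamma_\mu$ by \eqref{e:bounds_on_garbage}, $\partial^\beta V_\mu$ by \eqref{eq:V:bnd}, $\partial^\beta W$ by \eqref{eq:W_decay} (using $\eta^{\frac16}(y)\le2\eps^{\frac16}e^{\frac s2}$ on $\XXX(s)$ to control the $e^{-s}\Ncal_\mu W$ piece), and $\partial^\beta Z$ by \eqref{eq:Z_bootstrap}. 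The same argument handles the $\abs{\gamma}\ge1$ cases of $G_W$ and — since $(1-\beta_2)e^{\frac s2}\kappa_0$ and $(1-\beta_1)e^{\frac s2}\kappa_0$ are constant in $y$, hence killed by any $\partial^\gamma$ with $\abs{\gamma}\ge1$ — of $G_Z$ and $G_U$ as well. The lossy global bounds \eqref{eq:GW:lossy}--\eqref{eq:h:lossy} for $\abs{\gamma}\in\{3,4\}$ come from the identical expansion, except that the third/fourth derivatives of $W,Z,A$ are now controlled by Lemma~\ref{eq:higher:order:ests} and Proposition~\ref{cor:L2}, which is what produces the $e^{(\,\cdot\,)s/(2k-7)}$ factors.

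The one genuinely delicate point — and the one I would dispatch first, the rest being mechanical — is the $\gamma=0$ estimate for $G_W$, and analogously for $G_Z+(1-\beta_2)e^{\frac s2}\kappa_0$ and $G_U+(1-\beta_1)e^{\frac s2}\kappa_0$. Brute force fails here because $\beta_\tau\Jcal e^{\frac s2}\kappa$ is of size $e^{\frac s2}$, not small; the resolution is that $\kappa,\xi,\check n$ were chosen precisely so that this growth cancels. Substituting the formula \eqref{eq:dot:xi:def:1} for $\dot\xi$ into the definition \eqref{def_V} of $V$ and using $RR^T=\Id$, one finds $\kappa+\beta_2 Z+2\beta_1 V\cdot\Ncal=\kappa(1-\Ncal_1)+\beta_2\bigl(Z-Z^0+(1-\Ncal_1)Z^0\bigr)+\Ncal_1\beta_\tau^{-1}e^{-\frac s2}G_W^0+(\text{terms in }\dot Q,\dot\phi)$, where every displayed piece is small on $\XXX(s)$: $1-\Ncal_1=1-\Jcal^{-1}\lesssim\abs{\Jcal-1}$ via \eqref{e:bounds_on_garbage}; $Z-Z^0$ is controlled by $\abs{y_1}\norm{\partial_1 Z}_{L^\infty}+\abs{\check y}\norm{\check\nabla Z}_{L^\infty}$ via \eqref{eq:Z_bootstrap}; $\abs{G_W^0}\lesssim Me^{-s}$ by the modulation analysis \eqref{eq:GW:hW:0}; and the $\dot Q,\dot\phi$ factors are governed by \eqref{eq:dot:Q} and \eqref{eq:speed:bound}--\eqref{eq:acceleration:bound}. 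Multiplying through by $\beta_\tau\Jcal e^{\frac s2}$ and subtracting $\beta_\tau e^{\frac s2}\dot f$ (estimated via \eqref{e:bounds_on_garbage_2}) then yields the stated bound, the $\abs{y_1}e^{-s}$ and $\abs{\check y}$ contributions arising from the $Z-Z^0$ and $\kappa(1-\Ncal_1)$ pieces. For $G_Z,G_U$ the same computation gives $\beta_2\kappa+2\beta_1 V\cdot\Ncal=-(1-\beta_2)\kappa+(\text{small})$ and $\beta_1\kappa+\beta_1 Z+2\beta_1 V\cdot\Ncal=-(1-\beta_1)\kappa+(\text{small})$, after which $\abs{\kappa-\kappa_0}\lesssim M^2\eps^{\frac32}$ from \eqref{kappa-kappa0} converts these into the claimed $\lesssim\eps^{\frac12}e^{\frac s2}$ bounds. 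I expect the most tedious part to be checking that, once each $e^{\frac s2}$ is distributed, every error term really does land at or below the stated weight; but this is entirely routine once the cancellation structure above is identified.
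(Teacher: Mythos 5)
Your proposal is correct and follows essentially the same route as the paper's proof: brute-force Leibniz for $\abs{\gamma}\ge1$ (exploiting that $\partial_1$ kills $f,\dot f,\Jcal,\Ncal,\Tcal^\nu$), the support bound \eqref{e:space_time_conv} to convert temporal into spatial decay, and, for the genuinely dangerous $\gamma=0$ case of $G_W$, the cancellation coming from the $\dot\xi$ modulation. Your identity for $\kappa+\beta_2 Z+2\beta_1 V\cdot\Ncal$ is just the paper's step made explicit: the paper extracts $\kappa+\beta_2 Z^0-2\beta_1(R^T\dot\xi)_1$ by FTC and recognizes it as $\beta_\tau^{-1}e^{-s/2}G_W^0$ via \eqref{eq:GW:0:a}, then applies \eqref{eq:GW:hW:0}, whereas you substitute \eqref{eq:dot:xi:def:1} directly and read off the same $G_W^0$ term; the two computations are equivalent.

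The one place you genuinely diverge from the paper is the $\gamma=0$ bound for $G_Z$ and $G_U$: you propose to re-run the cancellation from scratch, identifying $\beta_2\kappa+2\beta_1 V\cdot\Ncal=-(1-\beta_2)\kappa+(\text{small})$ and then using $\abs{\kappa-\kappa_0}\lesssim M^2\eps^{3/2}$. This works, but the paper takes a shorter path: from the definitions, $G_Z+(1-\beta_2)e^{\frac s2}\kappa_0=G_W+(1-\beta_2)e^{\frac s2}\bigl((\kappa_0-\kappa)+(1-\beta_\tau\Jcal)\kappa+\beta_\tau\Jcal Z\bigr)$, so the already-proved $G_W$ bound feeds directly into $G_Z$ (and similarly for $G_U$) without re-examining $V\cdot\Ncal$. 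Your version costs a second pass through the $\dot\xi$ algebra but arrives at the same estimate; the paper's buys brevity by reusing $G_W$. Either is acceptable, and all other steps you describe match the paper (in particular, you correctly invoke Lemma~\ref{eq:higher:order:ests} and Proposition~\ref{cor:L2} for the lossy $\abs{\gamma}\in\{3,4\}$ bounds, which indeed source the $e^{(\abs{\gamma}-1)s/(2k-7)}$ loss).
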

\begin{proof}[Proof of Lemma~\ref{lemma_g}]
Recalling the definition of $G_W$ in \eqref{eq:gW}, and applying \eqref{eq:beta:tau}, \eqref{e:bounds_on_garbage}, \eqref{eq:V:bnd}  the inequality $\kappa\leq M$, and the fundamental theorem of calculus, we obtain that
\begin{align*}
\abs{G_W}
&\les M e^{-\frac s2}\abs{\check y}^2 +e^{\frac s2}\abs{ \kappa  + \beta_2 Z + 2\beta_1V\cdot \nn  }
\\
&\les  M \eps^{\frac 12}  \abs{\check y}+
e^{\frac s2}\sabs{\kappa  + \beta_2 Z^0 - 2\beta_1 (R^T \dot \xi)_1  }  +\abs{y_1}e^{\frac s2} \norm{\partial_1 Z}_{L^{\infty}} + \abs{\check y}e^{\frac s2}\norm{\check \nabla Z}_{\infty} \\
& \qquad \qquad + M^2 \eps^ {\frac{1}{2}}( e^{-s} \abs{y_1} +   \abs{\check y})
\\
&\les M e^{-\frac s2}+  M^{\frac 12}  \abs{y_1}e^{-s}+\eps^{\frac 13} \abs{ \check y}
\end{align*}
where in the second and third inequalities,  we have used \eqref{eq:dot:Q}, \eqref{e:space_time_conv},  \eqref{eq:Z_bootstrap}, 
and \eqref{eq:GW:hW:0}. Thus we obtain \eqref{e:G_W_estimates} for the case $\gamma=0$. Similarly, for the case $\gamma\neq 0$, we have
\begin{align}
\abs{\partial^\gamma G_W}
&\les e^{\frac s2}\left(\sabs{\partial^{\gamma}\dot f}+ M\abs{\partial^\gamma \Jcal}+ \abs{\partial^\gamma (\Jcal Z)}+\abs{\partial^{\gamma}(\Jcal V\cdot \Ncal)}\right) \,.\notag\\
&\les e^{\frac s2} \eps e^{-\frac{\abs{\gamma}}{2} s}{\bf 1}_{\gamma_1=0} 
+e^{\frac s2} \sum_{\beta\leq \gamma,~\beta_1=0} ({\bf 1}_{\abs{\beta}=0} + \eps)e^{-\frac{\abs{\beta}}{2}s} \left(  \sabs{\partial^{\gamma -\beta}Z}  +  \sabs{\partial^{\gamma-\beta}V} \right) \,. \label{eq:GW:bound}
\end{align}
where in the last line we invoked \eqref{e:bounds_on_garbage}.
Hence \eqref{e:G_W_estimates} is concluded by invoking \eqref{eq:Z_bootstrap} and \eqref{eq:V:bnd}.

Now consider the estimates on $G_Z$ and $G_{U}$ as defined in \eqref{eq:gZ} and \eqref{eq:gA}. We note that 
\begin{align*}
G_Z + (1-\beta_2) e^{\frac s2} \kappa_0 
&= G_W +  (1-\beta_2) e^{\frac s2} \left( (\kappa_0-\kappa) + (1-\beta_\tau \Jcal) \kappa + \beta_\tau \Jcal Z\right)
\, , \\
G_{U} + (1-\beta_1) e^{\frac s2} \kappa_0 
&= G_W + (1-\beta_1) e^{\frac s2} \left( (\kappa_0-\kappa) + (1-\beta_\tau \Jcal) \kappa  \right) + (\beta_2-\beta_1)  \beta_\tau e^{\frac s2} \Jcal Z
\,.
\end{align*}
The bounds in \eqref{e:G_ZA_estimates} now follow directly from \eqref{e:G_W_estimates}, the $\dot \kappa$ bound in \eqref{eq:acceleration:bound}, the $\beta_\tau$ estimate \eqref{eq:beta:tau}, the support estimate \eqref{e:space_time_conv}, the $\Jcal$ bounds in \eqref{e:bounds_on_garbage}, and the $Z$ bootstrap assumptions \eqref{eq:Z_bootstrap}\,.

Now consider $h_W$, which is defined in \eqref{eq:hW}.
For the case $\gamma=0$, applying \eqref{eq:acceleration:bound}, \eqref{eq:beta:tau},  and \eqref{e:bounds_on_garbage},  we obtain that
\begin{align*}
\abs{h_W}
\les e^{-s} \abs{W}+  e^{-\frac s2} (\abs{\check V}+\abs{Z}+\abs{A}) 
\les \eps^{\frac 16} e^{-\frac s2} + e^{-\frac s2} (M \eps^ {\frac{1}{2}} + M \eps )  
\les e^{-\frac s2}
\end{align*}
where in the second inequality we have also appealed to \eqref{e:space_time_conv}, \eqref{eq:W_decay}, \eqref{eq:Z_bootstrap}, and \eqref{eq:A_bootstrap},
and where we have used the fact that $\abs{\check V} \les M \eps^ {\frac{1}{2}}$.    This last inequality is obtained using the fact that we need only bound
$\abs{\check V}$.   Using definition \eqref{def_V},  because of the bounds  \eqref{eq:speed:bound} and \eqref{eq:dot:Q}, it remains to bound
$\abs{R_{j\mu} \dot \xi_j}$.
Restricting  \eqref{eq:gW} and \eqref{eq:hW} to $y=0$, and with $f$ given by \eqref{def_f} and using \eqref{eq:constraints}, we find that 
\begin{align*}
2\beta_1  (R^T \dot \xi)_\mu = 2\beta_1 A_\mu^0 - \tfrac{1}{\beta_\tau} e^{\frac s2} h_W^{\mu,0} \,.
\end{align*}
Hence, by \eqref{eq:A_bootstrap} and \eqref{eq:GW:hW:0}, we see that $\abs{(R^T \dot \xi)_\mu} \les  M \eps^ {\frac{1}{2}} $.

Similarly, invoking the same set of inequalities together with \eqref{eq:V:bnd}, for the case that $\gamma\neq 0$, we obtain 
\begin{align}
\abs{\partial^\gamma h_W^{\mu}}
&\les e^{-s}\abs{\partial^\gamma(\Ncal_\mu W )}+e^{-\frac s2} \left(
\abs{\p^\gamma V} +  M\abs{\partial^\gamma \Ncal_\mu}+\abs{\partial^\gamma (\Ncal_{\mu}Z )}+\abs{\partial^\gamma (A_\gamma \Tcal^\gamma_\mu)}\right) \notag \\
&
\les   \sum_{\beta\leq \gamma,~\beta_1=0} 
  e^{-\frac{\abs{\beta}+1}{2}s} \left(\eps e^{-\frac s2}\abs{\partial^{\gamma-\beta} W }+ \eps  
 \abs{\partial^{\gamma-\beta}Z }+   ( {\bf 1}_{|\beta|=0} + \eps)   \abs{\partial^{\gamma-\beta} A_\gamma } \right)\notag\\
&\qquad + M \eps  e^{-\frac{\abs{\gamma}+1}{2}s} {\bf 1}_{\gamma_1=0}+  M^2 \eps^{\frac 12} e^{-\frac{\abs{\gamma}+1}{2}s}{\bf 1}_{\gamma_1=0} + M^2 \eps^{\frac 12} e^{-2 s} {\bf 1}_{\gamma_1 \geq 1}  \,.\label{eq:hW:bound}
\end{align}
Finally, applying \eqref{e:space_time_conv},  \eqref{eq:W_decay}, \eqref{eq:Z_bootstrap}, \eqref{eq:A_bootstrap} and \eqref{eq:A:higher:order} we obtain the estimate on $h_W$. The estimates on $h_Z$ and $h_{U}$ are completely analogous since the only difference between these functions and $h_W$ lies in the different combinations of $\beta_1, \beta_2$ parameters.

 The estimates \eqref{eq:GW:lossy} and \eqref{eq:h:lossy}, follow as a consequence of  \eqref{eq:GW:bound}, \eqref{eq:hW:bound}, \eqref{eq:W_decay}, \eqref{eq:Z_bootstrap}--\eqref{eq:Z:higher:order}, and the estimate $\norm{\p^\gamma W}_{L^\infty} \les \norm{D^2 W}_{L^\infty}^{1-\frac{2\abs{\gamma}-4}{2k-7}} \norm{W}_{\dot H^k}^{\frac{2\abs{\gamma}-4}{2k-7}} \les M^{2k}$ which holds for $\abs{\gamma} \in \{3,4\}$ in view of  Lemma~\ref{lem:GN}, Proposition~\ref{cor:L2}, and of \eqref{eq:W_decay}. 
\end{proof} 

\subsection{Forcing estimates}

\begin{lemma}[Estimates on $\partial^\gamma F_W$, $\partial^\gamma F_Z$ and $\partial^\gamma F_A$] 
\label{lem:forcing}
For $y \in \XXX(s)$ we have the force bounds 
\begin{align}
\abs{\partial^\gamma F_W}+
e^{\frac s2}\abs{\partial^\gamma F_Z} &\les 
\begin{cases}
e^{-\frac s 2 },  &\mbox{if } \abs{ \gamma}=0\\
e^{-s} \eta^{- \frac16 + \frac{2 \abs{\gamma} +1}{3(2k-5)}}(y) ,&\mbox{if } \gamma_1 \geq 1 \mbox{ and } \abs{ \gamma}=1,2 \\
 M^2 e^{-s}, & \mbox{if } \gamma_1=0\mbox{ and } \abs{\check \gamma}=1\\
 e^{- (1 -  \frac{3}{2k-7} )s}, & \mbox{if } \gamma_1=0\mbox{ and } \abs{\check \gamma}=2
\end{cases} \,,\label{eq:F_WZ_deriv_est}
\\
\abs{\partial^\gamma F_{A \nu}} &\les 
\begin{cases}
 M^{\frac 12}  e^{-s},  &\mbox{if }  \abs{ \gamma}=0\\
 (M^{\frac 12} + M^2 \eta^{-\frac 16})  e^{-s}, & \mbox{if } \gamma_1=0\mbox{ and } \abs{\check \gamma}=1\\
e^{\left(-1+\frac{3}{2k-7}\right)s} \eta^{-\frac16}(y), & \mbox{if } \gamma_1=0\mbox{ and } \abs{\check \gamma}=2
\end{cases} \,.
\label{eq:F_A_deriv_est}
\end{align}
Moreover, we have the following higher order estimate at $y=0$
\begin{align}
\abs{(\partial^\gamma \tilde F_W)^0}
\les  e^{- (\frac12 - \frac{4}{2k-7})s}\quad\mbox{for}\quad \abs{\gamma}=3\label{e:FW3}
\end{align}
and the bound on $\tilde{F}_W$
\begin{equation}\label{eq:Ftilde_est}
\abs{\partial^\gamma \tilde F_W}
\les  M \eps^{\frac 16} 
\begin{cases}
\eta^{-\frac{1}{6}}(y),&\mbox{if}\quad \abs{\gamma}=0 \\
\eta^{-   \frac 12 + \frac{ 3 }{2k-5}}(y),&\mbox{if}\quad \gamma_1=1 \mbox{ and }\abs{\check \gamma}=0 \\
\eta^{-\frac{1}{3}}(y) ,&\mbox{if}\quad \gamma_1=0 \mbox{ and }\abs{\check \gamma}=1 \\
1 ,&\mbox{if}\quad \abs{\gamma}=4\quad\mbox{and}\quad \abs{y}\leq \ell
\end{cases}
\end{equation}
holds for all $ \abs{y} \leq \LLL $.
\end{lemma}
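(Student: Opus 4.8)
The proof is a direct, if lengthy, verification using the explicit formulas for the forcing terms $F_W, F_Z, F_{A\nu}$ from \eqref{eq:F:def}, the formula for $\tilde F_W$ from \eqref{eq:tilde:W:evo}, together with the bootstrap assumptions of Section~\ref{sec:bootstrap} and the geometric/transport estimates already established in Lemma~\ref{lem:BBS} and Lemma~\ref{lemma_g}. The strategy is to bound each term in the explicit expressions \eqref{eq:FW:def}--\eqref{eq:A:def} separately, keeping track of the prefactors $e^{-\frac s2}, e^{-s}$, the powers of $M$, and the weight $\eta$. The main point to exploit is that every term in $F_W$, $F_Z$, $F_{A\nu}$ is either quadratic in the ``small'' variables $(Z,A)$ and their derivatives — which carry smallness $\eps$ and exponential decay $e^{-\frac{|\check\gamma|}{2}s}$ by \eqref{eq:Z_bootstrap}, \eqref{eq:A_bootstrap}, \eqref{eq:A:higher:order}, \eqref{eq:Z:higher:order} — or is multiplied by a geometric factor ($\dot\Ncal$, $\dot Q$, $\Ncal_{,\mu}$, $\Tcal^\nu_{,\mu}$, $\Jcal-1$, $f$) which by Lemma~\ref{lem:BBS} carries a factor $\eps M^2 e^{-\frac m2 s}\abs{\check y}^2$, or is multiplied by the explicit prefactor $e^{-\frac s2}$ or $e^{-s}$ appearing in \eqref{eq:F:def}. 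The sound speed $S$ and $U\cdot\Ncal$ are bounded via Lemma~\ref{lem:US_est}, noting $S = \OO(M^{1/4})$ at top order but that the dangerous term $S\,\Tcal^\nu_\mu\p_\mu A_\nu$ in $F_W$ has a gradient of $A_\nu$ (hence $\eps$-smallness from \eqref{eq:A_bootstrap}) and a tangent-derivative structure; the term $S\,\Tcal^\nu_\mu\p_\mu S$ in $F_{A\nu}$ similarly carries $\p_\mu S = \OO(e^{-\frac s2})$.

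Concretely, I would proceed as follows. First, for $\abs\gamma = 0$: bound $F_W$ directly from \eqref{eq:FW:def}, using $|S| \lesssim M^{1/4}$, $|\check\nabla A| \lesssim M e^{-\frac12 s}$, $|\Ncal_{,\mu}| \lesssim \eps$, $|U\cdot\Ncal| \lesssim M^{1/4}$, $|\dot\Ncal| \lesssim \eps^{1/4}$, $|\dot Q| \lesssim M^2\eps^{1/2}$ (from \eqref{eq:dot:Q}), $|A| \lesssim M\eps$, and the explicit $e^{-\frac s2}$ prefactors, to get $\abs{F_W} \lesssim e^{-\frac s2}$; the factor $e^{\frac s2}$ in front of $F_Z$ is accounted for by the extra $e^{-\frac s2}$ relative to $F_W$ visible in \eqref{eq:FZ:def}. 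For the higher $\abs\gamma$ cases I would apply $\p^\gamma$ via the Leibniz rule to each summand, distributing derivatives onto the $(Z,A,S)$ factors (using \eqref{eq:Z:higher:order}, \eqref{eq:A:higher:order}, \eqref{eq:US_est}) and onto the geometric factors (using \eqref{e:bounds_on_garbage}, \eqref{e:bounds_on_garbage_2}). The weight $\eta^{-\frac16 + \cdots}$ appearing in the bound for $\gamma_1\geq1$, $\abs\gamma = 1,2$ comes from applying the Gagliardo–Nirenberg/interpolation estimate \eqref{eq:A:higher:order}, \eqref{eq:US_est} to $\p_1$ of the relevant quantities: the loss $\frac{2\abs\gamma+1}{3(2k-5)}$ in the exponent of $\eta$ is exactly the interpolation loss in those lemmas converted into a $\eta$-power via \eqref{e:space_time_conv}. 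For $F_{A\nu}$, the leading contribution at $\abs\gamma=0$ is $S\,\Tcal^\nu_\mu\p_\mu S$, which is $\lesssim M^{1/4}\cdot e^{-s}\cdot e^{-\frac s2}$ — wait, more carefully: $\p_\mu S = \OO(e^{-\frac s2})$ and the prefactor is $e^{-\frac s2}$, giving $e^{-s}$, times $S=\OO(M^{1/4})$, yielding $M^{1/2}e^{-s}$ after absorbing constants; the remaining terms are lower order. The $\eta^{-1/6}$ factors at $\abs{\check\gamma}=1,2$ arise when a $\check\nabla$ derivative lands on $S$ or $U\cdot\Ncal$, producing $\eta^{-1/6}$ via \eqref{eq:US_est}.

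For the $\tilde F_W$ estimates I would use the explicit formula for $\tilde F_W$ in \eqref{eq:tilde:W:evo}: $\tilde F_W = F_W - e^{-\frac s2}\beta_\tau\dot\kappa + ((\beta_\tau\Jcal - 1)\bar W - G_W)\p_1\bar W - h_W^\mu\p_\mu\bar W$. The $F_W$ term is already controlled; $e^{-\frac s2}\beta_\tau\dot\kappa = \OO(M^2 e^{-s})$ by \eqref{eq:acceleration:bound}; the crucial structural point is that $(\beta_\tau\Jcal - 1)$ carries smallness — $|\beta_\tau - 1| \lesssim M\eps$ by \eqref{eq:beta:tau} and $|\Jcal - 1| \lesssim \eps e^{-s}\abs{\check y}^2$ by Lemma~\ref{lem:BBS} — while $\bar W \lesssim \eta^{1/6}$ and $\p_1\bar W \lesssim \eta^{-1/3}$ and $\check\nabla\bar W \lesssim 1$ from \eqref{eq:bar:W:properties}, and $G_W$, $h_W^\mu$ are controlled by Lemma~\ref{lemma_g} (in particular $\abs{G_W} \lesssim Me^{-\frac s2} + M^{1/2}\abs{y_1}e^{-s} + \eps^{1/3}\abs{\check y}$, and $\abs{h_W^\mu} \lesssim e^{-\frac s2}$); these combine to give $M\eps^{1/6}\eta^{-1/6}$ at $\abs\gamma = 0$, using $\abs{y_1}\eta^{-1/3} \lesssim \eta^{1/6}$ and $\abs{\check y}\eta^{-1/3} \lesssim \eta^{1/6}$ and the support bound. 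Derivatives of $\tilde F_W$ follow by Leibniz, with the derivative weights tracked as above, and for $\abs y \leq \ell$, $\abs\gamma = 4$ we use that $\bar W$ is smooth so all its derivatives near the origin are $\OO(1)$, combined with the factor-of-$M\eps^{1/6}$ smallness. For \eqref{e:FW3}, the evaluation at $y=0$ allows us to use the sharper pointwise bounds \eqref{eq:bootstrap:Wtilde3:at:0}, \eqref{eq:Z_bootstrap} at the origin and the explicit formulas \eqref{eq:FW:0:a}--\eqref{eq:FW:0:f}, which give decay $e^{-(\frac12 - \frac{4}{2k-7})s}$; note $(\partial^\gamma\tilde F_W)^0 = (\partial^\gamma F_W)^0 + (\partial^\gamma G_W)^0$ for $\abs\gamma\geq1$ by the identity noted in Section~\ref{sec:explicit:crap}, so one bounds $\p^\gamma F_W^0$ using \eqref{eq:FW:0:f}-type formulas and $\p^\gamma G_W^0$ using \eqref{eq:GW:0:f}-type formulas together with the $\dot\phi$ bound \eqref{phi-final}.

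The main obstacle is purely bookkeeping: each of $F_W, F_Z, F_{A\nu}$ has five to six summands, each of which must be differentiated up to twice (or three times at the origin), and in each case one must correctly identify which factor absorbs the derivative, which factor supplies the smallness parameter, and which factor supplies the weight $\eta$; the exponents $\frac{2\abs\gamma+1}{3(2k-5)}$ and $\frac{3}{2k-7}$ must be produced exactly, which requires care in combining the interpolation losses of Lemma~\ref{eq:higher:order:ests} and Lemma~\ref{lem:US_est} with the conversion \eqref{e:space_time_conv} from $e^s$-growth to $\eta^{1/3}$. There is no conceptual difficulty beyond what is already in Lemma~\ref{lemma_g}; the proof is a systematic term-by-term estimate, and one should present it by treating $F_W$ in full detail and then remarking that $F_Z$ differs only by an extra $e^{-\frac s2}$ and permuted $\beta_i$'s, that $F_{A\nu}$ is handled identically with $S\p_\mu S$ in place of $S\Tcal^\nu_\mu\p_\mu A_\nu$, and that the $\tilde F_W$ bounds follow by adding the contributions of the correction terms $((\beta_\tau\Jcal-1)\bar W - G_W)\p_1\bar W - h_W^\mu\p_\mu\bar W$, each of which is estimated using Lemma~\ref{lemma_g} and \eqref{eq:bar:W:properties}.
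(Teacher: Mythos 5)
Your plan matches the paper's proof essentially term for term: Leibniz applied to the explicit formulas \eqref{eq:F:def}--\eqref{eq:A:def} and to \eqref{eq:tilde:W:evo}, the geometric bounds of Lemma~\ref{lem:BBS}, the bootstrap and interpolation bounds of Lemmas~\ref{eq:higher:order:ests} and~\ref{lem:US_est}, the transport estimates of Lemma~\ref{lemma_g}, and the temporal-to-spatial conversion \eqref{e:space_time_conv}. Two small inaccuracies worth recording before executing: the $M^{\sfrac12}$ factor in the $F_{A\nu}$ bound at $\abs\gamma=0$ actually comes from the quadratic $(U\cdot\Ncal)^2$-type terms carrying the $e^{-s}$ prefactor (the $e^{-\sfrac s2}S\p_\mu S$ term gives only $M^{\sfrac14}e^{-s}$), and the identity you quote, $(\p^\gamma\tilde F_W)^0=(\p^\gamma F_W)^0+(\p^\gamma G_W)^0$, is not the one in Section~\ref{sec:explicit:crap} (which concerns $F_W^{(\gamma),0}$ for $\abs\gamma\le 2$) and for $\gamma_1\ge1$ acquires extra corrections of order $\abs{\beta_\tau-1}$ and $\sabs{\p^\gamma((\beta_\tau\Jcal-1)\bar W\,\p_1\bar W)^0}$ — the paper instead estimates $\sabs{(\nabla^3\tilde F_W)^0}$ directly via Leibniz, uses the crude bounds \eqref{eq:GW:lossy} and the intermediate bound $\abs{\p^\gamma F_W}\les e^{-\sfrac s2}$ for $\abs\gamma=3$ rather than explicit $\abs\gamma=3$ formulas, and only invokes \eqref{eq:check_derivative_GW} to bound $\abs{(\nabla G_W)^0}$ — though all these extra terms are absorbed by the same bounds you cite.
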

\begin{proof}[Proof of Lemma~\ref{lem:forcing}]
By the definition \eqref{eq:FW:def} we have
\begin{align*}
\abs{\partial^\gamma F_W}&\les \abs{\partial^{\gamma}(S \Tcal^\nu_\mu \partial_{\mu} A_\nu)}+e^{-\frac s2} \abs{\partial^{\gamma}(A_\nu \Tcal^\nu_i \dot{\Ncal}_i)}+ e^{-\frac s2} \abs{\partial^{\gamma}(A_\nu \Tcal^\nu_j \Ncal_i)}
\notag \\
&+ e^{-\frac s2} \abs{\partial^{\gamma}\left(\left(V_\mu +\Ncal_\mu  U \cdot \nn +  A_\nu \Tcal^\nu_\mu \right)A_\gamma \Tcal^\gamma_i \Ncal_{i,\mu}\right)}
+ e^{-\frac s2} \abs{\partial^{\gamma}\left(S \left( A_\nu \Tcal^\nu_{\mu,\mu} + U\cdot \Ncal \Ncal_{\mu,\mu} \right)\right)}
\notag \\
&\les \sum_{\beta\leq \gamma,~\beta_1=0}e^{-\frac{\abs{\beta}+1}{2}s}\bigg(e^{\frac s2}  \abs{\partial^{\gamma-\beta}(S  \check\nabla A)}  + \eps^{\frac 14} \abs{\partial^{\gamma-\beta}A}+ \eps \abs{\partial^{\gamma-\beta}\left(V \otimes A\right)}
\notag\\
&\qquad +\eps \abs{\partial^{\gamma-\beta}\left(U\cdot \Ncal A\right)}
+\eps \abs{\partial^{\gamma-\beta}\left(A\otimes A\right)}+\eps \abs{\partial^{\gamma-\beta}\left(S A\right)}+\eps \abs{\partial^{\gamma-\beta}\left(S U\cdot \Ncal \right)}
\bigg)
\end{align*}
where we invoked \eqref{eq:dot:Q}, \eqref{e:bounds_on_garbage}, and \eqref{e:bounds_on_garbage_2}. Combining the above estimate with \eqref{eq:A_bootstrap}, \eqref{eq:A:higher:order}, \eqref{eq:V:bnd} and Lemma \ref{lem:US_est} we obtain the bounds claimed in \eqref{eq:F_WZ_deriv_est} for $\p^\gamma F_W$.  Using the same set of estimates we also obtain 
\begin{equation}\abs{\partial^\gamma  F_W }\les e^{-\frac{s}{2}}\label{e:FW3A}
\end{equation}
for $\abs{\gamma}=3$,
which we shall need later in order to prove \eqref{e:FW3},
and 
\begin{equation}
 \abs{\partial^\gamma F_W}\les \eps^{\frac 16}  \label{e:FW4A}
\end{equation}
for $\abs{\gamma}=4$ and $\abs{y}\leq \ell$,
which we shall need later in order to prove the last case of \eqref{eq:Ftilde_est}. Comparing \eqref{eq:FZ:def} and \eqref{eq:FW:def}, we note that the estimates on $\p^\gamma F_Z$ claimed in \eqref{eq:F_WZ_deriv_est} are completely analogous to the estimates ones $\p^\gamma F_W$ up to a factor of $e^{-\frac s2}$.

Now we consider the estimates on $F_A$. By definition \eqref{eq:A:def}, we have
\begin{align*}
\abs{\partial^{\gamma} {F}_{A_\nu}}&\les e^{-\frac s2}  \abs{\partial^{\gamma}( S T^\nu_\mu \p_{\mu} S)} 
 +  e^{-s}  \abs{ \partial^{\gamma}\left(\left( U\cdot\Ncal \Ncal_i + A_\gamma \Tcal^\gamma_i\right) \dot{\Tcal}^\nu_i \right)} 
 +  e^{-s}  \abs{\partial^{\gamma }\left(\left(  U\cdot \Ncal \Ncal_j + A_\gamma \Tcal^\gamma_j\right)\Tcal^\nu_i \right)}\\
 &\qquad
 + e^{-s} \abs{\partial^{\gamma}\left( \left(V_\mu + U\cdot \Ncal \Ncal_\mu + A_\gamma \Tcal^\gamma_\mu\right) \left(U\cdot \Ncal \Ncal_i 
 +    A_\gamma \Tcal^\gamma_i \right) \Tcal^\nu_{i,\mu} \right)}  \\
 &\les \sum_{\beta\leq \gamma,~\beta_1=0}e^{-\frac{\abs{\beta}+2}{2}s}\bigg(
  e^{\frac s2}  \abs{\partial^{\gamma-\beta}( S \check\nabla S)}+ \abs{\partial^{\gamma-\beta}( U\cdot \Ncal )} + \abs{\partial^{\gamma-\beta}A} \\
  &\qquad\qquad\qquad\qquad  + \sum_{\alpha\leq \gamma-\beta}\left(\abs{\partial^{\alpha}V}+
  \abs{\partial^{\alpha}(U\cdot \Ncal)}+\abs{\partial^{\alpha}A}\right)\left(\abs{\partial^{\gamma-\beta-\alpha}(U\cdot\Ncal)}+\abs{\partial^{\gamma-\beta-\alpha}A}\right)\bigg)
  \end{align*}
where we again invoked \eqref{eq:dot:Q}, \eqref{e:bounds_on_garbage}, and \eqref{e:bounds_on_garbage_2}. Combining the above bound with the estimates  \eqref{eq:A_bootstrap}, \eqref{eq:V:bnd} and with Lemma \ref{lem:US_est}, we obtain our claim \eqref{eq:F_A_deriv_est}. 

By definition \eqref{eq:tilde:W:evo} and \eqref{eq:acceleration:bound}
\begin{align}
\abs{ \partial^\gamma\tilde{F}_W}
 &\les \abs{\p^\gamma F _W} +  M^2  e^{-s}{\bf 1}_{\abs{\gamma}=0 } +\abs{\partial^{\gamma}((1-\beta_\tau \Jcal) \bar W\p_1 \bar W)}  + M^2  \abs{\partial^{\gamma}(G_W\p_1\bar W)}+ \abs{\partial^{\gamma}(h_W^\mu \p_\mu \bar W)} \notag\\
  &\les \abs{\partial^\gamma F _W} +  M^2  e^{-s}{\bf 1}_{\abs{\gamma}=0 } + M  \eps\sum_{\beta\leq \gamma,~\beta_1=0} e^{-\frac {|\beta|}2 s} \abs{ \p^{\gamma-\beta}\p_1(\bar W^2)} \notag\\
  &\qquad+ \sum_{\beta\leq \gamma} \abs{\partial^{\beta} G_W\, \p^{\gamma-\beta}\p_1\bar W}+\abs{\partial^{\beta} h_W^\mu  \p^{\gamma-\beta} \p_\mu \bar W } \notag\\
  &\les \abs{\partial^\gamma F _W} + M^2  e^{-s}{\bf 1}_{\abs{\gamma}=0 } + M  \eps\sum_{\beta\leq \gamma,~\beta_1=0} e^{-\frac {|\beta|}2 s} \eta^{-\frac 16 - \frac{\gamma_1}{2}  - \frac{\abs{\check \gamma - \check \beta}}{6}}(y)  \notag\\
  &\qquad+ \sum_{\beta\leq \gamma} \left( \abs{\partial^{\beta} G_W}\eta^{-\frac 13}(y)  +\abs{\partial^{\beta} h_W}  \right) \eta^{  - \frac{\gamma_1}{2}  - \frac{\abs{\check \gamma - \check \beta}}{6}}(y) 
  \label{eq:spumante}
\end{align}
where we  used \eqref{eq:beta:tau} and \eqref{e:bounds_on_garbage} to bound
\[\abs{\partial^{\beta}(1-\beta_\tau \Jcal)} \les (1-\beta_\tau )\abs{\partial^{\beta}\Jcal}+
\abs{\partial^{\beta}(1-\Jcal)}\les  M  \eps e^{-\frac {\abs{\beta}}2 s}\,. \]
Finally, applying \eqref{e:space_time_conv}, \eqref{e:G_W_estimates}, \eqref{e:h_estimates}--\eqref{eq:h:lossy}, \eqref{eq:F_WZ_deriv_est}, and \eqref{e:FW4A}, we can bound all the remaining terms in \eqref{eq:spumante} to obtain \eqref{eq:Ftilde_est}.   Note that in the $G_W$ estimate \eqref{e:G_W_estimates} we have used that $\abs{y} \leq \LLL = \eps^{-\frac{1}{10}}$, while in bounding $\p_1\tilde F_W$, we have used \eqref{e:space_time_conv} in order convert the temporal decay of $\p_1 F_W$ to spatial decay, as well as absorbing the $M$ and gaining the extra factor of $\eps^{\frac 16}$.

Now let us consider the estimate \eqref{e:FW3}. By definition \eqref{eq:tilde:W:evo} and the explicit formula for $\bar W$ (in particular, even derivatives of $\bar W$ vanish at $0$ as well as $\check\nabla \bar W$) and the explicit formula for $\Jcal$, we obtain
\begin{align*}
\abs{(\nabla^3 \tilde F_W)^0}&\les \abs{(\nabla^3 F _W)^0} +   \abs{(\nabla^3((\beta_\tau \Jcal -1)\bar W-G_W))^0}+ \abs{(\nabla((\beta_\tau \Jcal -1)\bar W-G_W))^0}+ \abs{(\nabla h_W)^0}\notag\\
&\les \abs{(\nabla^3 F _W)^0} +   \abs{(\check\nabla^2J)^0}+\abs{1-\beta_\tau}+ \abs{(\nabla^3 G_W)^0}+\abs{(\nabla G_W)^0}+ \abs{(\nabla h_W)^0}\notag\\
&\les e^{-\frac s2} +   e^{-s}+ Me^{-s}+  e^{- (\frac12 - \frac{4}{2k-7})s}+\abs{(\nabla G_W)^0}+e^{-s}\notag\\&
\les  e^{- (\frac12 - \frac{4}{2k-7})s}+\abs{(\nabla G_W)^0}
\end{align*}
where we used \eqref{eq:beta:tau},  \eqref{e:bounds_on_garbage}, \eqref{eq:GW:lossy}, \eqref{e:h_estimates} and  \eqref{e:FW3A}. Using the indentity \eqref{eq:check_derivative_GW}, and applying   \eqref{e:G_W_estimates}  and \eqref{eq:F_WZ_deriv_est} we obtain
\begin{equation*}
\abs{(\nabla G_W)^0}
\les  M  e^{-\frac s2}+\abs{(\check\nabla G_W)^0}
\les  M  e^{-\frac s2}+\abs{(\check\nabla F_W)^0} 
\les  M  e^{-\frac s2}\,.
\end{equation*}
Combining the two estimates above we obtain \eqref{e:FW3}.
\end{proof}

\begin{corollary}[Estimates on the forcing terms] 
\label{cor:forcing}
Assume that $k\geq 18$. Then, we have
\begin{align}
\abs{F^{(\gamma)}_W} &\les 
\begin{cases}
e^{-\frac s 2 },  &\mbox{if } \abs{ \gamma}=0\\
\eps^{\frac18} \eta^{- \frac 12 + \frac{3}{2k-5} }(y),&\mbox{if }  \gamma=(1,0,0)\\
 \eta^{-\frac 13}(y) ,&\mbox{if }  \gamma=(2,0,0) \\
 M^{\frac13} \eta^{-\frac 13}(y)  ,&\mbox{if } \gamma_1=1 \mbox{ and } \abs{\check \gamma}=1 \\
 M^2 \eps^{\frac 13} \eta^{-\frac 13}(y) , & \mbox{if } \gamma_1=0\mbox{ and } \abs{\check \gamma}=1\\
M^{\frac23} \eta^{-\frac 13 + \frac{1}{2k-7} }(y), & \mbox{if } \gamma_1=0\mbox{ and } \abs{\check \gamma}=2
\end{cases}\label{eq:forcing_W}\\
\abs{F^{(\gamma)}_Z} &\les \begin{cases}
e^{-s  },  &\mbox{if } \abs{ \gamma}=0\\
 e^{-\frac 32s} \eta^{- \frac{2}{2k-5}},& \mbox{if } \gamma_1= 1 \mbox{ and } \abs{\gamma}= 1  \\
  e^{-\frac 32s} ( M^{\frac{\abs{\check \gamma}}{2}}  +  M^{2} \eta^{-\frac 16}) ,& \mbox{if } \gamma_1\geq 1 \mbox{ and } \abs{\gamma}= 2  \\
M^2 e^{-\frac32 s}, & \mbox{if } \gamma_1=0\mbox{ and } \abs{\check \gamma}=1\\
e^{- (\frac32 - \frac{3}{2k-7})s}, & \mbox{if } \gamma_1=0\mbox{ and } \abs{\check \gamma}=2
\end{cases} \label{eq:forcing_Z}\\
\abs{F^{(\gamma)}_{A\nu}} &\les
\begin{cases}
M^{\frac 12} e^{-s},  &\mbox{if }  \abs{ \gamma}=0\\
(M^{\frac 12} + M^2 \eta^{- \frac 16}) e^{-s}, & \mbox{if } \gamma_1=0\mbox{ and } \abs{\check \gamma}=1\\
e^{\left(-1+ \frac{3}{2k-7} \right)s}\eta^{-\frac16}(y), & \mbox{if } \gamma_1=0\mbox{ and } \abs{\check \gamma}=2
\end{cases}\label{eq:forcing_U}
\,.
\end{align}
Moreover, we have the following higher order estimate
\begin{align}
\abs{ \tilde F_W^{(\gamma),0}}&\les  e^{- (\frac12 - \frac{4}{2k-7})s}\quad\mbox{for}\quad \abs{\gamma}=3\label{e:forcing:W3}
\end{align} 
and the following estimates on $\tilde F^{(\gamma)}_W$
\begin{alignat}{2}
\abs{\tilde F^{(\gamma)}_W} &\les  \eps^{\frac 1{11}}
\eta^{- \frac 12}(y)
\quad &&\mbox{for }\gamma=(1,0,0) \mbox{ and }  \abs{y} \leq \LLL \label{eq:Ftilde_d1_est}\\
\abs{\tilde F^{(\gamma)}_W} &\les  \eps^{\frac 1{12}} \eta^{-\frac 13}(y)
\quad &&\mbox{for }\gamma_1=0,  \abs{\check \gamma}=1 \mbox{ and } \abs{y} \leq \LLL\label{eq:Ftilde_dcheck_est}\\
\abs{\tilde F^{(\gamma)}_W} &\les \eps^{\frac18}
+ \eps^{\frac{1}{10}} (\log M)^{\abs{\check \gamma}-1}
\quad &&\mbox{for }\abs{\gamma}=4 \mbox{ and }\abs{y}\leq \ell\label{eq:Ftilde_4th_est}\,.
 \end{alignat}
\end{corollary}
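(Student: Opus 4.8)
The plan is to expand each of the differentiated forcing terms according to its definition and estimate the resulting pieces one group at a time. For $F_W^{(\gamma)}$ I would start from \eqref{eq:F:W:def}: the leading term $\p^\gamma F_W$ is bounded directly by the force estimate \eqref{eq:F_WZ_deriv_est} of Lemma~\ref{lem:forcing}, and in almost every case this term already saturates the claimed bound. The Leibniz commutators $\p^{\gamma-\beta}G_W\,\p_1\p^\beta W$ and $\p^{\gamma-\beta}h_W^\mu\,\p_\mu\p^\beta W$ with $0\le\beta<\gamma$ are controlled by pairing the $G_W$, $h_W$ bounds of Lemma~\ref{lemma_g} -- using the lossy high-order versions \eqref{eq:GW:lossy} and \eqref{eq:h:lossy} whenever the factor $\p^{\gamma-\beta}G_W$ or $\p^{\gamma-\beta}h_W$ has order three or four -- with the $W$ bootstrap \eqref{eq:W_decay}, its fourth-order refinement \eqref{eq:bootstrap:Wtilde4}, and the interpolated estimate \eqref{eq:W:GN} of Lemma~\ref{eq:higher:order:ests}. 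The remaining $\Jcal W$ corrections appear only for $\abs{\gamma}\ge 2$ (or $\abs{\gamma}\ge 3$); for these I would write $\abs{\p^{\gamma-\beta}(\Jcal W)}\les\abs{\p^{\gamma-\beta}W}+(\text{lower order in }W)$ via \eqref{e:bounds_on_garbage}, and finish with \eqref{eq:W_decay}. At each step one multiplies out the $\eta$-powers produced by the various factors and, where the target bound carries a spatial weight rather than pure temporal decay, trades an $e^{-s}$ for $\eps^{\frac13}\eta^{-\frac13}$ through \eqref{e:space_time_conv} on $\XXX(s)$.

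The estimates for $F_Z^{(\gamma)}$ are obtained in exactly the same way, carrying the extra factor $e^{-\frac s2}$ visible in \eqref{eq:FZ:def} together with the $Z$ bootstrap \eqref{eq:Z_bootstrap}; those for $F_{A\nu}^{(\gamma)}$ use \eqref{eq:F_A_deriv_est} for the leading piece and \eqref{eq:gA}, \eqref{eq:hA}, \eqref{eq:A_bootstrap}, \eqref{eq:A:higher:order} for the commutators. For the $\tilde W$-forcing $\tilde F_W^{(\gamma)}$ the structure of \eqref{eq:p:gamma:tilde:F} is identical, with three modifications: the leading term is $\p^\gamma\tilde F_W$, bounded by \eqref{eq:Ftilde_est} (and, at $y=0$, by \eqref{e:FW3}); the transport and $\Jcal W$ commutators now involve $\tilde W$ rather than $W$, so the $\tilde W$ bootstrap \eqref{eq:tildeW_decay}--\eqref{eq:bootstrap:Wtilde3:at:0} is used in their place; and there is the additional term $\p^{\gamma-\beta}(\Jcal\p_1\bar W)\,\p^\beta\tilde W$, handled with the decay of $\check\nabla\Jcal$ from \eqref{e:bounds_on_garbage}, the explicit bound $\abs{\p^m\p_1\bar W}\les\eta^{-\frac13-\frac m2}$, and again the $\tilde W$ bootstrap. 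The small gains $\eps^{\frac1{11}}$, $\eps^{\frac1{12}}$, $\eps^{\frac18}$ in \eqref{eq:Ftilde_d1_est}--\eqref{eq:Ftilde_4th_est} arise, as in the proof of \eqref{eq:Ftilde_est}, because every piece involving $W$ or a modulation parameter comes with an explicit power of $\eps$, while the pieces involving $\tilde W$ inherit the $\eps^{\frac1{10}},\eps^{\frac1{11}},\dots$ smallness from the bootstrap; on the range $\abs{y}\le\LLL=\eps^{-\frac1{10}}$ these powers dominate the at-worst polynomial-in-$\eps^{-1}$ growth of the $\eta$-weights.

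The only genuinely delicate part is the bookkeeping: for each multi-index $\gamma$ with $\abs{\gamma}\le 2$ (and the relevant third-order cases at $y=0$, and fourth-order cases on $\abs{y}\le\ell$) one must identify the worst split $\beta$ and check that the resulting exponent of $\eta$ and power of $M$ match the statement. No individual bound is hard, but one has to be careful that the fractional exponents such as $-\tfrac16+\tfrac{2\abs{\gamma}+1}{3(2k-5)}$ appearing in Lemma~\ref{lem:forcing} propagate correctly through the Leibniz rule, and that the loss of two derivatives in the top-order commutators is absorbed by the $\dot H^k$-interpolation bounds of Lemma~\ref{eq:higher:order:ests}; this last point is precisely where the hypothesis $k\ge 18$ enters.
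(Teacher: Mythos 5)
Your plan matches the paper's proof almost verbatim: decompose each $F^{(\gamma)}_\cdot$ according to \eqref{eq:F:W:def}, \eqref{eq:F:ZA:def}, \eqref{eq:p:gamma:tilde:F}, bound the leading derivative of the force via Lemma~\ref{lem:forcing}, bound the Leibniz commutators by pairing Lemma~\ref{lemma_g} (including the lossy \eqref{eq:GW:lossy}, \eqref{eq:h:lossy} when three or four derivatives land on $G_W$ or $h_W$) with the $W$, $Z$, $A$ bootstraps, and trade temporal for spatial decay with \eqref{e:space_time_conv}; this is exactly what the paper does. One small imprecision: $k\ge 18$ enters not so much through ``absorbing two derivatives'' in Lemma~\ref{eq:higher:order:ests} as through ensuring the fractional exponents such as $\tfrac{2|\gamma|+1}{2k-5}\le\tfrac16$ and $\tfrac12-\tfrac{3}{2k-7}\le\tfrac13$ are small enough to match the claimed $\eta$-powers, but that is the same mechanism you already name when you speak of the exponents ``propagating correctly.''
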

\begin{proof}[Proof of Corollary~\ref{cor:forcing}]
First we establish \eqref{eq:forcing_W}. Note that in this estimate $\abs{\gamma}\leq 2$, and thus by definition \eqref{eq:F:W:def} we have
\begin{align*}
\abs{F_{W}^{(\gamma)}}&\les \abs{\p^\gamma F_W }
+ \!\!\! \sum_{0\leq \beta < \gamma}  \left( \sabs{\p^{\gamma-\beta}G_W \p_1 \p^\beta W} + \sabs{\p^{\gamma-\beta}h_W^\mu \p_\mu \p^\beta W}  \right) + {\bf 1}_{|\gamma|= 2}  \!\!\! \sum_{\substack{ |\beta| = |\gamma|-1 \\ \beta\le\gamma, \beta_1 = \gamma_1}}\sabs{ \p^{\gamma-\beta} (\Jcal W)   \p_1\p^\beta W  }\\
&=:\abs{\partial^{\gamma}F_{W}}+\mathcal I_1+\mathcal I_2 \,.
\end{align*}
In order to estimate $\mathcal I_1$, we utilize \eqref{eq:W_decay}, \eqref{e:G_W_estimates}, \eqref{e:h_estimates}, and for $\abs{\gamma} \leq 2$  obtain
\begin{align*}
\mathcal I_1&\les
M \eta^{-\frac 13}  \left(e^{-\frac s2}+  M^2  \eps^{\frac12} ({\bf 1}_{\abs{\gamma}=2 } + {\bf 1}_{\abs{\gamma}=\abs{\check \gamma}=1})  \right)+Me^{-s}\left({\bf 1}_{\abs{\gamma}=\abs{\check\gamma}=1}+ \eta^{-\frac 16} \right)\\
&\les M \eta^{-\frac 13}\left(e^{-\frac s2}+  \eps^{\frac13}  ({\bf 1}_{\abs{\gamma}=2 } + {\bf 1}_{\abs{\gamma}=\abs{\check \gamma}=1})  \right) \, ,
\end{align*}
where in the last inequality we invoked \eqref{e:space_time_conv}.
Next, we consider the   $\mathcal I_2$ term. We first note that  $\mathcal I_2=0$ when $\gamma_1=2$. From \eqref{eq:W_decay} and \eqref{e:bounds_on_garbage}, using that $\abs{\gamma-\beta}=1$, and that $\abs{\check\beta} = \abs{\check \gamma} -1$, we have
\[\mathcal I_2\les  {\bf 1}_{|\gamma|= 2} \sum_{\substack{ |\beta| = |\gamma|-1 \\ \beta\le\gamma, \beta_1 = \gamma_1}}\abs{    \p_1\p^\beta W  }\les  M^{\frac{\abs{\check \gamma}}{3}}  \eta^{-\frac 13}\,. \] 
Combining the above three estimates with \eqref{eq:F_WZ_deriv_est} and  \eqref{e:space_time_conv}, we obtain \eqref{eq:forcing_W}.  Here we have used that for the $\gamma_1\geq 1$ and $\abs{\gamma} \in \{1,2\}$ case of \eqref{eq:F_WZ_deriv_est}, $\frac{2\abs{\gamma}+1}{2k-5} \leq \frac 16$, which is where the assumption $k\geq 18$ arises from.  

Similarly, for $\abs{\gamma}\leq 2$, from \eqref{eq:F:ZA:def} we have
\begin{align*}
\abs{F_{Z}^{(\gamma)}}&\les \abs{\p^\gamma F_Z }
+ \sum_{0\leq \beta < \gamma}  \left(\sabs{\p^{\gamma-\beta}G_Z \p_1 \p^\beta Z} + \sabs{\p^{\gamma-\beta}h_Z^\mu \p_\mu\p^\beta Z}\right) \notag\\
&\qquad +  {\bf 1}_{\abs{\gamma}=2} \sabs{\p_1 Z \p^\gamma(\Jcal W)}   +\sum_{\substack{ |\beta| = |\gamma|-1 \\ \beta\le\gamma, \beta_1 = \gamma_1}}\sabs{ \p^{\gamma-\beta} (\Jcal W)   \p_1\p^\beta Z  }\\
&=\abs{\partial^{\gamma}F_{Z}} +\mathcal I_1 +  {\bf 1}_{\abs{\gamma}=2  \sabs{\p_1 Z \p^\gamma(\Jcal W)}} +\mathcal I_2 \, .
\end{align*}
First, we note that by \eqref{eq:F_WZ_deriv_est}  the available estimates for $\p^\gamma F_Z$ are consistent with \eqref{eq:forcing_Z} since  $k\geq 18$ and thus $- \frac 16 + \frac{5}{2k-5} \leq 0$. 
Second, we note that for $\abs{\gamma}=2$, by   \eqref{eq:W_decay}, \eqref{eq:Z_bootstrap} and \eqref{e:bounds_on_garbage}, we have
\[
 \sabs{\p_1 Z \p^\gamma(\Jcal W)} \les M^{\frac 12} e^{-\frac 32 s} \left( M \eta^{-\frac 16} {\bf 1}_{\gamma_1=0} + M^{\frac 23} \eta^{-\frac 13} {\bf 1}_{\gamma_1 \geq 1}  + \eps e^{-\frac s2} \right) \, ,
\]
a bound which is consistent with \eqref{eq:forcing_Z}.
Next, in order to estimate $\mathcal I_1$ we utilize \eqref{eq:Z_bootstrap}, \eqref{e:G_ZA_estimates}, \eqref{e:h_estimates}, and \eqref{e:space_time_conv}, we obtain 
\begin{align*}
\mathcal I_1
&\les
e^{-\frac32 s}\left(M^2  e^{-\frac s2} + M^3 \eps^{\frac 12} {\bf 1}_{ \abs{\check\gamma}\geq 1}  + M \eps^{\frac12} \eta^{-\frac 16} \right) \,.
\end{align*}
Lastly, we consider $\mathcal I_2$. We first note that for $\abs{\gamma}\leq 2$, we have $\mathcal I_2=0$ whenever $\abs{\gamma}= \gamma_1$. For $\abs{\gamma} > \gamma_1$, from  \eqref{eq:W_decay}, \eqref{eq:Z_bootstrap} and \eqref{e:bounds_on_garbage}, we have
\[
\mathcal I_2\les\sum_{\substack{ |\beta| = |\gamma|-1 \\ \beta\le\gamma, \beta_1 = \gamma_1}}\abs{    \p_1\p^\beta Z  }\les \left({\bf 1}_{\abs{\check\gamma}=1} M^{\frac 12}+{\bf 1}_{\abs{\check\gamma}=2 }M\right) e^{-\frac32 s} 
\,.
\] 
Upon inspection, we note that the bounds for ${\mathcal I}_1$ and ${\mathcal I}_2$ obtained above are consistent with \eqref{eq:forcing_Z}, thereby concluding the proof of this bound.

In order to prove the $\sabs{F_{A}^{(\gamma)}}$ estimate, we use the definition \eqref{eq:F:ZA:def}, with $\gamma_1 =0$ and $\abs{\check \gamma} \leq 2$, and ignore the subindex $\nu$ to arrive at
\begin{align*}
\sabs{F_{A}^{(\gamma)}}&\les \abs{\p^\gamma F_A }
+ \sum_{0\leq \beta < \gamma}  \left(\sabs{\p^{\gamma-\beta}G_{U} \p_1 \p^\beta A} + \sabs{\p^{\gamma-\beta}h_{U}^\mu \p_\mu \p^\beta A}\right)  \notag\\
&\qquad +  {\bf 1}_{\abs{\gamma}=2} \p_1 A \p^\gamma (\Jcal W) +\sum_{\substack{ |\beta| = |\gamma|-1 \\ \beta\le\gamma, \beta_1 = \gamma_1=0}}\sabs{ \p^{\gamma-\beta} (\Jcal W)   \p_1\p^\beta A  }\\
&=\abs{\partial^{\gamma}F_{A}}+\mathcal I_1+  {\bf 1}_{\abs{\gamma}=2} \p_1 A \p^\gamma (\Jcal W)  + \mathcal I_2 \,.
\end{align*}
The bounds for $\p^\gamma F_A$ previously established in \eqref{eq:forcing_U} are the same as the desired bound in \eqref{eq:F_A_deriv_est}. Moreover, for $\abs{\gamma}=2$, by \eqref{eq:W_decay}, \eqref{eq:A_bootstrap} and \eqref{e:bounds_on_garbage},
\[
 \abs{\p_1 A \p^\gamma (\Jcal W)} \les M e^{-\frac 32 s} \left( M \eta^{-\frac 16} + \eps e^{-\frac s2} \right)
\]
which is consistent with the last bound in \eqref{eq:forcing_U}. In order to bound ${\mathcal I}_1$, we appeal to \eqref{eq:A_bootstrap}, \eqref{eq:A:higher:order}, \eqref{e:G_ZA_estimates}, and \eqref{e:h_estimates} to deduce
\[
{\mathcal I}_1 \les M^3 \eps^{\frac 12} e^{-\frac 32 s} + {\bf 1}_{\abs{\gamma}=2} M^2 \eps^{\frac 12} e^{-(\frac 32 - \frac{3}{2k-5})s}
\]
which is consistent with \eqref{eq:forcing_U} in view of \eqref{e:bounds_on_garbage}. 
Lastly, from the same bounds and using \eqref{eq:W_decay}, we arrive at
\[
{\mathcal I}_2 \les \abs{\check \nabla (\Jcal W)} \left( {\bf 1}_{\abs{\gamma}=1} \abs{\p_1 A} + {\bf 1}_{\abs{\gamma}=2} \abs{\p_1 \check \nabla A} \right) \les {\bf 1}_{\abs{\gamma}=1} M e^{-\frac 32 s} + {\bf 1}_{\abs{\gamma}=2} e^{-(\frac 32 - \frac{3}{2k-5})s}
\]
which combined with \eqref{e:bounds_on_garbage} completes the proof of \eqref{eq:forcing_U}.

Next, we turn to the proof of the $\tilde F^{(\gamma)}_W$ in \eqref{e:forcing:W3}--\eqref{eq:Ftilde_4th_est}. 
For $\abs{\gamma}=1$ and $\abs{y}\leq \LLL$, we consider the forcing term $\tilde F_W^{(\gamma)}$ defined in \eqref{eq:p:gamma:tilde:F}, and estimate it as
\begin{align*}
\abs{\tilde F^{(\gamma)}_W}
&\les \abs{\p^\gamma \tilde F_W }
+ \abs{\p^{\gamma}G_W} \abs{\p_1  \tilde W} + \abs{\p^{\gamma} h_{W}} \abs{\check\nabla \tilde W}+ \abs{\p^{\gamma} (\Jcal \partial_1\bar W)} \abs{  \tilde W} 
+ {\bf 1}_{\abs{\check\gamma}=1 }
\abs{ \p^{\gamma} (\Jcal W)}  \abs{  \p_1\tilde W} \,.
\end{align*}
If $\abs{\gamma}=\gamma_1=1$, utilizing \eqref{eq:bootstrap:Wtilde}, \eqref{eq:bootstrap:Wtilde1}, \eqref{eq:bootstrap:Wtilde2},  \eqref{e:bounds_on_garbage}, \eqref{e:G_W_estimates}, \eqref{e:h_estimates}, the explicit bounds on $\bar W$, and the previously  established  estimate \eqref{eq:Ftilde_est}, we obtain
\begin{align*}
\abs{\tilde F^{(\gamma)}_W}
 \les  M \eps^{\frac 16}  \eta^{-\frac 12 + \frac{3}{2k-5}}  + M \eps^{\frac{1}{12}} e^{-\frac s2} \eta^{-\frac 13}+ \eps^{\frac{1}{13}} e^{-s}\eta^{-\frac 16} +\eps^{\frac{1}{11}} \eta^{-\frac 23} 
 \les \eps^{\frac{1}{11}} \eta^{-\frac 12}
 \end{align*}
 where in the last inequality we invoked \eqref{e:space_time_conv}  and the fact that $\abs{y} \leq \LLL = \eps^{-\frac{1}{10}}$, which yields $M \eps^{\frac 16} \eta^{\frac{3}{2k-5}} \les M \eps^{\frac 16} \LLL^{\frac{18}{2k-5}} \les \eps^{\frac{1}{11}}$ for $k \geq 18$, by taking $\eps$ to be sufficiently small in terms of $k$ and $M$. Similarly for $\abs{\gamma}=\abs{\check\gamma}=1$, applying the same set of bounds yields \begin{align*}
\abs{\tilde F^{(\gamma)}_W}
\les  M \eps^{\frac 16} \eta^{-\frac 13} +  \eps^{\frac 12} \eta^{-\frac 13} + \eps^{\frac{1}{13}} e^{-s}  +\eps^{\frac{1}{11}} \eta^{\frac 16} \left(e^{-\frac s2} \eta^{-\frac 13} +\eta^{-\frac 12}  \right) 
+\eps^{\frac{1}{12}} \eta^{-\frac 13} \left(e^{-\frac s2} \eta^{\frac 16} + 1  \right)
\les \eps^{\frac{1}{12}}\eta^{-\frac 13}  \,.
 \end{align*}
Here we have use that $\norm{\eta^{\frac 12} \p_1 \check \nabla \bar W}_{L^\infty} \les 1$, which is a sharper estimate than what we have written earlier in~\eqref{eq:bar:W:properties}.
This concludes the proof of \eqref{eq:Ftilde_d1_est} and of \eqref{eq:Ftilde_dcheck_est}.

Consider now the estimate \eqref{e:forcing:W3}. Evaluating  \eqref{eq:p:gamma:tilde:F} at $y=0$, applying the constraints \eqref{eq:constraints},  the identity \eqref{eq:check_derivative_GW}, and using properties of the function $\bar W$ at $0$, we obtain for $\abs{\gamma}=3$ that
\begin{align*}
\abs{F^{(\gamma),0}_W}
&\les \abs{ \p^\gamma \tilde F_W^0}+
\abs{\nabla G_W^0} \abs{\p_1 \nabla^2 \tilde W^0} + \abs{\nabla h_{W}^0} \abs{\check\nabla \nabla^2 \tilde W^0} \notag\\
&\les \abs{\p^\gamma \tilde F_W^0} +
\left(\abs{\p_1 G_W^0} +\abs{\check\nabla  F_W^0} + \abs{\nabla h_{W}^0}\right) \left( \abs{\nabla^3W^0}   + \abs{\nabla^3 \bar W^0} \right)
\,.
 \end{align*}
 Then apply \eqref{e:G_W_estimates}, \eqref{e:h_estimates}, \eqref{eq:F_WZ_deriv_est}, \eqref{e:FW3}, and \eqref{eq:bootstrap:Wtilde3:at:0}, we obtain
 \begin{align*}
F^{(\gamma),0}_W
&\les  e^{- (\frac12 - \frac{4}{2k-7})s}+ Me^{-\frac s2} + M^2
  e^{-s} + e^{-s}\les e^{- (\frac12 - \frac{4}{2k-7})s} 
\end{align*}
thereby concluding the proof of \eqref{e:forcing:W3}.
 
Lastly, we consider the bound \eqref{eq:Ftilde_4th_est}, which needs to be established only for $\abs{y}\leq \ell$. 
For $\abs{\gamma}=4$ we consider the forcing term defined in \eqref{eq:p:gamma:tilde:F} and bound it using \eqref{eq:bootstrap:Wtilde:near:0}, \eqref{e:bounds_on_garbage}, \eqref{e:G_W_estimates}, \eqref{e:h_estimates}, \eqref{eq:GW:lossy}, \eqref{eq:h:lossy}, \eqref{eq:Ftilde_est}, and the explicit bounds of $\bar W$ as
\begin{align}
\abs{\tilde F^{(\gamma)}_W}
&\les \abs{\p^\gamma \tilde F_W }
+ \sum_{0\leq \beta < \gamma} \left( \abs{\p^{\gamma-\beta}G_W} \abs{\p_1 \p^\beta \tilde W} + \abs{\p^{\gamma-\beta} h_{W}^\mu} \abs{\p_\mu \p^\beta \tilde W}+ \abs{\p^{\gamma-\beta} (\Jcal \partial_1\bar W)} \abs{  \p^\beta \tilde W} \right) \notag\\
&\quad+ \sum_{\substack{ 0 \leq |\beta| \leq |\gamma|-2 \\ \beta\le\gamma}} \abs{\p^{\gamma-\beta} (\Jcal W)}  \abs{  \p_1\p^\beta \tilde W  }
+ \sum_{\substack{ |\beta| = |\gamma|-1 \\ \beta\le\gamma, \beta_1 = \gamma_1}} 
\abs{ \p^{\gamma-\beta} (\Jcal W)}  \abs{  \p_1\p^\beta \tilde W} \notag\\
&\les  M \eps^{\frac16} 
+ \sum_{0\leq \beta < \gamma}  \left(\eps^{\frac13} \abs{\nabla \p^\beta \tilde W}+ \abs{  \p^\beta \tilde W}\right)  + \sum_{\substack{0 \leq |\beta| \leq |\gamma|-2 \\ \beta\le\gamma}}  \abs{  \p_1\p^\beta \tilde W  }
+ \sum_{\substack{ |\beta| = |\gamma|-1 \\ \beta\le\gamma, \beta_1 = \gamma_1}} 
\abs{  \p_1\p^\beta \tilde W} \notag
\end{align}
where we used $W=\bar W+\tilde W$ to bound the terms on the second line of the first inequality, and the exponent bound $\frac 12 - \frac{3}{2k-7} \leq \frac 13$ for $k\geq 18$ for the $G_W$ term. Finally, using \eqref{eq:bootstrap:Wtilde:near:0}, and \eqref{eq:bootstrap:Wtilde4}, we obtain
\begin{align*}
\abs{\tilde F^{(\gamma)}_W}
&\les  M \eps^{\frac16}  +  M \eps^{\frac 13} 
+ (\log M)^4\eps^{\frac {1}{10}}\ell 
+ {\bf 1}_{\abs{\check\gamma} \neq 0} \eps^{\frac{1}{10}} (\log M)^{\abs{\check \gamma}-1}
\les  \eps^{\frac18}
+  \eps^{\frac{1}{10}} (\log M)^{\abs{\check \gamma}-1} 
\, ,
\end{align*} 
where we have used that by the definition of $\ell$ in \eqref{eq:ell:choice} we have
\begin{equation}\label{eq:kingfisher}
\ell \leq (\log M)^{-5}\,.
\end{equation}
This concludes the proof of the corollary.
\end{proof}

\section{Bounds on Lagrangian trajectories}
\label{sec:Lagrangian}

\subsection{Upper bound on  the support}
We now close the bootstrap assumption \eqref{eq:support} on the size of the support.
\begin{lemma}[Estimates on the support]\label{lem:support}
 Let  $\Phi$ denote either $\pw^{y_0}$, $\pz^{y_0}$ or $\pa^{y_0}$.  For any
$y_0 \in \mathcal{X} _0$ defined in \eqref{eq:support-init}, we have that
\begin{subequations} 
\label{eq:upper:bound:traj}
\begin{align} 
  \abs{\Phi_1(s)} & \le  \tfrac{3}{2}  \eps^{\frac12}  e^{\frac 32s} \,,  \label{trajectory1}\\
 \abs{\check \Phi(s)} & \le  \tfrac{3}{2}   \eps^{\frac{1}{6}}  e^{\frac{s}{2}} \,.    \label{trajectory2}
\end{align} 
\end{subequations} 
for all  $s \ge -\log\eps$.
\end{lemma}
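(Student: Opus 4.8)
The plan is to run a continuity (bootstrap) argument on the trajectories directly. Fix a label $y_0 \in \XXX_0$, so that at the initial time $s_0 = -\log\eps$ we have $|y_{0,1}| \le \eps^{-1}$ and $|\check y_0| \le \eps^{-\frac13}$, i.e. $|\Phi_1(s_0)| = |y_{0,1}| \le \eps^{\frac12} e^{\frac32 s_0}$ and $|\check\Phi(s_0)| = |\check y_0| \le \eps^{\frac16} e^{\frac{s_0}{2}}$, which is strictly better than the claimed bounds \eqref{eq:upper:bound:traj}. Define $s_* \in (s_0, \infty]$ to be the supremum of times up to which \eqref{trajectory1}--\eqref{trajectory2} hold with the constant $\frac32$. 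On $[s_0, s_*)$ the trajectory stays inside $\XXX(s)$ (enlarged by at most the factor $\frac34$ relative to the bootstrap set \eqref{eq:support}, hence still inside), so all the transport-velocity estimates of Section~\ref{sec:preliminary}, in particular Lemma~\ref{lemma_g} for $G_W, G_Z, G_U$ and $h_W, h_Z, h_U$, are available along $\Phi$.

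\textbf{Key steps.} First I would write the ODE for the first component. For $\Phi = \pw^{y_0}$ we have $\frac{d}{ds}\Phi_1 = g_W(\Phi,s) + \frac32 \Phi_1$, and similarly with $g_Z$ or $g_U$ for the other two flows. Recalling $g_W = \beta_\tau \Jcal W + G_W$ with $\beta_\tau\Jcal \ge 0$ and $W$ supported in $\XXX(s)$, and using Lemma~\ref{lem:Jp1W} together with the fundamental theorem of calculus in $y_1$ (so that $|W(y,s)| \le |y_1|\sup|\p_1 W| + |W(0,\check y,s)|$, controlled via \eqref{eq:W_decay}), one gets $|g_W(\Phi,s)| \le \frac12 \eps^{\frac13} e^{s} + (\text{lower order})$ using \eqref{e:space_time_conv}; the analogous bound for $g_Z, g_U$ uses \eqref{e:G_ZA_estimates} where one must be slightly careful about the $(1-\beta_i)e^{s/2}\kappa_0$ subtracted term, but this only contributes an $O(e^{s/2})$ quantity which is dominated. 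The essential point is that $g_\bullet(\Phi,s)$ is bounded by a small multiple of $e^s$ times the current size of $|\check\Phi(s)|^{?}$-type quantities plus exponentially dominated terms; feeding this into $\frac{d}{ds}(e^{-\frac32 s}\Phi_1) = e^{-\frac32 s} g_\bullet(\Phi,s)$ and integrating from $s_0$, the growth of $e^{-\frac32 s}|\Phi_1|$ beyond its initial value $\eps^{-1}e^{-\frac32 s_0}\cdot\eps^{?}$... is controlled: one obtains $e^{-\frac32 s}|\Phi_1(s)| \le \eps^{\frac12}(1 + C\eps^{\frac13})$, i.e. $|\Phi_1(s)| \le \frac54 \eps^{\frac12} e^{\frac32 s}$, strictly better than $\frac32$. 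For the tangential components, $\frac{d}{ds}\check\Phi = \check h_\bullet(\Phi,s) + \frac12\check\Phi$, so $\frac{d}{ds}(e^{-\frac{s}{2}}\check\Phi) = e^{-\frac s2}\check h_\bullet(\Phi,s)$, and since $|\check h_\bullet| \lesssim e^{-\frac s2}$ by \eqref{e:h_estimates}, integration gives $e^{-\frac s2}|\check\Phi(s)| \le \eps^{\frac16} + C\int_{s_0}^s e^{-s'}ds' \le \eps^{\frac16} + C\eps$, hence $|\check\Phi(s)| \le \frac54 \eps^{\frac16}e^{\frac s2}$, again strictly better than $\frac32$.

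\textbf{Conclusion and main obstacle.} Since on $[s_0, s_*)$ the a priori bounds are improved from $\frac32$ to $\frac54$, a standard continuity argument forces $s_* = \infty$, proving \eqref{eq:upper:bound:traj} for all $s \ge -\log\eps$; the closure of bootstrap \eqref{eq:support} for $(W,Z,A)$ then follows since any point in the support of $W(\cdot,s)$ (resp. $Z$, $A$) is $\Phi(y_0,s)$ for some $y_0$ in the initial support $\XXX_0$. I expect the main obstacle to be the bookkeeping in the $g_W$ (and $g_Z$, $g_U$) estimate along the trajectory: unlike $h_\bullet$, the functions $g_\bullet$ are genuinely of size $e^s$ (they contain $\Jcal W$ and the $e^{s/2}\kappa_0$ pieces), so one cannot naively integrate $e^{-\frac32 s}g_\bullet$ and hope for smallness — one must exploit the factor $e^{-\frac32 s}$ against $g_\bullet \lesssim e^s$ to get an integrable $e^{-\frac s2}$ remainder, and crucially use that the "large" part $\beta_\tau\Jcal W$ has the right \emph{sign} pattern near the origin (via Lemma~\ref{lem:Jp1W}, $\Jcal\p_1 W \ge -1$) so it does not drive $\Phi_1$ outward faster than the self-similar rate $\frac32 y_1$ already accounts for. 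Handling the three flows $\pw, \pz, \pu$ uniformly, and verifying that the slightly enlarged trajectory set still lies in the region where Section~\ref{sec:preliminary} applies, are the remaining routine points.
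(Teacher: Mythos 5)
Your organizing framework (writing the ODE as $\frac{d}{ds}(e^{-\frac32 s}\Phi_1)=e^{-\frac32 s}g_\bullet\circ\Phi$ and $\frac{d}{ds}(e^{-\frac s2}\check\Phi)=e^{-\frac s2}h_\bullet\circ\Phi$ and integrating) is exactly the paper's; the extra local continuity argument you wrap it in is fine, and the $h_\bullet$ estimate is correct. The problem is your bound on $W$ along the trajectory. You propose to use the fundamental theorem of calculus with the crude estimate $\sup\abs{\p_1 W}\le 1$, which only gives $\abs{W(y,s)}\lesssim \abs{y_1}+\abs{W(0,\check y,s)}$, i.e. $\abs{W}\lesssim\abs{y_1}$ for $y_1$ near the edge of the support. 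This is far too weak: plugging $\abs{W\circ\Phi}\lesssim\abs{\Phi_1}\lesssim\eps^{\sfrac12}e^{\frac32 s}$ into the integrand gives $e^{-\frac32 s}\abs{g_W\circ\Phi}\gtrsim\eps^{\sfrac12}$, a \emph{constant in $s$}, so the integral $\int_{-\log\eps}^s(\cdot)\,ds'$ diverges linearly and the claimed $\frac32\eps^{\sfrac12}$ bound is never obtained. In particular the intermediate bound $\abs{g_W\circ\Phi}\le\frac12\eps^{\sfrac13}e^s$ you assert does not follow from the FTC route (which produces $\eps^{\sfrac12}e^{\frac32 s}$, much larger for $s\ge-\log\eps$).

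What is actually needed is the sub-linear (cube-root) growth of $W$ in $y_1$, which is precisely what the $\eta^{\sfrac16}$ weight in the bootstrap \eqref{eq:W_decay} encodes: one should use $\abs{W}\le(1+\eps^{\sfrac1{20}})\eta^{\sfrac16}(y)$ \emph{directly} (no FTC), and then $\eta^{\sfrac16}(\Phi)\lesssim(\eps e^{3s})^{\sfrac16}=\eps^{\sfrac16}e^{\frac s2}$ via \eqref{e:space_time_conv}. This gives $e^{-\frac32 s}\abs{g_W\circ\Phi}\lesssim\eps^{\sfrac16}e^{-s}$, whose integral over $[-\log\eps,\infty)$ is $\OO(\eps^{\sfrac76})$, small enough to close the bound. (If you insist on an FTC-type argument, you must use the decaying derivative bound $\abs{\p_1 W}\lesssim\eta^{-\sfrac13}$ rather than the constant $1$, which after integration reproduces the $\eta^{\sfrac16}$ growth; but this is a detour — the bootstrap already supplies the pointwise bound.) Finally, the appeal to Lemma~\ref{lem:Jp1W} and the ``sign pattern'' of $\Jcal\p_1 W$ is a red herring here: this lemma only plays a role in the lower-bound and Lagrangian-escape estimates (Lemma~\ref{lem:escape}, the damping estimates in Section~\ref{sec:W}), not in the crude upper bound on the support, which only uses the size of $W$, never its sign.
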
 
\begin{proof}[Proof of Lemma \ref{lem:support}]
We begin by considering  the case that $\Phi= \Phi^{y_0}_W$, and write $\Phi = (\Phi_1, \check \Phi)$.
Note that by the definitions of \eqref{eq:transport_velocities} and \eqref{flows},
\begin{subequations} 
\begin{align} 
\tfrac{d}{ds}(e^{-\frac32 s}\Phi_1(s)) &= e^{-\frac32 s}(\beta_\tau \Jcal W+G_W)\circ \Phi \,, \label{phi1ode} \\
\tfrac{d}{ds}(e^{-\frac12 s} \Phi_\nu (s)) & =  e^{-\frac s2 }h_W^\nu \circ \Phi \,,  \label{phi2ode} \\
\Phi(-\log \eps) & =y_0 \,. \label{phi3ode}
\end{align} 
\end{subequations} 
Applying the estimates \eqref{eq:beta:tau}, \eqref{eq:W_decay}, \eqref{e:bounds_on_garbage} and \eqref{e:G_W_estimates}, we have that
\begin{align}
\abs{\beta_\tau \Jcal W}+\abs{G_W}
&\les \eta^{\frac 16}(y) +M e^{-\frac s2}+M^{\frac 12} \abs{y_1}e^{-s}+\eps^{\frac 13} \abs{ \check y}\notag\\
&\les \eps^{\frac16} e^{\frac s2}+ Me^{-\frac s2}+ \eps^{\frac13}e^{\frac s2}+\eps^{\frac 12} e^{\frac s2} \notag\\
&\leq e^{\frac s2}\,,\label{eq:yellow:tail}
\end{align}
where in the  penultimate inequality we have invoked \eqref{e:space_time_conv}, and for the last inequality and have taken $\eps$ sufficiently small to absorb the implicit constant. 
Thus,  integrating \eqref{phi1ode} and using the initial condition \eqref{phi3ode} and the bound \eqref{eq:yellow:tail}, we find that
$$
\abs{e^{-\frac32 s}\Phi_1(s)-   \eps^ {\frac{3}{2}}   {y_0}_1 } \le  \int_{-\log \eps}^s e^{-s'}\,ds'\leq \eps\,.
$$
Therefore, for $y_0 \in \mathcal{X} _0$ and for $\eps$ taken sufficiently small,
$$
 e^{-\frac32 s}\abs{\Phi_1(s)}\le   \tfrac{3}{2} \eps^ {\frac{1}{2}}   \,,
$$
so that \eqref{trajectory1} is proved.
 
Similarly, using \eqref{phi2ode} and \eqref{e:h_estimates}, we conclude that
$$
\abs{ e^{-\frac s2  }\check \Phi(s) - \eps^ {\frac{1}{2}} \check y_0}
\le  \int_{-\log \eps}^s e^{-\frac{s'}2}\abs{h_W\circ\Phi(s') }\,ds'\les  \int_{-\log \eps}^s e^{-s'}\,ds'\les
 \eps\,,
$$
and hence for $y_0 \in \mathcal{X} _0$ and for $\eps$ taken sufficiently small,
$$
 e^{-\frac{s}{2}} \abs{\check\Phi(s)}\le  \tfrac{3}{2} \eps^ {\frac{1}{ 6 }}   \,,
$$
which establishes \eqref{trajectory2}.

The estimates for the cases $\Phi= \pz^{y_0},\pu^{y_0}$ are completely analogous, once the estimate \eqref{e:G_W_estimates}  is 
replaced by the estimate \eqref{e:G_ZA_estimates}  in the argument above. 
\end{proof} 

\subsection{Lower bound for $\pw$}
\begin{lemma} \label{lem:escape}
Let $y_0 \in \RR^3$ be such that $\abs{y_0} \geq \ell$. Let $s_0 \geq - \log \eps$. 
Then, the trajectory $\pw^{y_0}$ moves away from the origin at an exponential rate, and we have the lower bound
\begin{equation}
\abs{\pw^{y_0}(s)}\geq \abs{y_0}e^{\frac{s-s_0}{5}}\label{eq:escape_from_LA} 
\end{equation}
for all  $s \geq s_0$.
\end{lemma}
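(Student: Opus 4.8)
The statement is a lower bound on the growth of the $W$-trajectory emanating from a point $y_0$ with $\abs{y_0}\ge\ell$. The strategy is to derive a differential inequality for a suitable radial quantity along the flow, and then integrate. Rather than working with $\abs{\pw^{y_0}(s)}^2 = \Phi_1^2 + \abs{\check\Phi}^2$ directly (which is awkward because $y_1$ and $\check y$ scale differently), the cleanest approach is to track the two components separately and combine. Write $\Phi = \pw^{y_0}(s) = (\Phi_1(s), \check\Phi(s))$. From the flow equations \eqref{flows} together with the definition of $\mathcal{V}_W$ in \eqref{eq:transport_velocities} and the expressions \eqref{eq:gW}, \eqref{eq:hW}, we have
\begin{align*}
\tfrac{d}{ds}\Phi_1 &= \left(\tfrac 32 + \beta_\tau \Jcal \tfrac{W}{\Phi_1}\right)\Phi_1 + G_W \circ \Phi \,, \\
\tfrac{d}{ds}\Phi_\nu &= \tfrac 12 \Phi_\nu + h_W^\nu \circ \Phi \,.
\end{align*}
The key structural fact is that $\beta_\tau \Jcal \p_1 W \ge -1$ by Lemma~\ref{lem:Jp1W}, and since $W(0,s)=0$ with $\p_1 W$ the relevant monotone quantity, the mean value theorem gives $\beta_\tau \Jcal W(y,s)/y_1 \ge -1$ wherever $y_1 \neq 0$; more precisely one should bound $\abs{\beta_\tau \Jcal W \circ \Phi} \le (1+C\eps)\abs{\Phi_1}$ using $\norm{\p_1 W}_{L^\infty}$ type control together with the $W(0,s)=0$ constraint, so that $\tfrac{d}{ds}\abs{\Phi_1} \ge (\tfrac 32 - 1 - C\eps)\abs{\Phi_1} - \abs{G_W\circ\Phi} \ge (\tfrac 12 - C\eps)\abs{\Phi_1} - \abs{G_W \circ \Phi}$.

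\textbf{Key steps.} First, I would establish the pointwise bound on the transport velocity components along $\Phi$: from \eqref{e:G_W_estimates} we have $\abs{G_W} \lesssim M e^{-s/2} + M^{1/2}\abs{y_1}e^{-s} + \eps^{1/3}\abs{\check y}$, and from \eqref{e:h_estimates} we have $\abs{h_W} \lesssim e^{-s/2}$. The crucial point is that the "dangerous" part of $G_W$ proportional to $\abs{y_1}$ comes with a prefactor $M^{1/2}e^{-s} \le M^{1/2}\eps$, which is tiny, and similarly $\eps^{1/3}\abs{\check y}$ is a small multiple of $\abs{\check y}$. So along the flow, $\abs{G_W\circ\Phi} \le C\eps^{1/3}(\abs{\Phi_1} + \abs{\check\Phi}) + Me^{-s/2}$. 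Second, combine the two component inequalities. Setting $r(s) = \abs{\Phi_1(s)} + \abs{\check\Phi(s)}$ (an $\ell^1$-type radial variable, comparable to $\abs{\Phi}$ up to a factor $\sqrt 2$), one gets $\tfrac{d}{ds}r \ge (\tfrac 12 - C\eps^{1/3})r - Me^{-s/2} \ge \tfrac 13 r - Me^{-s/2}$ for $\eps$ small. Here the $\tfrac 12$ from the self-similar scaling in the $\check\Phi$ equation dominates; and the $\tfrac 12$ surviving in the $\Phi_1$ equation (after absorbing the $-1$ from $\beta_\tau\Jcal\p_1 W$ into the $\tfrac 32$) also exceeds $\tfrac 13$. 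Third, since $\abs{y_0}\ge\ell = (\log M)^{-5}$ and $M$ is independent of $\eps$, while $Me^{-s_0/2} \le M\eps^{1/2}$, the forcing term $Me^{-s/2}$ is negligible compared to $\tfrac 13 r$ as long as $r \gtrsim \ell$; a standard Gronwall/barrier argument (comparing with the solution of $\tfrac{d}{ds}\rho = \tfrac 15 \rho$, which has a strictly smaller growth rate to leave room for the error) then yields $r(s) \ge r(s_0)e^{(s-s_0)/5}$, hence $\abs{\pw^{y_0}(s)} \ge \tfrac{1}{\sqrt 2}r(s) \ge \abs{y_0}e^{(s-s_0)/5}$ after adjusting constants (one should actually run the comparison so the clean exponent $\tfrac 15$ comes out, using that $\tfrac 13 > \tfrac 15$ with margin to spare).

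\textbf{Main obstacle.} The one genuinely delicate point is controlling $\beta_\tau \Jcal W \circ \Phi$ in terms of $\abs{\Phi_1}$ uniformly, including when $\Phi_1$ is near zero or changes sign. Near $y_1 = 0$ one cannot naively divide by $\Phi_1$; instead one should argue directly at the level of $\tfrac{d}{ds}\abs{\Phi_1}^2 = 2\Phi_1 \tfrac{d}{ds}\Phi_1$ and use that $W(0,\check y, s)$ is itself small (controlled by $\abs{\check y}$ via $\check\nabla W$ bounds) so that $\Phi_1 \cdot (\beta_\tau\Jcal W\circ\Phi) \ge -(1+C\eps)\Phi_1^2 - C\eps\abs{\check\Phi}^2 $ or similar, using the decomposition $W(y,s) = W(0,\check y,s) + \int_0^{y_1}\p_1 W$, the constraint $\check\nabla W(0,s) = 0$, and $\nabla^2 W(0,s)=0$ to get $\abs{W(0,\check y,s)} \lesssim \abs{\check y}^2 \cdot(\text{third derivative bounds})$, which is small. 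Once this is handled the rest is routine. One should also double-check the sign bookkeeping: the term $\tfrac 32 \Phi_1$ in the $\Phi_1$ equation has the correct (expanding) sign, and even in the worst case $\beta_\tau\Jcal W/\Phi_1 = -1$ one retains $\tfrac 12\abs{\Phi_1}$, which beats $\tfrac 15$. I would organize the final writeup around the single scalar inequality $\tfrac{d}{ds}r \ge \tfrac 13 r - Me^{-s/2}$ and a one-line barrier comparison, relegating the $W$-near-zero estimate to a short preliminary computation.
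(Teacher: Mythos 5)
Your overall strategy is the right one, and it is close to the paper's: establish a differential inequality for a radial quantity along the flow of $\mathcal V_W$, using the structure of the transport velocity and a pointwise bound of the form $\abs{W(y,s)}\lesssim\abs{y_1}+\text{small}\cdot\abs{\check y}$. However, there is a genuine gap in the way you close the argument, and a secondary issue in how you control $W(0,\check y,s)$.

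The main problem is the choice of the $\ell^1$-type radial variable $r(s)=\abs{\Phi_1(s)}+\abs{\check\Phi(s)}$. Even granting your differential inequality $\tfrac{d}{ds}r\ge\tfrac13 r-Me^{-s/2}$ and its integrated form $r(s)\ge r(s_0)e^{(s-s_0)/5}$, the translation back to $\abs{\Phi}$ only gives $\abs{\Phi(s)}\ge\tfrac{1}{\sqrt 2}r(s)\ge\tfrac{1}{\sqrt 2}r(s_0)e^{(s-s_0)/5}$, and since all you know is $r(s_0)\ge\abs{y_0}$ (with equality when $y_0$ lies on a coordinate axis), you obtain only $\abs{\Phi(s)}\ge\tfrac{1}{\sqrt 2}\abs{y_0}e^{(s-s_0)/5}$. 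The slack $\tfrac13>\tfrac15$ in the exponent cannot repair the constant $\tfrac{1}{\sqrt 2}$ at $s=s_0$: for $s-s_0$ small, $\tfrac{1}{\sqrt 2}e^{(s-s_0)/3}<e^{(s-s_0)/5}$, so the stated bound \eqref{eq:escape_from_LA} does not follow. The paper sidesteps this entirely by working with $\abs{\Phi}^2$ directly. It first proves the pointwise lower bound $y\cdot\mathcal V_W(y)\ge\tfrac15\abs{y}^2$ for $\abs{y}\ge\ell$, and then $\tfrac12\tfrac{d}{ds}\abs{\Phi}^2=\Phi\cdot\mathcal V_W\circ\Phi\ge\tfrac15\abs{\Phi}^2$ integrates cleanly with no loss. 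Your component-wise inequalities can in fact be repackaged to give exactly this: multiplying the $\Phi_1$ and $\Phi_\nu$ equations by $\Phi_1$ and $\Phi_\nu$ respectively and summing reproduces $\Phi\cdot\mathcal V_W\circ\Phi$, and the cross terms like $\eps^{1/13}\abs{\Phi_1}\abs{\check\Phi}$ are absorbed by Young's inequality. So the fix is structural, not conceptual: work with $\abs{\Phi}^2$, not the $\ell^1$ sum.

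Secondarily, your proposed bound $\abs{W(0,\check y,s)}\lesssim\abs{\check y}^2\cdot(\text{third-derivative bounds})$ is not what is available uniformly in the relevant region, and it is not needed. The paper uses the observation that $\bar W(0,\check y)=0$, so $W(0,\check y,s)=\tilde W(0,\check y,s)$, and the bootstrap \eqref{eq:bootstrap:Wtilde2} gives $\abs{\tilde W(0,\check y,s)}\le\eps^{1/13}\abs{\check y}$ for $\abs{y}\le\LLL$; combined with $\abs{\p_1 W}\le1$ this yields the key linear estimate $\abs{W(y,s)}\le\abs{y_1}+\eps^{1/13}\abs{\check y}$. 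You would also need to treat the region $\abs{y}\ge\LLL$ (where the $\tilde W$ bootstrap no longer applies); the paper uses the crude growth bound $\abs{W}\le(1+\eps^{1/20})\eta^{1/6}(y)\lesssim\abs{y}$ there. Finally, a small point: one should note that once $\tfrac{d}{ds}\abs{\Phi}^2>0$ holds on $\{\abs{y}\ge\ell\}$, the trajectory can never re-enter $\{\abs{y}<\ell\}$, so the inequality $y\cdot\mathcal V_W\ge\tfrac15\abs{y}^2$ applies along the entire trajectory for $s\ge s_0$; your exposition should make this explicit.
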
 
\begin{proof}[Proof of Lemma~\ref{lem:escape}]
First, we claim that
\begin{equation}
\label{eq:escape_from_NY}
y\cdot \mathcal V_W(y)\geq  \tfrac{1}{5}  \abs{y}^2 \,, \qquad \mbox{for} \qquad \abs{y}\geq \ell \, .
\end{equation}
From the bootstrap $\abs{\partial_1 W}\leq 1$, the explicit formula for $\bar W$ which yields $\bar W(0,\check y)=0$, the fundamental theorem of calculus, and the bound \eqref{eq:bootstrap:Wtilde2} we obtain 
\[
\abs{W(y)}\leq\abs{W(y_1,\check y)-W(0,\check y)} + \abs{\tilde W(0,\check y)}\leq \abs{y_1} + \eps^{\frac{1}{13}}  \abs{\check y}  
\]
for all $y$ such that $\abs{y}\leq \LLL$. 
Together with Lemma \ref{lemma_g}, in which we use an extra factor of $M$ to absorb the implicit constant in the $\les$ symbol, and \eqref{eq:beta:tau}, the above estimate implies that
\begin{align*}
y\cdot \mathcal V_W&= y\cdot \left(  \beta_\tau W+G_W  + \tfrac{3}{2} y_1 \,,     h_2 + \tfrac{1}{2} y_2\,,     h_3 + \tfrac{1}{2} y_3 \right)\\
&\geq y_1^2 + \tfrac 12 \abs{y}^2 - (1+2  M^2 \eps)\abs{y_1}(\abs{y_1}+ \eps^{\frac{1}{13}}  \abs{\check y}) 
 -  \abs{y_1} M^2 ( \eps^{\frac 12} +  \eps \abs{y_1} + \eps^{\frac 13} \abs{\check y}) -  M^2 \eps^{\frac12}\abs{\check y} \\
&\geq  \tfrac{1}{5} \abs{y}^2
\end{align*} 
for all $\ell \leq \abs{y} \leq  \LLL $, upon taking $\eps$ sufficiently small, depending on $M$ and $\ell$.  Similarly, directly from the first bound in \eqref{eq:W_decay} we have that
\[
\abs{W(y)} \leq  (1+\eps^{\frac{1}{20}})  \eta^{\frac 16}(y) \leq  (1+\eps^{\frac{1}{20}})^2 \abs{y}   
\]
for all $\abs{y}\geq \LLL= \eps^{-\frac{1}{10}}$,
and thus 
\begin{align*}
y\cdot \mathcal V_W
&\geq y_1^2 + \tfrac 12 \abs{y}^2 - (1+2  M^2 \eps)\abs{y_1}  (1+\eps^{\frac{1}{20}})^2 \abs{y}   
 -  M^3 \eps^{\frac12} \abs{y}^2-  M^3 \eps^{\frac12}\abs{ y} \\
&\geq \tfrac{1}{2}\abs{y}^2 - \tfrac 14 (1+2  M^2 \eps)^2  (1+\eps^{\frac{1}{20}})^4 \abs{y}^2  -  M^3 \eps^{\frac12} \abs{y}^2-  M^3 \eps^{\frac12} \LLL^{-1} \abs{ y}^2 \\
&\geq \tfrac 15 \abs{y}^2 
\end{align*} 
for all $ \abs{y} \geq  \LLL  = \eps^{-\frac{1}{10}}$ such that $y\in \XXX(s)$, by taking $\eps$ to be sufficiently small. 

We now let $y= \pw^{y_0}(s)$ and use the
fact that $\p_s \pw^{y_0}(s) =  \mathcal{V} _W \circ \pw^{y_0}(s)$, so that 
\eqref{eq:escape_from_NY} implies that
$$
\tfrac{1}{2} \tfrac{d}{ds} \abs{ \pw^{y_0}}^2 \ge \tfrac{1}{5}  \abs{\pw^{y_0}}^2 \,,
$$
which upon integration from $s_0$ to $s$ yields  \eqref{eq:escape_from_LA}.
\end{proof}

\subsection{Lower bounds for $\pz$, $\pa$,  and $\pu$}
We now establish important lower-bounds for $\pz^{y_0}(s)$ or $\pa^{y_0}(s)=\pu^{y_0}(s)$.  
\begin{lemma}\label{lem:phiZ} 
Let $\Phi(s)$ denote either $\pz^{y_0}(s)$ or  $\ \pu^{y_0}(s)$.  If
\begin{align} 
\kappa_0\geq \frac{3}{1-\max(\beta_1,\beta_2)}\,, \label{k0-lb}
\end{align} 
then for any $y_0 \in \XXX_0$ defined in \eqref{eq:support-init}, there exists an $s_*\geq -\log \eps$ such that
\begin{equation}\label{phi-lowerbound}
\abs{\Phi_1(s)}\geq \min\left(\abs{e^{\frac s2}-e^{\frac {s_*}2}}, e^{\frac s2}\right)\,.
\end{equation}
In particular, we have the following inequality:
\begin{equation}\label{phi-lowerbound_conseq}
\int_{-\log \eps }^{\infty} e^{\sigma_1 s'}(1+\abs{\Phi_1(s')})^{-\sigma_2}\,ds'\leq C\,,
\end{equation}
for $0\leq \sigma_1 < \sfrac12$ and $2\sigma_1 < \sigma_2$, where the constant $C$ depends only on the choice of $\sigma_1$ and $\sigma_2$.
\end{lemma}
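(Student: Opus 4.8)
The statement about $\Phi$ being either $\pz^{y_0}$ or $\pu^{y_0}$ has essentially the same structure for both cases, since the only difference between the $Z$ and $U$ transport velocities in the $y_1$ direction lies in the combination of the $\beta_i$ coefficients appearing in $g_Z$ versus $g_U$; both feature the dominant term $\tfrac{3}{2}y_1$ together with $G_Z$, respectively $G_U$, whose leading large-$\kappa_0$ behaviour is governed by $-(1-\beta_2)e^{s/2}\kappa_0$, respectively $-(1-\beta_1)e^{s/2}\kappa_0$. The plan is to exploit exactly this: the $y_1$-component of the flow obeys an ODE of the form $\partial_s \Phi_1 = (g + \tfrac32 y_1)\circ\Phi$, and I will show the transport velocity $g+\tfrac32 y_1$ carries $\Phi_1$ away from the hyperplane $\{y_1=0\}$ at a definite exponential rate. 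Concretely, I would first write, using \eqref{eq:gZ}--\eqref{eq:gA} and Lemma~\ref{lemma_g} (specifically \eqref{e:G_ZA_estimates}), that for $y \in \XXX(s)$,
\[
g_Z + \tfrac32 y_1 = \tfrac32 y_1 - (1-\beta_2)e^{\frac s2}\kappa_0 + \OO\!\left(\eps^{\frac12}e^{\frac s2} + \abs{y_1}\right),
\]
and similarly for $g_U$ with $\beta_2$ replaced by $\beta_1$; the $\beta_\tau \beta_2 \Jcal W$ and analogous terms are of lower order by \eqref{eq:W_decay}, \eqref{eq:beta:tau}, \eqref{e:bounds_on_garbage}.

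\textbf{Key steps.} First I would pass to the rescaled variable $\psi(s) := e^{-s/2}\Phi_1(s)$, for which $\partial_s \psi = e^{-s/2}(g+\tfrac32 y_1 - \tfrac12 y_1)\circ\Phi = e^{-s/2}(g + y_1)\circ\Phi$. Using the above expansion, $\partial_s\psi = \psi + \big(\text{contributions of size } \OO(\eps^{\frac12}+|\psi|e^{-s/2}) - (1-\beta_2)\kappa_0\big)$. The hypothesis \eqref{k0-lb}, i.e.\ $\kappa_0 \geq 3/(1-\max(\beta_1,\beta_2))$, guarantees $(1-\beta_2)\kappa_0 \geq 3$ and $(1-\beta_1)\kappa_0\geq 3$, so the constant forcing dominates; at the initial time $\psi(-\log\eps) = e^{1/2}\cdot \text{(something)}$ with $|\check y_0|$-type bound, hence $|\psi(-\log\eps)|\leq \eps^{1/2}|y_1(0)|\cdot e^{1/2}$ is small. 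The ODE $\partial_s\psi = \psi + (\text{negative drift of size} \geq 2)$ drives $\psi$ negative; I would track the time $s_*$ at which $\Phi_1$ crosses zero (there is at most one such crossing since $\partial_s\psi < 0$ whenever $|\psi|$ is bounded), and then a standard Grönwall/comparison argument on $e^{-s/2}|\Phi_1(s)|$ gives $|\Phi_1(s)| \geq |e^{s/2} - e^{s_*/2}|$ for $s$ away from $s_*$ and $|\Phi_1(s)| \geq e^{s/2}$ once the flow has moved definitively to one side; combining yields the $\min(\cdot,\cdot)$ form \eqref{phi-lowerbound}. (If the trajectory never reaches $y_1 = 0$, one simply takes $s_*=-\log\eps$ or interprets the bound accordingly.) For \eqref{phi-lowerbound_conseq}, I would split the integral at $s_*$: away from $s_*$ the integrand is $\lesssim e^{\sigma_1 s'}(1 + |e^{s'/2}-e^{s_*/2}|)^{-\sigma_2}$, and since $\sigma_2 > 2\sigma_1$, substituting $u = e^{s'/2}$ converts this to $\int u^{2\sigma_1 - 1}(1+|u - e^{s_*/2}|)^{-\sigma_2}\,du$ which converges uniformly in $s_*$ (the tail decays like $u^{2\sigma_1 - 1 - \sigma_2}$ with exponent $< -1$); near $s_*$ one uses the local bound $|\Phi_1(s')| \geq c|e^{s'/2}-e^{s_*/2}| \gtrsim |s' - s_*| e^{s_*/2}$, which is integrable against $(1+\cdot)^{-\sigma_2}$ as long as $\sigma_2 > 1$—and if $\sigma_2 \le 1$ the constraint $0\le\sigma_1$ together with $2\sigma_1<\sigma_2$ still leaves $\sigma_1 < 1/2$, so the crude bound $|\Phi_1|\ge 0$ near the (measure-zero) crossing with the $e^{s/2}$ lower bound on a full-measure set suffices.

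\textbf{Main obstacle.} The delicate point is showing the trajectory crosses $\{y_1 = 0\}$ at most once (so that $s_*$ is well-defined and the two regimes in \eqref{phi-lowerbound} genuinely stitch together), and controlling the behaviour of $\Phi$ precisely when $\Phi_1$ is small—there the error terms in the expansion of $g_Z + y_1$ that are proportional to $\Phi_1$ itself vanish, which is good, but one must still verify that $\Phi$ stays inside $\XXX(s)$ (available from Lemma~\ref{lem:support}) and that the $G_Z$, $G_U$ bounds of \eqref{e:G_ZA_estimates} really do leave the constant $-(1-\beta_2)e^{s/2}\kappa_0$ in the driver's seat uniformly in $s \geq -\log\eps$. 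I expect this monotonicity-near-the-crossing argument, rather than the final integral estimate, to require the most care; the integral bound \eqref{phi-lowerbound_conseq} is then a routine change of variables once \eqref{phi-lowerbound} is in hand.
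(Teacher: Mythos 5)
Your overall strategy is the same as the paper's: isolate the dominant drift $-(1-\beta_j)\kappa_0 e^{s/2}$ in $G_Z$, $G_U$, use the lower bound $\kappa_0 \geq 3/(1-\max(\beta_1,\beta_2))$ to make it overwhelm the $\tfrac32 y_1$ stretching and the $\beta_j\beta_\tau\Jcal W$ and error terms, deduce a unique zero-crossing $s_*$ with $\tfrac{d}{ds}\Phi_1 \leq -e^{s/2}$ once $\Phi_1\leq e^{s/2}$, and integrate. (The paper works directly with $\Phi_1$ rather than $\psi=e^{-s/2}\Phi_1$, but that is cosmetic.)

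Two inaccuracies worth correcting. First, your claim that $|\psi(-\log\eps)|\leq \eps^{1/2}|y_{0_1}|$ ``is small'' is wrong: since $y_0\in\XXX_0$ only gives $|y_{0_1}|\leq\eps^{-1}$, one has $|\psi(-\log\eps)|\leq\eps^{-1/2}$, which is \emph{large}. This does not break the argument, but it hides a case you must handle explicitly: when $\psi$ starts above the unstable fixed point $\psi=(1-\beta_j)\kappa_0$, the trajectory never crosses zero and instead satisfies $\Phi_1(s)>e^{s/2}$ for all $s$, which is precisely what the $\min(\,\cdot\,,e^{s/2})$ in \eqref{phi-lowerbound} is there to cover. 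The paper handles this by a clean trichotomy (always above $e^{s/2}$; $y_{0_1}\leq 0$; first hitting time $s_0$ of the region $\{\Phi_1\leq e^{s/2}\}$) rather than by asserting $\psi$ is initially small. Second, your treatment of the integral near $s_*$ is somewhat muddled, especially the $\sigma_2\leq 1$ branch. The paper's version is cleaner: substitute $r=e^{s'/2}$, then apply Young's inequality $a^\theta b^{1-\theta}\leq\theta a+(1-\theta)b$ with $\theta=\tfrac{1-2\sigma_1}{1+\sigma_2-2\sigma_1}$ to $r^{2\sigma_1-1}(1+|r-e^{s_*/2}|)^{-\sigma_2}$, yielding $r^{2\sigma_1-1-\sigma_2}+(1+|r-e^{s_*/2}|)^{2\sigma_1-1-\sigma_2}$, both of which are integrable on $[\eps^{-1/2},\infty)$ uniformly in $s_*$ precisely because $2\sigma_1-1-\sigma_2<-1$. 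You should adopt that route rather than your ad hoc near/far split.
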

\begin{proof}[Proof of Lemma~\ref{lem:phiZ}]
We first show that if $\Phi(s)= \pz^{y_0}(s)$ or $\pa^{y_0}(s)$, we have the inequality
\begin{equation}\label{eq:move:left}
\frac{d}{ds}\Phi_1(s)\leq  -\tfrac{1}{2} e^{\frac s2} \ \ \text{ if } \ \ \Phi_1(s) \leq e^{\frac s2} \ \ \text{ for any } \ \ s \in [-\log \eps, \infty ) \,.
\end{equation}
If we set $(j,G)=(2,G_Z)$ for the case $\Phi(s)= \pz^{y_0}(s)$, and $(j,G)=(1,G_{U})$ for the case $\Phi(s)= \pa^{y_0}(s)$, then by definition we  have
that
$$
\frac{d}{ds} \Phi_1 = \tfrac{3}{2} \Phi_1 + \beta_j \beta_\tau \Jcal  W  \circ \Phi   +G \circ \Phi\,.
$$
Since $\beta_1,\beta_2<1$,  by taking $\eps$ sufficiently small, by \eqref{eq:beta:tau} and \eqref{e:bounds_on_garbage}, we have 
that $\abs{\beta_j \beta_\tau \Jcal }\leq 1$ for $j=1,2$; therefore, 
applying \eqref{eq:W_decay} and \eqref{e:G_ZA_estimates}, if $\Phi_1(s) \leq e^{\frac s2}$ then
\begin{align*} 
\frac{d}{ds} \Phi_1 &
\le \tfrac{3}{2} e^{\frac s2}+  2 \eta^{\frac 16}(\Phi)  - (1-\beta_j) \kappa _0 e^{\frac{s}{2}}  +\eps^{\frac 1{2}}  e^{\frac{s}{2}}  \\
&
\le \tfrac{3}{2} e^{\frac s2} - (1-\beta_j) \kappa _0 e^{\frac{s}{2}}  +\eps^{\frac 1{8}} e^{\frac{s}{2}}  \,,
\end{align*} 
where in the last inequality, we have used \eqref{e:space_time_conv} and  taken $\eps$ is sufficiently small. 
Since $1-\beta_j>0$ for  $j=1,2$,  then using the lower bound on $\kappa_0$ given by \eqref{k0-lb}, 
the inequality  \eqref{eq:move:left} holds.

To prove \eqref{phi-lowerbound}, we consider the following two scenarios for $y_0$: 
\begin{enumerate}
\item\label{case:Tokaji1} Either $\Phi(s)> e^{\frac s2}$ for all $s\in[-\log\eps,\infty)$, or $y_{0_1}\leq 0$.
\item\label{case:Tokaji2} There exists a smallest $s_0\in[-\log\eps,\infty)$ such that $0<\Phi(s_0)\leq e^{\frac s2}$ and $y_{0_1}> 0$.
\end{enumerate}
We first consider Case \ref{case:Tokaji1}. If $\Phi_1(s)> e^{\frac s2}$ for all $s\in[-\log\eps,\infty)$, then we trivially obtain \eqref{phi-lowerbound}. Otherwise, if $\Phi_1(-\log \eps)\leq 0$, then as a consequence of \eqref{eq:move:left}, we have that
$$
\Phi_1(s)\leq y_{0_1}-e^{\frac s 2}+\eps^{-\frac12}\le -e^{\frac s 2}+\eps^{-\frac12} \,
$$
 for all $s\in[-\log\eps,\infty)$. Thus \eqref{phi-lowerbound} holds with $s_{*}=-\log\eps$.

We next consider Case \ref{case:Tokaji2}. As a consequence of \eqref{eq:move:left} we have that
\[
\frac{d}{ds}\Phi_1(s)\leq  -e^{\frac s2} \,, \qquad \mbox{for all} \qquad s\geq s_0\, .
\]
Thus by continuity,  there exists a unique $s_*>s_0$ such that $\Phi_1(s_*)=0$. Applying \eqref{eq:move:left} and then by tracing the trajectories either
forwards or backwards from the time $s_*$, we find that  for $s\in[s_0,\infty)$,
$$
\abs{\Phi(s)}\geq \abs{e^{\frac s2}-e^{\frac {s_*}2}}\,.
$$
Hence, \eqref{phi-lowerbound} holds for $s\in[s_0,\infty)$.  Suppose that $s_0\neq -\log\eps$;  then,  by definition,  if 
$s\in[-\log\eps,s_0]$, then  $\Phi_1(s)\geq e^{\frac s2}$, and hence we conclude \eqref{phi-lowerbound}.

In order to prove \eqref{phi-lowerbound_conseq}, we first note that since $\int_{-\log \eps }^{\infty} e^{(\sigma_1-\frac{\sigma_2} 2) s'}\,ds'\les 1$, in order to prove \eqref{phi-lowerbound_conseq}, by  \eqref{phi-lowerbound}, it suffices to prove that
\begin{equation*}
\mathcal I:=\int_{-\log \eps }^{\infty} e^{\sigma_1 s'}\left(1+\sabs{e^{\frac{s'}{2}}-e^{\frac {s_*}2}}\right)^{-\sigma_2}\,ds'\leq C\,.
\end{equation*}
Applying the change of variables $r=e^{\frac{s'}{2}}$,  we have that
\begin{align*}
\mathcal I & =2 \int_{\eps^{-\frac 12}}^{\infty}
r^{2\sigma_1 -1} \left(1+\sabs{r-e^{\frac{s_*}{2}}}\right)^{-\sigma_2}\,dr \\
& \les \int_{\eps^{-\frac 12}}^{\infty}
\left(r^{2\sigma_1 -1-\sigma_2}+ \left(1+\sabs{r-e^{\frac{s_*}{2}}}\right)^{2\sigma_1 -1-\sigma_2}\right)\,dr
\les 1\,,
\end{align*}
where we have used
Young's inequality for the second to last inequality. The implicit constant only depends on $\sigma_1$ and $\sigma_2$.
\end{proof}

 \begin{corollary}\label{cor:p1W} Let $\Phi^{y_0}(s)$ denote either $\pz^{y_0}(s)$ or $\pu^{y_0}(s)$.  Then, 
for all $s \ge -\log \eps$,
\begin{align} 
\sup_{y_0 \in \XXX_0} \int_{ -\log\eps}^s \abs{\p_1 \tilde W } \circ \Phi^{y_0}(s') ds' \les \eps^{\frac{1}{11}} \,\label{eq:p1Wtilde:PhiZAU}.\\
\sup_{y_0 \in \XXX_0} \int_{ -\log\eps}^s \abs{\p_1 W } \circ \Phi^{y_0}(s') ds' \les 1 \label{eq:p1W:PhiZAU} \,.
\end{align} 
\end{corollary}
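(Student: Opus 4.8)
The key point is that, along either the $\pz$ or the $\pu$ trajectory, the first component $\Phi_1^{y_0}(s')$ escapes towards spatial infinity at the rate $e^{\frac{s'}{2}}$ (up to the shift by $e^{\frac{s_*}{2}}$ appearing in Lemma~\ref{lem:phiZ}), so the weights $\eta^{-\frac13}(\Phi^{y_0})$ and $\tilde\eta^{-\frac13}(\Phi^{y_0})$ that control $\p_1 W$ and $\p_1\tilde W$ decay in $s'$ fast enough to be integrable. So the plan is: first, use the pointwise bounds for $\p_1 W$ and $\p_1 \tilde W$ from the bootstrap assumptions \eqref{eq:W_decay} and \eqref{eq:bootstrap:Wtilde1} (valid for $\abs{y}\leq \LLL$), together with the fact that these functions are supported in $\XXX(s)$, to reduce the two integrals to integrals of negative powers of $1 + \abs{\Phi_1^{y_0}(s')}$; then invoke the integrability statement \eqref{phi-lowerbound_conseq} of Lemma~\ref{lem:phiZ} to bound those integrals uniformly in $y_0 \in \XXX_0$ and $s$.

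\textbf{Step 1 (reduce to a scalar weighted integral).} For \eqref{eq:p1Wtilde:PhiZAU}: on the region $\abs{y}\leq \LLL$, the bootstrap \eqref{eq:bootstrap:Wtilde1} gives $\abs{\p_1 \tilde W(y,s')} \leq \eps^{\frac{1}{12}} \eta^{-\frac13}(y)$, and on $\abs{y}\geq \LLL$ the function $\tilde W = W - \bar W$ need not be small, but here I use that $W$ is supported in $\XXX(s')$ — actually more care is needed: $\p_1 W$ does not decay for $\abs{y}\geq \LLL$, but $\p_1\tilde W$ still satisfies an $\eta^{-\frac13}$ bound coming from \eqref{eq:W_decay} (second case) for $\p_1 W$ and the explicit bound $\snorm{\tilde\eta^{\frac13}\p_1\bar W}_{L^\infty}\leq 1$ from \eqref{eq:bar:W:properties} for $\p_1 \bar W$; combining, $\abs{\p_1\tilde W(y,s')} \lesssim \eta^{-\frac13}(y)$ for all $y \in \XXX(s')$. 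Since $\eta(y) = 1 + y_1^2 + \abs{\check y}^6 \geq 1 + \Phi_1^2$ when $y = \Phi^{y_0}(s')$, we get $\abs{\p_1\tilde W}\circ\Phi^{y_0}(s') \lesssim \eps^{\frac{1}{12}}(1+\abs{\Phi_1^{y_0}(s')}^2)^{-\frac13}$ on the small-$y$ piece and $\lesssim (1+\abs{\Phi_1^{y_0}(s')}^2)^{-\frac13}$ otherwise; since trajectories starting in $\XXX_0$ remain in $\XXX(s')$ by Lemma~\ref{lem:support}, the whole integrand is dominated by a constant times $(1+\abs{\Phi_1^{y_0}(s')})^{-\frac23}$, with an extra $\eps^{\frac{1}{12}}$ wherever $\abs{\Phi^{y_0}}\leq \LLL$. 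For \eqref{eq:p1W:PhiZAU}, the same reasoning with $\p_1 W$ in place of $\p_1 \tilde W$: for $\abs{y}\leq \LLL$ one uses the $\p_1 W$ bound in \eqref{eq:W_decay}, namely $\abs{\p_1 W}\leq \tilde\eta^{-\frac13}(y/2) + 2\eta^{-\frac13}(y)\lesssim \eta^{-\frac13}(y)$, and for $\abs{y}\geq \LLL$ one uses $\abs{\p_1 W}\leq 2\eta^{-\frac13}(y)$ directly from the second case of \eqref{eq:W_decay}; hence $\abs{\p_1 W}\circ\Phi^{y_0}(s') \lesssim (1+\abs{\Phi_1^{y_0}(s')})^{-\frac23}$ throughout.

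\textbf{Step 2 (apply Lemma~\ref{lem:phiZ}).} Now apply \eqref{phi-lowerbound_conseq} with $\sigma_1 = 0$ and $\sigma_2 = \frac23$, which satisfies $0\leq \sigma_1 < \frac12$ and $2\sigma_1 = 0 < \frac23 = \sigma_2$, to obtain $\int_{-\log\eps}^{\infty} (1+\abs{\Phi_1^{y_0}(s')})^{-\frac23}\,ds' \leq C$ uniformly in $y_0 \in \XXX_0$. This immediately yields \eqref{eq:p1W:PhiZAU}. For \eqref{eq:p1Wtilde:PhiZAU}, the same bound gives $\int \lesssim \eps^{\frac{1}{12}} + (\text{contribution from }\abs{\Phi^{y_0}}\geq \LLL)$; the second contribution occurs only for $s'$ large enough that $\abs{\Phi_1^{y_0}(s')} \gtrsim \LLL = \eps^{-\frac{1}{10}}$ (since $\abs{\check\Phi}$ also escapes but the dominant growing component near $\abs{y}= \LLL$ is constrained by $\eta\gtrsim\LLL^2$), so on that range $(1+\abs{\Phi_1^{y_0}})^{-\frac23}\lesssim \eps^{\frac{1}{15}}(1+\abs{\Phi_1^{y_0}})^{-\frac23 + \frac{2}{15}}$ and one still has an integrable negative power; thus the total is $\lesssim \eps^{\frac{1}{12}} + \eps^{\frac{1}{15}} \lesssim \eps^{\frac{1}{11}}$ after taking $\eps$ small. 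The uniformity over $y_0 \in \XXX_0$ is automatic because the constant $C$ in Lemma~\ref{lem:phiZ} is independent of $y_0$, and the supremum over $s$ is harmless since the full integral over $[-\log\eps,\infty)$ already converges.

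\textbf{Main obstacle.} The only genuinely delicate point is handling the region $\abs{y}\geq \LLL$, where $\tilde W$ is \emph{not} small; one must exploit that $\p_1 W$ and $\p_1\bar W$ individually decay like $\eta^{-\frac13}$ there (from \eqref{eq:W_decay} and \eqref{eq:bar:W:properties}), so that $\p_1\tilde W$ also decays like $\eta^{-\frac13}$ despite not being $O(\eps^{\frac{1}{12}})$; the $\eps$-power gain in \eqref{eq:p1Wtilde:PhiZAU} then has to come partly from the Taylor region ($\eps^{\frac{1}{12}}$ directly) and partly from trading a bit of the decay rate $\eta^{-\frac13}$ against the large value $\LLL^{-2/3} = \eps^{\frac{1}{15}}$ on the far region — arithmetic one must do carefully to land on the exponent $\frac{1}{11}$. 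Everything else is a routine combination of the pointwise bootstrap bounds, the support lemma, and the scalar integrability estimate \eqref{phi-lowerbound_conseq}.
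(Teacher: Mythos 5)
Your approach — bound $\abs{\p_1 W}\circ\Phi$ and $\abs{\p_1\tilde W}\circ\Phi$ pointwise by negative powers of $1+\abs{\Phi_1}$ using \eqref{eq:W_decay}/\eqref{eq:bootstrap:Wtilde1} and then invoke the scalar integrability estimate \eqref{phi-lowerbound_conseq} with $\sigma_1=0$, $\sigma_2=\sfrac23$ — is exactly the paper's approach; the paper's proof is a single sentence citing the bootstrap and Lemma~\ref{lem:phiZ}. Your Step~1 and the proof of \eqref{eq:p1W:PhiZAU} are correct, and you rightly flag that the bootstrap $\abs{\p_1\tilde W}\leq\eps^{\sfrac1{12}}\eta^{-\sfrac13}$ is only stated for $\abs{y}\leq\LLL$, a point the paper leaves implicit.

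Two issues in your handling of the far region, however. First, $\abs{\Phi}\geq\LLL$ does not imply $\abs{\Phi_1}\gtrsim\LLL$ (the trajectory could have $\abs{\check\Phi}$ large); what does hold is $\eta(\Phi)\gtrsim\LLL^2$ regardless of which component is large, giving the pointwise bound $\eta^{-\sfrac13}(\Phi)\lesssim\LLL^{-\sfrac23}=\eps^{\sfrac1{15}}$. Moreover, even granting $\abs{\Phi_1}\gtrsim\LLL=\eps^{-\sfrac1{10}}$, your step $(1+\abs{\Phi_1})^{-\sfrac2{15}}\lesssim\eps^{\sfrac1{15}}$ would require $\abs{\Phi_1}\gtrsim\eps^{-\sfrac12}$, not $\eps^{-\sfrac1{10}}$; with $\abs{\Phi_1}\gtrsim\LLL$ you'd only get $\LLL^{-\sfrac2{15}}=\eps^{\sfrac1{75}}$. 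The correct way to make the far-region contribution integrable and small is to split $\eta^{-\sfrac13}(\Phi) = \eta^{-\sfrac\theta3}(\Phi)\cdot\eta^{-\sfrac{(1-\theta)}3}(\Phi)\lesssim\eps^{\sfrac\theta{15}}(1+\abs{\Phi_1})^{-\sfrac{2(1-\theta)}3}$ for some $\theta<1$, yielding $\lesssim\eps^{\sfrac\theta{15}}$.

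Second, and more importantly, your concluding inequality $\eps^{\sfrac1{12}}+\eps^{\sfrac1{15}}\lesssim\eps^{\sfrac1{11}}$ runs the wrong direction: since $\sfrac1{15}<\sfrac1{12}<\sfrac1{11}$ and $\eps<1$, we have $\eps^{\sfrac1{15}}\gg\eps^{\sfrac1{12}}\gg\eps^{\sfrac1{11}}$ as $\eps\to 0$, so the sum is $\lesssim\eps^{\sfrac1{15}}$ but not $\lesssim\eps^{\sfrac1{11}}$. Your calculation (once corrected as above) establishes $\lesssim\eps^{\sfrac\theta{15}}$ for any $\theta<1$, which is a weaker statement than the corollary's claimed $\les\eps^{\sfrac1{11}}$. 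It is worth noting that the paper's own one-line proof appears to suffer the same imprecision — the bootstrap it cites carries a prefactor $\eps^{\sfrac1{12}}$ (or $\eps^{\sfrac1{11}}$ on $\tilde W$ but not on $\p_1\tilde W$), neither of which yields $\les\eps^{\sfrac1{11}}$ — and that \eqref{eq:p1Wtilde:PhiZAU} does not appear to be used elsewhere; only the $\les 1$ bound \eqref{eq:p1W:PhiZAU} is invoked in the sequel, so the exact exponent is cosmetic. Still, you should not assert an inequality between powers of $\eps$ that goes the wrong way.
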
 
\begin{proof}[Proof of Corollary \ref{cor:p1W}]
Due to the estimates in \eqref{eq:bootstrap:Wtilde}, and \eqref{phi-lowerbound_conseq} (with $\sigma_1=0$ and $\sigma_2=\sfrac23$), we obtain \eqref{eq:p1Wtilde:PhiZAU}. The estimate \eqref{eq:p1W:PhiZAU} similarly holds with the help of the second estimate in \eqref{eq:W_decay}.
\end{proof}

\section{$L^ \infty $ bounds for  $\mrg$ and $S$}
\label{sec:vorticity:sound}
We now establish bounds to solutions  $ \mrg $  of the specific vorticity equation \eqref{sv-ss} and solutions $S$ to the sound speed equation
\eqref{S-ss}. We set 
$S_0 (y) = S(y, -\log\eps) $.

\subsection{Sound speed}
\begin{proposition}[Bounds on the sound speed]\label{prop:sound}
We have that
\begin{align} \label{S-bound}
\snorm{S( \cdot , s) - \tfrac{\kappa_0}{2} }_{ L^ \infty } \le \eps^ {\frac{1}{8}}  \ \ \text{ for all } \ s \ge -\log\eps  \,.
\end{align} 
\end{proposition}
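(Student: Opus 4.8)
\textbf{Proof plan for Proposition~\ref{prop:sound}.}
The plan is to run a Lagrangian (characteristics) argument along the flow $\pu^{y_0}$ associated to the transport velocity $\mathcal V_U$ in the $S$-equation \eqref{S-ss}. Writing \eqref{S-ss} as
\[
\p_s S + (\mathcal V_U\cdot\nabla) S = -2\beta_\tau\beta_3 S\left(e^{\frac s2}\Jcal\,\p_{y_1}U\cdot\Ncal + e^{-\frac s2}\p_{y_\nu}U_\nu\right)\,,
\]
we see that along trajectories, $\tfrac{d}{ds}\big(S\circ\pu^{y_0}\big)$ equals $S\circ\pu^{y_0}$ times the forcing factor on the right. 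The key point is that this factor is integrable in $s$: indeed $\p_{y_1}U\cdot\Ncal = \tfrac12(e^{-\frac s2}\p_1 W + \p_1 Z)$ by \eqref{eq:UdotN:S}, so $e^{\frac s2}\p_{y_1}(U\cdot\Ncal) = \tfrac12(\p_1 W + e^{\frac s2}\p_1 Z)$; the $\p_1 W$ contribution is controlled using the $W$ bootstrap \eqref{eq:W_decay} together with the Lagrangian integral bound \eqref{eq:p1W:PhiZAU} of Corollary~\ref{cor:p1W} (with $\Phi=\pu^{y_0}$), and the $e^{\frac s2}\p_1 Z$ contribution is controlled using \eqref{eq:Z_bootstrap} which gives $\abs{\p_1 Z}\les M^{\frac 12}e^{-\frac{3s}2}$, hence $e^{\frac s2}\abs{\p_1 Z}\les M^{\frac 12}e^{-s}$, which is manifestly integrable. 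The remaining $e^{-\frac s2}\p_{y_\nu}U_\nu$ term is bounded by $\les e^{-\frac s2}$ on $\XXX(s)$ (by \eqref{eq:US_est}, applied to the $\check\nabla$-derivative of $U_\nu$; note $U_\nu = A_\nu\cdot$ geometric factors plus $U\cdot\Ncal$ times geometric factors, all controlled by the $A$, $W$, $Z$ bootstraps), again integrable.

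The precise execution: fix $y_0$ and set $\mathsf{S}(s) = S(\pu^{y_0}(s),s)$. Then $\tfrac{d}{ds}\log\mathsf{S} = -2\beta_\tau\beta_3\big(e^{\frac s2}\Jcal\p_{y_1}U\cdot\Ncal + e^{-\frac s2}\p_{y_\nu}U_\nu\big)\circ\pu^{y_0}$, so that
\[
\Big|\log\mathsf S(s) - \log\mathsf S(-\log\eps)\Big| \le C\int_{-\log\eps}^s \Big(\abs{\p_1 W} + e^{\frac{s'}2}\abs{\p_1 Z} + e^{-\frac{s'}2}\abs{\check\nabla U}\Big)\circ\pu^{y_0}(s')\,ds'\,,
\]
using $\abs{\beta_\tau}\le 2$, $\abs{\Jcal - 1}\le\eps$ from \eqref{e:bounds_on_garbage}, and $0\le\beta_3<1$. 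Bounding the first integrand via \eqref{eq:W_decay} and \eqref{eq:p1W:PhiZAU} (which yields $\les 1$ but we can do better: on $\XXX(s)$ we have $\abs{\p_1 W}\le 2$ always, and the trajectory integral is $O(1)$; actually we want the total to be $\le\eps^{1/8}$-small, so we should instead note that the $W$ contribution along $\pu$ trajectories is integrable and $O(1)$ but multiplied by the smallness coming from... — here one must be careful). The cleaner route is: by \eqref{eq:W_decay}, $\abs{\p_1 W}\circ\pu^{y_0} \le \tilde\eta^{-1/3}(\pu^{y_0}/2)\mathbf 1_{\abs{\pu^{y_0}}\le\LLL} + 2\eta^{-1/3}(\pu^{y_0})\mathbf 1_{\abs{\pu^{y_0}}\ge\LLL}$, and by Lemma~\ref{lem:phiZ} and \eqref{phi-lowerbound_conseq} with $(\sigma_1,\sigma_2)=(0,\tfrac23)$ this integrates to a universal constant $C_0$. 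So in general one only gets $\abs{\log\mathsf S(s) - \log\mathsf S(-\log\eps)}\le C_0 + C\eps$; this gives boundedness of $S$ but not the sharp $\eps^{1/8}$ closeness to $\kappa_0/2$.

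To get the sharp bound I would refine as follows: the $\p_1 W$ term is the only non-small one, and here one should use that the relevant combination is actually $e^{\frac s2}\p_{y_1}(U\cdot\Ncal) = \p_{y_1}w$ rescaled, and exploit that in the $S$-equation the $W$-forcing is weighted so that its trajectory integral is in fact $O(\eps^{1/8})$ — more precisely, one revisits the bound using $\eta^{-1/3}(\pu^{y_0}(s')) \le \big(1+\abs{\pu^{y_0}(s')}\big)^{-2/3}$ together with \eqref{phi-lowerbound}: since along $\pu$ trajectories $\abs{\Phi_1(s')}\ge\min(\abs{e^{s'/2}-e^{s_*/2}},e^{s'/2})$, the worst case is $s'$ near $s_*$, and the $L^\infty_{s_*}$-uniform integral $\int (1+\abs{e^{s'/2}-e^{s_*/2}})^{-2/3}ds'$ contributes $O(1)$ from an $O(1)$-length window in $s'$ near $s_*$ but is $O(\eps^{1/3})$ away from it; combined with the smallness of the other forcing terms and the fact that $\abs{S_0 - \kappa_0/2}\le\eps^{1/8}$ initially (which follows from \eqref{eq:ss:WZA:initial}, \eqref{eq:w0:far:out}, \eqref{eq:z0:ic}, \eqref{kappa-kappa0} since $S = \tfrac12(\kappa + e^{-s/2}W - Z)$ and at $s=-\log\eps$ this is $\tfrac12(\kappa_0 + \eps^{1/2}\cdot\eps^{-1/2}\tilde w_0\text{-part} - \tilde z_0)$), a Grönwall-type estimate closes. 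The main obstacle is precisely this last step: squeezing the trajectory integral of $\abs{\p_1 W}$ down to $O(\eps^{1/8})$ rather than merely $O(1)$, which requires combining the exponential escape of the $Z,A,U$ characteristics toward infinity (Lemma~\ref{lem:phiZ}, ensuring they spend only $O(1)$ time near $y=0$) with the spatial decay $\eta^{-1/3}$ of $\p_1 W$ and the $\eps$-smallness carried by $e^{-s/2}$ in the relation $S - \tfrac12\kappa = \tfrac12(e^{-s/2}W - Z) + \tfrac12(\kappa - \kappa_0)$ itself — note that $e^{-s/2}\abs{W}\le e^{-s/2}\cdot 2\eta^{1/6}(y)\les e^{-s/2}(\eps^{1/2}e^{3s/2})^{1/3} = \eps^{1/6}$ on $\XXX(s)$ by \eqref{e:space_time_conv}, $\abs{Z}\les M\eps$ by \eqref{eq:Z_bootstrap}, and $\abs{\kappa-\kappa_0}\le M^2\eps^{3/2}$ by \eqref{kappa-kappa0}, so in fact the sharp bound \eqref{S-bound} may be obtained \emph{directly} from \eqref{eq:UdotN:S} and the already-closed bootstrap bounds on $W$, $Z$, and $\kappa$ without any Grönwall argument at all, simply as $\abs{S - \kappa_0/2}\le \tfrac12 e^{-s/2}\abs W + \tfrac12\abs Z + \tfrac12\abs{\kappa-\kappa_0}\les \eps^{1/6} + M\eps + M^2\eps^{3/2}\le\eps^{1/8}$ for $\eps$ small. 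This direct route is the one I would adopt, and the only subtlety is verifying that $S$ satisfies \eqref{eq:UdotN:S} with the bootstrap bounds available at this point in the argument, which it does since the $W$, $Z$, $\kappa$ bootstraps have already been closed in Sections~\ref{sec:dynamic:closure}, \ref{sec:W}, \ref{sec:Z:A} or are assumed in Section~\ref{sec:bootstrap}.
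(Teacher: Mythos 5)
Your direct argument at the end of the proposal is exactly the paper's proof: use the identity \eqref{eq:UdotN:S} to write $S - \tfrac{\kappa_0}{2} = \tfrac12(\kappa-\kappa_0) + \tfrac12(e^{-s/2}W - Z)$, bound $e^{-s/2}\abs{W}\les\eps^{1/6}$ via \eqref{eq:W_decay} and \eqref{e:space_time_conv}, bound $\abs{Z}\les M\eps$ via \eqref{eq:Z_bootstrap}, and bound $\abs{\kappa-\kappa_0}$ via the $\dot\kappa$ bootstrap in \eqref{mod-boot}. The lengthy Lagrangian preamble is an unnecessary detour — and you correctly diagnose within the proposal itself that it only delivers an $O(1)$ Gr\"onwall constant rather than the sharp $\eps^{1/8}$ bound — so the right move was indeed to abandon it in favor of the direct computation.
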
 
\begin{proof}[Proof of Proposition \ref{prop:sound}]
By \eqref{eq:UdotN:S}, we have that
$$
S( \cdot , s) - \tfrac{\kappa_0}{2}= \tfrac{\kappa-\kappa_0}{2} + {\tfrac{1}{2}} (e^ {-\frac{s}{2}} W-Z) \,.
$$
By \eqref{mod-boot}, \eqref{e:space_time_conv},  \eqref{eq:W_decay},  and \eqref{eq:Z_bootstrap}, and the triangle inequality,
$$\snorm{S( \cdot , s) - \tfrac{\kappa_0}{2} }_{ L^ \infty } \les \eps^ {\frac{1}{6}} $$
which concludes the proof.
\end{proof}

\subsection{Specific vorticity}
From \eqref{svorticity_sheep}, we deduce that the normal and tangential components of the vorticity  satisfy  the system
\begin{subequations}
\label{sv-ss}
\begin{align}
\p_t (\mrg \cdot \Tcal^2)  +    \Vcal \cdot \nabla_{\! x} (\mrg \cdot \Tcal^2)  &= 
\FFF_{21} (\mrg \cdot \Ncal) + \FFF_{2\mu}(\mrg \cdot \Tcal^\mu)  \\
\p_t (\mrg \cdot \Tcal^3)   +    \Vcal \cdot \nabla_{\! x} (\mrg \cdot \Tcal^3)  &=
\FFF_{31} (\mrg \cdot \Ncal) + \FFF_{3\mu}(\mrg \cdot \Tcal^\mu) 
\end{align}
\end{subequations}
where  
\begin{align*}
\Vcal = (\Vcal_1,\Vcal_2,\Vcal_3) =  2\beta_1  \left(- \tfrac{\dot f}{2\beta_1} + \Jcal  v \cdot \Ncal + \Jcal \mru \cdot \Ncal ,  v_2 + \mru_2, v_3 + \mru_3\right)
\end{align*}
and
\begin{subequations}
\label{ssvort-force}
\begin{align}
\FFF_{21} &=  \Ncal \cdot \p_t \Tcal^2      + 2\beta_1 \dot{Q}_{ij} \Tcal^2_i \Ncal_j     + \Vcal_\nu ( \Ncal\cdot \Tcal^2_{,\nu})
  + 2\beta_1  \Ncal_{\nu} \p_{x_\nu} a_2
- 2\beta_1  \Ncal_\nu \mru \cdot  \Tcal^2_{,\nu}   
\\
\FFF_{22} &= 2\beta_1  \Tcal^2_{\nu} \p_{x_\nu} a_2   - 2\beta_1  \Tcal^2_\nu \mru \cdot  \Tcal^2_{,\nu} 
\\
\FFF_{23} &=   \Tcal^3 \cdot \p_t \Tcal^2   + 2\beta_1 \dot{Q}_{ij} \Tcal^2_i \Tcal_j^3    
 \Vcal_\nu (  \Tcal^3\cdot \Tcal^2_{,\nu})
  + 2\beta_1\Tcal^3_{\nu} \p_{x_\nu} a_2
  - 2\beta_1  \Tcal^3_\nu \mru \cdot  \Tcal^2_{,\nu} 
\\
\FFF_{31} &=  \Ncal \cdot \p_t \Tcal^3       + 2\beta_1 \dot{Q}_{ij} \Tcal^3_i \Ncal_j  + \Vcal_\nu ( \Ncal\cdot \Tcal^3_{,\nu}) 
  + 2\beta_1 \Ncal_{\nu} \p_{x_\nu} a_3
   - 2\beta_1  \Ncal_\nu \mru \cdot  \Tcal^3_{,\nu}  
   \\
\FFF_{32} &=    \Tcal^2 \cdot \p_t \Tcal^3   + 2\beta_1 \dot{Q}_{ij} \Tcal^3_i \Tcal_j^2   
 + \Vcal_\nu ( \Tcal^2\cdot \Tcal^3_{,\nu})
  + 2\beta_1 \Tcal^2_{\nu} \p_{x_\nu} a_3
   - 2\beta_1  \Tcal^2_\nu \mru \cdot  \Tcal^3_{,\nu}  
\\
\FFF_{33} &=  2\beta_1  \Tcal^3_{\nu} \p_{x_\nu} a_3 - 2\beta_1 \Tcal^3_\nu \mru \cdot  \Tcal^3_{,\nu}  \,.
 \end{align} 
\end{subequations}

\begin{proposition}[Bounds on specific vorticity]\label{prop:vorticity}
We have the estimate
\begin{align} \label{svort-bound}
\snorm{\mrg(\cdot,t)}_{L^\infty} = \norm{\Omega(\cdot,s)}_{L^\infty} \leq 2
\, .
\end{align} 
\end{proposition}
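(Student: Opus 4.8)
\textbf{Proof plan for Proposition~\ref{prop:vorticity}.}

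The plan is to propagate the $L^\infty$ bound on $\Omega = \mrg$ by working along the Lagrangian flow $\Phi = \pu^{y_0}$ associated to the velocity $\mathcal V_U$, exactly as the $\mathcal Z/\mathcal A$ trajectories behave. The key point is the geometric decomposition \eqref{sv-ss}: rather than estimating $\Omega$ componentwise in the fixed frame, we write the system for the three scalar quantities $q_1 = \mrg\cdot\Ncal$, $q_2 = \mrg\cdot\Tcal^2$, $q_3 = \mrg\cdot\Tcal^3$. Note $\abs{\mrg}^2 = q_1^2 + q_2^2 + q_3^2$ since $(\Ncal,\Tcal^2,\Tcal^3)$ is an orthonormal frame, so it suffices to control $\sum_i q_i^2$. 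First I would rescale \eqref{sv-ss} to self-similar variables $(y,s)$ using \eqref{eq:y:s:def} and \eqref{svort-trammy}; since the specific vorticity carries no self-similar prefactor (it satisfies a pure transport-type equation with lower-order forcing, just like $A$), the equations become $\p_s q_i + (\mathcal V_U\cdot\nabla) q_i = \sum_j \hat{\FFF}_{ij} q_j$ for rescaled coefficients $\hat\FFF_{ij}$, and similarly for $q_1$ one obtains the equation for $\mrg\cdot\Ncal$ from \eqref{svorticity_sheep} dotted with $\Ncal$.

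The second step is to bound the forcing coefficients $\hat\FFF_{ij}$ in $L^\infty$ and show they are \emph{integrable in $s$}. Inspecting \eqref{ssvort-force}: the terms $\p_t\Tcal^\nu$, $\dot Q_{ij}$, $\Tcal^\mu_{,\nu}$, $\Ncal_{,\nu}$ all carry factors that, in self-similar variables, come with either $e^{-s}$ or $e^{-s/2}$ decay together with powers of $\phi$, $\dot\phi$ (hence $\eps$ or $M\eps^{1/2}$), using \eqref{e:bounds_on_garbage}, \eqref{e:bounds_on_garbage_2}, and \eqref{eq:dot:Q}. The genuinely dangerous terms are those containing $\p_{x_\nu} a_\nu$, i.e.\ $\check\nabla A$: by the bootstrap \eqref{eq:A_bootstrap} we have $\abs{\check\nabla A}\les M\eps^{1/2} e^{-s/2}$, so after the $e^{-s/2}$ rescaling prefactor these contribute at worst $\les M\eps^{1/2} e^{-s}$, which is integrable with a small constant. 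The terms $\Ncal_\nu\, \mru\cdot\Tcal^\nu_{,\nu}$ combine $\Ncal_\nu = \Jcal^{-1}(-f_{,\nu})$ (size $\eps M^2 e^{-s/2}\abs{\check y}$ from \eqref{e:bounds_on_garbage}, but actually we need the fact $\Ncal^0=e_1$ so $\check\Ncal$ is $O(\eps)$ small) with $\Tcal^\nu_{,\nu}$ which by \eqref{e:bounds_on_garbage} is $\les \eps M^2 e^{-s}\abs{\check y}$; combined with $\abs{\mru}\les M^{1/4}$ from Lemma~\ref{lem:US_est} these are again exponentially small in $s$ after accounting for the support restriction via \eqref{e:space_time_conv}. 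Collecting everything, $\sup_{y\in\XXX(s)} \sum_{i,j}\abs{\hat\FFF_{ij}(y,s)} \les \eps^{1/2} e^{-s/2}$ (or better), hence $\int_{-\log\eps}^\infty \sup\abs{\hat\FFF}\, ds' \les \eps^{1/2} \ll 1$.

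The third step is a Grönwall argument along trajectories. Setting $Q(s) = \abs{\Omega(\Phi^{y_0}(s),s)}^2 = \sum_i q_i^2(\Phi^{y_0}(s),s)$, the system gives $\frac{d}{ds} Q(s) \le C\big(\sup_{\XXX(s)}\sum_{ij}\abs{\hat\FFF_{ij}}\big) Q(s)$, so $Q(s) \le Q(-\log\eps)\exp\big(C\int_{-\log\eps}^s \sup\abs{\hat\FFF}\, ds'\big) \le Q(-\log\eps)\, e^{C\eps^{1/2}}$. Since $\Phi^{y_0}$ sweeps out all of $\RR^3$ as $y_0$ ranges over $\XXX_0$ (trajectories cover the support, and outside the support $\Omega\equiv 0$ is preserved by the transport structure — which must be checked, but follows because the initial data is compactly supported and $\mathcal V_U$ preserves $\XXX(s)$ by \eqref{eq:support}), and using the initial bound $\norm{\Omega_0}_{L^\infty}\le 1$ from \eqref{eq:svort:IC:SS}, we get $\norm{\Omega(\cdot,s)}_{L^\infty}^2 \le e^{C\eps^{1/2}} \le 2$ for $\eps$ small enough, which is \eqref{svort-bound}. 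This simultaneously establishes the two-sided bound $\tfrac1{C_0}\norm{\Omega_0(y_0)}^2 \le \abs{\Omega(\pu^{y_0}(s),s)}^2\le C_0\norm{\Omega_0(y_0)}^2$ claimed in Theorem~\ref{thm:SS} by running the Grönwall inequality in both directions.

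\textbf{Main obstacle.} The delicate point is not the Grönwall step but verifying that \emph{every} forcing term in \eqref{ssvort-force}, once rescaled, carries enough decay in $s$ to be integrable — in particular the terms mixing the tangential velocity $a_\nu$ and its $\check\nabla$ derivative with the curvature coefficients $\phi_{\nu\mu}$: one must be careful to exploit both the $e^{-s/2}$ self-similar prefactor and the bootstrap smallness of $\check\nabla A$ and of $\phi$, and to use \eqref{e:space_time_conv} to trade the possible growth of $\abs{\check y}$ on $\XXX(s)$ against the exponential decay. Getting a clean bound with a universal (or $M$-independent, $\eps$-small) constant is what makes the final constant $2$ work.
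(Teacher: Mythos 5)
Your plan has a genuine gap, and it sits precisely at the point you flag as the ``main obstacle.''

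You propose to evolve all three geometric components $q_1=\mrg\cdot\Ncal$, $q_2=\mrg\cdot\Tcal^2$, $q_3=\mrg\cdot\Tcal^3$ as a coupled transport system and claim that after self-similar rescaling every coefficient $\hat\FFF_{ij}$ decays like $\eps^{1/2}e^{-s/2}$, hence is integrable in $s$. This is false for the diagonal coefficient $\hat\FFF_{11}$. Dotting \eqref{svorticity_sheep} with $\Ncal$ produces, from the stretching term $-2\beta_1\Jcal(\Ncal\cdot\mrg)\,\p_{x_1}\mru$, the coefficient $-2\beta_1\Jcal\,\p_{x_1}(\mru\cdot\Ncal)$ in front of $q_1$. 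Since $\p_{x_1}(\mru\cdot\Ncal)=e^{\frac{3s}{2}}\p_1(U\cdot\Ncal)=\tfrac12 e^{\frac{3s}{2}}(e^{-\frac s2}\p_1 W+\p_1 Z)\sim e^s\p_1 W\sim -(\tau(t)-t)^{-1}$, this coefficient is $\OO(e^s)$ in physical time and $\OO(1)$, not $\OO(e^{-s/2})$, in the rescaled equation. Its integral over $s\in[-\log\eps,\infty)$ diverges, so the Gr\"onwall inequality you write for $Q=q_1^2+q_2^2+q_3^2$ does not give a bounded constant, let alone the constant $2$. (The sign of this term happens to be favorable --- $\p_1 W<0$, so $q_1$ is damped --- and one could in principle salvage an upper bound by exploiting that sign, but your proposal does not notice the sign issue; it asserts smallness and integrability of the coefficient, which is the wrong claim.)

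The paper avoids this entirely by \emph{not writing an evolution equation for $\mrg\cdot\Ncal$}. The system \eqref{sv-ss} is a $2\times 2$ system for $\mrg\cdot\Tcal^2$ and $\mrg\cdot\Tcal^3$ only, with $\mrg\cdot\Ncal$ appearing solely as an inhomogeneous source. The normal component is controlled instead by the algebraic identity \eqref{boom2}: $\operatorname{curl}_{\tilde x}\tilde u\cdot\Ncal$ can be written purely in terms of $\check\nabla_x a_\nu$, $\mru$, and derivatives of the frame vectors, with no $\p_{x_1}$ derivative at all --- because the normal component of the curl never sees the normal derivative of the normal velocity. Combined with the bound $(\alpha\mrs)^{1/\alpha}\gtrsim 1$ from Proposition~\ref{prop:sound}, this gives the pointwise bound $|\mrg\cdot\Ncal|\les\eps^{1/5}$ (equation \eqref{boom4}) for all time. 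The $\FFF_{ij}$ that do appear in the $2\times 2$ system (the coefficients multiplying the tangential components, and the coefficient $\FFF_{\mu 1}$ of the small source $\mrg\cdot\Ncal$) are merely $\OO(1)$ bounded, not decaying; what makes the Gr\"onwall constant close to $1$ is that the argument is run in \emph{physical time} on the interval $[-\eps,T_*)$, which has length $\OO(\eps)$. This algebraic control of the normal vorticity component is the key structural idea of the paper's proof, and it is not present in your plan. Separately, your remark that running Gr\"onwall backwards yields the two-sided bound of Theorem~\ref{thm:SS} does not follow from this computation (the damping on $q_1$ precludes a na\"ive lower bound), but that claim is not part of Proposition~\ref{prop:vorticity}; the paper establishes it by a separate Lagrangian argument in the proof of Corollary~\ref{cor:vort}.
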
 
\begin{proof}[Proof of Proposition \ref{prop:vorticity}]
By Lemma \ref{lem:BBS}, 
\begin{align} 
\abs{\p_t \Ncal } + \abs{\p_t \Tcal^\mu  }+ \abs{\check\nabla_{\!x}   \Ncal } + \abs{\check\nabla_{\!x}   \Tcal^ \mu  } 
\les \eps^ {\frac{1}{4}}  \,.
\label{page48}
\end{align} 
The transformations \eqref{tildeu-dot-T}, \eqref{a_ansatz}, and \eqref{U-trammy} together with the
bootstrap bounds   \eqref{eq:A_bootstrap}, \eqref{eq:US_est}, Lemma \ref{lem:BBS} and \eqref{eq:V:bnd} we have that
\begin{align*} 
\norm{\mru}_{L^ \infty } \les M^ {\frac{1}{4}} \,, \ \   \norm{\p_{x_\nu}(\mru \cdot \Ncal)}_{L^ \infty } \les 1 \,, \ \ 
\norm{\p_{x_\nu} a}_{L^ \infty }  \le  M \eps^ {\frac{1}{2}}\, , \ \ \norm{\Vcal}_{L^\infty} \les M^{\frac 14}\,.
\end{align*} 
Together with \eqref{eq:dot:Q}, 
it follows that the forcing functions defined in \eqref{ssvort-force} satisfy
\begin{align} 
 \snorm{ \FFF _{ij}}_{L^\infty} \les 1  \ \ \text{ for } \ i,j \in \{1,2,3\} \,. \label{force-bound}
\end{align}

Now, from the definitions \eqref{eq:tilde:u:def}, \eqref{vort00}, \eqref{usigma-sheep},
\eqref{sv-sheep},  we have that
\begin{align} 
  (\alpha \mathring \sigma( x, t))^ {\sfrac{1}{\alpha }}  \mathring \zeta(x, t) = \tilde \rho( \tilde x, t) \tilde \zeta( \tilde x, t) = \tilde \omega ( \tilde x, t) 
= \operatorname{curl} _{ \tilde x}  \tilde u ( \tilde x, t)  = \operatorname{curl} _{ \tilde x} \mathring u (x, t) \,,
\notag
\end{align} 
and
\begin{align} 
\operatorname{curl}_{\tilde x} \mru \cdot \Ncal & = \Tcal^2_j  \p_{\tilde x_j} \mru     \cdot \Tcal^3 -    \Tcal^3_j \p_{\tilde x_j} \mru  \cdot \Tcal ^2
\notag \\
&   = \Tcal^2_\nu  \p_{ x_\nu} \mru  \cdot \Tcal^3 -    \Tcal^3_\nu \p_{ x_\nu} \mru \cdot \Tcal ^2 
\notag  \\
&   = \Tcal^2_\nu  \p_{ x_\nu} a_3 - \Tcal^2_\nu  \mru      \cdot  \Tcal^3_{,\nu}
-  \Tcal^3_\nu  \p_{ x_\nu} a_2 + \Tcal^3_\nu  \mru     \cdot  \Tcal^2_{,\nu} \,. \label{boom3}
\end{align} 
from which it follows that
\begin{align} 
 \mathring \zeta \cdot \Ncal = 
  \frac{ \Tcal^2_\nu  \p_{ x_\nu} a_3 - \Tcal^2_\nu  \mru     \cdot  \Tcal^3_{,\nu}
 -  \Tcal^3_\nu  \p_{ x_\nu} a_2 +  \Tcal^3_\nu  \mru     \cdot  \Tcal^2_{,\nu}}{  (\alpha \mathring \sigma( x, t))^ {\sfrac{1}{\alpha }} }
   \,. \label{boom2}
\end{align} 
By \eqref{S-trammy} and \eqref{S-bound}, we have that
\begin{align} \label{sigma-bound}
\snorm{\mrs( \cdot , t) - \tfrac{\kappa_0}{2} }_{ L^ \infty } \le \eps^ {\frac{1}{8}}  \,.
\end{align} 
Hence, from \eqref{gerd}, \eqref{boom3} and \eqref{sigma-bound}, we have that
\begin{align} 
\sabs{ \mathring \zeta \cdot \Ncal} \les \eps^ {\frac{1}{5}}  \,.  \label{boom4}
\end{align} 

We let $\phi(x,t)$ denote the flow of $\Vcal$ so that
$$
\p_t \phi(x,t) = \Vcal(\phi(x,t),t) \quad \text{ for } \quad t> -\eps\,,\quad  \text{ and }\quad  \phi(x,-\eps)=x\,,
$$
and denote by $\phi^{x_0}(t)$ the trajectory emanating from $x_0$.
We define
$$
\overline {\mathcal{F}} _{ij} = \mathcal{F} _{ij} \circ \phi^{x_0}\,, \ \ 
 \mathcal{Q} _1 = (  \mrg \cdot \Ncal) \circ  \phi^{x_0}    \,, \ \ 
\mathcal{Q} _2 = (  \mrg \cdot \Tcal^2) \circ  \phi^{x_0}\,, \ \ 
\mathcal{Q} _3 = (  \mrg \cdot \Tcal^3) \circ  \phi^{x_0}\,,
$$
Then,  \eqref{sv-ss} is written as the following system of ODEs:
 \begin{align*} 
  \p_t \mathcal{Q} _2   = \overline {\mathcal{F}}_{2 j } \mathcal{Q}_{j } \,, \ \
  \p_t \mathcal{Q} _3   = \overline {\mathcal{F}}_{3 j } \mathcal{Q}_{j }  \,.
 \end{align*} 
Hence,
\begin{align} 
\tfrac{1}{2} \tfrac{d}{dt} \left(  \mathcal{Q}_2^2 +  \mathcal{Q}_3^2 \right) 
& =     \overline {\mathcal{F}} _{\nu \mu} \mathcal{Q}_\nu  \mathcal{Q}_\mu +  \overline {\mathcal{F}} _{\mu 1}  \mathcal{Q}_\mu \mathcal{Q}_1   \,.
 \label{D-bound1}
\end{align} 
By Gr\"onwall's inequality on $[-\eps,t)$, with $t < T_* \leq \eps$, we deduce from \eqref{force-bound} and \eqref{boom4} that there  exists a universal constant $C_0 \geq 1$ such that 
\begin{align*}
\abs{\mathcal{Q}_2(t)} +  \abs{\mathcal{Q}_3(t)}  \leq C_0 \left( \abs{\mathcal{Q}_2(-\eps)} +  \abs{\mathcal{Q}_3(-\eps)} \right) + \eps 
\end{align*}
uniformly for all labels $x_0$, for a constant $C_0 \in (1, e^{\eps^{\frac 12}})$. Since $\Ncal, \Tcal^2,\Tcal^3$ form an orthonormal basis, the above estimate and \eqref{boom4}, together with  the initial datum assumption~\eqref{eq:svort:IC} implies that \eqref{svort-bound} holds. The self-similar specific vorticity bound follows directly from its definition in \eqref{svort-trammy}. 
\end{proof}

\section{Closure of $L^\infty$ based bootstrap for $Z$ and $A$}
\label{sec:Z:A}
Having established bounds on trajectories as well as on the vorticity, we now improve the bootstrap assumptions for 
$\partial^\gamma Z $ and $\partial^\gamma A$ stated in \eqref{eq:Z_bootstrap} and \eqref{eq:A_bootstrap}.  We shall obtain estimates for $\partial^\gamma Z \circ \pzy$ and 
$\partial^\gamma A \circ \pay$ which are weighted by an appropriate exponential factor $e^{\mu s}$.

From \eqref{euler_for_Linfinity:b} we obtain that $e^{\mu s}\p^\gamma Z$ is a solution of
\begin{align*} 
 \p_s (e^{\mu s}\p^\gamma Z) + D_Z^{(\gamma,\mu)} ( e^{\mu s}\p^\gamma Z)
  +  \left( \mathcal{V}_Z \cdot \nabla\right) ( e^{\mu s}\p^\gamma Z) &=e^{\mu s} F^{(\gamma)}_Z  \,,
\end{align*} 
where the damping function is given by
$$
D_Z^{(\gamma,\mu)}:=-\mu + \tfrac{3\gamma_1 + \gamma_2 + \gamma_3}{2} + \beta_2 \beta_\tau  \gamma_1  \Jcal \p_1 W \,.
$$
Upon composing with the flow of ${\mathcal V}_Z$, from Gr\"onwall's inequality  it follows that
\begin{align}
e^{\mu s}\abs{\partial^\gamma Z\circ \pz^{y_0}(s)}
&\leq   \eps^{-\mu} \abs{\partial^\gamma Z(y_0,-\log\eps)} \exp\left(- \int_{-\log\eps}^s  D_{Z}^{(\gamma,\mu)}\circ\pz^{y_0}(s')  \,ds'\right) \notag\\
&\qquad +\int_{-\log\eps}^s e^{\mu s'}\abs{F_Z^{(\gamma)}\circ \pz^{y_0}(s')}\exp\left(- \int_{s'}^s D_{Z}^{(\gamma,\mu)}\circ\pz^{y_0}(s'') \,ds''\right) \,ds' \,.\label{eq:weighted:Z:bnd}
\end{align}
Similarly, from \eqref{euler_for_Linfinity:c} we have that  $e^{\mu s}\p^\gamma A$ is a solution of
\begin{align*} 
 \p_s (e^{\mu s}\p^\gamma A) + D_A^{(\gamma,\mu)} ( e^{\mu s}\p^\gamma A)
  +  \left( \mathcal{V}_U \cdot \nabla\right) ( e^{\mu s}\p^\gamma A) &=e^{\mu s} F^{(\gamma)}_A  \,,
\end{align*} 
where 
$$
D_A^{(\gamma,\mu)}:=-\mu + \tfrac{3\gamma_1 + \gamma_2 + \gamma_3}{2} + \beta_1 \beta_\tau  \gamma_1 \Jcal \p_1 W   \,,
$$
and hence, again by Gronwall's inequality, we have that
\begin{align}
e^{\mu s}\abs{\partial^\gamma A\circ \pa^{y_0}(s)}
&\leq   \eps^{-\mu} \abs{\partial^\gamma A(y_0,-\log\eps)} \exp\left(- \int_{-\log\eps}^s  D_A^{(\gamma,\mu)}\circ\pa^{y_0}(s')  \,ds'\right) \notag\\
&\qquad +\int_{-\log\eps}^s e^{\mu s'}\abs{F_A^{(\gamma)}\circ \pa^{y_0}(s')}\exp\left(- \int_{s'}^s D_A^{(\gamma,\mu)}\circ\pa^{y_0}(s'') \,ds''\right) \,ds' \,.\label{eq:weighted:A:bnd}
\end{align}
For each choice of $\gamma \in {\mathbb N}_0^3$ present in \eqref{eq:Z_bootstrap} and \eqref{eq:A_bootstrap}, we shall require that the exponential factor $\mu$ satisfies
\begin{equation}\label{eq:mu_cond}
 \mu  \leq  \tfrac{3\gamma_1 + \gamma_2 + \gamma_3}{2} \,,
\end{equation}
which, in turn, shows that
\begin{equation}\label{eq:DZ_lower_bnd}
D_Z^{(\gamma,\mu)}\leq 2 \beta_2 \gamma_1 \abs{\partial_1 W}\,.
\end{equation}
For the last inequality, we have used the bound $\abs{\beta_\tau \Jcal}\leq 2$, which follows from \eqref{eq:beta:tau} and \eqref{e:bounds_on_garbage}. Combining \eqref{eq:mu_cond}, \eqref{eq:DZ_lower_bnd}, and \eqref{eq:p1W:PhiZAU},  for $s\geq s' \ge -\log \eps$ we obtain
\begin{equation}\label{eq:damping_Z}
 \exp\left(- \int_{s'}^s  D_{Z}^{(\gamma,\mu)}\circ\pz^{y_0}(s')  \,ds'\right)\les  \exp\left(  \left( \mu -  \tfrac{3\gamma_1 + \gamma_2 + \gamma_3}{2}\right) (s-s') \right)\les 1\,.
 \end{equation}
Replacing $\beta_2$  with $\beta_1$ in \eqref{eq:DZ_lower_bnd}, we  similarly obtain that  for $s\geq s'\geq -\log\eps$,
\begin{equation}\label{eq:damping_U}
 \exp\left(- \int_{s'}^s  D_{A}^{(\gamma,\mu)}\circ\pa^{y_0}(s')  \,ds'\right)\les 1\,.
 \end{equation}
Then as a consequence of \eqref{eq:weighted:Z:bnd}, \eqref{eq:weighted:A:bnd}, \eqref{eq:mu_cond}, \eqref{eq:damping_Z} and \eqref{eq:damping_U}, we obtain
\begin{align}
e^{\mu s}\abs{\partial^\gamma Z\circ \pz^{y_0}(s)}&\les    \eps^{-\mu} \abs{\partial^\gamma Z(y_0,-\log\eps)}\notag\\
&\qquad+\int_{-\log\eps}^s e^{\mu s'}\abs{F_Z^{(\gamma)}\circ \pz^{y_0}(s')} \exp\left(\left( \mu -  \tfrac{3\gamma_1 + \gamma_2 + \gamma_3}{2}\right) (s-s')\right) \,ds' \label{eq:weighted:Z:bnd2:alt}
\\
&\les   \eps^{-\mu} \abs{\partial^\gamma Z(y_0,-\log\eps)}
+\int_{-\log\eps}^s e^{\mu s'}\abs{F_Z^{(\gamma)}\circ \pz^{y_0}(s')} \,ds' \,,
\label{eq:weighted:Z:bnd2}
\end{align}
and
\begin{align}\label{eq:weighted:A:bnd2}
e^{\mu s}\abs{\partial^\gamma A\circ \pa^{y_0}(s)}
&\les   \eps^{-\mu} \abs{\partial^\gamma A(y_0,-\log\eps)}
+\int_{-\log\eps}^s e^{\mu s'}\abs{F_A^{(\gamma)}\circ \pa^{y_0}(s')} \,ds' \,.
\end{align}

\subsection{Estimates on $Z$}

For convenience of notation, in this section we  set $\Phi=\pz^{y_0}$.
We start with the case $\gamma=0$, for which we set $\mu=0$. Then, the first line of \eqref{eq:forcing_Z} combined with \eqref{eq:weighted:Z:bnd2} and our initial datum assumption \eqref{eq:Z_bootstrap:IC} show that
\begin{align*}
\abs{ Z\circ \Phi(s)}
&\les   \abs{ Z(y_0,-\log\eps)}
+\int_{-\log\eps}^s e^{-s'} \,ds' \les \eps \,.
\end{align*}
This improves the bootstrap assumption \eqref{eq:Z_bootstrap} for $\gamma=0$, upon taking $M$ to be sufficiently large to absorb the implicit universal constant in the above inequality.

For the case $\gamma=(1,0,0)$, we set $\mu=\frac 32$ so that \eqref{eq:mu_cond} is verified, and hence from \eqref{eq:Z_bootstrap:IC}, the second case in \eqref{eq:forcing_Z},  and \eqref{eq:weighted:Z:bnd2}, we find that
\begin{align*}
e^{\frac32 s}\abs{\partial_1 Z\circ \Phi(s)}
&\les   \eps^{-\frac32} \abs{\partial_1 Z(y_0,-\log\eps)}
 +\int_{-\log\eps}^s e^{\frac{3}{2}  s'}\abs{F_Z^{(\gamma)}\circ \pz^{y_0}(s')}  \,ds' \\
& \les 1 + \int_{-\log\eps}^s \left(1+\abs{\Phi_1(s')}^{2}\right)^ {-\frac{2}{2k-5}}   \,ds' \, .
\end{align*}
Now, applying 
\eqref{phi-lowerbound_conseq} with $\sigma_1=0$ and $\sigma_2 = \frac{1}{2k-5} $ for $k \ge 18$, we deduce that
\begin{align} 
e^{\frac32 s}\abs{\partial_1 Z\circ \Phi(s)} &\les 1\,, \label{zztop1}
\end{align} 
which improves the bootstrap assumption \eqref{eq:Z_bootstrap} for  $M$ taken sufficiently large.

We next consider  the case that $\gamma_1 \geq 1$ and $\abs{  \gamma}=2$. For such $\gamma$ we let $\mu= \tfrac{3}{2} $, so that  
$$
\mu - \tfrac{3\gamma_1 + \gamma_2 + \gamma_3}{2} = \tfrac 12 - \gamma_1 \leq - \tfrac 12\,.
$$
We deduce from \eqref{eq:weighted:Z:bnd2:alt}, the third case in \eqref{eq:forcing_Z}, the initial datum assumption \eqref{eq:Z_bootstrap:IC}, and Lemma~\ref{lem:phiZ} with $\sigma_1 = \frac 18 $ and $\sigma_2 = \frac 13$,   that
\begin{align}
e^{\frac32 s}\abs{\p^\gamma  Z\circ \Phi(s)}
&\les   \eps^{-\frac32} \abs{\p^\gamma  Z(y_0,-\log\eps)}
+\int_{-\log\eps}^s \left( M^{\frac{\abs{\check \gamma}}{2}} + M^2 \left(1+\abs{\Phi_1(s')}^{2}\right)^ {-\frac{1}{6}} \right)   e^{-\frac{1}2 (s-s')} \,ds' \notag\\
& \les 1+ M^{\frac{\abs{\check \gamma}}{2}} +  \int_{-\log\eps}^s  \eps^{\frac 18 } e^{\frac s8}  M^2 \left(1+\abs{\Phi_1(s')} \right)^ {-\frac{1}{3}}   \,ds' \notag\\
& \les 1+ M^{\frac{\abs{\check \gamma}}{2}} + \eps^{\frac 18 } M^2  \les  M^{\frac{\abs{\check \gamma}}{2}}   \label{zztop2}
\end{align}
for $s\geq   -\log \eps$ and $\gamma_1 \geq 1$ and $\abs{  \gamma}=2$. This improves the bootstrap stated in \eqref{eq:Z_bootstrap} by using the factor $M^{\frac 12}$ to absorb the implicit constant in the above inequality.

We are left to consider $\gamma$ for which $\gamma_1=0$ and $1\leq \abs{\check \gamma} \leq 2$.
For  $\abs{\gamma}=\abs{\check \gamma}=1$, setting $\mu=\frac 12$ (which satisfies
\eqref{eq:mu_cond}) we obtain from 
\eqref{eq:weighted:Z:bnd2}, the forcing bound  \eqref{eq:forcing_Z}, and the initial datum assumption \eqref{eq:Z_bootstrap:IC} that 
\begin{align}
e^{\frac s2 }\abs{\check\nabla Z\circ \Phi(s)}
&\les   \eps^{-\frac12} \abs{\check\nabla Z(y_0,-\log\eps)}
+M^2 \int_{-\log\eps}^s e^{- s'} \,ds' \les \eps^{\frac 12}\,.  \label{zztop3}
\end{align}
Finally, for $\abs{\gamma}=\abs{\check \gamma}=2$  we set $\mu=1$. As a consequence of \eqref{eq:forcing_Z}, \eqref{eq:Z_bootstrap:IC}, and \eqref{eq:weighted:Z:bnd2}, we obtain
\begin{align}
e^{s }\abs{\check\nabla^2 Z\circ \Phi(s)}
&\les   \eps^{-1} \abs{\check\nabla^2  Z(y_0,-\log\eps)}
+\int_{-\log\eps}^s e^{- (\frac12 - \frac{3}{2k-7})s'} \,ds' \les 1\,,  \label{zztop4}
\end{align}
for $k\geq 18$. Together, the estimates \eqref{zztop1}--\eqref{zztop4} improve the bootstrap bound \eqref{eq:Z_bootstrap} by taking  $M$ sufficiently large.

\subsection{Estimates on $A$}
The goal of this section is to improve on the bootstrap bounds \eqref{eq:A_bootstrap}. The $\p_1 A$ estimate is more delicate, and is obtained by considering the vorticity equation; we postpone this estimate for the end of this subsection. In contrast, the $\check \nabla^m A$ estimates with $0\leq m \leq 2$ are very similar to the estimates of $Z$, by setting $\Phi=\pa^{y_0}$ and utilizing \eqref{eq:A_bootstrap:IC}, \eqref{eq:forcing_U} and \eqref{eq:weighted:A:bnd2} in place of \eqref{eq:Z_bootstrap:IC}, \eqref{eq:forcing_Z} and \eqref{eq:weighted:Z:bnd2}. We summarize these as follows: 
\begin{subequations} 
\label{A-estimates-1}
\begin{align}
\abs{ A\circ \Phi(s)}
&\les   \abs{ A(y_0,-\log\eps)} +  M^{\frac 12}  \int_{-\log\eps}^s e^{-s'} \,ds' \les  M^{\frac 12}  \eps  \\
e^{\frac s2 }\abs{\check\nabla A\circ \Phi(s)}
&\les   \eps^{-\frac12} \abs{\check\nabla A(y_0,-\log\eps)}
+ \int_{-\log\eps}^s  \left(M^{\frac 12} + M^2  \left(1+\abs{\Phi_1(s')} \right)^ {-\frac{1}{3}} \right) e^{- \frac{s'}{2}} \,ds' \notag \\
&\les \eps^{\frac 12} + M^{\frac 12} \eps^{\frac 12} + M^2 \eps^{\frac 12 + \frac 18} \int_{-\log\eps}^s  e^{\frac{s'}{8}   \left(1+\abs{\Phi_1(s')} \right)^{-\frac{1}{3}}  }  \,ds'   \les M^{\frac 12} \eps^{\frac 12} \\
e^{ s }\abs{\check\nabla^2 A\circ \Phi(s)}
&\les   \eps^{-1} \abs{\check\nabla^2 A(y_0,-\log\eps)}
+\int_{-\log\eps}^s e^{\frac{3 s'}{2k-7}}\left(1+\abs{\Phi_{1}}^{2}\right)^{-\frac16} \,ds' \les 1
\end{align}
\end{subequations} 
where we applied \eqref{phi-lowerbound_conseq} first with $\sigma_1 = \frac18$ and $\sigma_2 = \frac 13$, and then  with $\sigma_1=\frac{4}{2k-7}$ and $\sigma_2=\frac13$. Taking $M$ sufficiently large, the bounds \eqref{A-estimates-1} close  the bootstrap assumption for $\p^\gamma A$ when $\gamma_1=0$.

It remains to close the bootstrap assumption on $\p_1A_\nu$ for $\nu=2,3$. For this purpose we use the vorticity estimate given in Proposition \ref{prop:vorticity} and the following representation:
\begin{lemma}[Relating $A$ and $\Omega$]
\label{lem:remarkable:sheep:structure}
The following identities hold:
\begin{subequations}
\begin{align}
e^{\frac{3s}{2}} \Jcal \p_1 A_2  
&= (\alpha S)^{\frac 1\alpha} \Omega \cdot \Tcal^3 + \tfrac 12 \Tcal^2_\mu \left(\p_\mu W + e^{\frac s2} \p_\mu Z\right) - e^{\frac s2} \Ncal_\mu \p_\mu A_2 \notag\\
&\qquad - \tfrac 12 \left( \kappa + e^{-\frac s2} W + Z\right) (\operatorname{curl}_{\tilde x} \Ncal) \cdot \Tcal^3 - A_2 (\operatorname{curl}_{\tilde x} \Tcal^2) \cdot \Tcal^3 \\
e^{\frac{3s}{2}} \Jcal \p_1 A_3  
&= - (\alpha S)^{\frac 1\alpha} \Omega \cdot \Tcal^2 + \tfrac 12 \Tcal^3_\mu \left(\p_\mu W + e^{\frac s2} \p_\mu Z\right) - e^{\frac s2} \Ncal_\mu \p_\mu A_3 \notag\\
&\qquad + \tfrac 12 \left( \kappa + e^{-\frac s2} W + Z\right) (\operatorname{curl}_{\tilde x} \Ncal) \cdot \Tcal^2 - A_3 (\operatorname{curl}_{\tilde x} \Tcal^3) \cdot \Tcal^2 
\,.
\end{align} 
\end{subequations}
\end{lemma}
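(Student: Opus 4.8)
\textbf{Proof strategy for Lemma~\ref{lem:remarkable:sheep:structure}.}

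The plan is to start from the identity already recorded in the proof of Proposition~\ref{prop:vorticity}, namely the decomposition of $\operatorname{curl}_{\tilde x} \mru$ into the moving orthonormal frame $(\Ncal, \Tcal^2, \Tcal^3)$. First I would expand $\operatorname{curl}_{\tilde x} \mru \cdot \Tcal^\nu$ the same way~\eqref{boom3} expands $\operatorname{curl}_{\tilde x} \mru \cdot \Ncal$: using that $\operatorname{curl}_{\tilde x} \mru \cdot \Tcal^\nu = \Tcal^3_j \p_{\tilde x_j} \mru \cdot \Ncal - \Ncal_j \p_{\tilde x_j} \mru \cdot \Tcal^3$ for $\nu=2$ (and the analogous cyclic version for $\nu=3$), together with $\Ncal \times \Tcal^2 = \Tcal^3$ and $\Ncal \times \Tcal^3 = -\Tcal^2$. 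Since $\Ncal, \Tcal^\nu, \Jcal$ do not depend on $\tilde x_1$, a $\p_{\tilde x_j}$ acting tangentially means $\p_{\tilde x_\nu}$ only, and I would convert $\p_{\tilde x_\nu}$ derivatives to $\p_{x_\nu}$ derivatives (they agree since $\check x = \check{\tilde x}$), picking up the $\mru \cdot \Tcal^\nu_{,\nu}$ terms exactly as in~\eqref{boom3}. This rewrites $\operatorname{curl}_{\tilde x} \mru \cdot \Tcal^\nu$ in terms of $\p_{x_\nu}(\mru \cdot \Ncal)$, $\p_{x_1}(\mru \cdot \Tcal^\nu)$, $\p_{x_\nu}(\mru \cdot \Tcal^3)$ and lower-order frame-curvature terms.

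Next I would invoke the vorticity relation $(\alpha \mrs)^{1/\alpha} \mrg = \operatorname{curl}_{\tilde x} \mru$ already established in the proof of Proposition~\ref{prop:vorticity}, dotted with $\Tcal^3$ (for the $A_2$ identity) and with $\Tcal^2$ (for the $A_3$ identity), to replace $\operatorname{curl}_{\tilde x} \mru \cdot \Tcal^\nu$ by $(\alpha \mrs)^{1/\alpha} \mrg \cdot \Tcal^\nu$. Then I would isolate the term $\p_{x_1}(\mru \cdot \Tcal^2) = \p_{x_1} a_2$, respectively $\p_{x_1} a_3$, on one side. At this point I would pass to self-similar variables: $\mrs = S$, $\mrg = \Omega$, $a_\nu = A_\nu$, $\mru \cdot \Ncal = \tfrac12(\kappa + e^{-s/2} W + Z)$ by~\eqref{eq:UdotN:S}, $\p_{x_1} = e^{3s/2} \p_{y_1}$ and $\p_{x_\nu} = e^{s/2} \p_{y_\nu}$ by~\eqref{eq:y:s:def}; also $\mru \cdot \Ncal = \tfrac12(w+z)$ with $w = e^{-s/2} W + \kappa$, so $\p_{x_\nu}(\mru \cdot \Ncal) = \tfrac12 e^{s/2}(\p_{y_\nu}(e^{-s/2}W) \cdot e^{s/2} + \ldots)$ — care is needed tracking the $e^{-s/2}$ weight on $W$ so that $\Tcal^\nu_\mu(\p_\mu W + e^{s/2}\p_\mu Z)$ emerges with the stated normalization. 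Finally I would group the curvature terms: the pieces proportional to $\mru \cdot \Ncal$ collect into $\tfrac12(\kappa + e^{-s/2}W + Z)(\operatorname{curl}_{\tilde x}\Ncal)\cdot\Tcal^3$, and the pieces proportional to $a_\nu = A_\nu$ collect into $A_\nu (\operatorname{curl}_{\tilde x}\Tcal^\nu)\cdot\Tcal^3$, using that $\Tcal^\mu_\nu \mru\cdot\Tcal^\nu_{,\mu}$ and $\Ncal_\nu \mru\cdot\Tcal^\nu_{,\mu}$ reassemble into curl components of the frame vectors contracted against $\mru = (\mru\cdot\Ncal)\Ncal + a_\gamma\Tcal^\gamma$.

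The main obstacle I expect is purely bookkeeping: correctly matching the tangential-derivative curvature terms $\Tcal^\nu_\mu \mru\cdot\Tcal^\gamma_{,\mu}$ and $\Ncal_\mu \mru\cdot\Tcal^\gamma_{,\mu}$ against the components $(\operatorname{curl}_{\tilde x}\Ncal)\cdot\Tcal^\gamma$ and $(\operatorname{curl}_{\tilde x}\Tcal^\nu)\cdot\Tcal^\gamma$, which requires expanding $\operatorname{curl}_{\tilde x}$ of a frame vector $\Ncal$ (itself $\tilde x_1$-independent) in the frame and using the orthonormality relations $\Ncal\times\Tcal^2 = \Tcal^3$, $\Tcal^2\times\Tcal^3 = \Ncal$, $\Tcal^3\times\Tcal^2 = -\Ncal$ repeatedly, and being careful about the sign coming from the cyclic structure (this is the source of the sign difference between the $A_2$ and $A_3$ identities). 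There is no analytic difficulty — no estimates are involved, it is an exact algebraic identity — but the computation must be carried out coordinate-component by component, which is exactly the kind of tedious derivation the paper elsewhere defers to Appendix~\ref{sec:derivation}; I would either present it there or do it carefully inline, double-checking against the already-established~\eqref{boom2} for the normal component as a consistency test.
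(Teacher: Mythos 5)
Your plan is essentially the paper's own proof: start from the frame decomposition of $\operatorname{curl}_{\tilde x}\mru$ used in the proof of Proposition~\ref{prop:vorticity}, compute $\operatorname{curl}_{\tilde x}\mru\cdot\Tcal^\nu$ via the chain rule from $\tilde x$ to $x$ (observing that tangential differentiation of the frame picks up no $x_1$-derivative, while $\Ncal_j\p_{\tilde x_j}$ contributes the key $\Jcal\p_{x_1}$ term), identify the result with $(\alpha\mrs)^{1/\alpha}\mrg\cdot\Tcal^\nu$, isolate $\p_{x_1}a_\nu$, and push through \eqref{eq:y:s:def}, \eqref{eq:ss:ansatz}, and \eqref{eq:UdotN:S}. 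The one cosmetic difference is that the paper's intermediate display \eqref{vort-correct2} packages the curvature terms as $\mru\cdot(\p_\Ncal\Tcal^3-\p_{\Tcal^3}\Ncal)$ rather than $(\operatorname{curl}_{\tilde x}\Ncal)\cdot\Tcal^\gamma$ and $(\operatorname{curl}_{\tilde x}\Tcal^\nu)\cdot\Tcal^\gamma$, but since the frame is $\tilde x_1$-independent and orthonormal these are the same quantities; you correctly flag this regrouping as the only bookkeeping to be done carefully.
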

Assuming for the moment that Lemma~\ref{lem:remarkable:sheep:structure} holds, by combining 
Propositions~\ref{prop:sound} and~\ref{prop:vorticity} with estimates \eqref{eq:W_decay}, \eqref{eq:Z_bootstrap}, \eqref{eq:A_bootstrap}, \eqref{e:space_time_conv} and \eqref{e:bounds_on_garbage} we deduce that
\begin{align} 
  e^{\frac{3}{2}s} \abs{ \p_1 A_\nu } \les \kappa_0^{\frac{1}{\alpha}}   +  (1 + \eps^{\frac 12} M^{\frac 12} ) + (\kappa_0 + \eps^{\frac 16} + M \eps)  + M \eps  \,.   \label{A-estimates-2}
\end{align} 
The above estimate thus improves on the bootstrap assumption for $\p_1 A_\nu$, by taking $M$ to be sufficiently large in terms of $\kappa_0$, and then $\eps$ sufficiently small in terms of $M$. The estimates \eqref{A-estimates-1} and \eqref{A-estimates-2}  thus improve the bootstrap assumptions on $A$, and it remains to prove Lemma~\ref{lem:remarkable:sheep:structure}.

\begin{proof}[Proof of Lemma~\ref{lem:remarkable:sheep:structure}]

   We note that for the
velocity $\mathring u$ and with respect to the orthonormal basis $(\Ncal, \Tcal^2,\Tcal^3)$ we have that 
$$
\operatorname{curl}_{\tilde x} \mru = \left( \p_{\Tcal^3} \mru \cdot \Ncal - \p_{\Ncal} \mru \cdot \Tcal ^3\right)\Tcal^2
- \left( \p_{\Tcal^2} \mru \cdot \Ncal - \p_{\Ncal} \mru \cdot \Tcal ^2\right)\Tcal^3
+  \left( \p_{\Tcal^2} \mru \cdot \Tcal^3 - \p_{\Tcal^3} \mru \cdot \Tcal ^2\right)\Ncal \,.
$$
Now, from the definitions \eqref{eq:tilde:u:def}, \eqref{vort00}, \eqref{usigma-sheep},
\eqref{sv-sheep}, \eqref{S-trammy}, and  \eqref{svort-trammy}, we have that
\begin{align*} 
( \alpha S)^ {\sfrac{1}{\alpha }} (y,s) \Omega(y,s) =  (\alpha \mathring \sigma( x, t))^ {\sfrac{1}{\alpha }}  \mathring \zeta(x, t) = \tilde \rho( \tilde x, t) \tilde \zeta( \tilde x, t) = \tilde \omega ( \tilde x, t) 
= \operatorname{curl} _{ \tilde x}  \tilde u ( \tilde x, t)  = \operatorname{curl} _{ \tilde x} \mathring u (x, t) \,.
\end{align*} 
In particular, 
\begin{align} 
( \alpha S)^ {\sfrac{1}{\alpha }} (y,s) \Omega(y,s)= \operatorname{curl} _{\tilde x} \mathring u(x,t)
= \operatorname{curl} _{ \tilde x} \left( \mathring u (\tilde x_1 - f(\check{\tilde x}, t), \tilde x_2, \tilde x_3, t) \right)\,. \label{vort-correct1}
\end{align} 
We only establish the formula for $\p_1 A_3$, as the one for $\p_1 A_2$ is obtained identically. 
 To this end, we write
\begin{align*} 
\operatorname{curl}_{\tilde x} \mru \cdot \Tcal^2 & = \Tcal^3_j  \p_{\tilde x_j} \mru(x,t)     \cdot \Ncal -    \Ncal_j \p_{\tilde x_j} \mru(x,t)  \cdot \Tcal ^3  \,.
\end{align*} 
By the chain-rule and the fact that $\Ncal$ is orthogonal to $\Tcal^3$, we have that
\begin{align*} 
 \p_{\tilde x_j} \mru(x,t) \Tcal^3_j & = \p_{ x_1} \mru \Tcal^3_1 - f,_\nu \p_{x_1} \mru \Tcal^3_\nu + \p_{x_\nu} \mru \Tcal^3_\nu   =  \Jcal \Ncal \cdot \Tcal^3 \p_{x_1}\mru  +  \p_{x_\nu} \mru \Tcal^3_\nu   =  \p_{x_\nu} \mru(x,t) \Tcal^3_\nu \,.
\end{align*} 
The important fact to notice here is that no $x_1$ derivatives of $\mru$ remain.
Similarly, 
\begin{align*} 
 \p_{\tilde x_j} \mru(x,t) \Ncal_j & = \p_{ x_1} \mru \Ncal_1 - f,_\nu \p_{x_1} \mru \Ncal_\nu + \p_{x_\nu} \mru \Ncal_\nu  =  \Jcal \Ncal \cdot \Ncal \p_{x_1}\mru  +  \p_{x_\nu} \mru \Ncal_\nu   =  \Jcal \p_{x_1}\mru  + \p_{x_\nu} \mru(x,t) \Ncal_\nu \,.
\end{align*} 
Hence, it follows that
\begin{align} 
&\operatorname{curl}_{\tilde x} \mru \cdot \Tcal^2 \notag\\
& = \Tcal^3_\nu\p_{x_\nu} \mru(x,t)     \cdot \Ncal -   \Jcal \p_{x_1}(\mru\cdot \Tcal ^3) -   \Ncal_\nu\p_{x_\nu} \mru(x,t)  \cdot \Tcal ^3 \notag \\
&=\Tcal^3_\nu \p_{x_\nu} (\mru(x,t) \cdot \Ncal )  -   \Jcal \p_{x_1}a_3   - \Ncal_\nu    \p_{x_\nu}( \mru(x,t) \cdot \Tcal ^3)  - \mru(x,t)\cdot   \p_{x_\nu}  \Ncal\,  \Tcal^3_\nu  +   \mru(x,t)  \cdot \p_{x_\nu} \Tcal ^3\,  \Ncal_\nu \notag\\
&= \tfrac{1}{2} \Tcal^3_\nu \p_{x_\nu} (w+z ) -   \Jcal \p_{x_1}a_3   - \Ncal_\nu    \p_{x_\nu}a_3 
+\left( \tfrac{1}{2} (w+z) \Ncal + a_\nu \Tcal^\nu\right) \cdot (\p_{\Ncal}\Tcal ^3 - \p_{\Tcal^3} \Ncal) \label{vort-correct2}
\end{align}
where we have used \eqref{tildeu-dot-N}, \eqref{tildeu-dot-T}, and \eqref{wza-sheep}.   The identities \eqref{vort-correct1} and \eqref{vort-correct2} 
and the definition of the self-similar transformation in \eqref{eq:y:s:def} and \eqref{eq:ss:ansatz} yield the desired formula for $\p_1 A_3$.
\end{proof}

\section{Closure of $L^\infty$ based bootstrap for $W$}
\label{sec:W}

The goal of this section is to close the bootstrap assumptions which involve $W$, $\tilde W$ and their derivatives, stated in \eqref{eq:W_decay} and \eqref{eq:bootstrap:Wtilde}--\eqref{eq:bootstrap:Wtilde3:at:0}.

\subsection{Estimates for $\partial^{\gamma}\tilde W(y,s)$ for $\abs{y}\leq \ell$}

\subsubsection{The fourth derivative}
We note that the damping term in \eqref{eq:p:gamma:tilde:W:evo} is  strictly positive if $\abs{\gamma}=4$. Indeed, for  $\abs{\gamma} = 4$, we have that
\begin{align}
 D_{\tilde W}^{(\gamma)} := \tfrac{3\gamma_1 + \gamma_2 + \gamma_3-1}{2} 
+ \beta_\tau \Jcal  \left(\p_1  \bar W  + \gamma_1   \p_1 W \right)
 &= \tfrac 32 + \gamma_1 + \beta_\tau \Jcal\left(\p_1  \bar W  + \gamma_1   \p_1 W \right) \notag\\
  &\geq \tfrac 32 + \gamma_1 - (1+ 2 M\eps)\left(1 + \gamma_1   \right) \notag\\
 &\geq \tfrac 13  \, ,
 \label{eq:p:gamma:tilde:W:damping}
\end{align}
where we have used \eqref{eq:beta:tau} and \eqref{Jp1W}.

Using \eqref{eq:p:gamma:tilde:W:damping} and composing with the flow $\pw^{y_0}(s)$ induced by ${\mathcal V}_W$ whose initial datum is given at $s=-\log \eps$ as $\pw^{y_0}(-\log \eps) = y_0$, we obtain from \eqref{eq:p:gamma:tilde:W:evo} that
\begin{align*}
\tfrac{d}{ds} \left(\p^\gamma \tilde W \circ \pw^{y_0}\right) + \left( D_{\tilde W}^{(\gamma)}\circ \pw^{y_0} \right) \left(  \p^\gamma \tilde W \circ \pw^{y_0}\right)  =  \tilde F_W^{(\gamma)} \circ \pw^{y_0} 
\, .
\end{align*}
Appealing to \eqref{eq:Ftilde_4th_est}, the Gr\"onwall inequality, the damping lower bound \eqref{eq:p:gamma:tilde:W:damping}, and our assumption~\eqref{eq:tilde:W:4:derivative} on the initial datum, we obtain
\begin{align}
\abs{\p^\gamma \tilde W \circ \pw^{y_0}} 
 \les \eps^{\frac18}
+   \eps^{\frac{1}{10}} (\log M)^{\abs{\check \gamma}-1}+\abs{\p^\gamma \tilde W(y_0,-\log\eps)}    \les \eps^{\frac18}
+ \eps^{\frac{1}{10}} (\log M)^{\abs{\check \gamma}-1}
\label{eq:end:of:november}
\end{align}
for all $\abs{y_0} \leq \ell$  and  all $s\geq -\log \eps$ such that $\abs{\pw^{y_0}(s)}\leq \ell$. 
Using a power of $\eps$ or the extra $\log M$ factor to absorb the implicit constants, we have thus closed the bootstrap assumption \eqref{eq:bootstrap:Wtilde4}: indeed, by  Lemma~\ref{lem:escape} we have that given any $\abs{y} \leq \ell$ and $s>-\log \eps$, we may write $y = \pw^{y_0}(s)$, for some $y_0$ with $\abs{y_0} < \ell$, and that $\abs{\pw^{y_0}(s')} \leq \ell$ for all $-\log \eps < s' \leq s$.

\subsubsection{Estimates for $\p^\gamma \tilde W$ with $\abs{\gamma}\leq 3$ and $\abs{y}\leq \ell$}
In this subsection we improve on the bootstrap assumptions \eqref{eq:bootstrap:Wtilde:near:0} and \eqref{eq:bootstrap:Wtilde3:at:0}. 
First we recall that $W$ satisfies the constraints \eqref{eq:constraints}, and that the power series for $\bar W$ near $y=0$ is given by 
\begin{align}
\bar W(y) =-y_1 +  y_1^3 + y_1 y_2^2 + y_1 y_3^2  - 3 y_1^5 - y_1 y_2^4 - y_1 y_3^4 - 4 y_1^3 y_2^2 - 4 y_1^3 y_3^2  - 2 y_1 y_2^2 y_3^2 +  \OO(|y|^6) \, .
\label{eq:bar:W:power:series:0} 
\end{align}
Based on this information,  we have that 
\begin{align}
\tilde W(0,s) =  \nabla \tilde W(0,s) = \nabla^2 \tilde W(0,s) = 0 \, .
\label{eq:tilde:W:at:0}
\end{align}
Consider now the bound on $\p^\gamma$ derivatives with $\abs{\gamma}=3$ at $y=0$, with the goal of improving \eqref{eq:bootstrap:Wtilde3:at:0}.  Evaluating \eqref{eq:p:gamma:tilde:W:evo} at $y=0$ yields
\begin{align*} 
\p_s (\p^\gamma \tilde W )^0   
&=  \tilde F_W^{(\gamma),0}  - G_W^0 (\p_1\p^\gamma \tilde W )^0 
- h_W^{\mu,0}    (\p_\mu\p^\gamma \tilde W )^0  - (1+\gamma_1)(1-\beta_\tau) ( \p^\gamma \tilde W)^0 \,.
\end{align*} 
Using \eqref{eq:bootstrap:Wtilde4}, \eqref{eq:bootstrap:Wtilde3:at:0},  \eqref{eq:GW:hW:0},  \eqref{e:forcing:W3}, and \eqref{eq:beta:tau} we obtain that 
\begin{align}
\abs{ \p_s (\p^\gamma \tilde W )^0   }
&\les e^{- (\frac 12- \frac{4}{2k-7})s}  + M (\log M)^{4} \eps^{\frac{1}{10}}  e^{-s} +  M  \eps^{\frac 14} e^{-s}  \les e^{- (\frac 12- \frac{4}{2k-7})s} \,. \label{cool-k}
\end{align}
Therefore, upon integrating in time, using that $\bar W$ is independent of $s$, and appealing to our initial datum assumption \eqref{eq:tilde:W:3:derivative:0} we have that
\begin{align}
 \abs{\p^\gamma \tilde W(0,s)} 
 &\leq   \abs{\p^\gamma \tilde W(0,-\log \eps)} + \int_{-\log \eps}^s \abs{ \p_s (\p^\gamma W )^0 (s') } ds' \leq \tfrac{1}{10} \eps^{\frac 14}\,,
 \label{eq:Schweinsteiger:1}
\end{align}
where we have used the bound \eqref{cool-k} with $k\geq 18$.
In summary, we have shown that 
\begin{align}
\abs{\p^\gamma \tilde W(0,s)} \leq \tfrac{1}{10} \eps^{\frac 14}
 \label{eq:Schweinsteiger:2}
\end{align}
for all $\abs{\gamma}\leq 3$, and all $s\geq-\log\eps$. This closes the bootstrap bound \eqref{eq:bootstrap:Wtilde3:at:0}.

The estimates for $0 \leq \abs{y} \le \ell$ stated in \eqref{eq:bootstrap:Wtilde:near:0} now follow directly from \eqref{eq:bootstrap:Wtilde4},  \eqref{eq:Schweinsteiger:2}, \eqref{eq:tilde:W:at:0}, and the fundamental theorem of calculus, by integrating from $y=0$.

To close the bootstrap bound \eqref{eq:bootstrap:Wtilde} for $\abs{y}\leq \ell$, we note that the bound  follows by setting $\gamma=0$ in \eqref{eq:bootstrap:Wtilde:near:0}, and using that $\eps$ is sufficiently small.  For  \eqref{eq:bootstrap:Wtilde1},  the bound in the case $\abs{y}\leq \ell$ follows by setting $\gamma=(1,0,0)$ in \eqref{eq:bootstrap:Wtilde:near:0}, and using that  $M \ell^3 \eps^{\frac{1}{10}} \ll \eps^{\frac{1}{11}}$.
For  \eqref{eq:bootstrap:Wtilde2}, in the case $\abs{y}\leq \ell$, the desired bound holds by setting $\abs{\gamma}=1$ in \eqref{eq:bootstrap:Wtilde:near:0}, and using that $(\log M)^4 \ell^3 \eps^{\frac{1}{10}} \ll \eps^{\frac{1}{13}}$. 

\subsection{A framework for weighted estimates}
In order to close the bootstrap estimates \eqref{eq:W_decay} and \eqref{eq:tildeW_decay}, for $\abs{y}\geq \ell$, we will need to employ carefully weighted estimates. If $\RSZ$ is the quantity we wish to estimate (either $\partial^\gamma W$ or $\partial^\gamma \tilde W$), we will write the evolution equation for $\RSZ$ in the form
\begin{equation}\label{eq:q_eq}
\p_s \RSZ + D_\RSZ \; \RSZ   + \mathcal{V}_W\cdot \nabla \RSZ= F_\RSZ \,,
\end{equation}
where $D_\RSZ$ denotes the damping of the $\RSZ$ equation, and $F_\RSZ$ is the forcing term.
If we let
\[q :=\eta^{\mu}\, \RSZ\]
denote the weighted version of $\RSZ$ (we will use exponents $\mu$ with $\abs{\mu} \leq \frac 12$), then $q$ satisfies the evolution equation
\begin{align}
  \p_sq  + \underbrace{\left( D_{\RSZ} - \eta^{-\mu} \mathcal{V}_W \cdot \nabla \eta^\mu \right)}_{=: D_q} q + \mathcal{V}_W \cdot \nabla q &= \underbrace{\eta^{\mu}F_{\RSZ}}_{:=F_q} \label{eq:tildeq}
\end{align}
and we can expand the definition of $\mathcal D_{q}$ as 
\begin{align}
 D_{q}=D_\RSZ -3\mu+3\mu\eta^{-1}- 2 \mu
 \underbrace{\eta^{-1} \left(y_1 (\beta_\tau \Jcal W+G_W ) + 3 h_{W}^{\nu} y_\nu \abs{\check y}^4\right)\,.}_{=: \mathcal D_\eta}
 \label{eq:Dq:def}
\end{align}
Note that $D_\eta$ is independent of $\mu$. 
By Gr\"onwall's inequality, and composing with the trajectories $\pw^{y_0}(s)$ such that $\pw^{y_0}(s_0) = y_0$ for some $s_0\geq - \log \eps$ with $\abs{y_0} \geq \ell$, we deduce from \eqref{eq:tildeq} that
\begin{align}
\label{eq:q:est}
\abs{q\circ \pw^{y_0}(s)}
&\leq   \abs{q(y_0)} \exp\left(- \int_{s_0}^s  D_{q}\circ\pw^{y_0}(s')  \,ds'\right) \notag\\
&\qquad +\int_{s_0}^s\abs{F_q\circ \pw^{y_0}(s')}\exp\left(- \int_{s'}^s D_{q}\circ\pw^{y_0}(s'') \,ds''\right) \,ds' \,.
\end{align}
We first note that the $3\mu\eta^{-1}$ term in the definition of $D_q$ in \eqref{eq:Dq:def} satisfies $- 3 \mu \eta^{-1} \circ \pw^{y_0}(s) \leq 0$ whenever $\mu \geq 0$, and thus this term does not contribute  to the right side of \eqref{eq:q:est}. Next, we estimate the $D_\eta$ contribution to the exponential term on the right side of \eqref{eq:q:est}, as this contribution is independent of $\mu$ and is a-priori not sign-definite.
Using \eqref{eq:W_decay} to bound $W$, \eqref{e:bounds_on_garbage_2} to estimate $\Jcal$, \eqref{eq:beta:tau} to bound $\beta_\tau$, \eqref{e:G_W_estimates} for $G_W$, and \eqref{e:h_estimates} to estimate $h_W$ we deduce
\begin{align}
\abs{\mathcal{D}_\eta}
&\leq
\eta^{-1} \left(4 \abs{y_1}  \eta^{\frac 16} + \abs{y_1} \abs{G_W} + 3 \abs{h_W^\nu} \abs{y_\nu} \abs{\check y}^4 \right)
\notag\\
&\leq 4 \eta^{-\frac 13} +  M  \eta^{-\frac 12} \left( M e^{-\frac s2} +  M^{\frac 12} \abs{y_1} e^{-s} + \eps^{\frac 13} \abs{\check y} \right) + 6  M^2  \eta^{-\frac 16} e^{-\frac s2}
\notag\\
& \leq 5 \eta^{-\frac 13} +   e^{-\frac s3} 
\label{eq:Deta:upper:bound}
\end{align}
for all $s\geq -\log \eps$, upon using \eqref{e:space_time_conv} and taking $\eps$ to be sufficiently small in terms of $M$.

\subsubsection{The case $\ell \leq \abs{y_0} \leq \LLL$}

Composing the upper bound for $D_\eta$ in \eqref{eq:Deta:upper:bound} with a trajectory $\pw^{y_0}(s)$ with $\abs{y_0} \geq \ell$, using \eqref{eq:escape_from_LA},  and the bound $2 \eta(y) \geq 1 + \abs{y}^2 $, we obtain from \eqref{eq:Deta:upper:bound} that  
\begin{equation}
2 \mu \int_{s_0}^s  \abs{\mathcal D_\eta\circ\pw^{y_0}(s')}\,ds'
\leq \int_{s_0}^{\infty} 10 \left( 1+ \ell^2 e^{\frac 25 (s'-s_0)}\right)^{-\frac 13}  + e^{-\frac{s'}{3}} ds'
\leq 65 \log \tfrac{1}{\ell} + \eps^{\frac 13}  \leq 70 \log \tfrac{1}{\ell} \,,
 \label{eq:Deta:est}
\end{equation}
since $s_0\geq -\log\eps$,  $\ell \in (0,1/100]$,   for all $\abs{\mu} \leq \frac 12$. Combining \eqref{eq:q:est} with \eqref{eq:Deta:est}, we deduce that 
\begin{align}
\abs{q\circ \pw^{y_0}(s)}
&\leq \ell^{-70}    \abs{q(y_0)} \exp\left( \int_{s_0}^s \left( 3\mu - D_{\RSZ}- 3\mu\eta^{-1}  \right)\circ\pw^{y_0}(s')  ds'\right) \notag\\
& \qquad + \ell^{-70} \int_{s_0}^s\abs{F_q\circ \pw^{y_0}(s')}\exp\left(  \int_{s'}^s \left( 3\mu - D_{\RSZ}- 3\mu\eta^{-1}  \right)\circ\pw^{y_0}(s'')  ds''\right) \,ds' \, .
\label{eq:q:est:new}
\end{align}
To conclude our weighted estimate, we need information on the size of $q(y_0)$. We recall
that for any $s> -\log \eps$ and any $\ell \leq \abs{y} \leq \LLL$, there exists $s_0 \in [-\log \eps,s)$ and $y_0$ with $\ell \leq \abs{y_0}\leq \LLL$ such that $y = \pw^{y_0}(s)$. 
This follows from Lemma~\ref{lem:escape} by following the trajectory ending at $(y,s)$ backwards in time. We also note that in the situation where $s_0 > -\log \eps$, 
we have  $\abs{y_0} = \ell$. 
Therefore, $q(y_0)$ is bounded using information on the initial datum if $s_0 = -\log \eps$, and appealing to bootstrap bounds which hold for all $s\geq -\log \eps$, and $\abs{y_0} = \ell$.  The bound \eqref{eq:q:est:new} will be applied in the following subsections for various values of $\mu$, with $\abs{\mu}\leq \frac 12$, and with $\RSZ$ being either equal to $W$ or $\tilde W$.

\subsubsection{The case $\abs{y_0} \geq \LLL$}
The only difference from the previously considered case comes in the upper bound \eqref{eq:Deta:est}. In this case, we have that  for $\abs{y_0} \geq \LLL \geq 4 $
\begin{equation}
2 \mu \int_{s_0}^s  \abs{\mathcal D_\eta\circ\pw^{y_0}(s')}\,ds'
\leq \int_{s_0}^{\infty} 10 \left( 1+ \LLL^2 e^{\frac 25 (s'-s_0)}\right)^{-\frac 13}  + e^{-\frac{s'}{3}} ds'
\leq 80 \LLL^{-\frac 23} + \eps^{\frac 13}  \leq \eps^{\frac{1}{16}} \,,
 \label{eq:Deta:est:new}
\end{equation}
for $s_0\geq -\log\eps$, and $\abs{\mu} \leq \frac 12$. Combining \eqref{eq:q:est} with \eqref{eq:Deta:est:new}, we deduce that 
\begin{align}
\abs{q\circ \pw^{y_0}(s)}
&\leq e^{\eps^{\frac{1}{16}}}    \abs{q(y_0)} \exp\left( \int_{s_0}^s \left( 3\mu - D_{\RSZ} - 3\mu\eta^{-1}  \right) \circ\pw^{y_0}(s') ds'\right) \notag\\
&\qquad  + e^{\eps^{\frac{1}{16}}} \int_{s_0}^s\abs{F_q\circ \pw^{y_0}(s')}\exp\left(  \int_{s'}^s \left( 3\mu - D_{\RSZ}  - 3\mu\eta^{-1}  \right) \circ \pw^{y_0} (s'')ds''\right) \,ds' \, .
\label{eq:q:est:new:new}
\end{align}
The bound on $\abs{q(y_0)}$ will now be obtained from the the previous estimate \eqref{eq:q:est:new} when $s_0>-\log \eps$ (since in this case $\abs{y_0} = \LLL$), or from the initial datum assumption when $s_0 = -\log \eps$ (since in this case $\abs{y_0} > \LLL$).

\subsection{Estimate for $\tilde W(y,s)$ for $\ell \leq \abs{y}\leq \LLL$}
\label{sec:tilde:W:middle}
We now close the bootstrap bound \eqref{eq:bootstrap:Wtilde} for $\ell \leq \abs{y}\leq \mathcal{L} $.
We let $\RSZ = \tilde W$, $\mu = - \frac 16$, so that the weighted quantity $q$ is given as $q:=\eta^{-\frac16}\tilde W$. We use the evolution equation \eqref{eq:tilde:W:evo}, so that in this case the quantity $3 \mu - D_{\RSZ} - 3\mu \eta^{-1}$ present in \eqref{eq:q:est:new} equals to $- \beta_\tau \Jcal \p_1 W + \frac 12 \eta^{-1}$, while the forcing term $F_q$ equals to $\eta^{-\frac 16} \tilde F_W$. 

First we estimate the contribution of the damping term. Since $\abs{\beta_\tau \Jcal} \leq 1 + \eps^{\frac 12}$ holds due to \eqref{eq:beta:tau} and \eqref{e:bounds_on_garbage}, and since for $\abs{y_0} \geq \ell$ we may apply to the trajectory estimate \eqref{eq:escape_from_LA}, by also appealing to the bootstrap assumption for $\p_1 W$ in \eqref{eq:W_decay}, and the bound $\tilde \eta^{-\frac 13}(y/2) \leq 4 \eta^{-\frac 13}$, we conclude
\begin{align}
\label{eq:Wtilde_damping}
\int_{s_0}^{s} \beta_\tau  \abs{\Jcal \p_1 W} \circ \pw^{y_0}(s') + \tfrac 12 \eta^{-1}\circ \pw^{y_0}(s') \,ds' 
\leq 5 \int_{s_0}^{s}  \eta^{-\frac 13} \circ \pw^{y_0}(s')\,ds'  
\leq 40 \log \tfrac{1}{\ell}
\end{align}
as in \eqref{eq:Deta:est}, for all $s\geq s_0 \geq -\log \eps$.  Second, we estimate the forcing term in \eqref{eq:q:est:new}. Using the $\gamma=0$ case in \eqref{eq:Ftilde_est} we arrive at
\begin{align} 
\int_{s_0}^{s}\abs{\eta^{-\frac16} \tilde F_W}\circ \pw^{y_0}(s')\,ds'
\les 
\eps^{\frac 18} \int_{s_0}^{s} \eta^{-\frac13} \circ \pw^{y_0}(s')\,ds'
\les 
\eps^{\frac 18} \log \tfrac{1}{\ell} 
\label{eq:Wtilde_forcing}
\end{align}
for all $s\geq s_0 \geq -\log \eps$, and $\ell \in (0,1/10]$. 

Inserting the the bounds \eqref{eq:Wtilde_damping} and \eqref{eq:Wtilde_forcing} into \eqref{eq:q:est:new},  we deduce that
\begin{align}
\abs{ \left(\eta^{-\frac 16} \tilde W\right) \circ \pw^{y_0}(s)} \leq \ell^{-110}  \eta^{-\frac 16}(y_0)  \abs{\tilde W(y_0,s_0)} + M \eps^{\frac 18} \ell^{-110} \log \ell^{-1}
\label{eq:bootstrap:Wtilde:*}
\end{align}
where $M$ absorbs the implicit constant in \eqref{eq:Wtilde_forcing}. 
Using the initial data assumption \eqref{eq:tilde:W:zero:derivative} if $s_0=-\log \eps$, and  \eqref{eq:bootstrap:Wtilde:near:0} if $s_0>-\log \eps$, we deduce from \eqref{eq:bootstrap:Wtilde:*}
that 
\begin{align} 
\eta^{-\frac 16}(y) \abs{\tilde W(y,s)} \leq \ell^{-110} \max\left\{ M  \eps^{\frac{1}{10}} \ell^4, \eps^{\frac{1}{10}} \right\}  + M \eps^{\frac 18} \ell^{-110} \log \ell^{-1}  \leq \tfrac{1}{10} \eps^{\frac{1}{11}}
\label{tildeWfinal}
\end{align} 
for all $\ell \leq \abs{y} \leq \LLL$ and all $s\geq -\log \eps$. Here we have used a small power of $\eps$ to absorb all the $\ell$ and $M$ factors. The above estimate shows that \eqref{eq:bootstrap:Wtilde} may be improved by a factor larger than two, as desired.

\subsection{Estimate for $\partial_1\tilde W(y,s)$ for $\ell \leq \abs{y}\leq \LLL$}
\label{sec:p1:tilde:W:middle}

Our goal is to close the bootstrap bound \eqref{eq:bootstrap:Wtilde1} for $\ell \leq \abs{y}\leq \LLL$.
We let $\RSZ = \p_1 \tilde W$, $\mu = \frac 13$, so that the weighted quantity $q$ is given as $q:=\eta^{\frac13} \p_1 \tilde W$. We use the evolution equation \eqref{eq:p:gamma:tilde:W:evo} with $\gamma = (1,0,0)$, so that  the quantity $3 \mu - D_{\RSZ}$   in \eqref{eq:q:est:new} equals to $- \beta_\tau \Jcal (\p_1 W + \p_1 \bar W)$, while the forcing term $F_q =\eta^{\frac 13} \tilde F_W^{(1,0,0)}$. 

As in the previous subsection (see estimate \eqref{eq:Wtilde_damping}), we have that the contributions to \eqref{eq:q:est:new} due to the damping term $3\mu -D_\RSZ$ are bounded as
\begin{align}
\label{eq:Wtilde_damping:d1}
\int_{s_0}^{s} \beta_\tau  \abs{\Jcal (\p_1 W + \p_1 \bar W)} \circ \pw^{y_0}(s')\,ds' 
\leq 80 \log \tfrac{1}{\ell} \,.
\end{align}
On the other hand, the forcing term $F_q=\eta^{\frac 13} \tilde F_W^{(1,0,0)}$ is estimated using \eqref{eq:Ftilde_d1_est} pointwise in space as 
\[
\abs{F_q} \les \eta^{\frac 13} \eps^{\frac{1}{11}} \eta^{-\frac 12} \les   \eps^{\frac{1}{11}} \eta^{-\frac 16}
\, ,
\] and thus, similarly to \eqref{eq:Wtilde_forcing} we obtain
\begin{align} 
\int_{s_0}^{s}\abs{F_q}\circ \pw^{y_0}(s')\,ds'
 \les \eps^{\frac{1}{11}}  \log \tfrac{1}{\ell} \,.
\label{eq:Wtilde_forcing:d1}
\end{align}
Combining \eqref{eq:Wtilde_damping:d1} and \eqref{eq:Wtilde_forcing:d1} with \eqref{eq:q:est:new}, and using our initial datum assumption \eqref{eq:tilde:W:p1=1} when $s_0 = -\log \eps$, respectively \eqref{eq:bootstrap:Wtilde4} for $s_0 > -\log \eps$, we deduce that 
\begin{align} 
\eta^{\frac 16}(y) \abs{\p_1 \tilde W(y,s)} \leq \ell^{-150} \max\left\{ M  \eps^{\frac{1}{10}} \ell^3, \eps^{\frac{1}{11}}  \right\}  + M \eps^{\frac{1}{11}} \ell^{-150} \log \ell^{-1} \leq \tfrac{1}{10} \eps^{\frac{1}{12}}
\label{p1tildeWfinal}
\end{align} 
for all $\ell \leq \abs{y} \leq \LLL$ and all $s\geq -\log \eps$. Here we have used a small power of $\eps$ to absorb all the $\ell$ and $M$ factors. The above estimate shows that \eqref{eq:bootstrap:Wtilde1} may be improved by a factor larger than two, as desired.

\subsection{Estimate for $\check \nabla \tilde W(y,s)$ for $\ell \leq \abs{y}\leq \LLL$}
\label{sec:pcheck:tilde:W:middle}
The proof of the bootsrap \eqref{eq:bootstrap:Wtilde2} for $\abs{y}\geq \ell$  is nearly identical to the one in the previous subsection, so we only present here the necessary changes.  We let $\RSZ = \check\nabla    W$ and $\mu=0$, so that $q= \check \nabla \tilde W$. Using \eqref{eq:p:gamma:tilde:W:evo} with $\gamma \in \{(0,1,0),(0,0,1)\}$, we obtain that in this case  $3 \mu - D_{\RSZ} = - \beta_\tau \Jcal  \p_1 \bar W$, while the forcing term is $F_q = \tilde F_W^{(\gamma)}$. The integral of the damping term arising in \eqref{eq:q:est:new} is bounded using \eqref{eq:Wtilde_damping} by $40 \log \ell^{-1}$. On the other hand, the forcing term is bounded using \eqref{eq:Ftilde_dcheck_est} by $\eps^{\frac{1}{12}} \eta^{-\frac 13}$. Therefore, as in \eqref{eq:Wtilde_forcing:d1}, the integral of the forcing term composed with the flow $\pw^{y_0}(s)$ is bounded as $\les  \eps^{\frac{1}{12}} \log \ell^{-1}$. Combining these two estimates, with our assumptions on the initial datum~\eqref{eq:tilde:W:check=1}  and \eqref{eq:bootstrap:Wtilde4}, we arrive at
\begin{align} 
\abs{\check \nabla \tilde W(y,s)} \leq \ell^{-110} \max\left\{ M  \eps^{\frac{1}{10}} \ell^3, \eps^{\frac{1}{12}}  \right\}  + M \eps^{\frac{1}{12}} \ell^{-110} \log \ell^{-1} \leq \tfrac{1}{10} \eps^{\frac{1}{13}}
\label{p2tildeWfinal}
\end{align} 
for all $\ell \leq \abs{y} \leq \LLL$ and all $s\geq -\log \eps$, thereby improving the bootstrap bound \eqref{eq:bootstrap:Wtilde2}.

\subsection{Estimate for $\partial^{\gamma} W(y,s)$ with $\abs{\gamma}=2$ for $\abs{y}\geq \ell$}
Our last remaining $W$ bootstrap bound is \eqref{eq:W_decay}. Recall that $W = \bar W + \tilde W$, and thus, the $\abs{\gamma}=0$ and $\abs{\gamma}=1$ cases of  \eqref{eq:W_decay} follow directly from the properties \eqref{eq:bar:W:properties} of the function $\bar W$, and the previously established estimates \eqref{eq:bootstrap:Wtilde}--\eqref{eq:bootstrap:Wtilde2}. Thus, it remains to treat the cases for which $\abs{\gamma} = 2$, which are the third and respectively the fifth bounds stated in \eqref{eq:W_decay}. 

For $\abs{\gamma} = 2$, we let $\RSZ = \p^\gamma W$, and we define $\mu$ as
\begin{align*}
\mu 
=
\begin{cases} 
\tfrac 13\, ,\qquad \mbox{for} \qquad \abs{\gamma}=2 \; \mbox{  and  } \; \gamma_1 \geq 1 \, ,\\
 \tfrac 16\, ,\qquad \mbox{for} \qquad \abs{\gamma}=2 \; \mbox{  and  } \; \gamma_1 = 0 \, .
\end{cases}
\end{align*}
According to these choices we define $q = \eta^\mu \p^\gamma W$, and appeal to the evolution equation \eqref{euler_for_Linfinity:a}, to deduce that the quantity $3 \mu - D_{\RSZ}$ present in \eqref{eq:q:est:new} equals to
\begin{align}
3 \mu - D_{\RSZ} 
= 
\begin{cases}
-\tfrac{2\gamma_1-1}{2} - (2\gamma_1-1) \beta_\tau \Jcal \p_1 W\, ,\qquad &\mbox{for} \qquad \abs{\gamma}=2 \; \mbox{  and  } \; \gamma_1 \geq 1 \, ,\\
-  \beta_\tau \Jcal \p_1 W \, ,\qquad &\mbox{for} \qquad \abs{\gamma}=2 \; \mbox{  and  } \; \gamma_1 = 0 \, .
\end{cases}
\label{eq:Magic:Johnson}
\end{align}
We next consider these two cases separately.

The case $\gamma_1=0$ and $\abs{\check \gamma} =2$ is similar to the cases treated earlier: as in \eqref{eq:Wtilde_damping}  we have 
\begin{align}
\int_{s_0}^{s} \beta_\tau  \abs{\Jcal \p_1 W} \circ \pw^{y_0}(s')\,ds' 
\leq 40 \log \tfrac{1}{\ell}
\label{eq:Magic:Johnson:1}
\end{align}
and similarly to \eqref{eq:Wtilde_forcing}, by appealing to \eqref{eq:forcing_W}, using that $-\frac 16 -\frac{1}{2k-7} \geq - \frac{1}{12}$ for $k\geq 10$, we have
\begin{align} 
\int_{s_0}^{s}\abs{\eta^{\frac16} F_W^{(\gamma)}}\circ \pw^{y_0}(s')\,ds'
\leq  
M^{\frac 23} \int_{s_0}^{s} \eta^{-\frac{1}{12}} \circ \pw^{y_0}(s')\,ds'
\leq 
M^{\frac56} \log \tfrac{1}{\ell} .
\label{eq:Magic:Johnson:2}
\end{align}
By inserting the bounds \eqref{eq:Magic:Johnson:1} and \eqref{eq:Magic:Johnson:2} into \eqref{eq:q:est:new}, we arrive at
\begin{align*}
\eta^{\frac 16}(y) \abs{\check\nabla^2 W(y,s)} 
&\leq \ell^{-110} \eta^{\frac 16}(y_0) \abs{\check\nabla^2 W(y_0,s_0)} +  M^{\frac 56} \ell^{-110}  \log \tfrac{1}{\ell} \notag\\
&\leq \ell^{-110} \max\left \{ M^{\frac 56} , 2 M \eps^{\frac{1}{10}} \ell^2 \right\} +  M^{\frac 56} \ell^{-112}   
\end{align*}
for $\ell \in (0,1/100]$,
where we have also appealed to our initial datum assumption \eqref{eq:W:gamma=2}  when $s_0 = - \log \eps$, and to \eqref{eq:bootstrap:Wtilde:near:0} when $s> -\log \eps$. Since by \eqref{eq:ell:choice} we have $\ell = (\log M)^{-5} $ we have that 
\begin{align}
\ell^{-112} \leq \tfrac{1}{10} M^{\frac 16} \, ,
\label{eq:kingfisher:2}
\end{align}
by taking $M$ to be sufficiently large, and so we obtain an improvement over the $\check \nabla^2 W$ bootstrap assumption in \eqref{eq:W_decay}.

To conclude, we consider the cases when $\abs{\gamma}=2$, with $\gamma_1\geq 1$. In this case, by appealing to \eqref{eq:Magic:Johnson} and \eqref{eq:Magic:Johnson:1}, we obtain that 
\begin{align}
\exp\left( \int_{s'}^s \left( 3\mu - D_{\RSZ}\circ\pw^{y_0}(s'') \right) ds'' \right)\leq \ell^{-120} e^{\frac{s'-s}{2}} 
\label{eq:Magic:Johnson:3}
\end{align}
for any $s> s' > s_0 \geq -\log \eps$. On the other hand, from \eqref{eq:forcing_W} we deduce that 
\begin{align}
\abs{F_q} \leq \eta^{\frac 13} \abs{F_W^{(\gamma)}}  \leq M^{\frac{\abs{\check \gamma}}{3} + \frac 16}
\,.
\label{eq:Magic:Johnson:4} 
\end{align}
Combining \eqref{eq:Magic:Johnson:3} and \eqref{eq:Magic:Johnson:4} with \eqref{eq:q:est:new}, for $\abs{\gamma}=2$ with $\gamma_1\geq 1$ we arrive at
\begin{align}
\eta^{\frac 13}(y) \abs{\p^\gamma W(y,s)} 
&\leq \ell^{-190} \eta^{\frac 13}(y_0) \abs{\p^\gamma W(y_0,s_0)} + M^{\frac{\abs{\check \gamma}}{3} + \frac 16} \ell^{-190}  \int_{s_0}^s e^{\frac{s'-s}{2}} ds' \notag\\
&\leq \ell^{-190} \max\left \{ M^{\frac 16}  , 2 M \eps^{\frac{1}{10}} \ell^2 \right\} + 2 M^{\frac{\abs{\check \gamma}}{3} + \frac 16} \ell^{-190}  
\label{D2Wfinal}
\end{align}
by appealing to our assumptions on the initial datum assumption~\eqref{eq:W:gamma=2:p1}  if $s_0 = -\log \eps$, and to \eqref{eq:bootstrap:Wtilde:near:0} when $s> -\log \eps$.
Since by \eqref{eq:ell:choice} we have $\ell = (\log M)^{-5}$, for  $M$ sufficiently large the bound
\begin{align}
\label{eq:kingfisher:3}
\ell^{-190}  \leq \tfrac{1}{10} M^{\frac 16}
\end{align}
holds, and we obtain an improvement over the $\p^\gamma W$ bootstrap assumption in \eqref{eq:W_decay}.

\subsection{Estimate for $W(y,s)$ for $ \abs{y}\geq \LLL$}
The bounds in this section are similar to those in Section~\ref{sec:tilde:W:middle}. We use $\mu = -\frac 16$ and $\RSZ =W$, so that $q = \eta^{-\frac 16} \tilde W$. From \eqref{eq:euler:ss:a}, we obtain that $3 \mu - D_{\RSZ} - 3\mu \eta^{-1}$  equals to $\frac 12 \eta^{-1}$, while the forcing term $F_q$ equals to $\eta^{-\frac 16} ( F_W - e^{-\frac s2} \beta_\tau \dot \kappa)$. In order to apply \eqref{eq:q:est:new:new}
similarly to \eqref{eq:Deta:est:new} we use  Lemma~\ref{lem:escape} to estimate
\begin{align*}
\int_{s_0}^{s}  \tfrac 12 \eta^{-1}\circ \pw^{y_0}(s') \,ds' 
\leq  \int_{s_0}^{\infty}  \left(1 + \LLL^2 e^{\frac 25 (s'-s_0)}\right)^{-1}   \,ds' 
\leq  \LLL^{-\frac 23} = \eps^{\frac{1}{16}}
\end{align*}
while using \eqref{eq:F_WZ_deriv_est} and \eqref{eq:acceleration:bound} we derive
\begin{align*}
 \int_{s_0}^s \abs{F_q \circ \pw^{y_0}}(s') ds' \les \int_{s_0}^s e^{-\frac{s'}{2}} \les \eps^{\frac 12}.
\end{align*}
Inserting the above two estimates into \eqref{eq:q:est:new:new}, we obtain
\begin{align*}
\abs{\eta^{-\frac 16}   W\circ \pw^{y_0}(s)} \leq   e^{2 \eps^{\frac{1}{16}}} \left( \abs{q(y_0)}  + \eps^{\frac 13} \right)\,.
\end{align*}
In the case $s_0> -\log \eps$, we have $\abs{y_0} = \LLL$, and so from \eqref{eq:bootstrap:Wtilde} and the first inequality in \eqref{eq:bar:W:properties} we have that $\abs{q(y_0)} \leq 1 + \eps^{\frac{1}{11}}$.
On the other hand, when $s_0 = -\log \eps$ we use the  initial data assumption \eqref{eq:rio:de:caca:1}, so that $\abs{q(y_0)} \leq 1 + \eps^{\frac{1}{11}}$. In summary, from the above bound we deduce that for any $\abs{y}\geq \LLL$ we have
\begin{align} 
\abs{\eta^{-\frac 16}   W(y,s)} \leq e^{2 \eps^{\frac{1}{16}}} \left( 1 + \eps^{\frac{1}{11}} + \eps^{\frac 13} \right) \leq 1 + \eps^{\frac{1}{19}}
\label{Wlargey-final}
\end{align} 
for $\eps$ sufficiently small, 
which improves the bootstrap bound in the first line of \eqref{eq:W_decay}.

\subsection{Estimate for $\p_1 W(y,s)$ for $ \abs{y}\geq \LLL$}
In order to close the bootstrap for the second bound in \eqref{eq:W_decay}, we proceed  similarly to Section~\ref{sec:p1:tilde:W:middle}. Letting $q = \eta^{\frac 13} \p_1 W$, from the evolution equation~\eqref{eq:grad:W:a} we deduce that the damping term at the exponential in \eqref{eq:q:est:new:new} obeys $3 \mu - D_{\RSZ} - 3\mu \eta^{-1} \leq - \beta_\tau \Jcal \p_1 W$, while the forcing term $F_q$ equals to $\eta^{\frac 13}  F_W^{(1,0,0)}$. Using the $\p_1 W$ bound in \eqref{eq:W_decay} for $\abs{y}\geq \LLL$,  and Lemma~\ref{lem:escape} with $\abs{y_0} \geq \LLL$, similarly to \eqref{eq:Deta:est:new} we obtain that 
\[
\int_{s_0}^s \left( 3\mu - D_{\RSZ} - 3\mu\eta^{-1}  \right) \circ\pw^{y_0}(s') ds'
\leq 3 \int_{s_0}^s \eta^{-\frac 13} \circ\pw^{y_0}(s') ds'
\leq \eps^{\frac{1}{16}}\,.
\]
On the other hand, from the second bound in \eqref{eq:forcing_W} and the fact that $k\geq 18$ we similarly deduce that 
\[
\int_{s_0}^s \abs{F_q \circ\pw^{y_0}(s')}  ds' \les \eps^{\frac 18} \int_{s_0}^s \eta^{\frac 13 - \frac 12 + \frac{3}{2k-5}} \circ\pw^{y_0}(s') ds'
\les 
\eps^{\frac 18} \int_{s_0}^s \eta^{- \frac{1}{15}} \circ\pw^{y_0}(s') ds'
\les \eps^{\frac 18}
\]
since $\abs{y_0} \geq \LLL$. Combining the above two estimates with \eqref{eq:q:est:new:new} we deduce that
\begin{align*}
\abs{\eta^{\frac 13}   \p_1W\circ \pw^{y_0}(s)} \leq   e^{2 \eps^{\frac{1}{16}}} \left( \abs{q(y_0)}  + \eps^{\frac 17} \right)\,.
\end{align*}
When $s_0>-\log \eps$ we have $\abs{y_0} = \LLL$ and   $q(y_0)$ is may be estimated using the second estimate in \eqref{eq:bar:W:properties}, the fact that $\tilde \eta^{-\frac 13} \leq \eta^{-\frac 13}$, and the bootstrap assumption \eqref{eq:bootstrap:Wtilde1} as $\abs{q(y_0)} \leq \eta^{\frac 13} \abs{\p_1 \bar W} + \eta^{\frac 13} \abs{\p_1 \tilde W} \leq 1 + \eps^{\frac{1}{12}}$. On the other hand, when $s_0=-\log \eps$ we have $\abs{y_0} > \LLL$ and from the initial datum assumption \eqref{eq:rio:de:caca:2} we also deduce $\abs{q_0(y_0)} \leq 1+ \eps^{\frac{1}{12}}$. Combining these bounds with the above estimate along trajectories, we deduce that
\begin{align}
\abs{\eta^{\frac 13}   \p_1W(y,s)} \leq   e^{2 \eps^{\frac{1}{16}}} \left( 1 + \eps^{\frac{1}{12}} + \eps^{\frac 17} \right) \leq \tfrac 32 
\label{p1Wlargey-final}
\end{align} 
for all $\abs{y} \geq \LLL$ and $s\geq -\log \eps$, thereby clsoing the bootstrap bound on the second line of \eqref{eq:W_decay}, in this $y$-region.

\subsection{Estimate for $\check \nabla W(y,s)$ for $ \abs{y}\geq \LLL$}
Closing the third bootstrap in \eqref{eq:W_decay}, for $\abs{y} \geq \LLL$, is done similarly to Section~\ref{sec:pcheck:tilde:W:middle}. In this region we have that $\mu = 0$ and $q = \check \nabla W$. From \eqref{eq:grad:W:b} and \eqref{eq:grad:W:c} we deduce that that damping term is given by 
$3 \mu - D_{\RSZ} - 3\mu \eta^{-1} = - \beta_\tau \Jcal \p_1 W$ so that we may use the same estimate for it as in the previous subsection. For the forcing term we appeal to the fifth case in \eqref{eq:forcing_W} which bounds $\abs{F_q}$ from above by $M^2 \eps^{\frac 13} \eta^{-\frac 13}$, so that
\[
\int_{s_0}^s \abs{F_q \circ \pw^{y_0}(s')} ds' \leq  \eps^{\frac 14} 
\]
for $\abs{y_0} \geq \LLL$. We deduce from \eqref{eq:q:est:new:new} that 
\begin{align*}
\abs{\check \nabla W\circ \pw^{y_0}(s)} \leq   e^{2 \eps^{\frac{1}{16}}} \left( \abs{\check \nabla W (y_0)}  + \eps^{\frac 14} \right)\,.
\end{align*}
For $s_0>-\log \eps$ we combine the third bound in \eqref{eq:bar:W:properties} with \eqref{eq:bootstrap:Wtilde2}, while for $s_0 = -\log \eps$ we appeal to the initial datum assumption \eqref{eq:rio:de:caca:3} to deduce that $\abs{\check \nabla W (y_0)}  \leq \frac 34$.
We deduce that 
\begin{align}
\abs{\check \nabla W (y,s)} \leq   e^{2 \eps^{\frac{1}{16}}} \left( \tfrac 34 + \eps^{\frac 14} \right) \leq \tfrac 56
\label{p2Wlargey-final}
\end{align}
holds for all $\abs{y} \geq \LLL$ and all $s\geq -\log \eps$, which closes the bootstrap from the third line of \eqref{eq:W_decay}.

\section{$\dot{H}^k$ bounds}\label{sec:energy}
\begin{definition}[Modified $\dot H^k$-norm]
For $k \geq 18$ we introduce the semi-norm
\begin{align}
E_k^2(s) = E_k^2[U,S](s) :=
\sum_{|\gamma|=k} \lambda^\modckg  \left( \norm{\p^\gamma U(\cdot,s)}_{L^2}^2 +\norm{\p^\gamma S(\cdot,s)}_{L^2}^2 \right)
\label{eq:Ek:def}
\end{align}
where $ \lambda = \lambda(k) \in (0,1)$ is to be made precise below (cf.~Lemma~\ref{lem:forcing:1}). 
\end{definition} 
Clearly, $E_k^2$ is equivalent to the homogenous Sobolev norm $\dot{H}^k$, and we have the inequalities
\begin{align}
 \lambda^k \left( \snorm{ U}_{\dot H^k}^2 + \snorm{ S}_{\dot H^k}^2\right) \le E_k^2 \le  \snorm{ U}_{\dot H^k}^2 + \snorm{ S}_{\dot H^k}^2 \,.
 \label{norm_compare}
 \end{align} 
 
 \subsection{Higher-order derivatives for the $(U,S)$-system}
In order to estimate $E_k(s)$ we need the differentiated form of the $(U,S)$-system \eqref{US-short}. For this purpose, fix $\gamma \in {\mathbb N}_0^3$ with $\abs{\gamma} = k$, and apply $\p^\gamma$ to \eqref{US-short}, to obtain
\begin{subequations} 
\label{US-L2}
\begin{align}
&\p_s (\gui) 
- 2 \beta_1 \beta_\tau e^{-s} \dot Q_{ij} ( \guj) 
+ ( \mathcal{V} _U \cdot \nabla ) \gui + \mathcal{D}_\gamma \gui  
+  \beta_\tau   (\beta_3  + \beta_3  \gamma_1) \Jcal \Ncal_i  \p_1 W \gs  \notag \\
& \qquad \qquad
+ 2 \beta_\tau \beta_3 S \left( \Jcal \Ncal_i e^{\frac{s}{2}}   \p_1 (\gs) + e^{-\frac{s}{2}} \delta^{i\nu}  \p_\nu ( \gs) \right)
= \mathcal{F} _{U_i}^{(\gamma)} \,, \label{US-L2-U}\\
&\p_s (\gs)  
+ ( \mathcal{V} _U \cdot \nabla ) \gs 
+ \mathcal{D}_\gamma \gs  
+   \beta_\tau (\beta_1 + \beta_3   \gamma_1)  \Jcal \Ncal_j \guj \p_1 W 
\notag\\
& \qquad \qquad
+ 2 \beta_\tau \beta_3 S \left( e^{\frac{s}{2}} \Jcal \Ncal_j \p_1( \guj) + e^{-\frac{s}{2}} \p_\nu( \p^\gamma U_\nu)\right)
= \mathcal{F} _{S}^{(\gamma)} \,, \label{US-L2-S}
\end{align} 
\end{subequations} 
where the damping function $ \mathcal{D} _\gamma$ is defined as 
\begin{align}
\label{Dgamma}
\mathcal{D} _\gamma =  \gamma_1 ( 1 + \p_1 g_U) 
 + \tfrac{1}{2} \abs{ \gamma}   \,,
\end{align} 
the transport velocity $ \mathcal{V} _U$ is given in \eqref{V_U}, and since $\abs{\gamma} \ge 3$ the forcing functions in \eqref{US-L2} are given by 
\begin{subequations} 
\label{U-gamma-forcing}
\begin{align} 
\mathcal{F} _{ U_i}^{(\gamma)}
&= F_{U_i}^{(\gamma,U)} + F_{U_i}^{(\gamma-1,U)} + F_{U_i}^{(\gamma,S)} + F_{U_i}^{(\gamma-1,S)}    \,, \label{FU-sum}\\
F_{U_i}^{(\gamma,U)} 
&= -2 \beta_\tau \beta_1 \left( e^{\frac{s}{2}}  \Jcal \Ncal_j \guj \p_1 U_i  + e^{-\frac{s}{2}} \p^\gamma U_\nu \p_\nu U_i\right)  \notag\\
& \qquad
-   \sum_{\substack{ |\beta| = |\gamma|-1 \\ \beta \leq \gamma, \beta_1 = \gamma_1}} {\gamma \choose \beta} 
\p^{\gamma -\beta} g_U \p^\beta \p_1 U_i
-  \sum_{\substack{ |\beta| = |\gamma|-1 \\ \beta \leq \gamma}} {\gamma \choose \beta} 
\p^{\gamma -\beta} h_U^\nu \p^\beta \p_\nu U_i \,, \notag \\
&=: F_{U_i,(1)}^{(\gamma,U)} + F_{U_i,(2)}^{(\gamma,U)} + F_{U_i,(3)}^{(\gamma,U)}
\label{FU-gamma-U}  \\
F_{U_i}^{(\gamma-1,U)} 
&=  -   \sum_{\substack{1 \le |\beta| \le |\gamma|-2 \\ \beta\leq\gamma}} {\gamma \choose \beta} 
\left( \p^{\gamma -\beta} g_U \p^\beta \p_1 U_i + \p^{ \alpha - \beta } h_U^\nu \p^\beta \p_\nu U_i 
 \right) - 2 \beta_\tau \beta_1 e^{\frac{s}{2}} \comm{\p^\gamma}{\Jcal \Ncal_j} U_j    \p_1 U_i \notag\\
 &=: F_{U_i,(1)}^{(\gamma-1,U)} +F_{U_i,(2)}^{(\gamma-1,U)}  \,, \label{FU-gamma-1-U}\\
F_{U_i}^{(\gamma,S)} 
&=  -2 \beta_\tau \beta_3  e^{-\frac{s}{2}} \delta^{i\nu} \p_\nu S  \gs  
- 2 \beta_\tau \beta_3  \sum_{\substack{ |\beta| = |\gamma|-1 \\ \beta\leq \gamma}} {\gamma \choose \beta}  e^{-\frac{s}{2}} \delta^{i\nu} \p^{\gamma -\beta} S \p^\beta \p_\nu S   \notag \\
& \qquad
+ \beta_\tau \beta_3  e^{\frac{s}{2}} \Jcal \Ncal_i (1 +  \gamma_1) \p_1 Z \gs   
- 2 \beta_\tau \beta_3  \sum_{\substack{ |\beta| = |\gamma|-1 \\ \beta\leq \gamma,\beta_1 = \gamma_1}} {\gamma \choose \beta} e^{\frac{s}{2}}  \p^{\gamma -\beta}(S \Jcal \Ncal_i) \p^\beta \p_1 S 
\notag\\
&=: F_{U_i(1)}^{(\gamma,S)} +F_{U_i(2)}^{(\gamma,S)} +F_{U_i(3)}^{(\gamma,S)} +F_{U_i(4)}^{(\gamma,S)} 
 \,,  \label{FU-gamma-S}\\
F_{U_i}^{(\gamma-1,S)} 
&= -2 \beta_\tau \beta_3   \sum_{\substack{ 1 \le |\beta| \le  |\gamma|-2 \\ \beta\leq \gamma}} {\gamma \choose \beta} 
 \left( e^{\frac{s}{2}}  \p^{\gamma -\beta}(S \Jcal \Ncal_i) \p^\beta \p_1 S + e^{-\frac{s}{2}} \delta^{i\nu} \p^{\gamma -\beta} S \p^\beta \p_\nu S\right) 
  \notag \\
& \qquad
 -2 \beta_\tau \beta_3 e^{\frac{s}{2}} \comm{ \p^\gamma}{\Jcal \Ncal_i}S \, \p_1 S
\, ,  \label{FU-gamma-1-S}
\end{align} 
\end{subequations} 
and
\begin{subequations} 
\label{S-gamma-forcing}
\begin{align} 
\mathcal{F} _{ S}^{(\gamma)}
&= F_{S}^{(\gamma,S)} + F_{S}^{(\gamma,U)} + F_{S}^{(\gamma-1,S)}+ F_{S}^{(\gamma-1,U)}   \,,\\
F_{S}^{(\gamma,S)} 
&= - 2 \beta_\tau \beta_3 \left(e^{\frac s2} \p^\gamma S \Jcal \Ncal_j \p_1 U_j + e^{-\frac s2} \p^\gamma S \p_\nu U_\nu \right) \notag\\
&\qquad  - \sum_{\substack{ |\beta| = \abs{\gamma}-1 \\ \beta\leq\gamma, \beta_1 = \gamma_1}} {\gamma \choose \beta}  \p^{\gamma -\beta} g_U \p^\beta \p_1 S
-  \sum_{\substack{ |\beta| = |\gamma|-1 \\ \beta\leq\gamma}} {\gamma \choose \beta} 
\p^{\gamma -\beta} h_U^\nu \p^\beta \p_\nu S \,,
 \\
F_{S}^{(\gamma,U)} &=   
 - 2 \beta_\tau \beta_1 e^{-\frac{s}{2}} \p_\nu S \p^\gamma U_\nu   - 2 \beta_\tau \beta_3   \sum_{\substack{ |\beta| = |\gamma|-1 \\ \beta\leq\gamma}} 
{\gamma \choose \beta}   e^{-\frac{s}{2}}  \p^{\gamma -\beta} S \p^\beta \p_\nu U^\nu \notag \\
&  \qquad
+  \beta_\tau (\beta_1 + \beta_3   \gamma_1) e^{\frac{s}{2}} \Jcal \Ncal_j \p_1 Z \guj    - 2 \beta_\tau \beta_3  \sum_{\substack{ |\beta| = |\gamma|-1 \\ \beta\leq\gamma, \beta_1=\gamma_1}} 
{\gamma \choose \beta}   e^{\frac s2} \p^{\gamma -\beta} (S \Jcal \Ncal_j ) \p^\beta \p_1 U_j    \,,\\
F_{S}^{(\gamma-1,S)} &=  
-  \sum_{\substack{1\le |\beta| \le |\gamma|-2 \\ \beta\leq\gamma}} 
{\gamma \choose \beta} \left(  \p^{\gamma -\beta} g_U \p^\beta \p_1 S+  \p^{\gamma-\beta}h_U^\nu  \p^\beta \p_\nu S \right) - 2 \beta_\tau \beta_3 e^{\frac{s}{2}} \comm{ \p^\gamma}{\Jcal \nn_j} S \p_1 U_j   \notag \\
& \qquad 
 - 2 \beta_\tau \beta_3  \sum_{\substack{1\le |\beta| \le |\gamma|-2 \\ \beta\leq\gamma}} 
{\gamma \choose \beta} \left( e^{\frac{s}{2}} \p^{\gamma-\beta}(S \Jcal \Ncal_j ) \p^\beta \p_1 U_j  +  e^{-\frac{s}{2}}   \p^{\gamma -\beta} S \p^\beta \p_\nu U^\nu 
\right) \,,
 \\
 F_{S}^{(\gamma-1,U)} &= - 2 \beta_\tau \beta_1 e^{\frac{s}{2}} \comm{\p^\gamma}{\Jcal \nn_j} U_j \p_1 S 
     \,.
\end{align} 
\end{subequations} 
In \eqref{U-gamma-forcing} and \eqref{S-gamma-forcing} we have used the notation $\comm{a}{b}$ to denote the commutator $a b- b a$. Here we have also appealed to the fact that $f$ and $V$ are quadratic functions of $\check y$, whereas $\Jcal \Ncal$ is an affine function of $\check y$; therefore $\p^\gamma$ annihilates these terms.

\subsection{Forcing estimates}
In order to analyze \eqref{US-L2} we first estimate the forcing terms defined in \eqref{U-gamma-forcing} and \eqref{S-gamma-forcing}.
We shall sometimes denote a partial derivative $\p^\gamma$ with $\abs{\gamma}=k$ as $D^k$,  when there is no need to keep track of the binomial coefficients from the product rule.

\begin{lemma}\label{lem:forcing:1} Consider the forcing functions $\mathcal{F} _{ U^i}^{(\gamma)} $ and $\mathcal{F} _{S}^{(\gamma)}$ defined in
\eqref{U-gamma-forcing} and \eqref{S-gamma-forcing}, respectively.
Let $k \ge 18$, fix $0 < \delta\le \tfrac{1}{32}$, and define the parameter $\lambda$ from \eqref{eq:Ek:def} as $\lambda= \tfrac{\delta ^2}{12k^2} $. Then, for $ \eps $ taken sufficiently small we have
\begin{subequations} 
\begin{align} 
2 \sum_{\abs{\gamma}=k}
 \lambda^\modckg \int_{ \mathbb{R}^3  } \abs{ \mathcal{F} _{ U^i}^{(\gamma)} \,  \gui } & 
\le   (2+ 8\delta )  E_k^2  + e^{-s} M^{4k-1}\,,   \label{Hk_est_FU}\\
2 \sum_{\abs{\gamma}=k}
 \lambda^\modckg \int_{ \mathbb{R}^3  }\abs{ \mathcal{F} _{S}^{(\gamma)}\, \gs} &
 \le  (2+ 8\delta )  E_k^2  +   e^{-s} M^{4k-1} \,.  \label{Hk_est_FS} 
\end{align} 
\end{subequations} 
\end{lemma}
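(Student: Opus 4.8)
The strategy is to estimate each of the many forcing terms in \eqref{U-gamma-forcing} and \eqref{S-gamma-forcing} by pairing it against $\p^\gamma U^i$, respectively $\p^\gamma S$, integrating over $\RR^3$, and summing over $\abs{\gamma}=k$ with the weights $\lambda^{\modckg}$. The terms fall into three qualitatively different groups, and I would treat them in the following order. First, the \emph{top-order linear-in-$(U,S)$ transport-type terms} $F_{U_i,(2)}^{(\gamma,U)}$, $F_{U_i,(3)}^{(\gamma,U)}$, $F_{U_i(4)}^{(\gamma,S)}$ and their $S$-analogues: here one derivative lands on a coefficient ($g_U$, $h_U^\nu$, or $S\Jcal\Ncal_i$) and $k$ derivatives land on $U$ or $S$. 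Using the $\abs{\gamma-\beta}=1$ first-derivative bounds for $g_U$, $h_U$ from Lemma~\ref{lemma_g} (namely \eqref{e:G_ZA_estimates}, \eqref{e:h_estimates}) together with the $L^\infty$ bounds for $\p S$, $\Jcal$, $\Ncal$ and the $\abs{\gamma}=k$ integrability of $U$, $S$, these produce a contribution bounded by $C k \lambda^{1/2} E_k^2$ plus lower-order terms; this is precisely where the choice $\lambda=\tfrac{\delta^2}{12k^2}$ is forced, so that $Ck\lambda^{1/2}\le \delta$ (the factor $\lambda^{1/2}$ appearing because $\abs{\check\gamma-\check\beta}$ can drop by one when the derivative hits a $\check y$-direction). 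The leading constant $2$ in \eqref{Hk_est_FU}--\eqref{Hk_est_FS} comes from the genuinely top-order pieces $F_{U_i,(1)}^{(\gamma,U)}$, $F_{U_i(1)}^{(\gamma,S)}$, $F_{U_i(3)}^{(\gamma,S)}$ and the corresponding $S$ forcing, where no derivative is spent on a coefficient; these are bounded by $2(1+C\eps)E_k^2$ using $\abs{\p_1 U}, \abs{\p_\nu U}, \abs{\p_1 Z}\lesssim 1$ and $\abs{\beta_\tau\beta_1 e^{\pm s/2}\Jcal\Ncal_j}\lesssim 1$ after exploiting the $e^{\pm s/2}$ prefactors against the support bound \eqref{e:space_time_conv} (so the naive exponential growth is tamed), with the $Cd$-size slack absorbed into the $8\delta$.

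Second, the \emph{intermediate-order terms} $F_{U_i}^{(\gamma-1,U)}$, $F_{U_i}^{(\gamma-1,S)}$, and their $S$-counterparts, in which $2\le\abs{\gamma-\beta}\le k-1$ derivatives land on a coefficient and the remaining derivatives on $(U,S)$. Here I would use the Moser/Gagliardo--Nirenberg product estimate together with the higher-order bounds of Lemma~\ref{eq:higher:order:ests}, Lemma~\ref{lem:US_est}, Lemma~\ref{lemma_g} (in particular the lossy bounds \eqref{eq:GW:lossy}, \eqref{eq:h:lossy}), and the $\dot H^k$ control $\snorm{W}_{\dot H^k}, \snorm{U}_{\dot H^k},\snorm{S}_{\dot H^k}\lesssim M^{2k}$ from Proposition~\ref{cor:L2}. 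After interpolation, each such term carries a genuinely negative power $e^{-\delta_0 s}$ for some $\delta_0>0$ (coming either from the explicit $e^{-s}$, $e^{-s/2}$ prefactors or from the decay of the intermediate derivatives of $Z$, $A$, $h_U$), so its contribution to the sum is $\lesssim e^{-s/2}M^{4k-1}$ after Cauchy--Schwarz against $\lambda^{\modckg/2}\norm{\p^\gamma U}_{L^2}\le E_k$; I would then absorb the residual $E_k$ into the left side using Young's inequality, leaving a clean $e^{-s}M^{4k-1}$. The commutator terms $\comm{\p^\gamma}{\Jcal\Ncal_j}U_j\,\p_1 U_i$, etc., are handled the same way: since $\Jcal\Ncal$ is affine in $\check y$, the commutator has order at most $k-1$ in $U$ and a single $\check y$-independent-plus-$\OO(\eps)$ coefficient, so it is bounded by $C\eps E_k^2$ plus a lower-order piece, and the $C\eps E_k^2$ is absorbed into $8\delta E_k^2$.

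Third, the \emph{purely lower-order forcing} involving $\p_\nu S\,\p^\gamma S$, $\p_\nu S\,\p^\gamma U_\nu$, $\p_1 Z\,\p^\gamma S$, etc.\ (the first terms of $F_{U_i}^{(\gamma,S)}$, $F_S^{(\gamma,U)}$) is bounded directly by $C E_k^2$ times the small factor coming from $\abs{\p S}\lesssim e^{-s/2}$ and $\abs{\p_1 Z}\lesssim e^{-3s/2}$ and $e^{-s/2}$ prefactors, hence $\lesssim e^{-s}M^{4k-1}$, again up to an absorbed $\delta E_k^2$. Summing the three groups yields the claimed $(2+8\delta)E_k^2 + e^{-s}M^{4k-1}$ bound for both \eqref{Hk_est_FU} and \eqref{Hk_est_FS}, the $S$-case being structurally identical to the $U$-case with the roles of $\beta_1,\beta_3$ permuted.

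\textbf{Main obstacle.} The technical heart of the argument is the bookkeeping for the intermediate-order terms: one must simultaneously track (i) which weighted $L^2$ norm ($\p^\gamma U$, $\p^\gamma S$, or $\p^\gamma W$) is being used, paying attention to the $\sqrt\eps$-vs-unweighted discrepancy in Proposition~\ref{cor:L2} between $W$ and $(Z,A)$; (ii) the $L^\infty$ norms of all intermediate derivatives of the coefficients, which have different decay rates depending on whether $\gamma_1$ is zero or positive (cf.\ the case distinctions in Lemmas~\ref{eq:higher:order:ests}, \ref{lem:US_est}, \ref{lemma_g}); and (iii) the $e^{\pm s/2}$ prefactors, which must be converted to spatial decay via \eqref{e:space_time_conv} whenever they would otherwise cause growth. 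The delicate point is ensuring that \emph{every} non-top-order term genuinely gains an integrable-in-$s$ factor so that, after the time integral in the subsequent energy estimate (done in the next section), these contribute only the harmless $M^{4k-1}$-type constant rather than feeding back into $E_k^2$; the choice $\lambda\sim\delta^2/k^2$ is what makes the single unavoidable top-order-coefficient-loss term small, and verifying $k\ge 18$ suffices for all the interpolation exponents (e.g.\ $\tfrac{2\abs{\gamma}+1}{2k-5}\le\tfrac16$) is the recurring sanity check.
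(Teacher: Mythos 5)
The overall skeleton of your argument is aligned with the paper's: you split the forcing according to how many derivatives land on the coefficient, identify $F_{U_i,(1)}^{(\gamma,U)}$ as the source of the factor $2$, and recognize that $F_{U_i,(2)}^{(\gamma,U)}$ is where the weight $\lambda \sim \delta^2/k^2$ earns its keep via the $\lambda^{\abs{\check\gamma}} \leftrightarrow \lambda^{\abs{\check\beta}}$ mismatch. However, your treatment of the intermediate-order terms $F_{U_i}^{(\gamma-1,U)}$, $F_{U_i}^{(\gamma-1,S)}$ contains gaps that, in my view, are fatal as written.

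The first problem is \emph{circularity}. You propose to use Lemma~\ref{eq:higher:order:ests}, the lossy bounds \eqref{eq:GW:lossy}--\eqref{eq:h:lossy}, and the quantitative bound $\norm{U}_{\dot H^k},\norm{S}_{\dot H^k}\lesssim M^{2k}$ from Proposition~\ref{cor:L2}. But Proposition~\ref{cor:L2} is \emph{deduced from} Proposition~\ref{prop:L2}, whose proof uses the very Lemma~\ref{lem:forcing:1} you are trying to prove, and Lemma~\ref{eq:higher:order:ests} and the lossy bounds \eqref{eq:GW:lossy}--\eqref{eq:h:lossy} are themselves consequences of Proposition~\ref{cor:L2}. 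The paper flags this explicitly in a footnote in the proof of Lemma~\ref{lem:forcing:1}: ``none of these bounds rely on Proposition~\ref{cor:L2}, which is proven in this section.'' A correct proof must control the intermediate terms using only the pointwise bootstraps \eqref{eq:W_decay}, \eqref{eq:Z_bootstrap}, \eqref{eq:A_bootstrap}, and the weighted quantity $E_k$ itself, handled by Young's inequality with $a+b<1$.

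The second, closely related gap is the missing decomposition $U_i = U\cdot\Ncal\,\Ncal_i + A_\nu\Tcal^\nu_i$ in the estimate of $F_{U_i,(1)}^{(\gamma-1,U)}$. Your plan would require a controlled $\norm{D^2 U}_{L^q}$, but the bootstrap does \emph{not} give a $\nabla^2 A$ estimate (only $\check\nabla^2 A$ and $\p_1 A$), and obtaining one would again invoke Lemma~\ref{eq:higher:order:ests}. The paper splits off $U\cdot\Ncal\,\Ncal_i$, for which $\norm{D^2(U\cdot\Ncal\,\Ncal)}_{L^q}\lesssim Me^{-s/2}$ follows from the $W,Z$ bootstraps, from the $A_\nu\Tcal^\nu_i$ piece, which is handled by the entirely separate interpolation between $\norm{\p_1 A\Tcal^\nu}_{L^\infty}\lesssim Me^{-3s/2}$ and $\norm{\p_1 A\Tcal^\nu}_{\dot H^{k-1}}$ (converted to $E_k$ via \eqref{norm_compare}), never touching $D^2 A$.

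Finally, your invocation of ``Moser/Gagliardo--Nirenberg'' conflates two very different tools. Plain Moser gives $\norm{\p^{\gamma-\beta}g_U\,\p^\beta\p_1 U_i}_{L^2}\lesssim\norm{g_U}_{\dot H^k}\norm{\p_1 U}_{L^\infty}+\norm{Dg_U}_{L^\infty}\norm{U}_{\dot H^k}$. The second term is $\approx \norm{U}_{\dot H^k}$ with \emph{no} small prefactor, since $\norm{Dg_U}_{L^\infty}\approx 1$, so this just gives $\lesssim E_k^2$ and destroys the estimate. What is needed is the tailored Lemma~\ref{lem:tailored:interpolation} with $L^q$, $q=\frac{6(2k-3)}{2k-1}<\infty$: there the exponents satisfy $\acal+\bcal = 1-\tfrac{1}{2k-4}<1$, so the $e^{+s/2}$ from $\norm{D^k g_U}_{L^2}$ and the $e^{-s/2}$ from $\norm{D^2(U\cdot\Ncal\,\Ncal)}_{L^q}$ do \emph{not} cancel, leaving a net factor $e^{(\acal+\bcal-1)s/2}$, and the residual power $E_k^{1+\acal+\bcal}$ with $1+\acal+\bcal<2$ can be absorbed by Young's inequality into $\delta E_k^2 + e^{-s}M^{4k-3}$. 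Your assertion that ``after interpolation, each such term carries a genuinely negative power $e^{-\delta_0 s}$'' is true for the paper's $L^q$ interpolation, but false for the $L^\infty/\dot H^k$ interpolation your proposal seems to describe.
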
 
\begin{proof}[Proof of Lemma~\ref{lem:forcing:1}]
We shall first prove \eqref{Hk_est_FU}, and to do so, we estimate each term in the  sum \eqref{FU-sum}.  
We first recall the decomposition of the forcing function $F_{U_i}^{(\gamma,U)}$ in \eqref{FU-gamma-U} as the sum 
$F_{U_i}^{(\gamma,U)} =F_{U_i,(1)}^{(\gamma,U)}  + F_{U_i,(2)}^{(\gamma,U)} + F_{U_i,(3)}^{(\gamma,U)}$, and we recall that by definition we have
\begin{align} 
U_i = U\cdot \Ncal \Ncal_i +  A_\nu \Tcal^\nu_i = \tfrac{1}{2} ( e^{-\frac{s}{2}} W + \kappa+ Z )\Ncal_i + A_\nu \Tcal^\nu_i \,.  \label{Udef00}
\end{align} 
From \eqref{e:bounds_on_garbage}, $\abs{\Jcal} \le 1+ \eps^ {\frac{3}{4}} $, and using \eqref{eq:beta:tau}
\begin{align} 
\beta_\tau \beta_1 \le (1+ \eps^{\frac 34})  \tfrac{1}{1+\alpha}  \le 1 \label{beta_bound}
\end{align} 
for $\eps$ taken sufficiently small.  Hence, for the first term in \eqref{FU-gamma-U}  we have that 
\begin{align}   
2 \sum_{\abs{\gamma}=k}
 \lambda^\modckg \int_{ \mathbb{R}^3  }\abs{ F _{U_i,(1)}^{(\gamma,U)}  \,  \gui }
&\le   4E_k^2 \left( (1+ \eps^ {\frac{3}{4}} ) e^{\frac{s}{2}} \snorm{\p_1 U}_{L^ \infty } + e^{-\frac{s}{2}} \|\check\nabla  U\|_{L^ \infty } \right)   \notag\\
&\le 2 E_k^2(1+ \eps^ {\frac{3}{4}} )   \Big( \snorm{\p_1 W}_{L^ \infty }  
+ e^{\frac{s}{2}} \snorm{\p_1 Z}_{L^ \infty } + 2e^{\frac{s}{2}} \snorm{\p_1 A}_{L^ \infty }
 +  e^{-s} \snorm{\check\nabla   W}_{L^ \infty }  \notag\\
 &\qquad \qquad  + e^{-\frac{s}{2}} \snorm{\check\nabla   Z}_{L^ \infty }+
  2e^{-\frac{s}{2}}\snorm{\check\nabla   A}_{L^ \infty } + e^{-s} \norm{Z}_{L^\infty} +  e^{-s} \norm{A}_{L^\infty}   \Big) \notag\\
& \le (2 + \eps^ {\frac{1}{2}})  E_k^2  \,, \label{FU1}
\end{align} 
where we have used \eqref{e:bounds_on_garbage} on the second inequality, and
 \eqref{eq:W_decay},  \eqref{eq:Z_bootstrap},  \eqref{eq:A_bootstrap} for the last inequality.

Next,  for the second term in \eqref{FU-gamma-U} we have
\begin{align*}   
2 \sum_{\abs{\gamma}=k}
 \lambda^\modckg \int_{ \mathbb{R}^3  }\abs{ F _{U_i,(2)}^{(\gamma,U)}  \,  \gui  }
&\le  \sum_{\abs{\gamma}=k}
 \int_{ \mathbb{R}^3  }    2\lambda^\modckg \abs{ \gui }\sum_{\substack{ |\beta| = |\gamma|-1 \\ \beta\leq\gamma, \beta_1 = \gamma_1}} {\gamma \choose \beta} 
\abs{\p^{\gamma -\beta} g_U} \ \abs{ \p^\beta \p_1 U^i } \\
&\le  \sum_{\abs{\gamma}=k}
  2 \abs{\ckg} \lambda^{\frac{\modckg}{2} + \frac 12} \snorm{ \gui } _{L^2} \snorm{\check\nabla g_U}_{L^ \infty } 
    \sum_{\substack{ |\beta| = |\gamma|-1 \\ \beta \leq \gamma, \beta_1 = \gamma_1}}  \lambda^{\frac{\abs{\check \beta}}{2}}
 \snorm{ \p^{\beta} \p_1 U } _{L^2} \,,
\end{align*} 
where we have used that $\modckg - \tfrac{1}{2} \abs{\check \beta} =  \tfrac{1}{2} ( \modckg +1)$. By Young's inequality, for $ \delta >0$,
\begin{align*}   
2 \sum_{\abs{\gamma}=k}
 \lambda^\modckg \int_{ \mathbb{R}^3  }\abs{ F _{U_i,(2)}^{(\gamma,U)}  \,  \gui  } 
 \le
 \sum_{\abs{\gamma}=k} \Bigg(
  \tfrac{4 \abs{\ckg}^2}{\delta}   \snorm{\check\nabla g_U}_{L^ \infty }^2  \lambda^{\modckg +1} \snorm{ \gui }^2 _{L^2} +
    \sum_{\substack{ |\beta| = |\gamma|-1 \\ \beta<\gamma, \beta_1 = \gamma_1}}  \delta  \lambda^{\abs{\check \beta}}
 \snorm{ \p^{\beta}\p_1 U }^2_{L^2} \Bigg)\,.
\end{align*} 
Note that for each $\gamma$ with $\abs{\gamma}=k$, and for each $\beta$ with $\abs{\beta} = k-1$ and $\beta_1 = \gamma_1$, the term $\lambda^{\abs{\check \beta}}
 \snorm{ \p^{\beta+e_1} U }^2_{L^2} $ defines a different summand  of $E_k^2$. Moreover, from   the definition \eqref{eq:gA}, the bounds \eqref{eq:W_decay} and \eqref{e:G_ZA_estimates}  we obtain that
$\snorm{\check\nabla g_U}_{L^ \infty }\le 1$.\footnote{While here for simplicity we appeal to second bound in \eqref{e:G_ZA_estimates}, we note that this bound just directly follows from the definitions \eqref{eq:gA} and \eqref{def_V}, and the bootstrap assumptions \eqref{eq:speed:bound}, \eqref{eq:acceleration:bound}, \eqref{e:space_time_conv}, and \eqref{eq:Z_bootstrap}. In particular, none of these bounds rely on Proposition~\ref{cor:L2}, which is proven in this section. The same comment applies for the bound $\snorm{\check\nabla h_U}_{L^ \infty }\les \eps$.} Hence, 
\begin{align}   
2 \sum_{\abs{\gamma}=k}
 \lambda^\modckg \int_{ \mathbb{R}^3  }\abs{F_{U_i,(2)}^{(\gamma,U)}  \,  \gui  } 
 \le
(\lambda \tfrac{4 k^2}{\delta}  + \delta )E_k^2 \,. \label{FU2}
\end{align} 

Similarly, from \eqref{e:h_estimates} (or alternatively, the definition \eqref{eq:hA} and the bootstrap assumptions \eqref{mod-boot}--\eqref{eq:A_bootstrap}), we have $\snorm{ \nabla h_U}_{L^ \infty }\les \eps$; hence,  it immediately follows
that for $\eps$ taken  sufficiently small the contribution from the third term in \eqref{FU-gamma-U} is estimated as
\begin{align}   
2 \sum_{\abs{\gamma}=k}
 \lambda^\modckg \int_{ \mathbb{R}^3  }\abs{F _{U_i,(3)}^{(\gamma,U)}  \,  \gui  } 
 \le \eps^ {\frac{1}{2}}  E_k^2 \,. \label{FU3}
\end{align} 
Combining \eqref{FU1}--\eqref{FU3}, and using the definition of $\lambda$ in the statement of Lemma~\ref{lem:forcing:1}, we obtain
\begin{align} 
2 \sum_{\abs{\gamma}=k}
 \lambda^\modckg \int_{ \mathbb{R}^3  }\abs{ F_{U_i}^{(\gamma,U)}  \,  \gui  } 
 \le 2 \left(1+    \delta +  \eps^ {\frac{1}{2}} \right)  E_k^2 \, , \label{FU4}
\end{align} 
where $\delta$ is a small universal constant. We emphasize that our choice of $\lambda$ only enters the proof in the transition from \eqref{FU2} to \eqref{FU4}.

We now estimate the next forcing term $F_{U_i}^{(\gamma-1,U)}$ in \eqref{FU-gamma-1-U} which we have decomposed as $F_{U_i}^{(\gamma-1,U)} = F_{U_i,(1)}^{(\gamma-1,U)} +F_{U_i,(2)}^{(\gamma-1,U)}$.  Our goal is to split off the $A$ from the $W$ and $Z$ contributions to these terms, since the bootstrap assumption  for $A$ in \eqref{eq:A_bootstrap} does not include bounds on the full Hessian $\nabla^2 A$.  Using \eqref{Udef00}   we write   $F_{U_i,(1)}^{(\gamma-1,U)}$ as 
\begin{align} 
F_{U_i,(1)}^{(\gamma-1,U)}
&=  \mathcal{I}_1 + \mathcal{I}_2+ \mathcal{I} _3\,, 
\label{eq:basic:shitstorm}
\end{align}
where 
\begin{align*}
\mathcal{I} _1 
&=
 -   \sum_{\substack{1 \le |\beta| \le |\gamma|-2 \\ \beta \leq \gamma}} {\gamma \choose \beta} 
\p^{\gamma -\beta} g_U \p^\beta \p_1(U \cdot \Ncal \, \Ncal_i )
 \,, \ \ \ \\
 \mathcal{I} _2 
&= -   \sum_{\substack{1 \le |\beta| \le |\gamma|-2 \\ \beta\leq \gamma}} {\gamma \choose \beta} 
\p^{\gamma -\beta} g_U \p^\beta (\p_1 A_\nu \Tcal^\nu_i )
\,, \\
 \mathcal{I} _3
 &=
 -   \sum_{\substack{1 \le |\beta| \le |\gamma|-2 \\ \beta\leq \gamma}} {\gamma \choose \beta} 
 \p^{ \alpha - \beta } h_U^\nu \p^\beta \p_\nu U_i
 \,.
\end{align*} 
 
First, for the ${\mathcal I}_1$ term in \eqref{eq:basic:shitstorm}, by  Lemma \ref{lem:tailored:interpolation}  for $q=\frac{6(2k-3)}{2k-1}$, we have that
\begin{align} 
2 \sum_{\abs{\gamma}=k}
 \lambda^\modckg \int_{ \mathbb{R}^3  }\abs{ \mathcal{I} _1 \,  \gui  } 
 &
  \les \snorm{D^kg_U }_{L^2}^\acal        \snorm{D^k U}_{L^2}^\bcal 
   \snorm{ D^2 g_U}_{L^q}^{1- \acal } \snorm{ D^2 ( U\cdot \Ncal \Ncal)}_{L^q}^{1- \bcal }  \snorm{D^k U}_{L^2} \,.
   \label{decomp1}
 \end{align} 
where $ \acal $ and $ \bcal $ are given by \eqref{alpha_beta}, and they obey $\acal + \bcal = 1 - \frac{1}{2k-4}$. Note by \eqref{eq:gA} that $g_U$ does not include any $A$ term. Thus, using the bootstrap bounds \eqref{mod-boot}--\eqref{eq:Z_bootstrap}, or alternatively by appealing directly to \eqref{eq:W_decay}, \eqref{e:bounds_on_garbage} and the last bound in \eqref{e:G_ZA_estimates}, and the definition of $\XXX(s)$ in \eqref{eq:support} we 
deduce that 
\begin{align}
  \snorm{ D^2 g_U}_{L^q(\XXX(s))} \les M \snorm{\eta^{-\frac 16}}_{L^q(\XXX(s))}  + M e^{-\frac s2} \abs{\XXX(s)}^{\frac 1q} \les M 
  \label{eq:D2:gU}
\end{align}
since $q \in [\frac{11}{2},6)$ for $k\geq 18$.  Similarly, from the first four bounds in \eqref{eq:US_est} (bounds which do not rely on any $A$ estimates) and from \eqref{e:bounds_on_garbage} (which only uses \eqref{eq:speed:bound} and  \eqref{e:space_time_conv}), we deduce that 
\begin{align}
  \snorm{ D^2((U \cdot \Ncal) \Ncal)}_{L^q(\XXX(s))} \les M e^{-\frac s2} \snorm{\eta^{-\frac 16}}_{L^q(\XXX(s))}  + M e^{-s} \abs{\XXX(s)}^{\frac 1q} \les M  e^{-\frac s2}  \, .
  \label{eq:D2:UdotN}
\end{align}
Moreover, by \eqref{normal}, \eqref{eq:gA}, the fact that $D^k$ annihilates $\dot f$ and $\Jcal \Ncal \cdot V$, we have that 
$$D^kg_U 
= \beta_1 \beta_\tau e^{\frac s2} D^k \left( \Jcal (\kappa + e^{-\frac s2} W +  Z)  \right)    
= 2 \beta_1 \beta_\tau D^k \left( \Jcal  U\cdot \Ncal \right)    
= 2 \beta_\tau \beta_1 e^{\frac{s}{2}}D^k (U_1 - e^{-\frac{s}{2}} \phi_{\nu\gamma} y_\gamma U_\nu) \, ,
$$
so that from \eqref{eq:speed:bound} and \eqref{eq:support} we obtain
\begin{align}
  \snorm{ D^k g_U}_{L^2} \les e^{\frac s2} \norm{U}_{\dot{H}^k}  \, .
  \label{eq:Dk:gU}
\end{align}
By combining \eqref{eq:D2:gU}--\eqref{eq:Dk:gU} we  obtain that the right side of \eqref{decomp1} is bounded from above as
\begin{align*} 
 & \snorm{D^kg_U }_{L^2}^\acal  \snorm{D^k U}_{L^2}^\bcal
   \snorm{ D^2 g_U}_{L^q}^{1- \acal } \snorm{ D^2 (U \cdot \Ncal \Ncal)}_{L^q}^{1- \bcal }  \snorm{D^k U}_{L^2}\\
&\qquad
  \les (e^{\frac{  s}{2}} \snorm{U}_{\dot{H}^k})^\acal        \snorm{U}_{\dot{H}^k}^{ \bcal }
   M^{1- \acal } (M e^{-\frac s2})^{1- \bcal }    \snorm{U}_{\dot{H}^k} \\
 &\qquad
  \les M^{2-\acal - \bcal} e^{\frac{(\acal+\bcal-1) s}{2}} \snorm{U}_{\dot{H}^k}^{1+\acal +\bcal} \,.
\end{align*} 
Recalling from Lemma \ref{lem:tailored:interpolation}  that $1- \acal - \bcal =  \frac{1}{2k-4} \in (0,1)$, the and using the norm equivalence  \eqref{norm_compare}, by 
Young's inequality with a small parameter $ \delta >0$, we have that the left side of \eqref{decomp1} is bounded as
\begin{align} 
2 \sum_{\abs{\gamma}=k}
 \lambda^\modckg \int_{ \mathbb{R}^3  }\abs{ \mathcal{I} _1 \,  \gui  } 
&\le C_k M^{2-\acal - \bcal}
 e^{\frac{(\acal+\bcal-1) s}{2}} \lambda ^\frac{-k(1+\acal +\bcal )}{2}    E_k^{1+\acal +\bcal }     \notag\\
&\le  \delta  E_k^2  + e^{-s} M^{4k-3} \,.
 \label{decomp2}
\end{align} 
In the last inequality we have used that by definition $\lambda = \lambda(k,\delta)$, $\delta \in (0,\frac{1}{32}]$ is a fixed universal constant, and $C_k$ is a constant that only depends on $k$; thus, we may use a power of $M$ (which is taken to be sufficiently large) to absorb all the $k$ and $\delta$ dependent constants.

Next, we estimate the   $\mathcal{I} _2$ term in \eqref{eq:basic:shitstorm}.  First, we note that by \eqref{eq:special1} we have
\begin{align*} 
\norm{ \mathcal{I} _2}_{L^2} 
&\les \sum_{j=1}^{k-2} \norm{D^{k-1-j} Dg_U}_{L^ {\frac{2(k-1)}{k-1-j}} }  \norm{ D^j (\p_1 A_\nu \Tcal^\nu )}_{L^ {\frac{2(k-1)}{j}} } \\
& \les \sum_{j=1}^{k-2} \norm{g_U}_{\dot{H}^{k} }^{\frac{k-1-j}{k-1}}  \norm{D g_U}_{ L^ \infty }^{\frac{j}{k-1}}
\norm{ \p_1 A_\nu \Tcal^\nu }_{ \dot{H}^{k-1} }^{\frac{j}{k-1}} \norm{ \p_1 A_\nu \Tcal^\nu  }_{L^\infty}^{ \frac{k-1-j}{k-1}}  \, .
\end{align*} 
Then, by appealing to \eqref{eq:gA}, \eqref{eq:W_decay}, \eqref{eq:A_bootstrap}, \eqref{e:bounds_on_garbage}, \eqref{e:G_ZA_estimates}, \eqref{norm_compare}, \eqref{eq:Dk:gU}, and \eqref{eq:Moser:inequality}, we deduce  
\begin{align*} 
\norm{ \mathcal{I} _2}_{L^2} 
& \les \sum_{j=1}^{k-2} \left(e^{\frac s2} \norm{U}_{\dot{H}^{k} }\right)^{\frac{k-1-j}{k-1}} \left(\norm{A}_{\dot{H}^k} + M \eps e^{-\frac{k+2}{2}s} \right)^{\frac{j}{k-1}} \left(M e^{-\frac{3s}{2}}\right)^{ \frac{k-1-j}{k-1}} \notag\\
& \les \sum_{j=1}^{k-2} \left( \lambda^{-\frac k2} E_k \right)^{\frac{k-1-j}{k-1}} \left( \lambda^{-\frac k2} E_k + M \eps e^{-\frac{k+2}{2}s} \right)^{\frac{j}{k-1}} \left(M e^{-s}\right)^{ \frac{k-1-j}{k-1}} \notag\\
&\les (M \eps)^{\frac{1}{k-1}} \lambda^{-\frac k2} E_k + M e^{-s}
\end{align*} 
since $\norm{D g_U}_{L^\infty} \les 1$. By taking $\eps$ sufficiently small, in terms of $M$, $\lambda = \lambda(k,\delta)$, and $k$, we obtain from the above estimate that 
\begin{align} 
2 \sum_{\abs{\gamma}=k}
 \lambda^\modckg \int_{ \mathbb{R}^3  }\abs{ \mathcal{I} _2 \,  \gui  } 
 &
  \le \eps^ {\frac{1}{k}} E_k^2 +  e^{-s} 
   \label{decomp2.5}
 \end{align} 
 for all $s\geq -\log \eps$.
 
At last, we estimate the ${\mathcal I}_3$ term in \eqref{eq:basic:shitstorm}, which is estimated similarly to the ${\mathcal I}_2$ term as
\begin{align*}
\norm{{\mathcal I}_3}_{L^2}
&\les \sum_{j=1}^{k-2} \norm{h_U}_{\dot{H}^{k} }^{\frac{k-1-j}{k-1}}  \norm{D h_U}_{ L^ \infty }^{\frac{j}{k-1}}
\norm{ \p_\nu U_i}_{ \dot{H}^{k-1} }^{\frac{j}{k-1}} \norm{ \p_\nu U_i}_{L^\infty}^{ \frac{k-1-j}{k-1}}  \, .
\end{align*}
From \eqref{eq:hA}, \eqref{eq:W_decay},  \eqref{eq:Z_bootstrap}, \eqref{eq:A_bootstrap}, \eqref{e:bounds_on_garbage}, and the Moser inequality \eqref{eq:Moser:inequality}, we have  
\begin{align*}
\norm{ h_U}_{\dot H^k} 
\les e^{-\frac{s}{2}} \norm{\Ncal U\cdot \Ncal}_{\dot H^k} + \kappa e^{-\frac s2} \norm{A_\gamma \Tcal^\gamma}_{\dot{H}^k}  
\les M e^{-\frac{s}{2}} \norm{ U}_{\dot H^k} + M \eps e^{-\frac{k+1}{2} s} \,.
\end{align*}
On the other hand, by \eqref{e:h_estimates} we have $\norm{D h_U}_{L^\infty} \les e^{-s}$, while from  \eqref{eq:W_decay}, \eqref{eq:Z_bootstrap}, \eqref{eq:A_bootstrap},  and \eqref{Udef00} we obtain $\norm{\check\nabla  U}_{L^\infty} \les e^{-\frac s2}$. Combining the above three estimates, we deduce that 
\begin{align*}
\norm{{\mathcal I}_3}_{L^2}
&\les \sum_{j=1}^{k-2} \left( M e^{-\frac s2} \norm{U}_{\dot H^k} + e^{-2s} \right)^{\frac{k-1-j}{k-1}}  e^{-\frac{j}{k-1}s}
\norm{ U}_{ \dot{H}^{k} }^{\frac{j}{k-1}} e^{- \frac{k-1-j}{2(k-1)}s} 
\les Me^{-s} \norm{ U}_{ \dot{H}^{k} } + e^{-s}
\end{align*}
from which we deduce
\begin{align} 
2 \sum_{\abs{\gamma}=k}
 \lambda^\modckg \int_{ \mathbb{R}^3  }\abs{ \mathcal{I} _3 \,  \gui  } 
 &
  \le \eps^{\frac 12} E_k^2 +  e^{-s} 
   \label{decomp3}
 \end{align} 
upon taking $M$ to be sufficiently large in terms of $k$, and $\eps$ sufficiently large in terms of $M$.
Combining \eqref{decomp2}, \eqref{decomp2.5}, and \eqref{decomp3}, we have thus shown that
\begin{align} 
2 \sum_{\abs{\gamma}=k}
 \lambda^\modckg \int_{ \mathbb{R}^3  }\abs{ F_{U_i,(1)}^{(\gamma-1,U)}  \,  \gui  } 
 \le ( \delta + \eps^{\frac 1k} + \eps^ {\frac{1}{2}} ) E_k^2 + M^{4k-2} e^{-s}  \,.
\label{FU5}
\end{align}

To estimate the integral with the forcing function $ F_{U_i,(2)}^{(\gamma-1,U)} $ defined in \eqref{FU-gamma-1-U}, we first note that due to the Leibniz rule and the fact that $D^2 (\Jcal \Ncal) = 0$, we have
$$  \comm{\p^\gamma}{\Jcal \Ncal_j} U_j
= \sum_{ \abs{\beta} = k-1, \beta\leq \gamma} {\gamma \choose \beta}    \p^{\gamma-\beta} (\Jcal \Ncal_j) \p^\beta U_j  
$$
for $ \modckg =k$.   Hence, by \eqref{e:bounds_on_garbage} we obtain
\begin{align*} 
2 \sum_{\abs{\gamma}=k}
 \lambda^\modckg \int_{ \mathbb{R}^3  }\abs{ F_{U_i,(2)}^{(\gamma-1,U)}  \,  \gui  } 
 \les \eps \snorm{\p_1 U}_{L^ \infty } \snorm{D^{k-1} U}_{L^2}\snorm{D^{k} U}_{L^2} \les \eps  e^{-\frac{s}{2}}  \snorm{D^{k-1} U}_{L^2}\snorm{D^{k} U}_{L^2} \,,
\end{align*} 
where we have used \eqref{Udef00},  together with the bounds
 \eqref{eq:W_decay}, \eqref{eq:Z_bootstrap},  \eqref{eq:A_bootstrap}.
By \eqref{eq:special3} applied with $\varphi = DU$, which thus obeys $\norm{\varphi}_{L^\infty} \les e^{-\frac s2}$, and Young's inequality with $ \delta >0$, 
\begin{align} 
2 \sum_{\abs{\gamma}=k}
 \lambda^\modckg \int_{ \mathbb{R}^3  }\abs{ F_{U_i,(2)}^{(\gamma-1,U)}  \,  \gui  } 
\leq \eps^{\frac 12} ( e^{-\frac{s}{2}} )^{1 + \frac{2}{2k-5}} \snorm{ U}_{\dot H^k }^{2 - \frac{2}{2k-5}}  
 \leq  \delta E_k^2 + e^{-s}
  \,,
\label{FU6}
\end{align} 
where we have used $\eps$ to absorb all $k$ and $\delta$ dependent constants. 
Hence, 
\eqref{FU5} and \eqref{FU6} together yield
\begin{align} 
2 \sum_{\abs{\gamma}=k}
 \lambda^\modckg \int_{ \mathbb{R}^3  }\abs{ F_{U_i}^{(\gamma-1,U)}  \,  \gui  } 
 \le  2(\delta + \eps^ {\frac{1}{k}} ) E_k^2 + 2 e^{-s} M^{4k-2}\,.
\label{FU7}
\end{align} 

Now, we turn to the forcing function  $F_{U_i}^{(\gamma,S)} $ in \eqref{FU-gamma-S} which we have decomposed as
 $F_{U_i}^{(\gamma,S)} =F_{U_i(1)}^{(\gamma,S)} +F_{U_i(2)}^{(\gamma,S)} +F_{U_i(3)}^{(\gamma,S)} +F_{U_i(4)}^{(\gamma,S)} $, and bound each of these contributions individually. 
We first note that the bounds for the integrals with $F_{U_i(1)}^{(\gamma,S)}$ and $F_{U_i(3)}^{(\gamma,S)}$ are obtained directly from the $\nabla \check S$ estimate in \eqref{eq:US_est} and the $\p_1 Z$ estimate in \eqref{eq:Z_bootstrap}, yielding
 \begin{align}   
2 \sum_{\abs{\gamma}=k}
 \lambda^\modckg \int_{ \mathbb{R}^3  }\abs{ \left( F _{U_i,(1)}^{(\gamma,S)} + F_{U_i,(3)}^{(\gamma,S)} \right) \,  \gui }
 \le M^2 e^{-s}  E_k^2 \le \eps^{\frac 12} E_k^2 \,.  \label{FU8}
 \end{align} 
 The bound for the integral with $F_{U_i(2)}^{(\gamma,S)}$  is obtained in the same way as the bound for 
 $\mathcal{F} _{U_i,(3)}^{(\gamma,U)}$ in \eqref{FU3}. Indeed, as far as our bounds are concerned $\p^\beta \p_\nu S$ behaves in the same exact way as $\p^\beta \p_\nu S$, and by \eqref{eq:US_est} we have $\norm{\nabla S}_{L^\infty} \les M \eps^{\frac 12}$, which is similar to the bound   $\norm{\nabla h_U} \les \eps$ which was used in \eqref{FU3}. In order to avoid redundancy we omit these details and simply claim
 \begin{align}   
2 \sum_{\abs{\gamma}=k}
 \lambda^\modckg \int_{ \mathbb{R}^3  }\abs{ F _{U_i,(2)}^{(\gamma,S)}  \,  \gui  } 
 \le \eps^ {\frac{1}{4}}  E_k^2 \,. \label{FU9}
\end{align} 
Similarly, the bound for the integral with $F_{U_i(4)}^{(\gamma,S)}$  is obtained in the same way as the bound for $\mathcal{F} _{U_i,(2)}^{(\gamma,U)}$ in \eqref{FU2}: $\p^\beta \p_1 S$ plays the same role as $\p^\beta \p_1 U$, whereas by \eqref{eq:US_est} we have $\norm{\check \nabla S}_{L^\infty} \les \eps^{\frac 12}$, which is better than the bound $\norm{\check \nabla g_U}_{L^\infty} \leq 1$ that was used in  \eqref{FU2}, reason for which we do not even need to appeal to our specific $\lambda$ choice for this estimate. In order to avoid redundancy we omit these details and state the resulting bound
 \begin{align} 
2 \sum_{\abs{\gamma}=k}
 \lambda^\modckg \int_{ \mathbb{R}^3  }\abs{ F_{U_i,(4)}^{(\gamma,S)}  \,  \gui  } 
 \le \eps^{\frac 14} E_k^2 \,.
\label{FU10}
\end{align} 
The estimates \eqref{FU8}--\eqref{FU10} together yield
 \begin{align} 
2 \sum_{\abs{\gamma}=k}
 \lambda^\modckg \int_{ \mathbb{R}^3  }\abs{ F_{U_i}^{(\gamma,S)}  \,  \gui  } 
 \le 3 \eps^ {\frac{1}{4}} E_k^2   \,.
\label{FU11}
\end{align} 

The last forcing term in the $U$ equation is $ F_{U_i}^{(\gamma-1,S)} $ defined by \eqref{FU-gamma-1-S}. We first note that the commutator term may be bounded identically to the commutator term in $F_{U_i(2)}^{(\gamma-1,U)}$ since $S \p_1 S$ may be used interchangeably with $U_j \p_1 U_i$ in terms of our estimates. Similarly, the summation term in $ F_{U_i}^{(\gamma-1,S)} $ is treated in the same way as $F_{U_i(1)}^{(\gamma-1,U)}$ for the same reasons which we invoked earlier in the $ F_{U_i}^{(\gamma,S)} $  discussion.  In summary, the integral with the forcing term  $ F_{U_i}^{(\gamma-1,S)} $
is estimated in the identical manner as \eqref{FU7}, and we obtain that
\begin{align} 
2 \sum_{\abs{\gamma}=k}
 \lambda^\modckg \int_{ \mathbb{R}^3  }\abs{ F_{U_i}^{(\gamma-1,S)}  \,  \gui  } 
 \le  2(\delta + \eps^ {\frac{1}{k}} ) E_k^2 + 2 e^{-s} M^{4k-2} \,.
\label{FU12}
\end{align} 

Combining the estimates \eqref{FU4}, \eqref{FU7}, \eqref{FU11}, and \eqref{FU12},  and choosing  $\eps$ to be sufficiently small in terms of $k$ and $\delta$, we obtain
we obtain that
\begin{align*} 
2 \sum_{\abs{\gamma}=k}
 \lambda^\modckg \int_{ \mathbb{R}^3  } \abs{F_{ U^i}^{(\gamma)} \,  \gui } & 
\le (2+ 8\delta)  E_k^2  + e^{-s} M^{4k-1} \,,
\end{align*}
which proves the inequality \eqref{Hk_est_FU}.   

Upon comparing the $S$-forcing terms in \eqref{S-gamma-forcing} with the $U$-forcing terms in \eqref{U-gamma-forcing}, we observe that they only differ by exchanging the letters $U$ and $S$ in several places; hence,  inequality \eqref{Hk_est_FS} is proved mutatis mutandi to \eqref{Hk_est_FU}. To avoid redundancy we omit these details.
\end{proof} 

\subsection{The $\dot H^k$ energy estimate}

We now turn to the main energy estimate.
\begin{proposition}[$\dot H^k$  estimate for $U$ and $S$]\label{prop:L2}
For any integer $k$ satisfying
\begin{align}
 k \geq 18 \,,
 \label{eq:k-cond}
\end{align}
with $\delta$ and $\lambda= \lambda(k,\delta)$ as specified in Lemma~\ref{lem:forcing:1}, we have the estimate
\begin{align}
&E_k^2(s)
  \leq  e^{-2(s-s_0)} E_k^2(s_0) + 2 e^{-s} M^{4k-1} \left(1 - e^{-(s-s_0)}\right) 
 \label{eq:L2}
\end{align}
for all $s\geq  s_0 \geq -\log \eps$.
\end{proposition}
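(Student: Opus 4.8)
The goal is an energy identity: test \eqref{US-L2-U} with $\lambda^{\modckg}\p^\gamma U_i$, test \eqref{US-L2-S} with $\lambda^{\modckg}\p^\gamma S$, integrate over $\RR^3$, sum over $\abs{\gamma}=k$, and add the two. The left-hand side produces $\tfrac12\tfrac{d}{ds}E_k^2$ from the $\p_s$ terms, a transport term $\int (\mathcal V_U\cdot\nabla)(\cdot)^2$ which integrates by parts to $-\tfrac12\int(\div\mathcal V_U)(\cdot)^2$, a damping contribution $\int\mathcal D_\gamma(\cdot)^2$, the $\dot Q$ rotation term, the ``Burgers-type'' lower-order terms proportional to $\Jcal\Ncal_i\p_1 W$ coupling $\p^\gamma U$ and $\p^\gamma S$, and the top-order pressure/sound terms $2\beta_\tau\beta_3 S(\Jcal\Ncal_i e^{s/2}\p_1(\p^\gamma S) + \dots)$ together with their $U\leftrightarrow S$ mirror images. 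The right-hand side is $\sum_{\abs{\gamma}=k}\lambda^\modckg\int(\mathcal F_{U_i}^{(\gamma)}\p^\gamma U_i + \mathcal F_S^{(\gamma)}\p^\gamma S)$, which is precisely what Lemma~\ref{lem:forcing:1} controls by $(2+8\delta)E_k^2 + e^{-s}M^{4k-1}$ (after doubling, since there are two such integrals). So the real work is to show the left-hand side ``structural'' terms contribute a damping at least $(2k\, - \, \text{something small})E_k^2$, actually enough to dominate $2E_k^2 + 8\delta E_k^2$ with room to spare, so that one is left with $\tfrac{d}{ds}E_k^2 \le -2E_k^2 + e^{-s}M^{4k-1}$, and then Gr\"onwall (integrating factor $e^{2s}$) gives \eqref{eq:L2}.

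\textbf{Handling the structural terms.} First, $\mathcal D_\gamma = \gamma_1(1+\p_1 g_U) + \tfrac12\abs{\gamma}$; using $\p_1 g_U = \beta_1\beta_\tau\Jcal\p_1 W + \p_1 G_W$-type bounds, $\Jcal\p_1 W\ge -1$ (Lemma~\ref{lem:Jp1W}), $\abs{1-\beta_\tau}\lesssim\eps$, $\abs{\p_1 G_U}\lesssim Me^{-s/2}$ from Lemma~\ref{lemma_g}, and $\beta_1<1$, one gets $\mathcal D_\gamma \ge \tfrac12 k - O(M e^{-s/2}) \ge 8$ say, which after the factor $2$ from testing gives a damping term $\ge 2\cdot 8\, E_k^2$ — more than enough to beat $(2+8\delta)E_k^2$. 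Second, the transport term: $\div\mathcal V_U = \tfrac32 + 1 + \p_1 g_U + \check\nabla\cdot h_U = \tfrac52 + O(Me^{-s/2})$, which is a \emph{favorable} (negative contribution to the energy growth) sign and only helps. Third, the $\dot Q$ term: $\abs{2\beta_1\beta_\tau e^{-s}\dot Q_{ij}}\lesssim M^2\eps^{1/2}e^{-s}$ by \eqref{eq:dot:Q}, so its contribution is $\lesssim \eps^{1/2}E_k^2$, negligible. Fourth, the Burgers coupling terms $\beta_\tau(\beta_3+\beta_3\gamma_1)\Jcal\Ncal_i\p_1 W\,\p^\gamma S\,\p^\gamma U_i$ (and mirror) are bounded by $C\gamma_1\snorm{\p_1 W}_{L^\infty}E_k^2\le C k E_k^2$ — this is \emph{not} small, so it must be absorbed into the large damping $\tfrac12 k E_k^2$; the point is that both the damping and this coupling scale like $\gamma_1$, and a Cauchy–Schwarz/Young split $\abs{\p^\gamma U_i\p^\gamma S}\le\tfrac12(\abs{\p^\gamma U_i}^2+\abs{\p^\gamma S}^2)$ together with $\beta_1,\beta_3<1$ and the diagonal structure of $\Jcal\Ncal\otimes\Ncal$ being bounded by $1+O(\eps)$ keeps the coefficient strictly below $\gamma_1(1+\p_1 g_U)$ — this is where the good energy structure of Euler enters, and I expect it to be the main obstacle.

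\textbf{The top-order sound/pressure terms and conclusion.} The terms $2\beta_\tau\beta_3 S(\Jcal\Ncal_i e^{s/2}\p_1\p^\gamma S + e^{-s/2}\delta^{i\nu}\p_\nu\p^\gamma S)$ tested against $\p^\gamma U_i$, plus the mirror term $2\beta_\tau\beta_3 S(e^{s/2}\Jcal\Ncal_j\p_1\p^\gamma U_j + e^{-s/2}\p_\nu\p^\gamma U_\nu)$ tested against $\p^\gamma S$, combine (after integration by parts moving the derivative off one factor) into a perfect symmetrization: the $e^{s/2}\Jcal\Ncal$-weighted pieces cancel modulo terms where the derivative hits $S$ or $\Jcal\Ncal$, which are lower order and bounded using $\snorm{\p_1 W}_{L^\infty}, \snorm{\nabla S}_{L^\infty}\lesssim Me^{-s/2}$ (Lemma~\ref{lem:US_est}), $\snorm{\check\nabla(\Jcal\Ncal)}\lesssim\eps$, giving $\lesssim (1+Me^{-s/2})E_k^2$; those bounded pieces are again absorbed into the surplus of the $\tfrac12 k$ damping. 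One must be slightly careful that $S\ge \tfrac{\kappa_0}{2}-\eps^{1/8}>0$ (Proposition~\ref{prop:sound}) so no sign issues arise, and that the $e^{s/2}$ growth is exactly matched by the cancellation. Collecting everything: $\tfrac12\tfrac{d}{ds}E_k^2 \le -\tfrac12 k E_k^2 + Ck E_k^2 + (1+4\delta)E_k^2 + C(1+Me^{-s/2}+\eps^{1/2})E_k^2 + \tfrac12 e^{-s}M^{4k-1}$; choosing $k\ge 18$ and $\delta\le\tfrac1{32}$ and $\eps$ small ensures the coefficient of $E_k^2$ on the right is $\le -1$, i.e.\ $\tfrac{d}{ds}E_k^2 \le -2E_k^2 + e^{-s}M^{4k-1}$. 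Multiplying by $e^{2s}$, integrating from $s_0$ to $s$, and using $\int_{s_0}^s e^{2s'}e^{-s'}ds' = e^s - e^{s_0} = e^s(1-e^{-(s-s_0)})$ yields exactly \eqref{eq:L2}.
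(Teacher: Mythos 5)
Your plan matches the paper's proof in every structural respect: test \eqref{US-L2-U} and \eqref{US-L2-S} against $\lambda^\modckg\p^\gamma U_i$ and $\lambda^\modckg\p^\gamma S$, use skew-symmetry of $\dot Q$, integrate by parts in the transport term and in the top-order sound/pressure terms to expose the symmetrization, apply Lemma~\ref{lem:forcing:1}, and close with Gr\"onwall. The one place where your write-up would not compile as an actual argument is the absorption of the Burgers cross-coupling.

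The difficulty is in the line ``$\tfrac12\tfrac{d}{ds}E_k^2 \le -\tfrac12 k E_k^2 + CkE_k^2 + \cdots$'' together with the claim that the total $E_k^2$-coefficient is $\le -1$. That requires $C<\tfrac12$. After Young's inequality the cross-coupling contributes a coefficient $\tfrac12\beta_\tau(1+2\beta_3\gamma_1)$, whose $\gamma_1$-part has prefactor $\beta_\tau\beta_3$; since $\beta_3 = \tfrac{\alpha}{1+\alpha}\to 1$ as $\gamma\to\infty$, you cannot guarantee $C<\tfrac12$ uniformly for $\gamma>1$. The resolution — and this is exactly what the paper does — is \emph{not} to bound $\mathcal D_\gamma$ from below by $\tfrac12 k$ and treat the cross-coupling separately. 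The piece $\gamma_1(1+\p_1 g_U)$ of $\mathcal D_\gamma$, which you discarded to get the $\tfrac12 k$ lower bound (it can be as large as $\gamma_1$ when $\beta_1$ is small), must be kept and combined with the cross-coupling term directly. The exact algebra $\beta_1+\beta_3=1$, together with $\Jcal\p_1 W\ge -1$ (Lemma~\ref{lem:Jp1W}) and Young's inequality, makes the $\gamma_1$-proportional contributions from these two sources cancel up to $O(1)+O(\eps\gamma_1)$, leaving the intact $\tfrac12\abs{\gamma}$ piece of $\mathcal D_\gamma$ (minus the divergence of the transport) as the genuine damping, which is $\ge k - 6$ in the paper's bookkeeping. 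Your remark ``both the damping and this coupling scale like $\gamma_1$'' shows you see the right mechanism, and you flagged this as the obstacle, but to actually close the estimate you need to run the three-way combination of $2\mathcal D_\gamma$, $-\div\mathcal V_U$, and the cross-coupling \emph{before} discarding the $\gamma_1(1+\p_1 g_U)$ contribution — a naive $-\tfrac12 k + Ck$ split does not work for $\alpha\ge 1$.

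Two minor bookkeeping notes: Lemma~\ref{lem:forcing:1} already includes the factor $2$ on the left, so after multiplying the energy identity by $2$ the right side is $(4+16\delta)E_k^2 + 2e^{-s}M^{4k-1}$, which is why the final bound carries $2e^{-s}M^{4k-1}$, not $e^{-s}M^{4k-1}$. Also the choice $\lambda=\tfrac{\delta^2}{12k^2}$ is what makes the $\p^{\gamma-\beta}g_U\,\p_1\p^\beta U$ top-order off-diagonal piece in the forcing (the term $F_{U_i,(2)}^{(\gamma,U)}$) absorbable, which is the reason $E_k^2$ is the $\lambda$-weighted rather than the plain $\dot H^k$ norm — you take this for granted in the forcing step, but it is built into the definition of $E_k^2$ precisely for Lemma~\ref{lem:forcing:1}.
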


\begin{proof}[Proof of Proposition~\ref{prop:L2}]
We fix a   multi-index $\gamma \in {\mathbb N}_0^3$ with $\abs{\gamma} = k$,  and consider the sum of the $L^2$ inner-product of \eqref{US-L2-U} with 
$\lambda^\modckg \p^\gamma U^i$ and the $L^2$ inner-product of  \eqref{US-L2-S} with $\lambda^\modckg  \p^\gamma S$.   
With the damping function $ \mathcal{D} _\gamma$ defined in \eqref{Dgamma} and the transport velocity
$ \mathcal{V} _U$ defined in \eqref{V_U}, using the fact that $\dot Q$ is skew-symmetric  we find that
\begin{align}
&\tfrac{d}{ds}  \int_{ \mathbb{R}^3  } \lambda^\modckg \left(   \abs{\p^\gamma U }^2    +   \abs{ \gs}^2  \right) 
+ \lambda^\modckg \int_{ \mathbb{R}^3  }  (2 \mathcal{D} _\gamma - \operatorname{div} \mathcal{V} _U) \,  \left( \abs{\p^\gamma U }^2 +
 \abs{ \gs}^2\right) \notag  \\
 & \qquad\qquad\qquad
 + 2 \beta_\tau \lambda^\modckg \int_{ \mathbb{R}^3  }  ( \beta_1 + \beta_3 + 2 \beta_3 \gamma_1)  \Jcal \p_1 W  \gs \p^\gamma U \cdot \Ncal \notag \\
& = 2 \lambda^\modckg  \int_{ \mathbb{R}^3  } (\mathcal{F} _{ U^i}^{(\gamma)} \,  \gui + \mathcal{F} _{S}^{(\gamma)}\, \gs) 
+ 4 \beta_\tau \beta_3\lambda^\modckg  \int_{ \mathbb{R}^3  } \left( e^{\frac{s}{2}}  \Jcal \Ncal_j   \p^\gamma U_j \p_1 S
+ e^{-\frac{s}{2}}  \p^\gamma U_\nu \p_\nu S  \right) \, \gs \,.
\label{energy0}
\end{align}
We note that the last integral on the right-hand side of the identity  \eqref{energy0} arises via integration by parts as follows:
\begin{align*} 
&4 \beta_\tau \beta_3  \int_{ \mathbb{R}^3  } \left( \Jcal \Ncal_i e^{\frac{s}{2}}  \p_1 (\gs) + e^{-\frac{s}{2}} \delta^{i\nu} \p_\nu ( \gs) \right) \, S \gui 
\\
&\qquad \qquad + 4 \beta_\tau \beta_3  \int_{ \mathbb{R}^3  } \left(  e^{\frac{s}{2}}\Jcal \Ncal_j  \p_1 (\p^\gamma U_j) + e^{-\frac{s}{2}}  \p_\nu (\p^\gamma U_\nu) \right) \, S \p^\gamma S
\\
&=  4 \beta_\tau \beta_3 \int_{ \mathbb{R}^3  } \left(e^{\frac{s}{2}}  \p_1 \left(  \Jcal \Ncal \cdot \p^\gamma U     \gs\right)  + e^{-\frac{s}{2}}  \p_\nu \left(\p^\gamma U_\nu \gs\right) \right) \, S  \\
&=  - 4 \beta_\tau \beta_3 \int_{ \mathbb{R}^3  } \left(e^{\frac{s}{2}}  \left(  \Jcal \Ncal \cdot \p^\gamma U     \gs\right)  \p_1 S + e^{-\frac{s}{2}}   \left(\p^\gamma U_\nu \gs\right) \p_\nu S \right)  \\
&=  - 4 \beta_\tau \beta_3 \int_{ \mathbb{R}^3  } \left(e^{\frac{s}{2}}   \Jcal \Ncal \cdot \p^\gamma U       \p_1 S + e^{-\frac{s}{2}}    \p^\gamma U_\nu  \p_\nu S \right) \gs \,.
\end{align*} 
The second and third integrals  on the left-hand side of the identity  \eqref{energy0} can be combined.   
Using \eqref{V_U}, given  the bounds \eqref{Jp1W},  \eqref{e:G_ZA_estimates} and  \eqref{e:h_estimates}, 
the second integral on the left-hand side of  \eqref{energy0} has an integrand with the  lower bound
\begin{align*} 
&
\left( 2 \mathcal{D} _\gamma - \operatorname{div} \mathcal{V} _U\right)\left( \abs{\gs}^2 +  \abs{\p^\gamma U}^2\right) \\
& \qquad\qquad
=   \left(\abs{\gamma}  - \tfrac{5}{2}+2 \gamma_1 +  (2 \gamma_1 -1)(\beta_\tau \beta_1 \Jcal \p_1 W + \p_1 G_U ) - \p_\nu h^\nu\right)
\left( \abs{\gs}^2 +  \abs{\p^\gamma U}^2\right) \\
&\qquad\qquad
 \ge    \left(\abs{\gamma}  - \tfrac{5}{2}+2 \gamma_1 -  \beta_\tau \beta_1 (2 \gamma_1 -1)_+ - \eps^ {\frac{1}{4}}  \right) \left( \abs{\gs}^2 +  \abs{\p^\gamma U}^2\right)\,,
\end{align*} 
while the third integral on the left-hand side of  \eqref{energy0} has an integrand with the lower bound 
\begin{align*} 
2\beta_\tau ( \beta_1 + \beta_3 + 2 \beta_3 \gamma_1)  \Jcal \p_1 W \ \gs \p^\gamma U \cdot \Ncal 
& \ge -
\beta_\tau ( \beta_1 + \beta_3 + 2 \beta_3 \gamma_1)  \Jcal \abs{\p_1 W} \left( \abs{\gs}^2 +  \abs{\p^\gamma U}^2\right) \\
& \ge -\beta_\tau (1 + 2 \beta_3 \gamma_1) \left( \abs{\gs}^2 +  \abs{\p^\gamma U}^2\right) \,,
\end{align*} 
where we have again used \eqref{Jp1W}, and the fact that by \eqref{eq:various:beta} we have $ \beta_1 + \beta_3 = 1$.
Hence these two integrals have the lower bound given by
$$
 \lambda^\modckg \int_{\mathbb{R}^3  } \left( \abs{\gamma} - \tfrac{5}{2}  + 2(1- \beta_\tau)\gamma_1 - \beta_\tau -\eps^ {\frac{1}{4}} \right)
\left( \abs{\gs}^2 +  \abs{\p^\gamma U}^2\right)  \,.
$$
Since by \eqref{eq:beta:tau},  $\abs{ \beta_\tau -1} \le \eps^ {\frac{1}{2}} $, it follows that for $\eps$ taken sufficiently small, 
by summing \eqref{energy0}  over all $\abs{\gamma}=k$, we obtain that
\begin{align} 
& \tfrac{d}{ds} E_k^2(s) + (k- \tfrac{15}{4}) E_k^2(s) \notag \\
& \le 
\sum_{\abs{\gamma}=k}\left(
 2\lambda^\modckg \int_{ \mathbb{R}^3  } (\mathcal{F} _{ U^i}^{(\gamma)} \,  \gui + \mathcal{F} _{S}^{(\gamma)}\, \gs) 
+ 4 \beta_\tau \beta_3 \lambda^\modckg \int_{ \mathbb{R}^3  } \left( e^{\frac{s}{2}}  \Jcal \Ncal_j   \p^\gamma U_j \p_1 S
+ e^{-\frac{s}{2}}  \p^\gamma U_\nu \p_\nu S  \right)\gs  \right)   \,. \label{energy1}
\end{align} 
Recalling that $S= \tfrac{1}{2} ( e^{-\frac{s}{2}} W +\kappa-Z)$,  that
$\abs{\Jcal} \le1 + M \eps$ from  \eqref{e:bounds_on_garbage},  and  that $\beta_\tau \beta_3 \le (1+ \eps^{\frac 14}) \left( \tfrac{\alpha }{1+\alpha}\right) \le 1$ for $\eps$ taken
sufficiently small,  we find that
\begin{align*} 
&4 \beta_\tau \beta_3\lambda^\modckg  \int_{ \mathbb{R}^3  } \left| e^{\frac{s}{2}}  \Jcal \Ncal_j   \p^\gamma U_j \p_1 S
+ e^{-\frac{s}{2}}  \p^\gamma U_\nu \p_\nu S  \right|  \abs{\gs} \notag\\
&\qquad \le 2 (1+M\eps) \lambda^{\modckg}  
\left ( \snorm{ \p_1 W}_{L^ \infty } + e^{\frac{s}{2}} \snorm{ \p_1 Z}_{L^ \infty } + e^{-s} \snorm{ \check\nabla   W}_{L^ \infty } 
+ e^{-\frac{s}{2}} \snorm{ \check\nabla   Z}_{L^ \infty } \right)
 \snorm{\p^\gamma U}_{L^2}  \snorm{\p^\gamma S}_{L^2}  \, .
\end{align*} 
Hence, using  \eqref{eq:W_decay},  \eqref{eq:Z_bootstrap}, and \eqref{eq:A_bootstrap} we obtain that the second term in \eqref{energy1} is estimated as
\begin{align*} 
4 \beta_\tau \beta_3 
\sum_{\abs{\gamma}=k} \lambda^\modckg  
\int_{ \mathbb{R}^3  } 
\left| e^{\frac{s}{2}}  \Jcal \Ncal_j   \p^\gamma U_j \p_1 S + e^{-\frac{s}{2}}  \p^\gamma U_\nu \p_\nu S  \right| 
\abs{\p^\gamma S}
\le ( 2+ \eps^ {\frac{1}{4}} ) E_k \,.
\end{align*} 
It follows from \eqref{energy1}, that 
\begin{align} 
& \tfrac{d}{ds} E_k^2(s) + (k-6) E_k^2(s) 
 \le 
2 \sum_{\abs{\gamma}=k}
 \lambda^\modckg \int_{ \mathbb{R}^3  } (\mathcal{F} _{ U^i}^{(\gamma)} \,  \gui + \mathcal{F} _{S}^{(\gamma)}\, \gs) \,.
  \label{energy2}
\end{align} 

By Lemma \ref{lem:forcing:1}, for $0< \delta \le \tfrac{1}{32} $,
\begin{align*} 
& \tfrac{d}{ds} E_k^2(s) + (k-6) E_k^2(s) 
 \le 
 2(2+ 8\delta)  E_k^2  + 2 e^{-s} M^{4k-1}  \,,
\end{align*} 
and hence, by  \eqref{eq:k-cond} we have that  
\begin{align*}
\tfrac{d}{ds} E_k^2 + 2 E_k^2  
&\leq  2 e^{-s} M^{4k-1}  \,,
\end{align*}
and so 
we obtain that 
\begin{align*}
E_k^2(s) \leq  e^{-2(s-s_0)} E_k^2(s_0) +  2 e^{-s} M^{4k-1} \left(1 - e^{-(s-s_0)}\right) \,,
\end{align*}
for all $s\geq s_0 \geq -\log \eps$. This concludes the proof of Proposition~\ref{prop:L2}.
\end{proof}

In conclusion of this section,  we mention that Proposition~\ref{prop:L2} applied with $s_0=-\log \eps$ yields the proof  of Proposition \ref{cor:L2}.

\begin{proof}[Proof of Proposition \ref{cor:L2}]

We recall the identities $D^k W = e^{\frac{s}{2}} D^k( U \cdot \Ncal +S)$,   $D^kZ = D^k( U \cdot \Ncal -S)$, and $A_\nu  = U \cdot \Tcal_\nu$ 
Therefore, by  \eqref{e:bounds_on_garbage}, \eqref{eq:special1}, using the  Poincar\'e inequality in the $\check y$ direction, and the fact that the diameter of $\XXX(s)$ in the $\check e$ directions is  $4 \eps^{\frac 16} e^{\frac{s}{2}}$, for any $\gamma$ with $\abs{\gamma} = k$, we obtain 
\begin{align*}
&\norm{e^{-\frac s2} \p^\gamma W -  \Ncal \cdot \p^\gamma U - \p^\gamma S}_{L^2}
+ \norm{\p^\gamma Z -  \Ncal \cdot \p^\gamma U + \p^\gamma S}_{L^2}
+\norm{\p^\gamma A_\nu -  \Tcal^\nu \cdot \p^\gamma U }_{L^2}
\notag\\
&\leq 2 \norm{\comm{\p^\gamma}{\Ncal}\cdot U}_{L^2} + \norm{\comm{\p^\gamma}{\Tcal^\nu}\cdot U}_{L^2} 
\notag\\
&\les  \sum_{j=1}^{k} \left(\norm{D^j \Ncal}_{L^\infty} + \norm{D^j \Tcal^\nu}_{L^\infty}\right) \norm{D^{k-j}U}_{L^2(\XXX(s))}
\notag\\
&\les \eps \sum_{j=1}^{k} e^{-\frac{j s}{2}} (4\eps^{\frac 16} e^{\frac s2})^{j} \norm{U}_{\dot{H}^k}
\notag\\
&\les \eps \norm{U}_{\dot{H}^k} \,.
\end{align*}
Summing over all $\gamma$ with $\abs{\gamma} =k$ relates the $\dot H^k$ norm of $W$, $Z$, $A$ with  the  $\dot H^k$ norm of $U$ and $S$.

The initial datum assumption \eqref{eq:data:Hk}  together with \eqref{norm_compare} thus imply that
\[
E_k^2(-\log \eps) \leq \eps\,.
\]
Thus, from \eqref{eq:L2} and \eqref{norm_compare}  we obtain
\[
\lambda^k \left( \norm{U(\cdot,s)}_{\dot{H}^k}^2 + \norm{S(\cdot,s)}_{\dot{H}^k}^2 \right) \leq E_k^2(s) \leq \eps^{-1} e^{-2s} + 2 e^{-s} M^{4k-1} (1- \eps^{-1} e^{-s})
\]
and  the inequalities \eqref{eq:AZ-L2}--\eqref{eq:W-L2} immediately follow by  combining the above inequalities.
\end{proof}

\section{Conclusion of the proof of the main theorems}
\label{sec:conclusion}

\subsection{The blow up time and location}
The blow up time $T_*$ is defined uniquely by the condition $\tau(T_*) = T_*$ which in view of \eqref{eq:modulation:IC} is equivalent to
\begin{equation}\label{good-vlad}
\int_{-\eps}^{T_*} (1-\dot \tau(t)) dt = \eps \, .
\end{equation} 
The estimate for $\dot \tau$  in \eqref{eq:acceleration:bound} shows that  for $\eps$ taken sufficiently small,
\begin{align} 
\abs{T_*} \leq 2M^2 \eps^{2} \,. \label{T*-bound}
\end{align} 

We also note here that the bootstrap assumption  \eqref{eq:acceleration:bound} and the definition of $T_*$ ensures that  $\tau(t) > t$ for all $t\in [-\eps, T_*)$.
Indeed, when $t = - \eps$, we have  that $\tau(-\eps) = 0 > -\eps$, and the function $t\mapsto \int_{-\eps}^t (1-\dot \tau) dt' - \eps = t -\tau(t)$ is strictly increasing.

The blow up location is determined by $\xi_* = \xi(T_*)$, which by \eqref{eq:modulation:IC} is the same as 
$$
\xi_* = \int_{-\eps}^{T_*} \dot \xi(t) dt \, .
$$
In view of \eqref{eq:acceleration:bound}, for $\eps$ small enough,
find  that 
\begin{align} 
\abs{\xi_*} \leq M \eps \,,\label{xi*-bound}
\end{align} 
so that the blow up location is $\OO(\eps)$ close to the origin. 

\subsection{H\"{o}lder bound for $w$}
\begin{proposition}\label{prop:Holder}  $w \in L^\infty([-\eps,T_*);C^{\sfrac 13})$.
\end{proposition}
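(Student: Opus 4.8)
The plan is to establish the $C^{1/3}$ regularity of $w$ at the blow up time by transferring the known self-similar bounds on $W$ and $\nabla W$ back to the original (sheared, rotated) physical variables $x$, and then controlling the modulus of continuity uniformly in $t$ up to $T_*$. Recall from \eqref{eq:ss:ansatz} that $w(x,t) = e^{-s/2} W(y,s) + \kappa(t)$, where $s = -\log(\tau(t)-t)$ and $y_1 = x_1 e^{3s/2}$, $\check y = \check x e^{s/2}$. Since $\kappa$ is bounded and Lipschitz in time by \eqref{mod-boot}, and $T_* < \infty$, it suffices to prove a uniform-in-$t$ bound of the form $|w(x,t) - w(x',t)| \lesssim |x-x'|^{1/3}$ for all $t \in [-\eps, T_*)$. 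By density and continuity this then passes to the limit $t\to T_*$.

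First I would reduce to estimating increments in each coordinate direction separately. For the $\check x$ (tangential) directions, the bootstrap bounds $\abs{\check\nabla W} \le 1$ from \eqref{eq:W_decay} together with the chain rule $\check\nabla_x w = e^{-s/2} e^{s/2} \check\nabla_y W = \check\nabla_y W$ give a uniform Lipschitz bound $\abs{\check\nabla_x w(x,t)} \lesssim 1$, which is stronger than $C^{1/3}$. The genuine obstruction is the $x_1$ direction, where $\p_{x_1} w = e^{-s/2} e^{3s/2}\p_{y_1} W = e^{s}\p_{y_1} W$ blows up like $e^s \sim (T_*-t)^{-1}$ near $y=0$, consistent with \eqref{tall:people:can:t:drink}. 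Here I would use the refined decay bound on $\p_1 W$ in \eqref{eq:W_decay}, namely $\abs{\p_1 W(y,s)} \lesssim \tilde\eta^{-1/3}(y/2) \lesssim (1+y_1^2)^{-1/3}$ for $\abs{y}\le \LLL$ (and the analogous bound with $\eta^{-1/3}$ for $\abs{y}\ge \LLL$). Then for two points $x = (x_1,\check x)$ and $x' = (x_1',\check x)$ with the same $\check x$, writing $y_1 = x_1 e^{3s/2}$,
\begin{align*}
\abs{w(x_1,\check x,t) - w(x_1',\check x,t)}
&= e^{-s/2}\abs{W(y_1,\check y,s) - W(y_1',\check y,s)} \\
&\le e^{-s/2}\int_{\min(y_1,y_1')}^{\max(y_1,y_1')} \abs{\p_1 W(\sigma,\check y,s)}\, d\sigma \\
&\lesssim e^{-s/2}\int_{\min(y_1,y_1')}^{\max(y_1,y_1')} (1+\sigma^2)^{-1/3}\, d\sigma \\
&\lesssim e^{-s/2}\abs{y_1 - y_1'}^{1/3} = e^{-s/2} e^{s/2}\abs{x_1-x_1'}^{1/3} = \abs{x_1-x_1'}^{1/3}.
\end{align*}
The key elementary fact is $\int_a^b (1+\sigma^2)^{-1/3} d\sigma \lesssim \abs{b-a}^{1/3}$ uniformly in $a,b$, which follows by splitting into the region near the origin (where the integrand is bounded) and the tails (where $(1+\sigma^2)^{-1/3} \lesssim \abs{\sigma}^{-2/3}$, whose antiderivative is $\sim \abs{\sigma}^{1/3}$); crucially the $e^{-s/2}$ prefactor exactly cancels the $e^{s/2}$ from the self-similar rescaling of the $x_1$ increment, so the bound is uniform in $s$, hence in $t$.

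To finish, I would combine the two directional estimates via the triangle inequality to obtain $\abs{w(x,t)-w(x',t)} \lesssim \abs{x-x'}^{1/3}$ uniformly on $[-\eps,T_*)$; since $w \in C([-\eps,T_*); H^k)$ with $k\ge 18$ and the $C^{1/3}$ seminorm is uniformly bounded, Arzel\`a--Ascoli (or simply passing to the pointwise limit along the already-constructed solution) gives $w(\cdot,T_*) \in C^{1/3}$, and combined with $w \in L^\infty$ this yields $w \in L^\infty([-\eps,T_*); C^{1/3})$. The main obstacle is purely bookkeeping: one must check that the $x_1$-increment estimate is genuinely uniform across the transition regions $\abs{y} \sim \LLL$ and $\abs{y}\sim \ell$ where the form of the bound on $\p_1 W$ changes between $\tilde\eta^{-1/3}(y/2)$, $2\eta^{-1/3}(y)$, and the Taylor-series bound near $y=0$; but in all three regimes $\abs{\p_1 W} \lesssim (1+y_1^2)^{-1/3}$ (possibly after using $\abs{\check y}\le 2\eps^{1/6} e^{s/2}$ from \eqref{eq:support} to absorb the $\abs{\check y}$-dependence), so the same integral bound applies throughout. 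One should also remark that it suffices to prove the bound for the sheared variable $x$ since the shear map $x\mapsto\tilde x$ (and the rotation $\tilde x \mapsto \xcal$) is, by the $C^2$ bounds on $f$ and $R$, a uniformly bi-Lipschitz change of coordinates, so H\"older regularity is preserved.
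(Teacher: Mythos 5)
Your argument is correct and follows essentially the same route as the paper: split the increment into the $x_1$ and $\check x$ directions, use the $\eta^{-1/3}$-type decay of $\p_1 W$ from \eqref{eq:W_decay} (so that the $e^{-s/2}$ prefactor from \eqref{w_ansatz} exactly cancels the $e^{s/2}$ coming from $\abs{y_1-y_1'}^{1/3} = e^{s/2}\abs{x_1-x_1'}^{1/3}$), and use $\abs{\check\nabla W}\le 1$ for the tangential increment. The only step you leave implicit is that passing from the Lipschitz bound in $\check x$ to a $C^{1/3}$ bound requires the compact support (equivalently $\abs{\check y - \check y'}\lesssim \eps^{1/6}e^{s/2}$ from \eqref{eq:support}), which is precisely what the paper uses at \eqref{holder3} to absorb the $e^{-s/3}\abs{y_\nu-y_\nu'}^{2/3}$ factor.
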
 
\begin{proof}[Proof of Proposition \ref{prop:Holder}]
We choose two points $y$ and $y'$ in $ \mathcal{X} $ such that $ y\neq y'$ and define $x$ and $x'$ via the relations
\begin{equation}\label{cov1}
y_1 = e^{\frac{3}{2}s }x_1 \,, \ \ \check y = e^{\frac{s}{2}} \check x\,, \ \ \text{ and } \ \ 
y_1' = e^{\frac{3}{2}s }x_1' \,, \ \ \check y' = e^{\frac{s}{2}} \check x'\,.
\end{equation} 
Using the identity \eqref{w_ansatz} and the change of variables \eqref{cov1}, we see that
\begin{align*}
\frac{\abs{w(x_1,\check x,t) - w(x_1', \check x',t)}}{ ( \abs{x_1-x_1'}^2 + \abs{\check x- \check x'}^2)^{\sfrac 16}} 
&= 
\frac{ e^{-\frac{s}{2}} \abs{W(y_1,\check y,s) - W(y_1',\check y',s)}}{ ( e^{-3s}\abs{y_1-y_1'}^2 + e^{-s}\abs{\check y- \check y'}^2)^{\sfrac 16}} 
= 
\frac{ \abs{W(y_1,\check y,s) - W(y_1',\check y',s)}}{ ( \abs{y_1-y_1'}^2 + e^{2s}\abs{\check y- \check y'}^2)^{\sfrac 16}}  \,,
\end{align*}
so that
\begin{align}
\frac{\abs{w(x_1,\check x,t) - w(x_1', \check x',t)}}{ ( \abs{x_1-x_1'}^2 + \abs{\check x- \check x'}^2)^{\sfrac 16}} 
& \le
\frac{ \abs{W(y_1,\check y,s) - W(y_1',\check y,s)}}{  \abs{y_1-y_1'}^{\sfrac 13}}
+\frac{ \abs{W(y_1',\check y,s) - W(y_1',\check y',s)}}{ e^{\frac{s}{3}}  \abs{\check y-\check y'}^{\sfrac 13}}   \,.
\label{holder1}
\end{align}
By the fundamental theorem of calculus and 
estimate \eqref{eq:W_decay}, we have that
\begin{align} 
\sup_{y_1 \neq y_1'}\frac{\abs{W(y_1,\check y ,s) - W(y_1',\check y,s)}}{\abs{y_1-y_1'}^{\sfrac 13}} 
\le 
\sup_{y_1 \neq y_1'}\frac{ \int_{y_1'}^{y_1} (1 + z_1^ {\sfrac{2}{3}}) ^{-1}  dz_1    }{\abs{y_1-y_1'}^{\sfrac 13}}  \le 3 \,, \label{holder2}
\end{align} 
and similarly for $\nu=2,3$,
\begin{align*} 
\sup_{\check y \neq \check y'}\frac{\abs{W(y_1', y_\nu ,s) - W(y_1', y_\nu',s)}}{e^{\frac{s}{3}} \abs{y_\nu-y_\nu'}^{\sfrac 13}} 
\le 
\sup_{y_1 \neq y_1'}\frac{  \int_{y_\nu'}^{y_\nu} \abs{\p_\nu W  } dz_\nu    }{e^{\frac{s}{3}} \abs{y_\nu-y_\nu'}^{\sfrac 13}} 
\le \sup_{y_1 \neq y_1'} e^{-\frac{s}{3}} \abs{y_\nu-y_\nu'}^{\sfrac 23}   \,,
\end{align*} 
where we have again used \eqref{eq:W_decay} which gives the bound $\abs{\p_\nu W} \le 1$.  Since both $y_\nu$ and $y_\nu '$ are in
$\mathcal{X}(s) $, by \eqref{e:space_time_conv}
$$
 \abs{ y_\nu - y_\nu '}^{\sfrac 23} \leq  \eps^{\frac{1}{10}}  e^{\frac{s}{3}} 
$$
and hence
\begin{align} 
\sup_{\check y \neq \check y'}\frac{\abs{W(y_1', y_\nu ,s) - W(y_1', y_\nu',s)}}{e^{\frac{s}{3}} \abs{y_\nu-y_\nu'}^{\sfrac 13}} 
\les 1 \,. \label{holder3}
\end{align} 
Combining \eqref{holder1}--\eqref{holder3}, we see that
\begin{align*} 
\sup_{ x \neq  x'}\frac{\abs{w(x_1,\check x,t) - w(x_1', \check x',t)}}{  \abs{x-x'}^{\sfrac 13}}  \les 1
\end{align*} 
where the implicit constant is universal, and is in particular independent of $s$ (and thus $t$).  This concludes the proof of the uniform-in-time
 H\"older $\sfrac 13$ estimate for $w$. 
 
The fact that $\tilde w$ has the same  H\"older $\sfrac 13$ regularity follows from the transformation $x$ to $\tilde x$ given in \eqref{x-sheep},  the
transformation from $w$ to $\tilde w$ given in \eqref{wza-sheep}, together with the bound for $\phi(t)$ given in \eqref{eq:speed:bound}.
\end{proof} 

\begin{remark} 
A straightforward computation shows that the $C^\alpha$ H\"older norms of $ w$, with $\alpha>\sfrac 13$,  blow up as $t\to T_*$ with a rate proportional to $(T_*-t)^{\sfrac{(1-3\alpha)}{2}}$. 
\end{remark} 

\subsection{Bounds for vorticity and sound speed}
\begin{corollary}[Bounds on  density and  vorticity]\label{cor:vort} 
The density remains bounded and non-trivial and satisfies
\begin{align} 
\snorm{ \tilde \rho^ \alpha ( \cdot , t) -  \alpha \tfrac{\kappa_0}{2} }_{ L^ \infty } \le \alpha  \eps^ {\frac{1}{8}}  \ \ \text{ for all } \ t\in[-\eps, T_*] \,. \label{density-bound-final}
\end{align} 
The vorticity has the bound
\begin{align} 
&\norm{\omega(\cdot, t) }_{L^\infty}  \le C_0 \kappa_0^{\frac 1\alpha}  \ \ \text{ for all } \ \ 
 \ t\in[-\eps, T_*] \,, \label{vorticity-bound-final}
\end{align} 
where $C_0$ is a universal constant. 
In addition, if we assume that 
\begin{align}
\abs{\omega(\cdot, -\eps) }\geq c_0 \qquad \mbox{on the set} \qquad B(0,2 \eps^{\sfrac 34}) \,, \label{vort-assume}
\end{align}
for some $c_0>0$, then at the location of the shock we have a nontrivial vorticity, and moreover
\begin{align}
\abs{\omega(\cdot, T_*)} \geq \tfrac{c_0}{C_0} \qquad \mbox{on the set} \qquad    B(0, \eps^{\sfrac 34})  \,. \label{vort-nonzero}
\end{align}
\end{corollary}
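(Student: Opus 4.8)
\textbf{Proof proposal for Corollary~\ref{cor:vort}.}

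The plan is to prove the three assertions in sequence, each one being a direct consequence of bounds already established in the excerpt, together with the explicit transformations relating the physical variables to the self-similar variables. First, for the density bound \eqref{density-bound-final}: by the definition $\sigma = \frac{\rho^\alpha}{\alpha}$ together with \eqref{sigma-sheep} and \eqref{S-trammy}, we have $\tilde\rho^\alpha(\tilde x,t) = \mathring\rho^\alpha(x,t) = \alpha \mathring\sigma(x,t) = \alpha S(y,s)$. The bound \eqref{S-bound} of Proposition~\ref{prop:sound}, namely $\snorm{S(\cdot,s) - \tfrac{\kappa_0}{2}}_{L^\infty} \le \eps^{\sfrac 18}$, then gives \eqref{density-bound-final} immediately, and in particular shows $\tilde\rho$ is uniformly bounded above and below (using $\kappa_0 > 1$ and $\eps$ small), so no vacuum forms up to and including $T_*$.

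Second, for the vorticity upper bound \eqref{vorticity-bound-final}: we recall from the computation in the proof of Proposition~\ref{prop:vorticity} that $\tilde\omega(\tilde x,t) = (\alpha\mathring\sigma)^{\sfrac 1\alpha}\mathring\zeta(x,t) = \tilde\rho \,\tilde\zeta$, and also $\omega = \operatorname{curl}_\xcal u$ is related to $\tilde\omega$ by the rotation $R(t) \in SO(3)$, which is an isometry, so $\abs{\omega(\xcal,t)} = \abs{\tilde\omega(\tilde x,t)}$. Then $\abs{\omega} = \tilde\rho \abs{\tilde\zeta} = (\alpha S)^{\sfrac 1\alpha} \abs{\Omega}$. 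The sound speed bound \eqref{S-bound} gives $(\alpha S)^{\sfrac 1\alpha} \le (\tfrac{\alpha\kappa_0}{2} + \alpha\eps^{\sfrac 18})^{\sfrac 1\alpha} \lesssim \kappa_0^{\sfrac 1\alpha}$, while \eqref{svort-bound} of Proposition~\ref{prop:vorticity} gives $\abs{\Omega} \le 2$; multiplying and absorbing the factor of $2$ and the $\alpha$-dependent constant into the universal constant $C_0$ yields \eqref{vorticity-bound-final}. I should be slightly careful about which universal constant $C_0$ is meant here versus the $C_0$ appearing in Proposition~\ref{prop:vorticity}; the cleanest route is to enlarge $C_0$ once at the end so that both the $\Omega$-Gr\"onwall constant and the $(\alpha S)^{\sfrac 1\alpha}$ prefactor are controlled.

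Third, and this is the step requiring the most care, we prove the nontriviality \eqref{vort-nonzero}. The quantitative two-sided vorticity estimate at the end of Theorem~\ref{thm:SS} (equivalently the Gr\"onwall argument inside the proof of Proposition~\ref{prop:vorticity}) gives, along the flow $\pu^{y_0}$ of the transport velocity, that $\tfrac{1}{C_0}\abs{\Omega_0(y_0)}^2 \le \abs{\Omega(\pu^{y_0}(s),s)}^2 \le C_0\abs{\Omega_0(y_0)}^2$; since $\abs{\omega} = (\alpha S)^{\sfrac 1\alpha}\abs{\Omega}$ and $S$ is bounded above and below by $\eqref{S-bound}$, the same two-sided comparison holds (after enlarging $C_0$) for $\abs{\omega}$ along Lagrangian trajectories. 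The remaining issue is a geometric/transport one: one must check that the trajectories which at time $T_*$ (i.e.\ $s = +\infty$) lie in the ball $B(0,\eps^{\sfrac 34})$ originate at time $-\eps$ from within the ball $B(0,2\eps^{\sfrac 34})$ where the hypothesis \eqref{vort-assume} holds. This is where I expect the main obstacle: one needs to show that the physical-space flow $X(\xcal,t)$ (or equivalently the composition of the flow $\phi^{x_0}$ of $\Vcal$ with the coordinate changes \eqref{eq:tilde:x:def}, \eqref{x-sheep}) moves points by at most $\OO(\eps)$ — more precisely by less than $\eps^{\sfrac 34}$ — over the time interval $[-\eps, T_*]$. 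This follows from $\abs{T_* + \eps} \lesssim \eps$ (by \eqref{T*-bound}) combined with the uniform bound $\norm{\mathring u}_{L^\infty} \lesssim M^{\sfrac 14}$ and $\norm{\Vcal}_{L^\infty} \lesssim M^{\sfrac 14}$ established inside the proof of Proposition~\ref{prop:vorticity}, which give a displacement of size $\lesssim M^{\sfrac 14}\eps \ll \eps^{\sfrac 34}$ for $\eps$ small; one must also account for the $\OO(\eps)$ shifts coming from $\xi(t)$ (bounded by \eqref{xi*-bound}, i.e.\ $\abs{\xi_*}\le M\eps$) and the sheep-shear $f$ (bounded by $2\eps$ via \eqref{eq:speed:bound}), all of which are $\OO(\eps) \ll \eps^{\sfrac 34}$. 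Putting these together: given $\xcal \in B(0,\eps^{\sfrac 34})$, its backward trajectory reaches at $t=-\eps$ a point in $B(0,\eps^{\sfrac 34} + C M\eps) \subset B(0,2\eps^{\sfrac 34})$, where $\abs{\omega(\cdot,-\eps)} \ge c_0$ by \eqref{vort-assume}; the two-sided trajectory estimate then forces $\abs{\omega(\xcal,T_*)} \ge \tfrac{c_0}{C_0}$, which is \eqref{vort-nonzero}. In particular $\abs{\omega(\xi_*,T_*)} \ge \tfrac{c_0}{C_0} > 0$ since by \eqref{xi*-bound} the blow up location $\xi_*$ lies in $B(0,M\eps) \subset B(0,\eps^{\sfrac 34})$, establishing nontrivial vorticity at the shock.
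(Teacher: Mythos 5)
Your proof of the first two items \eqref{density-bound-final} and \eqref{vorticity-bound-final} is essentially identical to the paper's. For the nontriviality \eqref{vort-nonzero}, however, you take a genuinely different route. The paper works with the \emph{physical} Lagrangian flow $X(\xcal,t)$ of $u$ and the Cauchy formula $\zeta(X(\xcal,t),t) = \nabla_{\!\xcal} X(\xcal,t)\cdot\zeta_0(\xcal)$ for the specific vorticity transport equation \eqref{svorticity}; the key technical step is to establish a two-sided bound $c_1 \le \abs{\nabla_{\!\xcal}X} \le c_2$, which is obtained by combining the determinant bound $\det\nabla_{\!\xcal}X \asymp 1$ (forced by conservation of mass and the density bound \eqref{density-final}) with a Gr\"onwall estimate on the tangential and normal components of $\nabla_{\!\xcal}X$ in the geometric basis $(\mathcal{N},\mathcal{T}^2,\mathcal{T}^3)$, see \eqref{vomit}--\eqref{buzzed-good-bro2}. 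You instead invoke the two-sided magnitude bound on $\abs{\Omega}$ along the self-similar trajectories $\pu^{y_0}$ that is stated in Theorem~\ref{thm:SS} (and which follows from the linear Gr\"onwall system \eqref{D-bound1} together with the smallness \eqref{boom4} of the normal component $\mathring\zeta\cdot\Ncal$), and then convert back to $\omega$ using $\abs{\omega} = (\alpha S)^{\sfrac 1\alpha}\abs{\Omega}$ and the two-sided sound-speed bound. Both approaches then close via the same $\OO(\eps)$ displacement estimate that shows trajectories starting in $B(0,\eps^{3/4})$ at $t=T_*$ traced backwards land in $B(0,2\eps^{3/4})$. The trade-off: your route is shorter for this corollary, since it reuses the specific-vorticity trajectory bound as a black box; the paper's route is heavier but in the process establishes the deformation-gradient bound $c_1 \le \abs{\nabla_{\!\xcal}X} \le c_2$, which is a separate stated conclusion of Theorem~\ref{thm:main} (the seventh bullet point), so it does double duty. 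If you were writing this as part of the proof of Theorem~\ref{thm:main}, you would still need that bullet from somewhere; if the corollary is viewed in isolation your argument is cleaner. One small caveat worth noting: the two-sided bound on $\abs{\Omega}$ in Theorem~\ref{thm:SS} is not literally proven in Proposition~\ref{prop:vorticity} (which only records the upper bound); the lower bound is implicit in the same Gr\"onwall argument combined with \eqref{boom4}, and it is only good modulo an $\OO(\eps^{\sfrac 15})$ additive error, so the argument requires $\eps$ small relative to $c_0$, which is the natural reading of the hypothesis.
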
 
\begin{proof}[Proof of Corollary \ref{cor:vort}] 
Using the the identities  \eqref{vort00}, \eqref{sv-sheep}, and  \eqref{svort-trammy},  we have that
$$
\Omega(y,s) = \tilde \zeta (\tilde x, t) =  \frac{\tilde \omega( \tilde x,t) }{ \tilde \rho(\tilde x, t) } \,,
$$
and hence
 from 
Proposition \ref{prop:vorticity}, it follows that
\begin{equation}\label{svort-final}
\norm{ \frac{\tilde \omega( \cdot ,t) }{ \tilde \rho(\cdot , t) }  }_{L^\infty} 
\leq 2 \,. 
\end{equation} 
for $t\in[-\eps, T_*)$.
Next, using the identities \eqref{eq:tilde:u:def}, \eqref{sigma-sheep}, and \eqref{S-trammy}, we find that
$$
 (\alpha S(y,s))^ {\frac{1}{\alpha }}  = (\alpha \tilde \sigma(x,t))^ {\frac{1}{\alpha }} =\tilde \rho( \tilde x,t)  \,,
$$
so that by Proposition \ref{prop:sound}, the estimate \eqref{density-bound-final} immediately follows.   Then, with the definition of the
transformation \eqref{eq:tilde:u:def}, we have that
\begin{align} \label{density-final}
 \left( \alpha (\tfrac{\kappa_0}{2}-  \eps^ {\sfrac{1}{8}} )\right)^ {\sfrac{1}{\alpha }}   \le  \rho ( \xcal  , t)  \le   \left( \alpha (\tfrac{\kappa_0}{2} +  \eps^ {\sfrac{1}{8}} )\right)^ {\sfrac{1}{\alpha }} \ \ \text{ for all } \ t\in[-\eps, T_*)\,,  \xcal  \in \mathbb{R}^3   \,. 
\end{align} 
The bounds \eqref{svort-final} and \eqref{density-final} together show that \eqref{vorticity-bound-final} holds for $\eps$ taken sufficiently small with respect to $\kappa_0$.

From  \eqref{a_ansatz} and \eqref{eq:UdotN:S}, $U= \tfrac 12 \left( \kappa + e^{-\frac s2} W + Z\right)\Ncal + A_\nu \Tcal^\nu$.
By \eqref{e:space_time_conv}, \eqref{eq:W_decay}, \eqref{eq:Z_bootstrap}, \eqref{eq:A_bootstrap}, and \eqref{kappa-kappa0},
\begin{align*} 
\snorm{U}_{L^ \infty } & \le \tfrac{1}{2} e^{-\frac{s}{2}} \snorm{W}_{L^ \infty }+  \tfrac{1}{2}  \snorm{Z}_{L^ \infty }+\snorm{A}_{L^ \infty }    + {\tfrac{1}{2}} \abs{ \kappa - \kappa _0} + \tfrac{1}{2} \abs{ \kappa _0} \\
& \le 2 \eps^ {\frac{1}{6}}  + \tfrac{1}{2} M^2 \eps^ {\frac{3}{2}} + \tfrac{3}{2} M \eps + \tfrac{\kappa_0}{2}  \le \tfrac{\kappa_0}{2}  + \eps^ {\frac{1}{8}} \,.
\end{align*} 

Let $X(\xcal, t)$  denote the Lagrangian flow of $u$:   $\p_t X( \xcal, t) =  u(  \xcal, X( \xcal, t))$ for $t \in (-\eps, T_*)$ such that $X( \xcal, -\eps) = \xcal$.   
Then, 
\begin{align} 
\frac{d}{dt} \p_{\xcal_j} X^i  & = (\p_{\xcal_k}  u^i \circ X) \p_{\xcal_j}X^k \label{eq:DX0}   \,.
\end{align}

We shall make use of the transformations \eqref{eq:tilde:x:def} and \eqref{eq:tilde:u:def} to relate $\p_{\tilde x}$ derivates of $\tilde u(\tilde x, t)$  with $\p_{\xcal}$
derivatives of $u(\xcal, t)$.   It is convenient to define the normal and tangent vectors that are function of $\xcal$, so we set
$$
\mathcal{N} (\check \xcal,t) = R(t) \Ncal (\check {\tilde x},t) \,, \ \ \mathcal{T} ^\nu (\check \xcal,t) = R(t) \Tcal ^\nu(\check {\tilde x},t) \,.
$$
We then have that $ u \cdot \mathcal{N} = \tilde u \cdot \Ncal$ and
\begin{align} 
\p_{\xcal_k} (u \cdot \mathcal{N} )  \mathcal{N} _k = \p_{\tilde x_j} (\tilde u \cdot \Ncal) R^T_{jk} R_{km} \Ncal_m = \p_{\tilde x_j} (\tilde u \cdot \Ncal)\Ncal_j \,.
 \label{early-buzz}
\end{align} 
By \eqref{early-buzz} and Lemma~\ref{lem:div} we obtain
\begin{align} 
\p_{\xcal_k} (u \cdot \mathcal{N} )  \mathcal{N} _k
&=\operatorname{div}_{\tilde x} \tu  -\Tcal^\nu_j  \p_{\tilde x_j} \tilde a_\nu
- (\tu\cdot \Ncal)  \p_{\tilde x_\mu}\Ncal_\mu 
-  \tilde a_\nu \p_{\tilde x_\mu} \Tcal^\nu_\mu \,. \label{buzz-buzz}
\end{align} 
We then write \eqref{eq:DX0} as
\begin{align*} 
\frac{d}{dt} \p_{\xcal_j} X^i 
& = \left(  \p_{\xcal_k}(u\cdot {\mathcal{N}})  {\mathcal{N}}_i + (u\cdot {\mathcal{N}}) \p_{\xcal_k}{\mathcal{N}}_i    
+ \p_{\xcal_k}a_\nu {\mathcal{T}}^\nu_i + a_\nu \p_{\xcal_k}{\mathcal{T}}^\nu_i    \right)  \circ X \ \p_{\xcal_j}X^k  \,,
\end{align*} 
and expand
$$
\p_{\xcal_j}X^k = \p_{\xcal_j}X^m \mathcal{N} _m \mathcal{N} _k + \p_{\xcal_j}X^m \mathcal{T} ^\mu_m \mathcal{T} ^\mu_k  \,.
$$
We then have that
\begin{subequations} 
\label{vomit}
\begin{align} 
& \frac{d}{dt}\left( \p_{\xcal_j} X^i  \ {\mathcal{T}}^\nu_i \circ X  \right)  \notag\\
&=
\left(   ({\mathcal{N}} \cdot \nabla_{\!\xcal})a_\nu  + (u \cdot {\mathcal{N}})({\mathcal{N}} \cdot \nabla_{\!\xcal}){\mathcal{N}}_i  {\mathcal{T}}^\nu_i 
+( \dot{\mathcal{T}}^\nu + \mathcal{T}^\nu,_\gamma u_\gamma  ) \cdot  {\mathcal{N}}  \right)  \circ X \ \left( \p_{\xcal_j}X^k \ {\mathcal{N}}_k \circ X\right) \notag \\
&  
+ \Bigl(   ({\mathcal{T}}^\mu  \cdot \nabla_{\!\xcal}) a_\nu  + (u \cdot {\mathcal{N}}) ({\mathcal{T}}^\mu \cdot \nabla_{\!\xcal}){\mathcal{N}}_i {\mathcal{T}}^\nu_i   
+ ( \dot{\mathcal{T}}^\nu  +  \mathcal{T} ^\nu,_\gamma u_\gamma )\cdot  {\mathcal{T}}^\mu \Bigr) 
 \circ X \ \left( \p_{\xcal_j}X^k \ {\mathcal{T}}^\mu_k \circ X\right) \,, \label{tan-comp} \\
& \frac{d}{dt}\left( \p_{\xcal_j} X^i  \ {\mathcal{N}}_i \circ X  \right) 
\notag\\
&=
\Bigl(   ({\mathcal{N}} \cdot \nabla_{\!\xcal})(u \cdot {\mathcal{N}}) + a_\nu({\mathcal{N}} \cdot \nabla_{\!\xcal}){\mathcal{T}}^\nu_i  {\mathcal{N}}_i   \Bigr)  \circ X \ \left( \p_{\xcal_j}X^k \ {\mathcal{N}}_k \circ X\right) \notag\\
&  
+ \left(   ({\mathcal{T}}^\mu  \cdot \nabla_{\!\xcal})(u \cdot {\mathcal{N}}) + a_\nu({\mathcal{T}}^\mu \cdot \nabla_{\!\xcal}){\mathcal{T}}^\nu_i {\mathcal{N}}_i+ (\dot{\mathcal{N}} + \mathcal{N},_\nu u_\nu ) \cdot  {\mathcal{T}}^\mu   \right) 
 \circ X \ \left( \p_{\xcal_j}X^k \ {\mathcal{T}}^\mu_k \circ X\right) \,.  \label{nor-comp}
\end{align}  
\end{subequations}

In Lagrangian coordinates, conservation
of mass can be written as $\rho \circ X = (\det \nabla_{\!\xcal} X) ^{-1} \rho_0$. Hence, by  \eqref{density-final}, there exists $C_X >0$ such that
\begin{align}
\tfrac{1}{C_X} \le  \det ( \nabla_{\!\xcal} X(\xcal, t) )  \le C_X \ \ \text{ for all } \ t\in[-\eps, T_*)\,, x \in \mathbb{R}^3   \,.  \label{light-buzz}
\end{align} 
The kinematic identity
\begin{align*} 
\frac{d}{dt}\det \nabla_{\!\xcal} X = \det \nabla_{\!\xcal} X  \operatorname{div_{\xcal}} u \circ X
\end{align*} 
leads to
\begin{align} 
 \det \nabla_{\!\xcal} X(\xcal, t) =\exp \int_{-\eps}^t (\operatorname{div}_{\xcal}  u \circ X)(\xcal, t') dt' \,, \label{nice-buzz}
\end{align} 
and hence from \eqref{damn-thats-nice}, \eqref{light-buzz} and \eqref{nice-buzz}, 
\begin{align} 
\tfrac{1}{C_X} \le \exp \int_{-\eps}^{T_*} (\operatorname{div}_{\xcal}  u \circ X)(\xcal, t') dt' \le C_X \,. \label{feeling-good}
\end{align} 
It is clear from the transformations \eqref{eq:tilde:x:def} and \eqref{eq:tilde:u:def} that 
\begin{align} 
\tfrac{1}{C_X} \le \exp \int_{-\eps}^{T_*} (\operatorname{div}_{\tilde x}  \tilde u \circ X)(\tilde x, t') dt' \le C_X  \label{even-better-mofo}
\end{align} 
and from   \eqref{damn-thats-nice}, \eqref{page48}, \eqref{even-better-mofo}, and  \eqref{buzz-buzz},
\begin{align} 
 \exp \int_{-\eps}^{T_*}  (\mathcal{N} _j  \p_{\xcal_j} (u\cdot   \mathcal{N} )) \circ X  dt' \le C \,. \label{buzzed-good-bro}
\end{align} 
By possibly enlarging the constant $C$ in \eqref{buzzed-good-bro},  by  \eqref{def_f}, \eqref{tangent}, \eqref{normal},  \eqref{damn-thats-nice}, and \eqref{page48}, we obtain
\begin{align} 
 \exp \int_{-\eps}^{T_*} \abs{ \diamondsuit } dt' \le C \,, \label{buzzed-good-bro2}
\end{align} 
where $\diamondsuit $ denotes one of the $10$ remaining exponential stretchers in \eqref{vomit}.   Consequently, 
taking the inner-product of \eqref{tan-comp} with $\p_{\xcal_j}X^k \ {\mathcal{T}}^\nu_k \circ X $ and summing this with the inner-product of 
\eqref{nor-comp} and $\p_{\xcal_j}X^k \ {\mathcal{N}}_k \circ X $ and applying Gronwall, we find that 
$$
\abs{\p_{\xcal_j}X^k \ {\mathcal{N}}_k \circ X}^2 + \abs{\p_{\xcal_j}X^k \ {\mathcal{T}}^\nu_k \circ X}^2 = \abs{ \nabla_{\!\xcal} X}^2  \le C  \,,
$$
since $X$ is the identity map at time $t=-\eps$.  
This implies that the eigenvalues of $ \nabla X$ are uniformly bounded from above on the time interval
$[-\eps, T_*)$, and therefore by \eqref{light-buzz}, the eigenvalues are bounded in absolute value from below by $\lambda_{\operatorname{min}} >0$.
Using the Lagrangian version of \eqref{svorticity}, which is given by,
$$
\zeta( X(\xcal, t), t) = \nabla_{\!\xcal} X(\xcal, t)  \cdot \zeta_0(\xcal) \,,
$$
we see that on the set that $\zeta_0(\xcal) \ge c_0$, we have that
\begin{align} 
\abs{ \zeta( X(\xcal, t), t)} \ge \lambda_{\operatorname{min}}  c_0 \,, \label{wish_I_was_drunk}
\end{align} 

 Since
$X(\xcal, T_*) - X(\xcal, -\eps) = \int_{-\eps}^{T_*} u(X(\xcal, s)) ds$,  and  $\| u \|_{L^ \infty } = \| U \|_{L^ \infty }$ we have from \eqref{T*-bound} that
\begin{align} 
\snorm{X(\cdot , T_*) - X(\cdot , -\eps) }_{L^ \infty }  & \le (T_* + \eps) \| u \|_{L^ \infty }   \le (2M^2 \eps^2 + \eps) ( \tfrac{\kappa_0}{2}  + \eps^ {\frac{1}{8}}) 
\le \eps \kappa_0 \,. \label{X-bound1}
\end{align} 
It follow from \eqref{density-final} and \eqref{wish_I_was_drunk} that if the condition  \eqref{vort-assume} on the initial vorticity holds, then 
\eqref{vort-nonzero} and this concludes the proof.
\end{proof} 

\subsection{Convergence to stationary solution}\label{s:CSS}
\begin{theorem}[Convergence to stationary solution]\label{thm-2week-delay}
There exists a $10$-dimensional symmetric $3$-tensor $\mathcal{A}$ such that, with $\bar W_ \mathcal{A} $ defined in Appendix~\ref{sec:delay}, we have that the solution $W(\cdot,s)$ of \eqref{eq:euler:ss:a} satisfies
$$
\lim_{s\to \infty } W(y,s)  = \bar W_ \mathcal{A} (y) 
$$
for any fixed $y \in \RR^3$.
\end{theorem}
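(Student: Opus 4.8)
The plan is to establish Theorem~\ref{thm-2week-delay} by combining the exponential convergence already encoded in the bootstrap estimates for $\tilde W = W - \bar W$ with the rigidity of stationary solutions of the self-similar Burgers equation. First I would observe that the forcing term $\tilde F_W$ and the various $s$-dependent coefficients ($\beta_\tau - 1$, $\dot f$, $G_W$, $h_W$, $\dot\kappa$, etc.) appearing in the evolution equation \eqref{eq:tilde:W:evo} all decay exponentially in $s$, as quantified in Lemma~\ref{lem:forcing} (e.g.\ \eqref{eq:Ftilde_est}, which carries a prefactor $M\eps^{\frac16}$ but needs to be upgraded to a genuine $e^{-\delta s}$ decay) and in the $\dot\kappa$, $\dot\tau$ bounds of Section~\ref{sec:dynamic:closure}. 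The point is that \eqref{euler-ss}, via the substitution $W = \bar W + \tilde W$, reads schematically
\begin{align*}
\p_s \tilde W + (\beta_\tau \Jcal \p_1 \bar W - \tfrac12)\tilde W + (\mathcal V_W\cdot\nabla)\tilde W = \tilde F_W,
\end{align*}
so that along the Lagrangian flow $\pw^{y_0}$ the quantity $\tilde W\circ\pw^{y_0}$ is governed by an ODE with a damping term and an exponentially small forcing. For $y_0$ in the Taylor region $\abs{y_0}\le\ell$ the damping is not sign-definite, but for $\abs{y_0}\ge\ell$ Lemma~\ref{lem:escape} shows the trajectory escapes to infinity and the weighted bounds of Section~\ref{sec:W} already give $\abs{\tilde W(y,s)}\lesssim \eps^{1/11}\eta^{1/6}(y)$ uniformly; what remains is to show that, for each fixed $y$, $\tilde W(y,s)$ actually converges as $s\to\infty$ rather than merely staying small.

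The key steps, in order, would be: (1) Promote the $\abs{\gamma}\le 2$ bounds on $\tilde W$ to show that $\p^\gamma \tilde W(0,s)$ converges as $s\to\infty$ for $\abs{\gamma}=3$; by \eqref{eq:constraints} we already have $\p^\gamma\tilde W(0,s)\equiv 0$ for $\abs{\gamma}\le 2$, and from \eqref{cool-k} we know $\abs{\p_s(\p^\gamma\tilde W)^0}\lesssim e^{-(\frac12-\frac{4}{2k-7})s}$, which is integrable in $s$; hence $\mathcal A_\alpha := \lim_{s\to\infty}\p^\alpha W(0,s) = \lim_{s\to\infty}\p^\alpha\tilde W(0,s) + \p^\alpha\bar W(0)$ exists for $\abs{\alpha}=3$, defining the symmetric $3$-tensor $\mathcal A$. (2) Use Proposition~\ref{prop-stationary-burgers} of Appendix~\ref{sec:delay} to produce the stationary solution $\bar W_{\mathcal A}$ of the self-similar $3$D Burgers equation \eqref{eq:Burgers:self:similar} matched to these $10$ parameters, and note it inherits the genericity and the weighted bounds \eqref{eq:W_decay}. (3) Set $\overline W := W - \bar W_{\mathcal A}$ and derive its evolution equation: since $\bar W_{\mathcal A}$ is an exact stationary solution of $3$D Burgers, the equation for $\overline W$ has the same structure as \eqref{eq:tilde:W:evo} but now with the additional feature that $\overline W$ and its first two derivatives vanish at $y=0$ \emph{and} $\p^\gamma\overline W(0,s)\to 0$ for $\abs{\gamma}=3$. (4) Run a Lagrangian/Gr\"onwall argument as in Section~\ref{sec:W}: on the escaping region $\abs{y}\ge\ell$ the trajectory estimate \eqref{eq:escape_from_LA} converts the (integrable, $\eps$-small) forcing and the $\eta^{-1/3}$ damping contributions into an $o(1)$ bound as $s\to\infty$; on $\abs{y}\le\ell$ one Taylor-expands, using that $\overline W$ vanishes to order $3$ at the origin with the third-order coefficient tending to zero, plus the fourth-derivative bound \eqref{eq:bootstrap:Wtilde4}, to get $\abs{\overline W(y,s)}\lesssim o(1)\abs{y}^3 + \eps^{1/10}(\log M)^{\abs{\check\gamma}}\abs{y}^4 \to 0$. (5) Conclude pointwise convergence $W(y,s)\to\bar W_{\mathcal A}(y)$ for every fixed $y$.

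The main obstacle I anticipate is Step (4) in the region of intermediate $y$, roughly $\ell\le\abs{y}\le\LLL$: here the forcing terms in the equation for $\overline W$ are only known to be $\OO(\eps^{1/12}\eta^{-1/3})$ (cf.~\eqref{eq:Ftilde_d1_est}, \eqref{eq:Ftilde_dcheck_est}), which is small but \emph{not} obviously decaying to zero as $s\to\infty$, and the damping gained from $\beta_\tau\Jcal\p_1 W$ is only of size $\eta^{-1/3}$, producing the logarithmically-in-$\ell$ large Gr\"onwall factors $\ell^{-C}$ seen in \eqref{tildeWfinal}. To get genuine convergence (not just smallness) one must exploit that \emph{all} the time-dependent perturbative coefficients ($\beta_\tau-1$, $G_W$, $h_W$, $\dot\kappa$, the geometric quantities $\Ncal-\Ncal_0$, etc., as well as $Z$ and $A$ themselves) tend to zero as $s\to\infty$ — this is where Lemma~\ref{lem:phiZ}, the $Z,A$ decay in \eqref{eq:Z_bootstrap}--\eqref{eq:A_bootstrap}, and the convergence of the modulation variables $\tau\to T_*$, $\xi\to\xi_*$, $\Ncal\to\Ncal_*$ all get used — so that the limiting equation for $\overline W$ is \emph{exactly} the homogeneous linearized $3$D Burgers flow, for which $\overline W\equiv 0$ is the unique decaying solution compatible with the vanishing constraints at $y=0$. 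The spectral stability of the linearized operator (Proposition~12 of Collot--Ghoul--Masmoudi, cited after \eqref{genericity}) then underlies why no nontrivial bounded-in-appropriate-norm stationary limit can persist. Making this asymptotic-autonomy argument quantitative — i.e.\ producing an explicit $\omega(1)$ rate so that the Gr\"onwall integral over $[s_0,s]$ genuinely vanishes as $s_0,s\to\infty$ — is the technical heart of the proof; everything else is a repackaging of bounds already established in Sections~\ref{sec:Lagrangian}--\ref{sec:W}.
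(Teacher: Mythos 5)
Your Step (1)--(3) match the paper exactly: $\mathcal A$ is defined via the convergence of $\p^\gamma W(0,s)$ for $\abs{\gamma}=3$, which follows from the integrability in $s$ of \eqref{cool-k}; $\bar W_{\mathcal A}$ is supplied by Proposition~\ref{prop-stationary-burgers}; and the working object is $\tilde W_{\mathcal A}=W-\bar W_{\mathcal A}$, whose evolution you correctly identify as a perturbed self-similar Burgers equation. Your obstacle paragraph also correctly diagnoses the crux: the Section~\ref{sec:W} bounds are $\OO(\eps^{c})$ but not $o(1)$ in $s$, so one needs a genuine decay mechanism. Where you go wrong is in the proposed resolution, in two ways.

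First, you never isolate the quantitative forcing estimate that the paper actually uses. The paper collects \emph{all} the $s$-dependent perturbations ($G_W$, $h_W$, $\beta_\tau-1$, $\Jcal-1$, $\dot\kappa$, $F_W$, $\dot Q$, $\check\nabla Z^0$, \dots) into a single residual $F$ for the pure-Burgers transport of $\tilde W_{\mathcal A}$, and then proves the pointwise bound $\abs{F}\lesssim e^{-s/3}(1+\abs{y}^2)$ (equation \eqref{e:force:decay}); obtaining this requires the dynamic identity \eqref{eq:dot:Q:1} to cancel the non-decaying parts of $\dot Q_{1\nu}$ against $\beta_2 e^{s/2}\check\nabla Z^0$, cf.~\eqref{eq:GWWW2}. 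Saying only that the various coefficients ``tend to zero'' does not give you this bound, and without it the Gr\"onwall integral does not close.

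Second, your Step~(4) argument near $y=0$ does not actually produce convergence. You write $\abs{\tilde W_{\mathcal A}(y,s)}\lesssim o(1)\abs{y}^3 + \eps^{1/10}(\log M)^{\abs{\check\gamma}}\abs{y}^4\to 0$, but the second term is $s$-independent and does \emph{not} vanish as $s\to\infty$ for fixed $y$; the bootstrap bound \eqref{eq:bootstrap:Wtilde4} gives smallness, not decay. The paper's mechanism is different and essential: after Taylor-expanding to get $\abs{\tilde W_{\mathcal A}(y_0,s_0)}\lesssim\delta+M^6\abs{y_0}^4$ at a \emph{variable} basepoint $y_0$, one integrates along the pure self-similar Burgers trajectory $\Phi^{y_0}$ (not $\pw^{y_0}$), which escapes at rate $e^{(s-s_0)/5}$ by \eqref{ignoramus2}. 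The time to travel from $\abs{y_0}$ to $\LLL$ is $\sim\tfrac52\log(\LLL/\abs{y_0})$, over which the anti-damping $(\p_1\bar W_{\mathcal A}-\tfrac12)\ge-\tfrac32$ produces a Gr\"onwall growth $\sim\abs{y_0}^{-15/4}$. The crucial cancellation is $\abs{y_0}^4\cdot\abs{y_0}^{-15/4}=\abs{y_0}^{1/4}\to 0$ as $\abs{y_0}\to 0$ with $s_0\to\infty$ (so $\delta\le\abs{y_0}^4$). This ``smallness-near-zero defeats Gr\"onwall-growth'' balance is what replaces the spectral-stability/linearized-rigidity argument you propose in your obstacle paragraph — a result the paper neither uses nor needs here. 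Without this escape argument your proof has a hole both on $\abs{y}\le\ell$ (where you only get bounded, not vanishing) and on $\ell\le\abs{y}\le\LLL$ (where the damping $\eta^{-1/3}$ along $\pw^{y_0}$ only gives the $\ell^{-C}$-loss estimates of \eqref{tildeWfinal}, which again are not $o(1)$).
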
 
\begin{proof}[Proof of Theorem \ref{thm-2week-delay}]
We will first show that as $s\rightarrow\infty$, that the equation \eqref{eq:euler:ss:a}, converges pointwise to the self-similar Burgers equation
\begin{align*} 
\partial_s W-\tfrac12  W+\left(  W+\tfrac32 y_1\right)\partial_1  W+ \tfrac12 \check y \cdot\check\nabla  W=0\,.
\end{align*} 
To do this, we write \eqref{eq:euler:ss:a} as
\begin{align*} 
\partial_s W-\tfrac12  W+\left(  W+\tfrac32 y_1\right)\partial_1  W+ \tfrac12 \check y \cdot\check\nabla  W=F\,.
\end{align*} 
where
\begin{align*}
F:=F_W-e^{-\frac s2}\beta_{\tau} \dot \kappa+(W-g_W)\partial_1 W+h_W\cdot\check\nabla W\,. 
\end{align*}
The aim is to show uniform decay of $F$.

From \eqref{eq:gW}, \eqref{eq:acceleration:bound}, \eqref{eq:beta:tau}, \eqref{eq:W_decay}, \eqref{e:h_estimates}, and \eqref{eq:F_WZ_deriv_est}, we have that
\begin{align}\label{eq:FFF}
\abs{F}\les e^{-\frac s2}+\abs{G_W}
\end{align}
Thus we must show uniform decay of $G_W$.
 Recalling the definition of $G_W$ in \eqref{eq:gW}, and applying \eqref{eq:speed:bound}, \eqref{eq:dot:Q},\eqref{eq:beta:tau}, \eqref{eq:GW:hW:0}, \eqref{e:bounds_on_garbage},  \eqref{eq:V:bnd}, together with the fact that we are taking $\kappa\leq M$, we find that
\begin{align}
\abs{G_W}
&\les M e^{-\frac s2}\abs{\check y}^2 +e^{\frac s2}\abs{ \kappa  + \beta_2 Z + 2\beta_1V\cdot \nn  }
\notag \\
&\les  M e^{-\frac s2} \abs{\check y}+
e^{\frac s2}\sabs{\kappa  + \beta_2 Z^0 - 2\beta_1 (R^T \dot \xi)_1  }+ \abs{V}\abs{\Ncal-e_1}  \notag \\
&\quad \qquad +\abs{\dot Q_{11} \left( e^{-s}y_1 +  {\tfrac 12 e^{-\frac s2} \phi_{\nu\mu} y_\nu y_\mu} \right)}+
\abs{\beta_2e^{\frac s2}(Z-Z^0)+ 2\beta_1\dot Q_{1\nu}y_\nu} \notag \\
&\les  e^{-\frac s3}(\abs{y}+1) + \abs{y}\norm{\nabla V}_{L^{\infty}}+e^{\frac s2}\abs{Z-Z^0-\check\nabla Z^0 \cdot \check y}+\abs{\p_1 Z^0 y_1} \notag \\
&\quad \qquad +
\abs{\beta_2e^{\frac s2}\check\nabla Z^0 \cdot\check y+ 2\beta_1\dot Q_{1\nu}y_\nu} \notag \\
&\les  e^{-\frac s3}(1+\abs{y}^2)+
\abs{\beta_2e^{\frac s2}\check\nabla Z^0 \cdot\check y+ 2\beta_1\dot Q_{1\nu}y_\nu}
\,. \label{eq:GWWW}
\end{align}
The identity \eqref{eq:dot:Q:1}, together with the bounds  \eqref{mod-boot}, \eqref{eq:dot:Q}, \eqref{eq:beta:tau},  \eqref{eq:Z_bootstrap}, \eqref{eq:A_bootstrap}, and \eqref{eq:GW:hW:0}, shows that
\begin{align}\label{eq:GWWW2}
\abs{\beta_2e^{\frac s2}\p_\nu Z^0+2\beta_1\dot Q_{1\nu}}\les e^{-\frac s3}\,,
\end{align}
and thus,  using \eqref{eq:FFF}, \eqref{eq:GWWW} and \eqref{eq:GWWW2}, we conclude that
\begin{align}\label{e:force:decay}
\abs{F}\les  e^{-\frac s3}(1+\abs{y}^2)\,.
\end{align}

With $\bar W_{\mathcal A}$ denoting the stationary solution constructed in Appendix~\ref{sec:delay} whose Taylor coefficients about $y=0$ match those of  
$\lim_{s \to \infty }W(y,s)$ up to third order, we define
$$
\tilde W_{\mathcal A}= W- \bar  W_{\mathcal A}\,,
$$
which satisfies the equation
\begin{align}\label{eq:tilde:WA}
( \p_s+\p_1 \bar W_{\mathcal A}- \tfrac{1}{2} )  \tilde W _{\mathcal A}
+  \left(  W + \tfrac{3}{2}   y_1  \right)  \p_{1}  \tilde W_{\mathcal A}
+  \tfrac{1}{2} y_\mu  \p_{\mu} \tilde W _{\mathcal A}
&=  F \,.
\end{align}
In particular, since $\lim_{s \to \infty } D^3 W(0,s) = D^3 W_ \mathcal{A} (0)$, for $\delta >0$, there exists $s_\delta \ge -\log \eps$ such that
\begin{align} 
\abs{D^3 \tilde W(0, s_\delta)} \le \delta \,.  \label{ignoramus1}
\end{align} 

An application of Lemma~\ref{lem:GN} to the function $D^2 W$ and the estimate \eqref{eq:W_decay} yields
\begin{align} 
\snorm{ D^4 W}_{L^ \infty } \les \| W\|_{\dot H^m}^{\frac{4}{2m-7}}  \| D^2W\|_{L^ \infty }^{\frac{2m-11}{2m-7}} \les M^{\frac{10m-11}{2m-7}} \les M^6 \,,
\label{dull-bound}
\end{align} 
for $m \ge 18$.   
Now fix $\delta>0$ and $s_0\geq s_\delta$.   We also fix a point $y_0$.
Using \eqref{ignoramus1}, \eqref{dull-bound}, and the fundamental theorem of calculus, we obtain that
\begin{align} 
\abs{ \tilde W_ \mathcal{A}(y_0, s_0 )} \les \delta + \abs{y_0}^4 M^6 \,.
\label{eq:tilde:WA:initial}
\end{align} 
Here, we have made use of the fact that $\p^\gamma\tilde W_ \mathcal{A} (0, s_0) = 0$ for $\abs{\gamma} \le 2$.

Next, consider the Burgers trajectory $\Phi^{y_0}(s)$, defined by
\begin{subequations} 
\label{burgers-flow}
\begin{alignat}{2}
\p_s \Phi^{y_0}&=\left(W\circ \Phi^{y_0}+\tfrac32 \Phi^{y_0}_1,\tfrac12 \Phi^{y_0}_2,\tfrac 12 \Phi^{y_0}_3\right)  \qquad && s> s_0 \,, \\
 \Phi^{y_0}(s_0) & = y_0 \,.
\end{alignat} 
\end{subequations} 
From the bootstrap $\abs{\partial_1 W}\leq 1$ for $\abs{y}\le \mathcal{L} $,  the explicit formula for $\bar W$ which yields $\bar W(0,\check y)=0$, 
the fundamental theorem of calculus, and the bounds \eqref{eq:W_decay} and \eqref{eq:bootstrap:Wtilde2} , we obtain that
\begin{align*}
\abs{W(y)}\leq\abs{W(y_1,\check y)-W(0,\check y)} + \abs{\tilde W(0,\check y)}&\leq \abs{y_1} + \eps^{\frac{1}{13}}  \abs{\check y} \text{ for } \abs{y}\leq \mathcal L  \,,
\end{align*}
and therefore $y\cdot \left(W+\tfrac32 y_1,\tfrac12 y_2,\tfrac 12 y_3\right)\ge \frac 25 \abs{y}^2$ whenever  $\abs{y}\leq \mathcal L$. 
 It follows from \eqref{burgers-flow}, that
$$
\p_s\abs{\Phi^{y_0}(s)}^2 \ge \tfrac 45 \abs{\Phi^{y_0}}^2\,,
$$
and that
\begin{align} 
\abs{\Phi^{y_0}(s)} \ge \abs{y_0} e^ {\frac{2}{5}(s-s_0)}  \,.  \label{ignoramus2}
\end{align} 
Notice, then, that this trajectory will move at least a distance of length one in the time increment $s-s_0=-\frac{5}{2}\log\abs{y_0} \to \infty $ as $\abs{y_0} \to 0$.
Moreover,
 from \eqref{ignoramus2}, we have that 
\begin{align} 
\abs{\Phi^{y_0}(s_0-\tfrac{5}{2}\log \abs{y_0}+\tfrac{5}{2}\log \mathcal L)}\geq  \mathcal L\,.  \label{ignoramus3}
\end{align} 

Returning now to the evolution equation  \eqref{eq:tilde:WA}, we shall first consider the case that $\abs{y}\le  \mathcal{L} $.
We use the fact that the anti-damping term $(\p_1 \bar W_ \mathcal{A} - \frac{1}{2} )
\tilde W_ \mathcal{A} \ge -\frac{3}{2} \tilde W_ \mathcal{A}$ since  $\abs{\p_1\bar W_{\mathcal A}}\leq 1$.
As a consequence of the forcing estimate \eqref{e:force:decay} and  the initial condition bound \eqref{eq:tilde:WA:initial}, 
we apply the  Gr\"onwall inequality on the time interval $s\in [s_0\,,  s_0-\frac{5}{2}\log\abs{y_0}+\frac{5}{2}\log \mathcal L]$ to obtain that
\begin{equation}\label{womby}
\abs{\tilde W_{\mathcal A}\circ \Phi^{y_0} (s)}\les e^{\frac 32 (s-s_0)} M^6( \abs{y_0}^4+\delta)\les \abs{y_0}^ {-\frac{15}{4}} \mathcal{L} ^{\frac32} M^6
(\abs{y_0}+ \delta ) \les
M^6\mathcal L^{\frac32}\abs{y_0}^{\frac1{4}} \,,
\end{equation}
where we have assumed that $s_0\ge s_\delta$ is taken sufficiently large so that $\delta\leq \abs{y_0}^4$.

By continuity of $\Phi^{y_0}(s)$, we see from \eqref{ignoramus3} that   for any $y_*$ such that $\abs{y_*} \in [\abs{y_0}\,,  \mathcal L]$, there exists 
$ s_* \in [s_0\,, s_0-\frac{5}{2}\log \abs{y_0}+\frac{5}{2}\log \mathcal L]$ such that
\[\Phi^{y_0}(s_*)=y_*,\]
and hence by \eqref{womby},  we obtain that
\begin{equation}
\abs{\tilde W_{\mathcal A}(y_*,s_*)}\les M^6\mathcal L^{\frac32}\abs{y_0}^{\frac1{4}}\,.  \label{ignoramus4}
\end{equation}
By letting $\abs{y_0} \to 0$, any point $y_* \in (0, \mathcal{L} ]$ is equal to $\Phi^{y_0}(s_*)$ for some $y_0$ approaching the origin. Hence,
by continuity, taking $s\rightarrow \infty$ and  letting $\abs{y_0}\rightarrow 0$ in \eqref{ignoramus4}, we have that for any fixed $\abs{y}\leq \mathcal L$, 
\begin{equation}
\lim_{s\rightarrow \infty}\abs{\tilde W_{\mathcal A}(y,s)}=0\,. \label{ignoramus5}
\end{equation}
Furthermore the convergence in uniform on the interval $[0, \mathcal L]$.

It remains to establish the convergence as $s \to \infty $ for the case that   $\abs{y}\geq \mathcal L$.   We 
fix $\delta>0$. From  \eqref{ignoramus5}, there exists an $s_0\geq -\log\eps$ sufficiently large,  such that
\begin{align} 
\abs{\tilde W_{\mathcal A}(y_0,s_0)}\leq \delta \text{ for } \abs{y_0}=\mathcal L\,.   \label{ignoramus6}
\end{align} 
We again apply the Gronwall inequality to  \eqref{eq:tilde:WA}, but now on the time interval $s \in [s_0 , s_0-\tfrac13\log \delta]$.
We find that
\begin{align}
\abs{\tilde W_{\mathcal A}\circ \Phi^{y_0}(s)}\les e^{\frac 32 (s-s_0)} \delta\les \delta^{\frac12} \,.   \label{ignoramus7}
\end{align}

For all $\abs{y}\geq \LLL= \eps^{-\frac{1}{10}}$,
$$
\abs{W(y)} \leq  (1+\eps^{\frac{1}{20}})  \eta^{\frac 16}(y) \leq  (1+\eps^{\frac{1}{20}})^2 \abs{y}   
$$
and so, it follows that
\begin{align*}
y\cdot \left(W+\tfrac32 y_1,\tfrac12 y_2,\tfrac 12 y_3\right)
&\geq y_1^2 + \tfrac 12 \abs{y}^2 - \abs{y_1}  (1+\eps^{\frac{1}{20}})^2 \abs{y}    
\geq \tfrac{1}{2}\abs{y}^2 - \tfrac 14  (1+\eps^{\frac{1}{20}})^4 \abs{y}^2   
\geq \tfrac 15 \abs{y}^2 \,,
\end{align*} 
and hence for $\abs{y_0} \ge \mathcal{L} $, 
\begin{align} 
\abs{\Phi^{y_0}(s)} \ge \abs{y_0} e^ {\frac{1}{5}(s-s_0)}  \,.  \label{ignoramus8}
\end{align} 
Thus, for $s_0 \leq s\leq s_0-\frac34\log \delta$, \eqref{ignoramus8} shows that
\begin{align} 
\Phi^{y_0}(s) \geq \delta^{-\frac1{15} } \mathcal{L} \,.  \label{ignoramus9}
\end{align} 
By continuity, we see from \eqref{ignoramus9} that   for any $y$ such that $\abs{y} \in [ \mathcal L, \delta^{-\frac1{15} } \mathcal{L} ]$, there exists 
$s \in [s_0 , s_0-\tfrac13\log \delta]$ such that
\[\Phi^{y_0}(s)=y,\]
and hence by \eqref{ignoramus7}, 
\begin{equation}
\abs{\tilde W_{\mathcal A}(y,s)}\les  \delta ^ {\frac{1}{2}}  \,.  \notag
\end{equation}
Thus, for any fixed $y$, taking $ \delta \to 0$ and $s \to \infty $ shows that $\tilde W(y,s) \to 0$. This completes the proof.
\end{proof}

\subsection{Proof of Theorem~\ref{thm:SS}}

The system of equations \eqref{euler-ss} for $(W,Z,A)$, with initial data $(W_0, Z_0, Z_0)$ satisfying the conditions of the theorem, is locally well-posed.   In particular,
because the transformations from \eqref{eq:Euler2} to \eqref{euler-ss} are smooth for sufficiently short time, we use the fact that  \eqref{eq:Euler2} is
 locally well-posed in Sobolev spaces and has a well-known continuation principle  (see, for example,  \cite{Ma1984}): 
Letting $\text{U}=(u,\sigma): \mathbb{R}^3  \times \mathbb{R}  \to \mathbb{R}^3  \times \mathbb{R}  _+$ with
initial data $\text{U}_0 = \text{U}( \cdot , -\eps) \in H^k$ for some $k \ge 3$,  there exists a unique local-in-time  solution to the Euler equations \eqref{eq:Euler} 
satisfying $\text{U} \in C([-\eps,T), H^k)$.  Moreover, if $\norm{\text{U}( \cdot , t)}_{C^1} \le K < \infty $ for all $t\in [-\eps, T)$, then there exists $ T_1 > T$, such that $\text{U}$ extends  to a
solution of \eqref{eq:Euler2} satisfying  $\text{U} \in C([-\eps,T_1), H^k)$.  This implies that
$(W,Z,A)$ are continuous-in-time with values in  $ H^k$ and define a local unique solution to \eqref{euler-ss} with initial data $(W_0, Z_0, Z_0)$.   Moreover, the evolution of the modulation functions is described by the  system of ten nonlinear ODEs  \eqref{eq:dot:phi:dot:n} and \eqref{eq:dot:kappa:dot:tau}.
This system also has local-in-time existence and uniqueness as discussed in Remark \ref{rem:local:constraints}.
In Sections \ref{sec:dynamic:closure}--\ref{sec:energy} we close the bootstrap stipulated in Section \ref{sec:bootstrap}, and thus obtain global-in-time solutions with bounds given by the bootstrap.

In particular, the closure of the bootstrap shows that solutions $(W,Z,A)$ to  \eqref{euler-ss} exist  globally in self-similar time,  that
$(W,Z,A) \in C([-\log\eps,+ \infty ); H^k) \cap C^1([-\log\eps,+ \infty ); H^{k-1})$, and that the estimates stated in Theorem \ref{thm:SS} are verified.  
Theorem \ref{thm-2week-delay} shows that $ \lim_{s\to +\infty } W(y,s)= \bar W_ \mathcal{A}  $, where $\bar W_ \mathcal{A} $ is a $C^ \infty $ stationary solution of
the 3D self-similar Burgers equation described in Appendix  \ref{sec:delay}.   Moreover, $\bar W_ \mathcal{A}$ satisfies the conditions stated in
Theorem \ref{thm:SS}.  The  bootstrap estimates
\eqref{mod-boot} then show that the modulation functions are in $C^1[-\eps, T_*)$.  This completes the proof. 

Let us now provide a brief summary of the closure of the bootstrap given  in  Sections \ref{sec:dynamic:closure}--\ref{sec:energy}, which
consisted of the following five steps:
\begin{itemize}[itemsep=2pt,parsep=2pt,leftmargin=.3in]
 \item[(A)] $L^ \infty $ bounds for $\p^\gamma W$ in different
spatial regions for $\abs{\gamma} \le 4$; 
 \item[(B)]  $L^ \infty $ bounds for $\Omega$;
 \item[(C)]  $L^ \infty $ bounds for $\p^\gamma Z$, and $\p^\gamma A$ for $\abs{\gamma}\le 2$;   
 \item[(D)]  $L^2 $ bounds for $\p^\gamma W$, $\p^\gamma Z$, and $\p^\gamma A$ for $\abs{\gamma}=k$, $k\ge 18$;  and
 \item[(E)]  bounds for the modulation functions.
\end{itemize}

(A) We   split the analysis for $W$ into three spatial regions in the support $\XXX(s)$, required to close the bootstrap assumptions
\eqref{eq:W_decay}--\eqref{eq:bootstrap:Wtilde3:at:0}.  The first region ($\abs{y}\leq \ell$) was a small  neighborhood of $y=0$ where the Taylor series of the solution was used.
 The second (large) intermediate region ($\ell \leq \abs{y}\leq \LLL$) was chosen so that $W(y,s)$ and some of its derivatives  remained close to $\bar W$, while the 
third spatial region ($\abs{y}\geq \LLL$) allowed $W$ to decrease to zero at the boundary of $\XXX(s)$, while maintaining important bounds on derivatives.

%
We began our study in the first region $\abs{y} \le \ell$.  Our analysis relied on the structure of the equations satisfied by the  perturbation function $\tilde W(y,s)= W(y,s)-\bar W(y)$ and its
derivatives,  given by $\p^\gamma \tilde W$ by \eqref{eq:tilde:W:evo} and  \eqref{eq:p:gamma:tilde:W:evo}.    As we  showed in 
\eqref{eq:p:gamma:tilde:W:damping},  for $\abs{\gamma}=4$ the damping term satisfies
$ D_{\tilde W}^{(\gamma)} \ge {\sfrac{1}{3}} $ and hence using the bootstrap assumptions, we obtained the $L^ \infty $
bound  \eqref{eq:end:of:november} for all $s\ge -\log\eps$, which closed the
bootstrap \eqref{eq:bootstrap:Wtilde4}.

The ten  time-dependent modulation functions $\kappa, \tau, n_\nu, \xi_i, \phi_{\nu \mu}$, solving the coupled system of ODE given by
\eqref{eq:dot:phi:dot:n} and \eqref{eq:dot:kappa:dot:tau}, were used to enforce the dynamic constraints  $\p^\gamma\tilde W(0,s)=0$ for $\abs{\gamma}=2$.
Using these conditions at $y=0$, and the $L^ \infty $ bound on $\p^\gamma \tilde W$ for $\gamma=4$, we obtained the bound \eqref{eq:Schweinsteiger:2}
for $\abs{\p^\gamma \tilde W(0,s)}$ for $\abs{\gamma} \le 3$, and this closed the bootstrap  \eqref{eq:bootstrap:Wtilde3:at:0}.   The fundamental theorem of 
calculus then closed the remaining bootstrap assumption  \eqref{eq:bootstrap:Wtilde:near:0} for $\abs{y}\le \ell$.

We next obtained $L^ \infty $ estimates for $\p^\gamma \tilde W$ in the region $\ell \le \abs{y} \le \mathcal{L} $.   We  relied on our estimates for trajectories defined
in \eqref{flows}--\eqref{traj}.  In particular, we  proved in Lemma  \ref{lem:escape} that for any
 $y_0 \in \RR^3$ such that $\abs{y_0} \geq \ell$ and $s_0 \geq - \log \eps$, 
$\pw^{y_0}(s)\geq \abs{y_0}e^{\frac{s-s_0}{5}} $ for all  $s \geq s_0$.   Thanks to \eqref{e:space_time_conv}, we were able to convert  temporal decay to spatial decay 
so that
the exponential {\it escape to infinity} of trajectories $\pwy$ provided the essential time-integrability of forcing and damping functions in
\eqref{eq:tilde:W:evo} and  \eqref{eq:p:gamma:tilde:W:evo}, when composed with $\pwy$.   Specifically, these equations were rewritten in weighted form as
\eqref{eq:tildeq}--\eqref{eq:Dq:def}, and then composed with $\pwy$, to which we applied Gr\"onwall's inequality.    We thus obtained
 the weighted estimate \eqref{tildeWfinal}
for $\tilde W$ as well as the weighted estimates for $ \nabla \tilde W$ in \eqref{p1tildeWfinal} and \eqref{p2tildeWfinal}, which closed the bootstrap assumptions
\eqref{eq:tildeW_decay}, which in turn,  as stated in Remark \ref{goodremark1}, closed the first three bootstrap assumption on $W$ in \eqref{eq:W_decay} for the region
$\abs{y} \le \mathcal{L} $.

It remained to close the $L^ \infty $ bootstrap assumptions for $\p^\gamma W$ for $\abs{\gamma}=2$ in the region
$\abs{y} \ge \ell$. We employed
the same type of weighted estimates along trajectories $\pwy$ as for the study of $ \nabla \tilde W$ above, and  thus established the bound \eqref{D2Wfinal} which, in 
conjunction with our choice of $\ell =(\log M)^{-5}$ satisfying \eqref{eq:kingfisher:3}, closed the bootstrap assumption in \eqref{eq:W_decay}.   
Finally, in the third spatial region  $\abs{y} \ge  \mathcal{L} $, using the same type of weighed estimates along trajectories $\pwy$, we obtained weighted estimates \eqref{Wlargey-final} for $W$ and  \eqref{p1Wlargey-final}--\eqref{p2Wlargey-final}  for  $ \nabla W$    
which closed the first three bootstrap assumptions in \eqref{eq:W_decay} for $\abs{y} \ge  \mathcal{L} $.   This completed the $L^ \infty $ estimates for $\p^\gamma W$.

(B) The specific vorticity estimates required a decomposition of the vector $\mrg$ into the normal component $\mrg \cdot \Ncal $ and the tangential components
$\mrg \cdot \Tcal^\nu$ as was done in \eqref{sv-ss}.  We observed  that these geometric components of specific vorticity have forcing functions \eqref{ssvort-force}
which are bounded;  therefore,  in  Proposition \ref{prop:vorticity}, we established the upper  bound \eqref{svort-bound}.    For the
self-similar sound speed $S$, we also  established the upper and lower bounds \eqref{S-bound} in  Proposition \ref{prop:sound}.

(C) We then closed the bootstrap assumptions \eqref{eq:Z_bootstrap} and  \eqref{eq:A_bootstrap} for $\p^\gamma Z$ and $\p^\gamma A$ with $\abs{\gamma} \le 2$.
To do so, we relied upon Lemma \ref{lem:phiZ}, wherein we proved that trajectories $\pzy(s)$ and $\pay(s)$ escape to infinity exponentially fast for all $y_0 \in \XXX_0$,
and also upon Corollary  \ref{cor:p1W} which  established the integrability (for all time) of both  $\p_1 W$ and $\p_1 \tilde W$ along these trajectories.  This then allowed
us to use weighted estimates for $\p^\gamma Z$ to obtain  the bounds \eqref{zztop1}--\eqref{zztop4} which closed the bootstrap assumptions \eqref{eq:Z_bootstrap}. 
The same type of weighted estimates for $A$ then yielded the bounds \eqref{A-estimates-1} which closed the bootstrap assumptions  \eqref{eq:A_bootstrap} for all
$\abs{\gamma} \le 2$ with $\gamma_1=0$.  For the latter case, we relied crucially on the previously obtained  specific vorticity estimates.
In particular, Lemma \ref{lem:remarkable:sheep:structure} proved that bounds on geometric components of specific vorticity give the desired $L^ \infty $ bounds on $\p_1 A$.

(D) In order to complete the bootstrap argument, we obtained $\dot H^k$-type energy estimates for the $(U,S)$-system of equations \eqref{US-euler-ss}.  The evolution for 
the differentiated system $(\p^\gamma U, \p^\gamma S)$ was computed in  \eqref{US-L2}--\eqref{S-gamma-forcing}.  
The main idea for closing the energy estimate
was to make use of the $L^ \infty$ bounds for $\p^\gamma W$ and $\p^\gamma Z$ with $\abs{\gamma}\le 2$ and for $\p^\gamma A$ with $\abs{\gamma}=1$.  Together
with the damping obtained when $k$ is chosen large enough, the lower-order $L^ \infty$ bounds effectively linearized the resulting damped differential inequalities which 
then lead to global-in-time bounds.   Instead of obtaining bounds for the $\dot H^k$-norm directly, we  instead obtained bounds for the weighted norm
$E_k^2(s) =
\sum_{|\gamma|=k} \lambda^\modckg  ( \norm{\p^\gamma U(\cdot,s)}_{L^2}^2 +\norm{\p^\gamma S(\cdot,s)}_{L^2}^2)$, where 
 $\lambda= \tfrac{\delta ^2}{12k^2} $,  $0 < \delta\le \tfrac{1}{32}$, and $k \ge 18$. 
 The energy method proceeded in the following manner:
 we considered the sum of the $L^2$ inner-product of \eqref{US-L2-U} with 
$\lambda^\modckg \p^\gamma U^i$ and the $L^2$ inner-product of  \eqref{US-L2-S} with $\lambda^\modckg  \p^\gamma S$.   We made use of a fundamental cancellation 
of terms containing $k$$+$$1$ derivatives that lead to the identity \eqref{energy0}, obtained the lower-bound on the damping, and employed the error bounds from Lemma \ref{lem:forcing:1}.  This lead us to the differential inequality $\tfrac{d}{ds} E_k^2 + 2 E_k^2  \leq  2 e^{-s} M^{4k-1} $ which then yielded the  desired $\dot H^k$ bound. 

(E) Closing the bootstrap assumptions for the modulation variables used the precise form of the ODE system  \eqref{eq:dot:phi:dot:n} and \eqref{eq:dot:kappa:dot:tau} and
relied on the bounds $W$, $Z$, $A$, and some of their partial derivatives at $y=0$.   The bounds \eqref{tau-final}--\eqref{n-final} closed the bootstrap assumptions
\eqref{mod-boot}.

\subsection{Proof of Theorem~\ref{thm:main}} 
The blow up time $T_*$ is uniquely determined by the formula \eqref{good-vlad}; the blow up location is defined by $\xi_* = \xi(T_*)$.  
The bounds \eqref{T*-bound} and \eqref{xi*-bound} shows that $\abs{T_*} = \OO(\eps^2)$ and $\abs{\xi_*} = \OO(\eps)$, respectively.  Moreover,
$\kappa(t)$ satisfies \eqref{kappa-kappa0}, and from \eqref{eq:phi:0:def} and  \eqref{eq:speed:bound}, 
for each $t \in [-\eps, T_*)$, we have that 
$\sabs{\Ncal(\check {\tilde x},t) - \Ncal_0(\check \xcal)} +\sabs{\Tcal^\nu(\check {\tilde x},t) - \Tcal_0^\nu(\check \xcal)}  = \OO(\eps) \,.$

By Theorem \ref{thm:SS}, $(W,Z,A) \in C([-\log\eps,+ \infty ); H^k)$ and since $U =  \tfrac{1}{2} ( e^{-\frac{s}{2}} W + \kappa+ Z )\Ncal + A_\nu \Tcal^\nu$ and
$S =  \tfrac{1}{2} ( e^{-\frac{s}{2}} W + \kappa- Z )\Ncal + A_\nu \Tcal^\nu$, then $(U,S) \in C([-\log\eps,+ \infty ); H^k)$.  The identities \eqref{trannies} together with
the change of variables \eqref{eq:y:s:def} show that 
$(\mathring u,\mathring \sigma) \in C([-\eps,T_* ); H^k)$.  It then follows from  the sheep shear coordinate and function transformation, \eqref{x-sheep} and \eqref{usigma-sheep},
together with the fact that $\abs{\phi} = \OO(\eps)$ from \eqref{eq:speed:bound}
 that $(\tilde u,\tilde \sigma) \in C([-\eps,T_* ); H^k)$.  Finally, the transformations \eqref{eq:tilde:x:def} and  \eqref{eq:tilde:u:def}  show that $(u, \sigma) \in C([-\eps,T_* ); H^k)$.  Clearly $\rho \in C([-\eps,T_* ); H^k)$ as well.

From the change of variables \eqref{x-sheep}, we have that
\begin{align*} 
\p_{\tilde x_1} \tilde w (\tilde x,t)  &= \p_{x_1} w(x,t) \,, \ \
\p_{\tilde x_\nu} \tilde w (\tilde x,t) 
= \p_{x_\nu} w(x,t) - \p_{x_1} w(x,t) \p_{x_\nu} f(\check x,t)   \,,
\end{align*}
so that by \eqref{normal}, this identity is written as
\begin{align*} 
\p_{\tilde x_j} \tilde w (\tilde x,t)  = \p_{x_1} w(x,t) \Jcal  \Ncal_j   + \delta _{j \mu} \p_{x_\mu} w(x,t) \,.
\end{align*} 
Hence, we see that
\begin{subequations} 
\begin{align}
(\Ncal \cdot \nabla_{\!\tilde x})\tilde w (\tilde x,t)  & = \p_{x_1} w(x,t) \Jcal + \Ncal_\mu\p_{x_\mu} w(x,t) = e^{s} \p_1 W(y,s) \Jcal + \Ncal_\mu\p_{\mu}  W(y,s) \,, 
\label{laphroaig10}\\
(\Tcal^\nu \cdot \nabla_{\!\tilde x})\tilde w (\tilde x,t)  & =  \Tcal^\nu_\mu\p_{x_\mu} w(x,t)  =   \Tcal^\nu_\mu\p_{\mu} W(y,s) \,. \label{cask-strength}
\end{align} 
\end{subequations} 
Using the definitions of the transformation \eqref{vort00}, \eqref{x-sheep}, \eqref{eq:y:s:def}, \eqref{w_ansatz}, the fact that $f(0,t)=0$, and the constraints \eqref{eq:constraints}, we see
from \eqref{laphroaig10} that
$$
(\Ncal \cdot \nabla_{\!\tilde x})\tilde w (\xi(t),t)  =  e^{s} \p_1 W(0,s) \Jcal + \Ncal_\mu\p_{\mu}  W(0,s) = -e^{s} = \tfrac{-1}{\tau(t) -t} \,,
$$
and hence $\lim_{t\to T_*} (\Ncal \cdot \nabla_{\!\tilde x})\tilde w (\xi(t),t) = - \infty $.  Moreover, from \eqref{ic-kappa0-phi0} and \eqref{e:bounds_on_garbage}, we have that
$\abs{\Jcal} \les 1+ \eps$ and $\abs{\Ncal_\nu} \les \eps^ {\frac{3}{2}} $, and so from \eqref{laphroaig10}, it  follows  that
$$
\tfrac{1}{2(T_*-t)} \leq  \norm{\Ncal \cdot \nabla_{\! \tilde x} \tilde w(\cdot,t)}_{L^\infty} \leq \tfrac{2}{T_*-t}  \text{ as } t \to T_* \,.
$$

By Theorem \ref{thm:SS}, we have that 
\begin{align*} 
\snorm{ e^ {\frac{3s}{2}} \p_1 Z }_{ L^ \infty}+\snorm{ e^ {\frac{3s}{2}} \p_1 A }_{ L^ \infty} +\snorm{ e^ {\frac{s}{2}} \check\nabla   Z }_{ L^ \infty}  \le M \eps^ {\frac{1}{2}}    \le M^ {\frac{1}{2}}\,, \ \ 
\snorm{ e^ {\frac{s}{2}} \check\nabla   Z }_{ L^ \infty}  \le M \eps^ {\frac{1}{2}}  \,, 
\end{align*} 
and hence by the transformation \eqref{eq:y:s:def}, \eqref{z_ansatz},  and  \eqref{a_ansatz}, 
\begin{align*} 
\snorm{  \nabla _{\!  x}  z }_{ L^ \infty} + \snorm{  \nabla _{\!  x}  a }_{ L^ \infty}  \les M  \,.
\end{align*} 
Since
\begin{align*} 
\p_{\tilde x_\nu} \tilde z (\tilde x,t) = \p_{x_\nu} z(x,t) - \p_{x_1} z(x,t) \p_{x_\nu} f(\check x,t) \text{ and } 
\p_{\tilde x_\nu} \tilde a (\tilde x,t) = \p_{x_\nu} a(x,t) - \p_{x_1} a(x,t) \p_{x_\nu} f(\check x,t) \,,
\end{align*} 
and hence
\begin{align*} 
\snorm{  \nabla _{\!  \tilde x}  \tilde z }_{ L^ \infty} + \snorm{  \nabla _{\!  \tilde x}  \tilde a }_{ L^ \infty}  \les M  \,.
\end{align*} 
By Corollary \ref{cor:vort}, 
$\snorm{ \tilde \rho^ \alpha ( \cdot , t) -  \alpha \tfrac{\kappa_0}{2} }_{ L^ \infty } \le \alpha  \eps^ {\frac{1}{8}}$  for all $t\in[-\eps, T_*] $, and hence $\rho$ is strictly positive
and bounded.  Now
$$\tilde u \cdot \Ncal  = \tfrac{1}{2} (\tilde w + \tilde z)\,, \ \ \   \rho = \left(\tfrac{ \alpha }{2} (\tilde w + \tilde z)\right)^ {\sfrac{1}{\alpha }} \,,$$
and hence \eqref{lagavulin16} immediately follows.   Finally,  Corollary \ref{cor:vort} establishes the claimed vorticity bounds.

\begin{remark}\label{rem:idiot}
Note that the $(\tilde w, \tilde z, \tilde a)$ as defined by \eqref{laphroaig18} are solutions to the system \eqref{eq:Euler-riemann}.   Thus,
one may obtain $(u, \rho)$ as a solution of \eqref{eq:Euler} and define $(\tilde w, \tilde z, \tilde a)$  by \eqref{laphroaig18} or  equivalently, one may 
directly solve  \eqref{eq:Euler-riemann} with the corresponding initial conditions.
\end{remark} 

\subsection{Open set of initial data, the proof of Theorem~\ref{thm:open:set:IC}}
\begin{proof}[Proof of Theorem~\ref{thm:open:set:IC}] 

Let us denote by $\tilde{\mathcal F}$ the set of initial data $(u_0, \sigma_0)(\xcal)$, or equivalently $(\tilde w_0, \tilde z_0, \tilde a_0)(\xcal)$, which are related via the identity \eqref{eq:tilde:wza:0}, which satisfy the hypothesis of Theorem~\ref{thm:main}: the support property \eqref{eq:support-init-x}, the $\tilde w_0(\xcal)$ bounds \eqref{IC-setup}--\eqref{check-two-w0}, the $\tilde z_0(\xcal)$ estimates in \eqref{eq:z0:ic}, the $\tilde a_0(\xcal)$ bounds in \eqref{eq:a0:ic}, the specific vorticity upper bound \eqref{eq:svort:IC}, and the Sobolev estimate \eqref{Hk-xland}.  We will let  $\mathcal F$ be a sufficiently small neighborhood of  $\tilde{\mathcal F}$ in the $H^{k}$ topology. The specific smallness  will be implicit in the arguments given below.

A first comment is in order regarding all the initial datum assumptions which are {\em inequalities}, namely \eqref{love-fosters}--\eqref{Hk-xland}. These initial datum bounds are technically not open conditions, since for convenience we have written ``$\leq$'' instead of ``$<$''. However, we note that all of these bounds can be slightly weakened by introducing a pre-factor that is close to $1$ without affecting any of the conclusions of the theorem. Therefore, we view \eqref{love-fosters}--\eqref{Hk-xland} as stable with respect to small perturbations. 

This leaves us to treat the assumption that $(\tilde w_0-\kappa_0, \tilde z_0,\tilde a_0)$ are supported in the set $\XX_0$ defined by \eqref{eq:support-init-x}, and the pointwise conditions on $\tilde w_0$ at $\xcal=0$ given in \eqref{IC-setup}--\eqref{eq:w0:power:series}. We first deal with the support issue, where we use the finite speed of propagation of the Euler system. After that, we explain why the invariances of the Euler equation allow us to relax the pointwise constraints at the origin. Due to finite speed of propagation, these two matters are completely unrelated: the second issue is around $\xcal=0$, while the first one is for $|\xcal|$ large. Thus, in the proof we completely disconnect these two matters.
 
Let $(u_0,\sigma_0)  \in \tilde{\mathcal F}$ and consider a small $H^k$ perturbation $(\bar u_0, \bar \sigma_0) $ which decays rapidly at infinity, but need not have compact support in $\XX_0$. By the local existence theory in $H^k$, from this perturbed initial datum 
\[
(u_0 + \bar u_0 ,\sigma_0 + \bar \sigma_0) =: (u_{0, \rm total} ,  \sigma_{0, \rm total} )
\] we have a maximal local in time $C^0_t H^k_x$ smooth solution of the 3D Euler system \eqref{eq:Euler2}. Let us denote this solution  as $(u_{\rm total} ,  \sigma_{\rm total})$, and let its maximal time of existence be $T_{\rm total}$. The standard continuation criterion implies that if $\int_{-\eps}^T \norm{u_{\rm total}}_{C^1}  < \infty$, then solution may be continued past $T$.

In addition to the set $\XX_0$ defined in \eqref{eq:support-init-x}, for $n\in \{1,2\}$ we introduce the nested cylinders 
\[
\XX_n = \left\{\abs{\xcal_1}\leq  \tfrac{1}{2^{n+1}}  \eps^ {\frac{1}{2}}  ,\abs{\check \xcal}\leq \tfrac{1}{2^{n}}  \eps^ {\frac{1}{6}}  \right\}\,.
\]
Clearly $\XX_3\subset  \XX_2 \subset \XX_1 \subset \XX_0$, and we have 
\begin{align}
\operatorname{dist}(\XX_{n+1},\XX_n^c) \geq \eps^{\frac 34} \, ,\qquad \mbox{for all} \qquad n\in \{0,1\}\,.
\label{eq:nest}
\end{align}
Let $\psi$ be a $C^\infty$ smooth non-negative cutoff function, with $\psi  \equiv 1$ on $\XX_1$ and $\psi^\sharp \equiv 0$ on $\XX_0^c$. Then, we define 
\begin{align*}
(u_0^\sharp,  \sigma_0^\sharp)(\xcal) 
&=(u_0 + \sigma_0) + \psi(\xcal) (\bar u_{0} , \bar  \sigma_{0})(\xcal) \,,
\\
(u_0^\flat,   \sigma_0^\flat)(\xcal) 
&= (1 - \psi(\xcal)) (\bar u_{0} , \bar  \sigma_{0})(\xcal) \,.
\end{align*}
By construction, the {\em inner} initial value $(u_0^\sharp,   \sigma_0^\sharp)$ is compactly supported in $\XX_0$ and  is a small $H^k$ disturbance of the  data $(u_0, \sigma_0)$ on $\XX_0$. Therefore, we can apply Theorem~\ref{thm:main} to this initial datum, and the resulting inner solution $(u^\sharp,   \sigma^\sharp)$ of the Euler system \eqref{eq:Euler2} satisfies all the conclusions of Theorem~\ref{thm:main} (with a suitably defined $(w^\sharp,z^\sharp,a^\sharp)$ defined as in \eqref{eq:tilde:wza:0}). In particular, we have a bound on the maximum  wave speed due to the bound
\begin{align} 
\norm{u^\sharp}_{L^\infty} +   {\norm{\sigma^\sharp }_{L^{\infty}}} \les \kappa_0\,, \label{this:is:trivial}
\end{align} 
and $(u^\sharp, \sigma^\sharp) \in C([-\eps,T_*);H^k)$  with $T_* = \OO(\eps^2)$\,. 
The key observation is that because  $(u_0^\sharp,  \sigma_0^\sharp)$ is identical to our perturbed initial datum $(u_{0, \rm total} , \sigma_{0, \rm total})$ on $\XX_1$ (the cutoff is identically equal to $1$ there), by using the finite speed of propagation and the uniqueness of smooth solutions to the compressible Euler system, from the bounds \eqref{eq:nest} and \eqref{this:is:trivial} we deduce that   
\begin{align}
(u^\sharp,   \sigma^\sharp)(\xcal,t) = (u_{\rm total} , \sigma_{\rm total})(\xcal,t) \qquad \mbox{on} \qquad \XX_2 \times [-\eps,T_*) \,.
\label{eq:inner:part:done}
\end{align}
In particular, because Theorem~\ref{thm:main} guarantees that the only singularity in $(u^\sharp,   \sigma^\sharp)$ occurs at $\xi_* = \OO(\eps)$ at time $T_*$, we know that 
\begin{align}
\label{eq:mushrooms}
\sup_{[-\eps,T_*)}  \norm{(u^\sharp,  \sigma^\sharp)}_{H^k(\XX_2^c)} \leq {\mathcal M}_{k,\eps}
\end{align}
for some constant ${\mathcal M}_{k,\eps}$, which depends  polynomially on $\eps^{-k}$ in view of \eqref{Hk-xland}.

It remains to analyze the total solution on the set $\XX_2^c$. For this purpose, write 
\begin{align}
(u_{\rm total} ,   \sigma_{\rm total} )(\xcal,t)  = (u^\sharp,   \sigma^\sharp)(\xcal,t) + (u^\flat, \sigma^\flat)(\xcal,t) 
\label{eq:total:part:done}
\end{align}
and note that $(u^\flat, \sigma^\flat)$ solves  a version of \eqref{eq:Euler2} where we also add linear terms due to $(u^\sharp,\sigma^\sharp)$:
\begin{subequations}
\label{eq:Euler22}
\begin{align}
\tfrac{1+\alpha}{2}\p_t u^\flat + ((u^\flat  + u^\sharp) \cdot \nabla_{\!\xcal}) u^\flat + \alpha \sigma^\flat \nabla_{\!\xcal} \sigma^\flat
&=(u^\flat \cdot \nabla_{\!\xcal}) u^\sharp  + \alpha \sigma^\flat \nabla_{\!\xcal} \sigma^\sharp + \alpha \sigma^\sharp \nabla_{\!\xcal} \sigma^\flat \,,  \label{eq:momentum3} \\
\tfrac{1+\alpha}{2}\partial_t \sigma^\flat + ((u^\flat  + u^\sharp)  \cdot \nabla_{\!\xcal}) \sigma^\flat  + \alpha \sigma^\flat \operatorname{div}_\xcal u^\flat
&= (u^\flat \cdot \nabla_{\!\xcal}) \sigma^\sharp + \alpha \sigma^\flat \operatorname{div}_\xcal u^\sharp + \alpha \sigma^\sharp \operatorname{div}_\xcal u^\flat \,,  \label{eq:mass3}\\
(u^\flat,\sigma^\flat)|_{t= -\eps} &= (u_0^\flat,\sigma_0^\flat)(\xcal) = (1-\psi(\xcal))(\bar u_{0} , \bar  \sigma_{0})(\xcal) \,.
\label{eq:IC3}
\end{align}
\end{subequations}
In particular, the initial condition in \eqref{eq:IC3} has small Sobolev norm, and is compactly supported in $\XX_1^c$, by the definition of the cutoff function $\psi$. Additionally, every  term in \eqref{eq:momentum3} and \eqref{eq:mass3} contains either a $u^\flat$ or a $\sigma^\flat$ term. Combined with \eqref{this:is:trivial}, the implication is that as long as the maximal wave speed due to $(u^\flat,\sigma^\flat)$ is bounded, e.g.~$\OO(1)$, then on the time interval $[-\eps,T_*)$ the support of the solution $(u^\flat,\sigma^\flat)$ cannot travel a distance larger than $\OO(\eps)$. Hence, due to \eqref{eq:nest}, we have that the support of $(u^\flat,\sigma^\flat)$ remains confined to $\XX_2^c$, again, conditional on an $\OO(1)$ bound for $\snorm{u^\flat}_{L^\infty} + \snorm{\sigma^\flat}_{L^\infty}$ (we have such a bound for short time, but it may not be clear that it holds uniformly on $[-\eps,T_*)$). Next, we recall that using a standard $H^3$ energy estimate for the system \eqref{eq:Euler22}, we may prove that
\begin{align*}
\tfrac{d}{dt} \snorm{(u^\flat,\sigma^\flat)}_{H^{k-1}}^2 \les  \snorm{(u^\flat,\sigma^\flat)}_{H^{k-1}}^3 + \snorm{(u^\flat,\sigma^\flat)}_{H^{k-1}}^2 \snorm{(u^\sharp,\sigma^\sharp)}_{H^k(\XX_2^c)}
\end{align*}
where the implicit constant only depends on $\alpha$ and $k\geq 18$, and we have used the aforementioned support property of $(u^\flat,\sigma^\flat)$. Since we have previously established in \eqref{eq:mushrooms} that $\snorm{(u^\sharp,\sigma^\sharp)}_{H^k(\XX_2^c)} \leq {\mathcal M}_{k,\eps}$ uniformly on $[-\eps,T_*)$, we deduce that if $T_*$ obeys
\begin{align}
\snorm{(u^\flat_0,\sigma^\flat_0)}_{H^{k-1}}^2 \exp\left(2 (T_*+\eps) {\mathcal M}_{k,\eps}\right) \leq 1
\label{eq:how:small:is:small?}
\end{align}
then uniformly on $[-\eps,T_*)$ we have $\snorm{(u^\flat,\sigma^\flat)}_{H^{k-1}} \les 1$; this bound also implies the desired $\OO(1)$ wave speed. To conclude the argument, all we have to do is to choose our initial disturbance $(\bar u_0,\bar \sigma_0)$ to have a small enough $H^{k-1}$ norm (in terms of $\eps$) so that \eqref{eq:how:small:is:small?} holds. We combined this $\OO(1)$ bound on the $H^{k-1}$ norm of the outer solution with \eqref{eq:inner:part:done} and \eqref{eq:total:part:done} to deduce that the total solution $(u_{\rm total},\sigma_{\rm total})$ behaves extremely tame on $\XX_2^c$, and its behavior is given by the bounds in Theorem~\ref{thm:main} on $\XX_2$. We have thus proven that one may indeed remove the strict support condition from the assumptions of Theorem~\ref{thm:main}, as desired.

It remains to show that the pointwise constraints \eqref{IC-setup}--\eqref{eq:w0:power:series} on $\tilde w_0$  can be turned into open conditions. First, we note cf.~\eqref{ic-kappa0-phi0} that Theorem~\ref{thm:main} allows for $\kappa_0$ to be taken in an open set, and by definition  $\eps$ is taken to be sufficiently small, thus also in an open set. As a consequence the conditions on $\tilde w_0(0)$ and $\partial_1 \tilde w_0(0)$  in \eqref{eq:w0:power:series} are open conditions.
It remains to show that by applying an affine coordinate change, we may replace  the assumptions \eqref{IC-setup}, \eqref{grad-d1-w0}, and the last equation in \eqref{eq:w0:power:series} by open conditions.
 
We start with the last condition in \eqref{eq:w0:power:series}.
We aim to show that if $\mathcal F$ is a sufficiently small neighborhood of $\tilde{\mathcal F}$, and $B\subset \mathbb R^3$ is a sufficiently small ball around the origin (with radius depending solely on $\eps$), then  there exists functions ${\mcal}_2,\mcal_3:B\times \mathcal F\rightarrow (-\sfrac12,\sfrac12)$ such that if we define the vector
\begin{equation}\label{eq:Fosters}
\mcal (\xcal, u_0,\sigma_0):=(\mcal_1,\mcal_2,\mcal_3) := \left((1- \mcal_2^2-\mcal_3^2)^{\frac 12}, \mcal_2,\mcal_3\right),
\end{equation}
then for any $\xcal\in B$ and $(u_0,\sigma_0)\in \mathcal F$  
\begin{equation}\label{eq:bud_light}
\mcal_j(\xcal,u_0,\sigma_0)  (\mcal(\xcal,u_0,\sigma_0)\times \nabla_{\! \xcal})  u_{0j}  +  (\mcal(\xcal,u_0,\sigma_0)\times \nabla_{\! \xcal})  \sigma_0  =0\,.
\end{equation}
We denote by $(m_2,m_3)$ two {\em free variables}, i.e. they do not depend on $(\xcal,u_0,\sigma_0)$, and are not to be confused with the pair $(\mcal_2,\mcal_3)$. In terms of $(m_2,m_3)$ we define the vector
\begin{align}
m := (m_1,m_2,m_3):=  \left( (1- m_2^2-m_3^2)^{\frac 12}, m_2,m_3\right) \,.
\label{eq:Fosters:steroids}
\end{align}
in analogy to \eqref{eq:Fosters}. Also in terms of $(m_2,m_3)$ we define the rotation matrix $R= R(m_2,m_3)$ using the definition \eqref{eq:R:def} with $m$ replacing $n$; more explicitly, replace $(n_2,n_3)$ with $(m_2,m_3)$ in \eqref{eq:R:def:detail}. Then, using $R$ we define two vectors which are orthogonal to the vector $m$ defined in \eqref{eq:Fosters:steroids}, as  
$$
\nu_\beta := \nu_\beta(m_2, m_3) := R(m_2,m_3) e_\beta \qquad \mbox{for} \qquad \beta \in \{2,3\}\,.
$$
By construction, $(m,\nu_2,\nu_3)$ form an orthonormal basis. Then, for each $\beta \in \{2,3\}$ define functions
$$
G_\beta(\xcal,u_0,\sigma_0, m_2,m_3):={m}_j  \nu_\beta \cdot \nabla_{\! \xcal} u_{0j}(\xcal) + \nu_\beta \cdot \nabla_{\! \xcal} \sigma_0(\xcal)
$$
where the summation is over $j \in \{1,2,3\}$. 
Thus one can rewrite \eqref{eq:bud_light} as
\begin{equation}\label{eq:tuborg}
G(\xcal,u_0,\sigma_0,\mcal_2(\xcal,u_0,\sigma_0),\mcal_3(\xcal,u_0,\sigma_0))=0
\end{equation}
with $G = (G_2,G_3)$.
By  \eqref{eq:w0:power:series} we have for $(u_0,\sigma_0)\in \tilde{\mathcal F}$ that
\begin{equation}\label{eq:VB1}
G(0,u_0,\sigma_0,0,0)=0\,.
\end{equation}
Moreover, employing the notation $\nabla_{m}f=(\partial_{m_2}f,\partial_{m_3}f)$, for $(u_0,\sigma_0)\in\tilde{\mathcal F}$ we have by \eqref{eq:w0:power:series} that
\begin{align}
\nabla_{\!  m}G(0,u_0,\sigma_0,0,0)&
=\begin{bmatrix}
-\p_1 (u_{01}+\sigma_0) & 0 \\
0 & -\p_1 (u_{01}+\sigma_0)
\end{bmatrix}
=  \eps^{-1}\begin{bmatrix}
1 & 0 \\
0& 1
\end{bmatrix}
\,.\label{eq:VB2}
\end{align}
By \eqref{grad-d1-w0} and \eqref{check-two-w0}, we have
\begin{align}
\nabla_{\! \xcal}G(0,u_0,\sigma_0,0,0)&= (\nabla_{\!\xcal} \p_{\xcal_2} (u_{01}+\sigma_0), \nabla_{\!\xcal} \p_{\xcal_2} (u_{01}+\sigma_0))|_{\xcal=0} \notag\\
&=
\begin{bmatrix}
0 & 0 \\
\p_{\xcal_2}\p_{\xcal_2} \tilde w_0(0) & \p_{\xcal_2}\p_{\xcal_3} \tilde w_0(0) \\
\p_{\xcal_2}\p_{\xcal_3} \tilde w_0(0) & \p_{\xcal_3}\p_{\xcal_3} \tilde w_0(0)
\end{bmatrix}
=\OO(1)\,.
\label{eq:VB3}
\end{align}
Using \eqref{eq:w0:pipi}, \eqref{Hk-xland},   and the interpolation Lemma~\ref{lem:GN} we also have 
\begin{equation}\label{eq:VB4}
\abs{\nabla_{\! m}^2G(\xcal,u_0,\sigma_0,m_2,m_3)}+\abs{\nabla_{\! \xcal}\nabla_{\! m}G(\xcal,u_0,\sigma_0,m_2,m_3)}+\abs{\nabla_{\! \xcal}^2G(\xcal,u_0,\sigma_0,m_2,m_3)}\les \eps^{-7}\,.
\end{equation}

For every $\delta>0$, if we assume $\mathcal F$ is a sufficiently small neighborhood of $\tilde{\mathcal F}$, then for $(u_0,\sigma_0)\in\mathcal F$, we can replace \eqref{eq:VB1}-\eqref{eq:VB4} with
\begin{subequations}
\label{eq:tsingtao}
\begin{align}\label{eq:tsingtao1}
G(0,u_0,\sigma_0,0,0)&=\OO(\delta)\,,\\\label{eq:tsingtao2}
\nabla_{\! m}G(0,u_0,\sigma_0,0,0)&
=\eps^{-1}
\Id
+\OO(\delta)\,,\\\label{eq:tsingtao3}
\nabla_{\! \xcal}G(0,u_0,\sigma_0,0,0)&=\OO(1)\,,
\\
\label{eq:tsingtao4}
\abs{\nabla_{\! \xcal,m}^2G(\xcal,u_0,\sigma_0,m_2,m_3)}&\les \eps^{-7}\,.
\end{align}
\end{subequations}
For {\em a  fixed $(u_0,\sigma_0)\in\mathcal F$}, now consider the map  $\Psi_{u_{0},\sigma_0}\colon \mathbb R^3\times (-\sfrac12,\sfrac12)^2 \to \mathbb R^3\times \mathbb R^2$ given by
\begin{equation}\label{eq:map:to:invert}
\Psi_{u_{0},\sigma_0} (\xcal,m_2,m_3) = (\xcal, G(\xcal,u_0,\sigma_0,m_2,m_3))
\end{equation}
with gradient with respect to $\xcal$ and $m$ given by in block form as
\[D\Psi_{u_{0},\sigma_0}:=\begin{bmatrix}
\Id & 0 \\
\nabla_{\xcal} G& \nabla_m G
\end{bmatrix}\,.\]
From \eqref{eq:tsingtao2} and \eqref{eq:tsingtao3}, we have $\operatorname{det} (D\Psi_{u_0,\sigma_0}) \geq \frac 12\eps^{-2}$, for $\delta \les 1$. Thus, by the inverse function theorem, for each $(u_0,\sigma_0)\in\mathcal F$, there exists an inverse map $\Psi_{u_{0},\sigma_0}^{-1}$ defined in a neighborhood of $(0,G(0,u_0,\sigma_0,0,0))$. Moreover, using \eqref{eq:tsingtao2}-\eqref{eq:tsingtao4}, we can infer that the domain of this inverse function $\Psi_{u_{0},\sigma_0}^{-1}$ contains a ball around $(0,G(0,u_0,\sigma_0,0,0))$ whose radius can be bounded  from below  in terms of $\eps$, independently of $\delta \les 1$. In particular, by assuming $\delta$ to be sufficiently small  in terms of $\eps$, as a consequence of \eqref{eq:VB1} and \eqref{eq:tsingtao1}, we can ensure that the domain of $\Psi_{u_{0},\sigma_0}^{-1}$ contains a ball $B$ centered at the origin with radius depending solely on $\eps$. In other words, assuming $\mathcal F$ is a sufficiently small neighborhood of $\tilde{\mathcal F}$, then $\Psi_{u_{0},\sigma_0}^{-1}$ is well defined on $B$, where $B$ is independent of $(u_0,\sigma_0) \in {\mathcal F}$. The key step is to define
\[
(\mcal_2,\mcal_3) := (\mcal_2,\mcal_3)(\xcal,u_0,\sigma_0):=\mathbb P_{\mcal} \Psi_{u_{0},\sigma_0}^{-1}(\xcal,0)
\, ,
\]
where $\mathbb P_{\mcal}$ is the projection of the vector $\Psi_{u_{0},\sigma_0}^{-1}(\xcal,0)$ onto its last two components. Note that as a consequence of \eqref{eq:tsingtao2}-\eqref{eq:tsingtao4}, we obtain
\begin{align}\label{eq:m_bnd}
\abs{\nabla_{\! \xcal} (\mcal_2,\mcal_3)}\les \abs{(D\Psi_{u_{0},\sigma_0})^{-1}} \les 1 \quad \mbox{and}\quad \abs{\nabla_{\! \xcal}^2 (\mcal_2,\mcal_3)} \les \abs{(D\Psi_{u_{0},\sigma_0})^{-1}} \abs{\nabla_{\! \xcal} (D\Psi_{u_{0},\sigma_0})}\les \eps^{-7}
\end{align}
for all $\xcal\in B$, where we reduce the radius of $B$  if required (dependent only on $\eps$). In order to see the first bound we note that $D\Psi_{u_{0},\sigma_0}$ is a lower triangular matrix. Then using \eqref{eq:tsingtao2} we obtain that $\operatorname{det}(D\Psi_{u_{0},\sigma_0})\geq \frac{1}{2\eps^2}$. Moreover, applying \eqref{eq:tsingtao2} and \eqref{eq:tsingtao3}, we can bound the entries of the cofactor matrix by a constant multiple of $\eps^{-2}$, from which we conclude
\[\abs{(D\Psi_{u_{0},\sigma_0})^{-1}}= \abs{{\rm Cof} D\Psi_{u_{0},\sigma_0}} \abs{\operatorname{det}(D\Psi_{u_{0},\sigma_0})}^{-1} \les 1\,.\]
Thus, we have identified the desired functions $(\mcal_2,\mcal_3)(\xcal,u_0,\sigma_0)$ such that \eqref{eq:tuborg}, and thus \eqref{eq:bud_light} holds for all $\xcal \in B$  and all $(u_0,\sigma_0) \in {\mathcal F}$.

Next, we turn to relaxing the constraint \eqref{grad-d1-w0}. For each  $(u_0,\sigma_0) \in\mathcal F$, we wish for find $\xcal\in B$ such that
\begin{align}\label{eq:H:goal}
H(\xcal, u_0,\sigma_0):= \big( (\nabla \partial_k u_{0j})(\xcal) \mcal_j(\xcal,u_0,\sigma_0)  + (\nabla \partial_k \sigma_0)(\xcal) \big) \mcal_k(\xcal,u_0,\sigma_0) =0\,.
\end{align}
Using \eqref{grad-d1-w0}, for $(u_0,\sigma_0)\in\tilde{\mathcal F}$ we have 
\begin{align}\label{eq:H0}
H(0,u_0,\sigma_0) =0\,,
\end{align}
where we used the identity $ \mcal(0,u_0,\sigma_0)=e_1$.
Moreover, we have 
\begin{align}
\nabla_{\! \xcal} H
&=(\nabla^2 \partial_k u_{0j})(\xcal) \mcal_j(x,u_0,\sigma_0) \mcal_k(\xcal,u_0,\sigma_0)+(\nabla \partial_k u_{0j})(\xcal) \otimes\nabla_{\! \xcal} (\mcal_j(x,u_0,\sigma_0) \mcal_k(\xcal,u_0,\sigma_0)) 
\notag \\
&\qquad + (\nabla^2 \partial_k \sigma_0)(\xcal)   \mcal_k(\xcal,u_0,\sigma_0)+(\nabla \partial_k \sigma_{0})(\xcal) \otimes\nabla_{\! \xcal}   \mcal_k(\xcal,u_0,\sigma_0) \, .
\label{shine:on:you:crazy:diamond}
\end{align}
For $(u_0,\sigma_0)\in\tilde{\mathcal F}$ and $\xcal=0$, by the definition of  $\bar w_\eps$ in \eqref{eq:bar:w:eps} and the property \eqref{eq:Hess_py1_barW} of $\bar W$, we have 
\begin{align}
&(\nabla^2 \partial_k u_{0j}) (0) \mcal_j(0,u_0,\sigma_0) \mcal_k(0,u_0,\sigma_0) + 
(\nabla^2 \partial_k  \sigma_0) (0)   \mcal_k(0,u_0,\sigma_0)   \notag\\
&\qquad =
(\nabla^2 \partial_1 (u_{01}+\sigma_0))(0)=\begin{bmatrix} 
6\eps^{-4} & 0 & 0\\
0 & 2\eps^{-2} & 0 \\
0 & 0 & 2\eps^{-2} \\
\end{bmatrix}+ \RSZ \label{eq:coors1}
\end{align}
where by  \eqref{eq:tilde:w0:3:derivative} and the fact that $k\geq 18$, the remainder $\RSZ$ is bounded as 
\begin{align}
\abs{\RSZ_{11}}\leq \eps^{- \frac 72 - \frac{1}{7}}\,, \qquad  \abs{\RSZ_{1\mu}} + \abs{\RSZ_{\mu1}} \leq \eps^{-\frac 52 - \frac{1}{7}}\,, \qquad  \abs{\RSZ_{\mu\nu}}\leq \eps^{- \frac 32 - \frac{1}{7}}\,.\label{eq:coors2}
\end{align}
By \eqref{grad-d1-w0}, \eqref{eq:w0:pipi}, \eqref{eq:z0:ic}, \eqref{eq:a0:ic}, \eqref{Hk-xland} (which implies by Sobolev embedding an estimate on $\p_{\xcal_1}^2 \tilde a_0$ and $\nabla_{\! \xcal} \p_{ \xcal_1}  \tilde a_0$, where we also use that $k\geq 18$) and \eqref{eq:m_bnd}
\begin{align}
&\abs{ (\partial_i \partial_k u_{0j})(0)  \, \nabla_{\! \xcal_\ell} (\mcal_j(\xcal,u_0,\sigma_0) \mcal_k(0,u_0,\sigma_0)) +  (\partial_i \partial_k \sigma_{0})(0) \, \nabla_{\! \xcal_\ell}  \mcal_k(0,u_0,\sigma_0)} \notag\\
&\qquad \les\begin{cases}
\abs{\p_1 \nabla \tilde z_0(0)}+\abs{\p_1 \nabla \tilde a_0(0)},&\mbox{if }i=1\\
\abs{\check\nabla^2 \tilde w_0(0)}+\abs{\check\nabla \nabla \tilde z_0(0)}+\abs{\check\nabla \nabla \tilde a_0(0)},&\mbox{otherwise} 
\end{cases} \notag\\
&\qquad \les\begin{cases}
\eps^{-\frac{3}{2} - \frac{1}{10}},&\mbox{if }i=1\\
\eps^{-\frac{1}{2} - \frac{1}{10}},&\mbox{otherwise} 
\end{cases}\label{eq:coors3}  \, .
\end{align}
Inserting the bounds \eqref{eq:coors1}--\eqref{eq:coors3} into identity \eqref{shine:on:you:crazy:diamond} we deduce that 
$$ 
\operatorname{det}(\nabla_{\! \xcal} H)(0,u_0,\sigma_0) \geq \eps^{-8}\,,
$$ for all $(u_0,\sigma_0)\in \tilde{\mathcal{F}}$.

Using a similar computation, whose details we omit to avoid redundancy, for $\xcal\in B$ and all $(u_0,\sigma_0)\in\tilde{\mathcal F}$, we may use \eqref{eq:m_bnd}, \eqref{eq:tilde:w0:4:derivative}, \eqref{Hk-xland}, and Gagliardo-Nirenberg-Sobolev to show
\begin{equation}\label{eq:coors4}
\abs{\nabla_{\! \xcal}^2 H}\leq \eps^{-9} \,.
\end{equation}
Therefore, we have established bounds for $H$ similar to those we have established earlier in \eqref{eq:tsingtao} for $G$, which will allow us to again apply the inverse function theorem. More precisely,  let us fix $(u_0,\sigma_0) \in \mathcal F$ and assuming again that $\mathcal F$ is a sufficiently small neighborhood of $\tilde{\mathcal F}$, the map $\Phi_{u_0,\sigma_0} \colon \mathbb R^3 \to \mathbb R^3$  given by
\begin{equation}\label{eq:map:to:invert2}
\Phi_{u_0,\sigma_0}(\xcal) = H(\xcal,u_0,\sigma_0)
\end{equation}
is invertible in a ball centered at $H(\xcal,u_0,\sigma_0)$, with a radius depending solely on $\eps$.  Due to \eqref{eq:H0} we may ensure that this ball contains the origin, and by appealing to \eqref{eq:H0}-\eqref{eq:coors4} and a similar argument to that used on to invert the map in \eqref{eq:map:to:invert}, by assuming $\mathcal F$ is a sufficiently small neighborhood of $\tilde{\mathcal F}$, the map $\Phi_{u_0,\sigma_0}$ defined in \eqref{eq:map:to:invert2} is shown to be invertible in a ball containing the origin, whose radius depends solely on $\eps$ and so is independent of $(u_0,\sigma_0) \in \mathcal F$.
This shows that for each $(u_0,\sigma_0) \in \mathcal F$ there exists $\xcal_0$ in a ball centered around the origin, such that \eqref{eq:H:goal} holds.

To conclude, for a given $(u_0,\sigma_0)\in\mathcal F$ we construct $\xcal_0,\mcal_2(\xcal_0,u_0,\sigma_0)$ and $\mcal_3(\xcal_0,u_0,\sigma_0)$ such that   \eqref{eq:bud_light} and \eqref{eq:H:goal} hold. That is, we have
\[\mcal \times \nabla_{\! \xcal} (\mcal\cdot u_0(\xcal_0)+\sigma_0(\xcal_0)) =0\qquad\mbox{and}\qquad
\nabla_{\! \xcal} (\mcal \cdot \nabla (\mcal\cdot u_0(\xcal_0)+\sigma_0(\xcal_0)) )=0\,.
\]
By the arguments above, we can ensure $\xcal_0,\mcal_2,\mcal_3$ are uniquely defined in a small ball around the origin and they can be made arbitrarily small by assuming that $\mathcal F$ is a sufficiently small neighborhood of $\tilde{\mathcal F}$. Then replacing $(u_0,\sigma_0)$ by
\[(\bar u_0,\bar \sigma_0)(\xcal)=(R^Tu_0(R(\xcal + \xcal_0),\sigma_0(R(\xcal + \xcal_0))\]
where $R$ is the rotation matrix defined in \eqref{eq:R:def} with $(\mcal_2,\mcal_3)$ replacing $(n_2,n_3)$;
then,  we have that $(\tilde u_0,\tilde \sigma_0)$ satisfy the conditions
\[
\check \nabla_{\!\xcal} (\bar u_{01} + \bar \sigma_0)(0) =0\quad\mbox{and}\quad
\nabla_{\! \xcal} \p_{\xcal_1} (\bar u_{01}+\bar \sigma_0)(0) =0\,.
\]
i.e.~the constraint \eqref{grad-d1-w0}   and the last equation in \eqref{eq:w0:power:series}, which was our goal.  To complete the proof, we note that by construction we have that  $\xcal_0$,  $\mcal_2$, and $\mcal_3$ are small and $\mathcal F$ is a sufficiently small neighborhood of $\tilde{\mathcal F}$;  thus, the global minimum of $\p_{\xcal_1}  (\bar u_{01}+\bar \sigma_0)$ must be attained very close to $0$. By the above formula, $\xcal =0$ is indeed a critical point of $\p_{\xcal_1} (\bar u_{01}+\bar \sigma_0)$, and using that the non-degeneracy condition \eqref{eq:tilde:w0:3:derivative} is stable under small perturbations, the minimality condition \eqref{IC-setup} also holds for $(\bar u_0,\bar \sigma_0)$ at $\xcal =0$.  This completes the proof of  Theorem~\ref{thm:open:set:IC}. 
\end{proof}

\appendix

\section{Appendices}\label{sec:toolshed}

\subsection{A family of self-similar solutions to the 3D Burgers equation}
\label{sec:delay}

\begin{proposition}[Stationary solutions for self-similar 3D Burgers]\label{prop-stationary-burgers}
Let $ \mathcal{A} $ be  a symmetric $3$-tensor   such that $ \mathcal{A}_{1jk} = \mathcal{M} _{jk}$ with $ \mathcal{M} $  a positive definite
symmetric matrix.   Then, there exists a $C^ \infty $ solution $\bar W_ \mathcal{A} $ to 
\begin{align}
- \tfrac 12 \bar W_{\mathcal A} + \left( \tfrac{3y_1}{2} + \bar W_{\mathcal A} \right) \p_1 \bar W_{\mathcal A} + \tfrac{\check y}{2} \cdot\check \nabla \bar W_{\mathcal A} = 0 \,,
\label{eq:EQUATION}
\end{align}
which has the following properties:
\begin{itemize} 
\item  $\bar W_{\mathcal A}(0) = 0$, $\p_1 \bar W_{\mathcal A}(0) = -1$, $\p_2 \bar W_{\mathcal A}(0) = 0$, 
\item $\p^\alpha \bar W_{\mathcal A}(0) = 0$ for  $|\alpha|$  even, 
\item  $\p^\alpha \bar W_{\mathcal A}(0) = \mathcal{A}_ \alpha  $  for $|\alpha| = 3$.   
 \end{itemize} 
\end{proposition}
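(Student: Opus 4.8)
The plan is to construct $\bar W_{\mathcal A}$ by the method of characteristics, following the same strategy used for the explicit solution $\bar W$ in Section~\ref{sec:wbar}, but now matching a prescribed third-order jet at the origin. First I would recall that the self-similar 3D Burgers equation \eqref{eq:EQUATION} is solved implicitly: if one looks for $\bar W_{\mathcal A}$ satisfying
\begin{align*}
\bar W_{\mathcal A}(y) + y_1 = \Xi\bigl( \bar W_{\mathcal A}(y) , \check y \bigr)
\end{align*}
for a suitable profile function, the characteristic ODEs force a scaling relation; concretely, the substitution $y_1 = \mathcal B^{-3/2}(\check y)\, z$, $\bar W_{\mathcal A} = \mathcal B^{-1/2}(\check y)\, V$ reduces matters to a $1$D self-similar Burgers-type equation in $(z,V)$ whenever $\mathcal B$ is chosen compatibly with the $\tfrac12 \check y\cdot\check\nabla$ scaling. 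For the isotropic choice $\mathcal M = \mathrm{Id}$, $\mathcal B(\check y) = (1+|\check y|^2)^{-1}$ recovers \eqref{eq:barW:def}; for a general positive-definite $\mathcal M$ one replaces $|\check y|^2$ by the quadratic form $\check y^{\mathsf T} \mathcal M^{-1}\check y$ (up to normalization), and the remaining freedom in the odd part of the $1$D profile is used to encode the purely-$y_1$ and mixed third derivatives contained in $\mathcal A$. I would set this up as: solve the implicit equation $z = V + V^3 + (\text{lower-order corrections determined by } \mathcal A)$, invert near $V=0$ by the implicit function theorem since $\partial_V z|_{0} = 1 \neq 0$, and verify $C^\infty$ regularity and global solvability exactly as in \cite{CaSmWa1996, CoGhMa2018}.

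The key steps, in order: (i) write down the characteristic system for \eqref{eq:EQUATION} and identify the conserved quantity along characteristics, obtaining the implicit representation $y_1 = \Psi(\bar W_{\mathcal A}, \check y)$ with $\Psi$ homogeneous under the parabolic scaling; (ii) expand $\Psi$ in a Taylor series at the origin and match coefficients so that $\bar W_{\mathcal A}(0)=0$, $\partial_1\bar W_{\mathcal A}(0)=-1$, $\check\nabla\bar W_{\mathcal A}(0)=0$, and $\partial^\alpha \bar W_{\mathcal A}(0)=\mathcal A_\alpha$ for $|\alpha|=3$ — here the condition $\mathcal A_{1jk}=\mathcal M_{jk}$ positive definite is exactly what guarantees $\partial_1\nabla^2\bar W_{\mathcal A}(0) = \mathcal M > 0$, ensuring the genericity/invertibility needed to run the implicit function theorem; (iii) invert the implicit relation globally in $y$ via a monotonicity argument (the map $V\mapsto \Psi(V,\check y)$ is strictly increasing for all $\check y$, which follows from the sign of $\partial_1\bar W_{\mathcal A}$ staying in $(-1,0]$, just as \eqref{eq:bar:W:properties}); (iv) prove $C^\infty$ regularity of $\bar W_{\mathcal A}$ away from and at $y=0$ by inspecting where $\partial_V\Psi$ vanishes and showing it only does so at the single blow-up point in a controlled, resolvable way; and (v) verify the parity statement $\partial^\alpha\bar W_{\mathcal A}(0)=0$ for $|\alpha|$ even, which is automatic from the structure of $\Psi$ (the even part of the $1$D profile vanishes because the scaling relation is odd in $V$).

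The main obstacle I expect is step (ii)–(iii): showing that an \emph{arbitrary} admissible third-order jet $\mathcal A$ can be realized by a \emph{globally defined}, smooth stationary solution, rather than just a local germ. Locally this is routine implicit-function-theorem territory, but the global continuation requires that the correction terms forced by $\mathcal A$ do not destroy the strict monotonicity of $V\mapsto\Psi(V,\check y)$ for large $|y|$; one must check that the leading parabolic-scaling terms dominate the $\mathcal A$-dependent perturbations at infinity. I would handle this by a two-scale argument: near the origin the jet condition is enforced by the IFT, while for $|y|$ large the solution is a small perturbation of the explicit profile $\bar W$ from \eqref{eq:barW:def}, and the same escape-to-infinity mechanism used in Lemma~\ref{lem:escape} (or rather its stationary counterpart) guarantees monotonicity persists. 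The genericity condition \eqref{genericity} for $\bar W_{\mathcal A}$ then follows immediately from $\partial_1\nabla^2\bar W_{\mathcal A}(0)=\mathcal M>0$, which was built in by construction.
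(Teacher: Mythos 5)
The proposal takes a genuinely different route than the paper, but it has a real gap. The paper's proof (Appendix~\ref{sec:delay}) constructs $\bar W_{\mathcal A}$ directly as a power series: it posits the ansatz $\bar W_{\mathcal A}(y) = -y_1 + \sum_{|\alpha|=3} \tfrac{\mathcal A_\alpha}{\alpha!} y^\alpha + \sum_{|\alpha|\geq 5,\,\text{odd}} a_\alpha y^\alpha$, inserts it into \eqref{eq:EQUATION} to obtain the recursion \eqref{e:sum:sum:summy}, bounds the coefficients by $|a_\alpha| \le C_{\alpha_1}C_{\alpha_2}C_{\alpha_3} D^{|\alpha|-2}$ using Catalan numbers to establish a positive radius of convergence, and then extends the local analytic germ to all of $\mathbb R^3$ by pushing forward along characteristics, verifying injectivity of the flow map and global smoothness. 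Your step (iii)--(iv) extension strategy is in the same spirit, but your local construction is different from and, as stated, weaker than the paper's.

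The concrete gap is in your step~(ii). A separable ansatz of the form
\begin{align*}
\bar W_{\mathcal A}(y) = \mathcal B^{-1/2}(\check y)\, V\bigl(\mathcal B^{3/2}(\check y)\,y_1\bigr)\,,
\qquad \mathcal B(\check y) = \bigl(1 + \check y^{\mathsf T}\mathcal M^{-1}\check y\bigr)^{-1}\,,
\end{align*}
cannot match an arbitrary admissible third-order jet. Since $\mathcal B$ depends only quadratically on $\check y$ and the argument of $V$ is proportional to $y_1$, a direct computation shows that at $y=0$ all mixed third derivatives $\partial_{11\nu}\bar W_{\mathcal A}(0)$ and all purely transverse ones $\partial_{\mu\nu\gamma}\bar W_{\mathcal A}(0)$ vanish identically; the ansatz also ties the off-diagonal entries of $\partial_1\nabla^2 \bar W_{\mathcal A}(0)$ to the quadratic form defining $\mathcal B$ rather than letting them be set freely. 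In total your construction has at most the $3$ parameters of a $2\times 2$ quadratic form plus the $1$ parameter of the $1$D profile, whereas $\mathcal A$ has $10$ independent components (the $6$ of $\mathcal M$ plus the $4$ purely Greek ones). Your phrase ``the remaining freedom in the odd part of the $1$D profile is used to encode the purely-$y_1$ and mixed third derivatives'' is precisely where the argument breaks: a single scalar profile $V$ cannot deliver the missing $6$ parameters, and there is no implicit-function-theorem trick inside a separable ansatz that does so. The paper sidesteps this entirely by \emph{prescribing} the full third-order jet in the Taylor series and letting the PDE recursion determine the higher coefficients; there is no reduction to a $1$D profile. You would need to abandon the separable representation and work with the full Taylor expansion (or some equally general local parametrization) before the IFT/characteristics machinery can be brought to bear. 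Your claim in step~(v) that parity is ``automatic from the structure of $\Psi$'' is believable but unverified in your framework; in the paper it follows because the recursion \eqref{e:sum:sum:summy} maps odd-degree data to odd-degree coefficients, a fact that requires the explicit power-series structure to check.
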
 
\begin{proof}[Proof of Proposition \ref{prop-stationary-burgers}]
We first construct an analytic  solution $W = W(y_1,\check y)$ of the 3D self-similar Burgers equation \eqref{eq:EQUATION} for 
$\abs{y} \le r_0$ with $r_0>0$ small, and to be specified below.
To constrict such a solution, we make the following power series ansatz:
\begin{align}
\bar W_{\mathcal A}(y) = - y_1 + \sum_{|\alpha|=3} \frac{\mathcal A_\alpha}{\alpha!} y^\alpha + \sum_{|\alpha|\geq 5, {\rm odd}} a_\alpha y^\alpha := \sum_{\alpha} a_\alpha y^\alpha
\label{eq:ANSATZ}
\end{align}
where $y^\alpha = y_1^{\alpha_1}  y_2^{\alpha_2} y_3^{\alpha_3}$. 
We note that the  properties listed in the statement of the Proposition are satisfied by any function with a convergent power series expansion as above.

Inserting \eqref{eq:ANSATZ} into \eqref{eq:EQUATION}, we deduce that for $\abs{\alpha}\geq 3$
\begin{align}
-\tfrac{1}{2} a_{\alpha}+\sum_{\beta+\gamma=\alpha+e_1}\gamma_1 a_{\beta}a_{\gamma}+\tfrac32 \alpha_1 a_{\alpha}+\tfrac12(\alpha_2+\alpha_3  )a_{\alpha}=0
\,.
\label{vodka:vodka}
\end{align}
Using that $a_{e_1}=-1$ we obtain the recursive expression for $\abs{\alpha}\geq 3$
\begin{align}\label{e:sum:sum:summy}
 a_{\alpha}=\tfrac{2}{3-\abs{\alpha}}\sum_{\substack{\beta+\gamma=\alpha+e_1,\\\beta,\gamma \neq \alpha}}\gamma_1 a_{\beta}a_{\gamma}\,.
\end{align}
To see that the formula provides a recursive definition, we note that since $a_0=0$, note that no term of the type $a_{\nu}$ for $\abs{\nu}> \abs{\alpha}$ appears  on the right hand side. Also note that the only terms of the type $a_{\nu}$ for $\abs{\nu}=\abs{\alpha}$ that appear on the right hand side have the property that $\abs{\check \nu}>\abs{\check\alpha}$.

We seek a bound of the type
\begin{equation}\label{eq:abnd}
a_{\alpha}\leq C_{\alpha_1}C_{\alpha_2}C_{\alpha_3} D^{\abs{\alpha}-2}
\end{equation}
for $\abs{\alpha}\geq 2$, where $C_n$ are Catalan numbers. The inequality \eqref{eq:abnd} is trivial for the case $\abs{\alpha}=2$ since in that case we have $a_{\alpha}=0$. Note that by choosing $D$ sufficiently large, dependent on $\mathcal A$, we obtain \eqref{eq:abnd} for all $\abs{\alpha=3}$. Finally, for $\abs{\alpha}\geq 4$, we may use that $a_{e_1}$ does not appear in the sum \eqref{e:sum:sum:summy} to conclude that
\begin{align*}
\abs{a_{\alpha}}&\leq \tfrac{2}{\abs{\alpha}} \sum_{\substack{\beta+\gamma=\alpha+e_1,\\\beta,\gamma \neq \alpha}}
\beta_1 C_{\beta_1}C_{\alpha_1+1-\beta_1}C_{\beta_2}C_{\alpha_2-\beta_2}C_{\beta_3}C_{\alpha_3-\beta_3}D^{\abs{\alpha}-3}
\\
&\leq  \tfrac{2\alpha_1}{\abs{\alpha}} C_{\alpha_1+2} C_{\alpha_2+1}C_{\alpha_3+1}D^{\abs{\alpha}-3}\\
&\leq  C_{\alpha_1} C_{\alpha_2}C_{\alpha_3}D^{\abs{\alpha}-2}
\end{align*}
where in the second line we used the identity $C_{n+1}=\sum_{j=0}^n C_j C_{n-j}$ and in the third line we used that $C_{n+1}\leq 4 C_n$ and assumed that $D\geq 512$.

From \eqref{eq:abnd} and the bound $C_n\leq 4^n$, we conclude that
\begin{equation}
a_{\alpha}\leq (4D)^{\abs{\alpha}}\,.  \label{Melbourne-breeds-tards}
\end{equation}
from which it immediately follows that the Taylor series \eqref{eq:ANSATZ} converges absolutely, with radius of convergence bounded from below by $r_0:=(8D)^{-1}$.  

Next, we substitute  the partial sum $P_n(y):=\sum_{\abs{ \alpha }=1}^n a_ \alpha y^\alpha  $ of the Taylor series in \eqref{eq:ANSATZ} into \eqref{eq:EQUATION}.  
We consider the expression for the nonlinear term, which by appealing to \eqref{vodka:vodka} becomes
\begin{align*} 
P_n \p_1 P_n 
&= \left( \sum_{\abs{\beta}=1}^n a_ \beta y^ \beta \right) \left( \sum_{ \abs{\gamma}=1}^n  \gamma _1 a_\gamma y_1^{ \gamma_1-1} y_2^{ \gamma_2} y_3^{ \gamma_3}\right) 
\notag\\
&= \sum_{\abs{\alpha}=1}^n y^\alpha \sum_{\substack{1\leq |\beta|,|\gamma|\leq n \\ \beta+\gamma=\alpha+e_1}} \gamma _1 a_\beta  a_\gamma + \sum_{\abs{\alpha} = n+1}^{2n} y^\alpha \sum_{\substack{1\leq |\beta|,|\gamma|\leq n \\ \beta+\gamma=\alpha+e_1}} \gamma_1 a_\beta a_\gamma
\notag\\
&= \left( \tfrac 12 P_n - \tfrac{3y_1}{2} \p_1 P_n - \tfrac{\check y}{2} \cdot \check \nabla P_n \right) +  
\underbrace{\sum_{\abs{\alpha} = n+1}^{2n} y^\alpha \sum_{\substack{1\leq |\beta|,|\gamma|\leq n \\ \beta+\gamma=\alpha+e_1}} \gamma_1 a_\beta a_\gamma}_{=: {\mathcal R}}
\,.
\end{align*} 
For the remainder term $ \mathcal{R} $,  using that $\abs{y}\leq r_0 = (8D)^{-1}$ and \eqref{Melbourne-breeds-tards},  we have that 
\begin{align*}
\abs{\mathcal R}
&\leq \sum_{\abs{\alpha} = n+1}^{2n} \abs{y}^{\abs{\alpha}} \sum_{\substack{1\leq |\beta|,|\gamma|\leq n \\ \beta+\gamma=\alpha+e_1}} \gamma_1 \abs{a_\beta} \abs{a_\gamma}
\leq \sum_{\abs{\alpha} = n+1}^{2n}  {\abs{\alpha}\!+\!2 \choose \abs{\alpha} }r_0^{\abs{\alpha}}  (4 D)^{|\alpha+1|} \\
& 
\leq  4D  \sum_{j = n+1}^{\infty} {j\!+\!2 \choose j}  2^{-j} \les   n^2  2^{-n}
\end{align*}
which vanishes exponentially fast as $n \to \infty$.  This shows that $ \bar W_ \mathcal{A} $ defined by \eqref{eq:ANSATZ} is an analytic solution of
\eqref{eq:EQUATION}
for all $\abs{y} \le r_0$.

We next extend this solution to the entire domain, and we do so via trajectories. Let $\Phi^{y_0}$ be the trajectory
\begin{equation}\label{eq:dingo}
\p_s \Phi^{y_0}=\left( \tfrac{3y_1}{2} + \bar W_{\mathcal A},\tfrac12 x_2,\tfrac12x_3\right)\circ\Phi^{y_0} ,\quad\Phi^{y_0}(0)=y_0\,.
\end{equation}
Let us choose $0<\delta<\frac{r_0}{2}$ sufficiently small such that
\begin{gather}\label{eq:platypus1}
-1<\p_1 \bar W_{\mathcal A}(y)<-\tfrac12 \,,
\\
 y\cdot \left( \tfrac{3y_1}{2} + \bar W_{\mathcal A},\tfrac12 x_2,\tfrac12x_3\right)\geq \tfrac{\abs{y}}{3}\,,\label{eq:platypus2}
\end{gather}
for all $\abs{y}\leq \delta$.

For any $\frac{\delta}{2} \leq \abs{y_0}\leq \delta$ and $s\geq 0$, we define
\begin{equation}\label{eq:green:turtle}
\bar W_{\mathcal A}\circ \Phi^{y_0}=e^{\frac s2} \bar W_{\mathcal A}(y_0)\,.
\end{equation}
Let $\mathcal D$ be the domain of $\bar W_{\mathcal A}$. The aim is to prove that $\bar W_{\mathcal A}=\mathbb R^3$. First we show that the definition \eqref{eq:green:turtle} assigns a unique value for every $y\in\mathcal D$. In particular, suppose for a given $y_*\in \mathcal D$, there exists $y_0,\tilde y_0$ such that $\abs{y_0},\abs{\tilde y_0}\leq \delta$ such that
\[
 \Phi^{y_0}(s_0)=\Phi^{\tilde y_0}(\tilde s_0)=y_*
\]
for some $s_0,\tilde s_0\geq 0$. Without loss of generality, assume $s_0\geq \tilde s_0$. Let us denote $\bar y:=\Phi^{\tilde y_0}( \tilde s_0-s_0)$ which satisfies $\abs{\bar y}<\delta$ by \eqref{eq:platypus2} and we have
\begin{equation}\label{eq:echidna}
 \Phi^{y_0}(s_0)=\Phi^{\bar y_0}( s_0)=y_*\,.
 \end{equation}
From \eqref{eq:dingo} and \eqref{eq:green:turtle},we have
\begin{align*}
 \Phi_1^{y_0}(s)&=e^{\frac32 s}(y_{0_1}+\bar W_{\mathcal A}(y_0)(1-e^{-s}))\,,\\ 
 \Phi_1^{\bar y_0}(s)&=e^{\frac32 s}(\bar y_{0_1}+\bar W_{\mathcal A}(\bar y_0)(1-e^{-s}))\, ,\\
 \check \Phi^{y_0}(s)&=\check \Phi^{\bar y_0}( s)=
e^{\frac{s-s_0}{2}}\check y_*\,.
\end{align*}
In particular substituting $s=s_0$ into the first two equations and $s=0$ in the second equation we obtain
\[y_{0_1}+\bar W_{\mathcal A}(y_0)(1-e^{-s_0})=\bar y_{0_1}+\bar W_{\mathcal A}(\bar y_0)(1-e^{-s_0})\quad\mbox{and}\quad  y_{0_\nu}= \bar y_{0_\nu}\]
Rearranging the first equation, we have
\begin{align*}
y_{0_1}-\bar y_{0_1}=(W_{\mathcal A}(\bar y_0)-\bar W_{\mathcal A}(y_0))(1-e^{-s_0})
\end{align*}
which is impossible by \eqref{eq:platypus1} and the  Fundamental Theorem of Calculus.
Thus we must have $y_0=\bar y_0$, and thus we obtain a unique value for $\bar W_{\mathcal A}(y_*)$.

Now consider trajectories beginning at a point $y_0$ on the ball $\abs{y_0}=\delta$. Then differentiating \eqref{eq:dingo} in $y_1$ and solving explicitly along trajectories $\Phi^{y_0}$, we obtain
\begin{align*}
\p_1 \bar W_{\mathcal A}\circ\Phi^{y_0}(s) =\frac{\p_1\bar W_{\mathcal A}(y_0)}{(\p_1\bar W_{\mathcal A}(y_0)+1)e^s-\p_1\bar W_{\mathcal A}(y_0)}\geq -1 \,.
\end{align*}
Here we have used that that the hessian of $\p_1 \bar W_{\mathcal A}$ at $0$ given by $\nabla^2 \p_1 \bar W_{\mathcal A}(0)$  is positive definite, and we 
have assumed that $\delta$ is taken sufficiently small. Indeed,  from the above calculation,  we further have  that
\begin{align*}
\abs{\p_1 \bar W_{\mathcal A}\circ\Phi^{y_0}(s) }\leq C_1e^{-s}
\end{align*}
for some $C_1$ depending on $\mathcal A$ and $\delta$.
Then,  by Gr\"onwall's inequality,  we can bound $\check \nabla W_{\mathcal A}$ along trajectories by
\begin{align*}
\abs{\check \nabla \bar W_{\mathcal A}\circ\Phi^{y_0}(s)}
\leq \exp(C_1 (1-e^{-s}))\abs{\check \nabla \bar W_{\mathcal A}(y_0)}\leq C_2
\,,
\end{align*}
where again $C_2$ depends on $\mathcal A$ and $\delta$.

Let us now observe that by the fundamental theorem of calculus,
\begin{align*}
(y_1,2C_2y_2,2C_2y_3 )\cdot\left( \tfrac{3y_1}{2} + \bar W_{\mathcal A},C_2 y_2,C_2y_3\right)&\geq \tfrac32 y_1^2 +2C_2^2\abs{\check y}-\abs{y_1\bar W_{\mathcal A}}\\
&\geq \tfrac12 y_1^2 +2C_2^2\abs{\check y}-C_2\abs{y_1}\abs{\check y}\\&
\geq \tfrac14 y_1^2+C_2^2\abs{\check y}\geq \tfrac14 \abs{(y_1,2C_2y_2,2C_2y_3 )}^2 \,.
\end{align*}
This, in turn,  implies that
\begin{equation}\label{eq:quoll}
\abs{(\Phi^{y_0}_1,2C_2\Phi^{y_0}_2,2C_2\Phi^{y_0}_3 )}\geq 
\abs{(y_{0_1},2C_2y_{0_2},2C_2y_{0_3} )}e^{\frac s4}
\,.
\end{equation}
By a simple continuity argument, this implies that $\mathcal D=\mathbb R^3$.\footnote{Suppose $y_*\in\partial \mathcal D$, then there exists a sequences $y_j,\tilde y_j\in \mathbb R^3$, $s_j\geq 0$ such that we have the following:  $y_j\rightarrow y_*$, $\abs{\tilde y_j}= \delta$ and $y_j=\Phi^{\tilde y_j}(s_j)$. The bound \eqref{eq:quoll} implies that the sequence $s_j$ is uniformly bounded. Then taking a subsequence if necessary, by continuity, there exists $\tilde y$ satisfying $\abs{\tilde y}=\delta$ and $s_*$ such that $\Phi^{\tilde y}(s_*)=y_*$. Thus $y_*\in\mathcal D$ and we conclude $\mathcal D$ is closed. Note that if $y_*\in\mathcal D$, then there exists $\tilde y$ satisfying $\abs{\tilde y}=\delta$ and $s_*$ such that $\Phi^{\tilde y}(s_*)=y_*$. Furthermore, by flowing a small ball around $\tilde y$ by the vector field $\left( \tfrac{3y_1}{2} + \bar W_{\mathcal A},\tfrac12 x_2,\tfrac12x_3\right)$ one can verify that $\mathcal D$ contains a small ball around $y_*$. Thus $\mathcal D$ is open. Since $\mathcal D$ is open, closed and non-empty, $\mathcal D=\mathbb R^3$.}
\end{proof}

\subsection{The derivation of the self-similar equation}
\label{sec:derivation}
The goal of this appendix is to provide details concerning the derivation of the self-similar equations \eqref{euler-ss}, starting from the standard form of the equations in \eqref{eq:Euler}. This derivation was described in Subsections~\ref{sec:time:dependent:coord}--\ref{sec:s:s:equations}, and in this Appendix we include the details that were omitted earlier. 

\subsubsection{The time-dependent coordinate system}
\label{app:time:dependent:coord}
The first step is to go from the spatial coordinate $\xcal$ to the rotated coordinate $\tilde x$. For this purpose, the rotation matrix $R$ defined in \eqref{eq:R:def} may be written out explicitly as
\begin{align}
R =   R(t)  
&=
 \begin{bmatrix}
n_1  & -n_2  & - n_3 \\
n_2    & 1 - \frac{n_2^2 }{1+n_1 } & - \frac{n_2  n_3 }{1+ n_1 } \\
n_3  &  - \frac{n_2  n_3 }{1+ n_1 } & 1 - \frac{n_3^2 }{1+n_1 }
\end{bmatrix} 
=  \begin{bmatrix}
\sqrt{1 -  |\check n|^2} & -n_2 & - n_3\\
n_2  & 1 - \frac{n_2^2}{1+\sqrt{1 -  |\check n|^2}} & - \frac{n_2 n_3}{1+ \sqrt{1 - |\check n|^2}} \\
n_3  &  - \frac{n_2 n_3}{1+ \sqrt{1 -  |\check n|^2}} & 1 - \frac{n_3^2}{1+\sqrt{1 -  |\check n|^2}}
\end{bmatrix}  
\label{eq:R:def:detail}
\end{align}
The new basis of $\RR^3$ given by $\tilde e_i = R e_i$ is thus given explicitly as 
\[
\te_1 = (n_1,n_2,n_3), \quad \te_2 = \left(-n_2,1-\tfrac{n_2^2}{1+n_1}, - \tfrac{n_2 n_3}{1+n_1} \right), \quad \mbox{and} \quad \te_3 = \left(-n_3, -\tfrac{n_2 n_3}{1+n_1}, 1 - \tfrac{n_3^2}{1+n_1}\right). 
\]
The time derivative of the matrix $R$ is given cf.~\eqref{eq:R:dot:def} in terms of $\dot{n}_2$, $\dot{n}_3$ and the matrices
\begin{align}
R^{(2)} =
\begin{bmatrix}
-\frac{n_2}{n_1} & -1 & 0\\
1  & -\frac{n_2(2 + 2 n_1 -n_2^2 -2 n_3^2)}{n_1(1+n_1)^2}  & - \frac{n_3(1 -n_3^2 + n_1)}{n_1(1+n_1)^2}  \\
0  & - \frac{n_3(1 -n_3^2 + n_1)}{n_1(1+n_1)^2}  & - \frac{n_2 n_3^2}{n_1 (1+n_1)^2}
\end{bmatrix}  
= \begin{bmatrix}
-n_2 & -1 & 0\\
1  &  -n_2 & -\frac{n_3}{2} \\
0  &-\frac{n_3}{2} & 0
\end{bmatrix}  + \OO\left(  |\check n|^2 \right)
\label{eq:R:2:def}
\end{align}
and 
\begin{align}
R^{(3)} =
\begin{bmatrix}
-\frac{n_3}{n_1} & 0 & -1 \\
0  & -\frac{n_2^2 n_3}{n_1(1+n_1)^2}  & - \frac{n_2(1 -n_2^2 + n_1)}{n_1(1+n_1)^2}  \\
1  & - \frac{n_2(1 -n_2^2 + n_1)}{n_1(1+n_1)^2}  & - \frac{n_3(2 + 2 n_1 -2 n_2^2 - n_3^2)}{n_1 (1+n_1)^2}
\end{bmatrix}  
= \begin{bmatrix}
-n_3 & 0 & -1\\
0  &  0 & -\frac{n_2}{2} \\
1  &-\frac{n_2}{2} & -n_3
\end{bmatrix}  + \OO\left(  |\check n|^2 \right)
\label{eq:R:3:def}
\end{align}
where we recall that by definition $n_1 = \sqrt{1 -  |\check n|^2}$. With this notation, the matrices $Q^{(2)} = (R^{(2)})^T R$ and $Q^{(3)} = (R^{(3)})^T R$ appearing in \eqref{eq:Q:def} may be spelled out as
\begin{align}
Q^{(2)} =  
\begin{bmatrix}
0 & 1 + \frac{n_2^2}{n_1(1+n_1)} & \frac{n_2 n_3}{n_1(1+n_1)} \\
-1 - \frac{n_2^2}{n_1(1+n_1)} & 0  &  \frac{n_3}{1+n_1}  \\
- \frac{n_2 n_3}{n_1(1+n_1)} & - \frac{n_3}{1+n_1}  & 0
\end{bmatrix}
=
\begin{bmatrix}
0 & 1 &  0 \\
-1  & 0  & \frac{n_3}{2}  \\
0  &- \frac{n_3}{2}  & 0
\end{bmatrix}  + \OO(|\check n|^2)
\, ,
\label{eq:Q2:def}
\end{align}
and
\begin{align}
Q^{(3)} =  
\begin{bmatrix}
0 & \frac{n_2 n_3}{n_1(1+n_1)} & 1 + \frac{n_3^2}{n_1(1+n_1)} \\
-\frac{n_2 n_3}{n_1(1+n_1)} & 0 & -  \frac{ n_2}{1+n_1} \\
- 1 - \frac{n_3^2}{n_1(1+n_1)}   &  \frac{ n_2}{1+n_1}  & 0
\end{bmatrix}
=
\begin{bmatrix}
0 & 0 & 1 \\
0 & 0 & - \frac{n_2}{2}  \\
-1 &  \frac{ n_2}{2}  &0
\end{bmatrix}  + \OO(|\check n|^2) \, .
\label{eq:Q3:def}
\end{align}
Note that both matrices $Q^{(2)}$ and $Q^{(3)}$ are skew-symmetric, and thus so is $\dot Q$.

Next we turn to the definitions of $\tilde u$ and $\tilde \rho$ in \eqref{eq:tilde:u:def}, which may be rewritten as
\[
u(\xcal,t) = R(t) \tu (  R^T(t)  (\xcal-\xi(t)),t)  
\qquad \mbox{and} \qquad
\rho(\xcal,t) =  \trho (  R^T(t) (\xcal-\xi(t)),t)  \, .
\]
From the definitions of $\tilde x$, $\tilde u$ and $\tilde \rho$ in \eqref{eq:tilde:x:def}--\eqref{eq:tilde:u:def} we obtain that
\begin{align*} 
\p_t \tilde x_k
&= \dot R_{\ell k} R_{\ell m} \tilde x_m - R_{\ell k} \dot{\xi}_\ell  \\
\p_{\xcal_\ell} \tilde x_k 
&= R_{\ell k}\\
 \tfrac{1+ \alpha }{2} \p_t u_i 
&= \dot R_{ij} \tu_j +  \tfrac{1+ \alpha }{2} R_{ij} \p_t \tu_j  + R_{ij} \p_{\tilde x_k} \tu_j (\dot R_{\ell k} R_{\ell m} \tilde x_m - R_{\ell k} \dot{\xi}_\ell) \\
\p_{\xcal_\ell} u_i
&= R_{ij} \p_{\tilde x_k} \tu_j R_{\ell k}\\
 \tfrac{1+ \alpha }{2} \p_t \rho
&=  \tfrac{1+ \alpha }{2} \p_t \trho + \p_{\tilde x_k} \trho (\dot R_{ \ell k} R_{\ell m} \tilde x_m - R_{\ell k} \dot{\xi}_\ell)\\
\p_{\xcal_\ell} \rho 
&= \p_{\tilde x_k} \trho R_{\ell k}
\, .
\end{align*} 
Using the above identities and the fact that  $R R^T = \Id$ implies $R_{ki} \dot R_{kj} = - \dot{R}_{ki} R_{kj}$, 
we may write the Euler equations in the basis $(\te_1, \te_2,\te_3)$ as 
\begin{subequations}
\label{eq:Euler-coordinates}
\begin{align}
 \tfrac{1+ \alpha }{2} \partial_t  \tu_i - \dot R_{ki} R_{kj} \tu_j +  (\dot{R}_{\ell j} R_{\ell m}  \tilde x_m - R_{\ell j} \dot \xi_\ell) \p_{\tilde x_j} \tu_i  + \tu_j  \p_{\tilde x_j} \tu_i  +  \tfrac{1}{2 \alpha } \p_{\tilde x_i} \trho^{2 \alpha} &= 0 \,,  \\
 \tfrac{1+ \alpha }{2} \partial_t \left( \tfrac{\trho^\alpha }{ \alpha }\right) + (\dot{R}_{\ell j} R_{\ell m}  \tilde x_m - R_{\ell j} \dot \xi_\ell)  \p_{\tilde x_j} \left( \tfrac{\trho^\alpha }{ \alpha }\right)+ \tu_j \p_{\tilde x_j} \left( \tfrac{\trho^\alpha }{ \alpha }\right) + \alpha  \left( \tfrac{\trho^\alpha }{ \alpha }\right) \p_{\tilde x_j} \tu_j&=0 \,.
\end{align}
\end{subequations}
The perturbations \eqref{eq:Euler-coordinates} presents over the usual Euler system are only due to the $\dot R(t)$ and $\dot{\xi}(t)$ terms, arising from our time dependent change of coordinates. The first term is a linear rotation term, while the second term alters the transport velocity, to take into account rotation.  
Using the definitions of $\dot Q$ in \eqref{eq:Q:def} and $\tilde \sigma$ in \eqref{eq:tilde:u:def}, the system \eqref{eq:new:Euler} now directly follows from \eqref{eq:Euler-coordinates}\,.

\subsubsection{The adapted coordinates}
We first collect a number of properties of the function  $f(\check{\tilde x},t)$ defined in \eqref{def_f}.
Due to symmetry with respect to $\nu\gamma$, we clearly have that 
$$
f,_\nu = \phi_{\nu \gamma}(t) \tilde x_\gamma
$$
so that $f(0,t) =  f,_\nu (0,t) = 0$, and for the Hessian,  we have that
\begin{align*}
f,_{\nu\gamma}(\tilde x,t) =   \phi_{\nu \gamma}(t) \,.
\end{align*}
For the derivative with respect to space and time we have
\begin{align*}
 \dot f,_\nu = \dot\phi_{\nu \gamma}(t) \tilde x_\gamma\,.
\end{align*}

The following Lemma is useful in deriving the equations satisfied by $\mru, \mrs, w,z$, and $a_\nu$.
\begin{lemma}[The divergence operator in the $(\nn, \tt^2,\tt^3)$ basis]\label{lem:div}
\begin{align} 
\operatorname{div}_{\tilde x} \tu 
& = \Ncal_j  \p_{\tilde x_j} (\tu \cdot   \Ncal) +\Tcal^\nu_j  \p_{\tilde x_j} (\tu \cdot   \Tcal^\nu) 
+ (\tu\cdot \Ncal)  \p_{\tilde x_\mu}\Ncal_\mu 
+  (\tu \cdot \Tcal^\nu)  \p_{\tilde x_\mu} \Tcal^\nu_\mu
   \,. \label{div-identity}
\end{align} 
\end{lemma}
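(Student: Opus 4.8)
The plan is to expand the velocity field in the orthonormal frame $(\nn,\tt^2,\tt^3)$ and then take the divergence, exploiting the fact that this frame does not depend on the $\tilde x_1$ coordinate. Concretely, since $\{\Ncal,\Tcal^2,\Tcal^3\}$ forms an orthonormal basis of $\RR^3$ at every point (as recorded immediately after \eqref{normal}), the first step is to write the pointwise decomposition
\begin{align*}
\tu_j = (\tu\cdot\Ncal)\,\Ncal_j + (\tu\cdot\Tcal^\nu)\,\Tcal^\nu_j\,,
\end{align*}
with the Greek index $\nu$ summed over $\{2,3\}$ according to our convention.

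Next I would apply $\p_{\tilde x_j}$, sum over $j\in\{1,2,3\}$, and use the Leibniz rule to obtain
\begin{align*}
\operatorname{div}_{\tilde x}\tu
&= \Ncal_j\,\p_{\tilde x_j}(\tu\cdot\Ncal) + (\tu\cdot\Ncal)\,\p_{\tilde x_j}\Ncal_j \\
&\quad + \Tcal^\nu_j\,\p_{\tilde x_j}(\tu\cdot\Tcal^\nu) + (\tu\cdot\Tcal^\nu)\,\p_{\tilde x_j}\Tcal^\nu_j\,.
\end{align*}
The last step is to observe that, from the definitions \eqref{tangent}, \eqref{normal} and \eqref{def_f}, the functions $\Ncal$, $\Tcal^\nu$ and $\Jcal$ depend only on $\check{\tilde x}=(\tilde x_2,\tilde x_3)$ (and on $t$), so that $\p_{\tilde x_1}\Ncal_1 = \p_{\tilde x_1}\Tcal^\nu_1 = 0$; hence $\p_{\tilde x_j}\Ncal_j = \p_{\tilde x_\mu}\Ncal_\mu$ and $\p_{\tilde x_j}\Tcal^\nu_j = \p_{\tilde x_\mu}\Tcal^\nu_\mu$, with the Greek index $\mu$ summed over $\{2,3\}$. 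Substituting these two identities yields exactly \eqref{div-identity}.

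I do not expect any genuine obstacle here: the statement is a direct application of the product rule once the orthonormal expansion is in place. The only point that deserves a line of care is the collapse of the summation index in $\p_{\tilde x_j}\Ncal_j$ and $\p_{\tilde x_j}\Tcal^\nu_j$ from $j\in\{1,2,3\}$ down to $\mu\in\{2,3\}$, which is precisely where the $\tilde x_1$-independence of the moving frame is used. This is the same mechanism that, later in the paper, produces the chain-rule identities leading to \eqref{buzz-buzz}.
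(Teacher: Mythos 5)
Your proof is correct, but it is a genuinely different (and shorter) route than the one the paper uses. You begin by decomposing $\tu_j = (\tu\cdot\Ncal)\Ncal_j + (\tu\cdot\Tcal^\nu)\Tcal^\nu_j$ and then apply $\p_{\tilde x_j}$ with the Leibniz rule; all that remains is to observe that $\p_{\tilde x_1}\Ncal_1=\p_{\tilde x_1}\Tcal^\nu_1=0$ so the Latin sum in $\p_{\tilde x_j}\Ncal_j$ and $\p_{\tilde x_j}\Tcal^\nu_j$ collapses to a Greek one. The paper instead starts from the directional-derivative representation $\operatorname{div}_{\tilde x}\tu = \p_\Ncal\tu\cdot\Ncal + \p_{\Tcal^\nu}\tu\cdot\Tcal^\nu$, moves the basis vectors inside the derivatives, and is then left with the correction terms $-\tu_i\Ncal_\beta\Ncal_{i,\beta}-\tu_i\Tcal^\nu_\beta\Tcal^\nu_{i,\beta}$; to rewrite these as $(\tu\cdot\Ncal)\Ncal_{\mu,\mu}$ and $(\tu\cdot\Tcal^\nu)\Tcal^\nu_{\mu,\mu}$ the paper must decompose $\tu_i$ and then establish two separate identities,
\begin{align*}
\Tcal^\nu_i\bigl(\Ncal_\beta\Ncal_{i,\beta}+\Tcal^\gamma_\beta\Tcal^\gamma_{i,\beta}\bigr)=-\Tcal^\nu_{\mu,\mu}
\qquad\text{and}\qquad
\Ncal_i\Tcal^\nu_\beta\Tcal^\nu_{i,\beta}=-\Ncal_{\mu,\mu}\,,
\end{align*}
each proved by repeatedly exploiting orthonormality and completeness of the frame. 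Your approach sidesteps both identities entirely because it never introduces the undifferentiated $\tu_i$ as a separate object, and it localizes the frame-geometry input to the single trivial fact that $\Ncal,\Tcal^\nu$ are independent of $\tilde x_1$. Both proofs are valid; yours is the more economical one for this particular statement, while the paper's starting point $\p_\Ncal\tu\cdot\Ncal+\p_{\Tcal^\nu}\tu\cdot\Tcal^\nu$ has the pedagogical merit of being the coordinate-free divergence formula in an orthonormal frame, which does not rely on the frame being $\tilde x_1$-independent.
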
 
\begin{proof}[Proof of Lemma \ref{lem:div}]
With respect to the orthonormal basis vectors $(\Ncal, \Tcal^2, \Tcal^3)$, we have
\begin{align*} 
\operatorname{div}_{\!\tilde x} \tu  & = \p_{\Ncal} \tu \cdot \Ncal +  \p_{\Tcal^\nu} \tu \cdot \Tcal^\nu   \\
& = \Ncal_j  \p_{\tilde x_j}\tu_i  \Ncal_i+\Tcal^\nu_j  \p_{\tilde x_j}\tu_i  \Tcal^\nu_i  \\
& = \Ncal_j  \p_{\tilde x_j} (\tu \cdot   \Ncal) +\Tcal^\nu_j  \p_{\tilde x_j} (\tu \cdot   \Tcal^\nu) 
-  \tu_i  \Ncal_\beta \Ncal_i ,_\beta - \tu_i  \Tcal^\nu_\beta  \Tcal^\nu_{i,\beta}  \\
& = \Ncal_j  \p_{\tilde x_j} (\tu \cdot   \Ncal) +\Tcal^\nu_j  \p_{\tilde x_j} (\tu \cdot   \Tcal^\nu) 
- (\tu\cdot \Ncal) \Ncal_i  \Tcal^\nu_\beta \Tcal^\nu_{i,\beta}
-  (\tu \cdot \Tcal^\nu)\Tcal^\nu_i  \left( \Ncal_\beta \Ncal_{i,\beta}  +
 \Tcal^\gamma_\beta  \Tcal^\gamma_{i,\beta} \right)  \,.
\end{align*} 

The equation \eqref{div-identity} then follows from the
following identities:
\begin{align*} 
 \Tcal^\nu_i \left(\Ncal_\beta  \Ncal^i,_\beta +  \Tcal^\gamma_\beta  \Tcal^\gamma_{i,\beta} \right)= - \Tcal^\nu_{\mu,\mu} \text{ for } \nu=2,3,
 \ \text{ and } \  \Ncal_i  \Tcal^\nu_\beta  \Tcal^\nu_{i,\beta} & = - N_{\mu,\mu}
 \,.
\end{align*} 
For the first identity, we first consider the case that $\nu=2$, in which case
\begin{align*} 
 \Tcal^2_i \left(\Ncal_\beta  \Ncal^i,_\beta +  \Tcal^\gamma_\beta  \Tcal^\gamma_{i,\beta} \right) 
 & = \Ncal_\beta \Ncal_{i,\beta} \Tcal^2_i  + \Tcal^3_\beta\Tcal^3_{i,\beta} \Tcal^2_i  \\
 & = -\Ncal_\beta\Ncal_i  \Tcal^2_{i,\beta} -     \Tcal^3_\beta\Tcal^3_i \Tcal^2_{i,\beta}  \\
  & = -\left( \Ncal_\beta \Ncal_i  +  \Tcal^2_\beta\Tcal^2_i+    \Tcal^3_\beta \Tcal^3_i \right) \Tcal^2_{i,\beta}  \\
   & = -\left( \Ncal_j\Ncal_i  +  \Tcal^2_j\Tcal^2_i+    \Tcal^3_j\Tcal^3_i \right) \Tcal^2_{i,j} \\
  &= -  \Tcal^2_{j,j}  =  -  \Tcal^2_{\mu,\mu}   \,.
\end{align*}
and clearly, the same holds for $\nu=3$.   For the second identity, note that
\begin{align*} 
 \Ncal_i  \Tcal^\nu_\beta \Tcal^\nu_{i,\beta} = -  \Ncal_{i,\beta} \Tcal^\nu_\beta \Tcal^\nu_i = - \Ncal_{i,\beta} (\Tcal^\nu_\beta \Tcal^\nu_i+ N_\beta \Ncal_i)
 = - \Ncal_{i,j} (\Tcal^\nu_j \Tcal^\nu_i+ \Ncal_j \Ncal_i)= -\Ncal_{j,j} = -\Ncal_{\mu,\mu}  
\end{align*} 
which concludes the proof of the Lemma.
\end{proof} 

Besides the above Lemma, it is useful to note that  under the sheep shear transform \eqref{x-sheep}--\eqref{usigma-sheep} a term of the type $  b \cdot \nabla \tilde g$ becomes $\check b \cdot \check \nabla g + \Jcal b \cdot \nn \, \p_1 g$. In particular, for $  b = T^\nu$, the term involving $\p_1 g$ disappears and we are left with $\check b \cdot \check \nabla g$. This is a key identity used in the following computations. 

Proving that the Euler system in the $\tilde x$ variable \eqref{eq:new:Euler} becomes \eqref{usigma-sheep}--\eqref{eq:time:rescale} in the $x$ variable, is a matter of applying the above observation, identity \eqref{div-identity}, and the chain rule. It is also not difficult to prove that  \eqref{tgvorticity} becomes \eqref{svorticity_sheep} under this change of variables.

\subsubsection{The adapted Riemann variables}
We give the details concerning the derivation of the system \eqref{eq:Euler-riemann-sheep} directly from \eqref{eq:new:Euler}.

We start from \eqref{eq:new:Euler}, in which the space variable is $\tilde x$, and the time is the original time $t$, i.e., prior to \eqref{eq:time:rescale}.  We  define the intermediate Riemann variables 
\begin{align} 
\label{eq:tilde:Riemann}
\tilde w = \tilde u \cdot \Ncal + \tilde \sigma  \,, \qquad \tilde z =  \tilde u \cdot \Ncal - \tilde \sigma   \,, \qquad \tilde a_\nu = \tilde u \cdot \tt ^\nu \,,
\end{align} 
which are still functions of $(\tilde x,t)$, so that
\begin{align*} 
\tilde u \cdot \Ncal  = \tfrac{1}{2} (\tilde w+\tilde z) \,, \qquad  \tilde \sigma  = \tfrac{1}{2} (\tilde w- \tilde z) \,.  
\end{align*}

The Euler sytem \eqref{eq:new:Euler} can be written in terms of the new variables $(\tilde w, \tilde z,\tilde a_2, \tilde a_3)$ as
\begin{subequations}
\label{eq:Euler-riemann}
\begin{align}
&
 \tfrac{1+ \alpha }{2} \p_t \tilde w 
+ \left(\tilde v_j + {\tfrac{1}{2}} (\tilde w+\tilde z) \Ncal _j 
+ {\tfrac{\alpha }{2}} (\tilde w-\tilde z) \Ncal _j 
+ \tilde a_\nu {\Tcal}^\nu_j\right) \p_j \tilde w \notag\\
&\quad = - \alpha \tilde \sigma {\Tcal}^\nu_j \p_j \tilde a_\nu
+ \tilde a_\nu {\Tcal}^\nu_i   \dot{\Ncal_i} 
+ \dot Q_{ij} \tilde a_\nu {\Tcal}^\nu_j \Ncal _i  \notag\\
&\quad\qquad  
+ \left( \tilde v_\mu + \tilde u \cdot \Ncal \Ncal _\mu 
+ \tilde  a_\nu {\Tcal}^\nu_\mu\right) \tilde a_\nu {\Tcal}^\nu_i   
\Ncal_{i,\mu}
- \alpha\tilde \sigma (\tilde a_\nu {\Tcal}^\nu_{\mu,\mu} 
+\tilde u\cdot \Ncal \Ncal_{\mu,\mu})   \,, \\
&
 \tfrac{1+ \alpha }{2} \p_t \tilde z 
+ \left(\tilde v_j + {\tfrac{1}{2}} (\tilde w+\tilde z) \Ncal _j 
- {\tfrac{\alpha }{2}} (\tilde w- \tilde z) \Ncal _j 
+ \tilde a_\nu {\Tcal}^\nu_j\right) \p_j \tilde z \notag\\
&\quad= \alpha \tilde \sigma {\Tcal}^\nu_j \p_j \tilde a_\nu
+ \tilde a_\nu {\Tcal}^\nu_i   \dot{\Ncal_i} 
+ \dot Q_{ij} \tilde a_\nu {\Tcal}^\nu_j \Ncal _i  \notag\\
&\quad \qquad  
+ \left( \tilde v_\mu + \tilde u \cdot \Ncal \Ncal _\mu + \tilde a_\nu {\Tcal}^\nu_\mu\right) \tilde a_\nu {\Tcal}^\nu_i \Ncal_{i,\mu} 
+ \alpha\tilde \sigma (\tilde a_\nu {\Tcal}^\nu_{\mu,\mu} 
+\tilde u\cdot \Ncal \Ncal_{\mu,\mu}) \,, \\
&
 \tfrac{1+ \alpha }{2} \p_t \tilde a_\nu 
+ \left(\tilde v_j + {\tfrac{1}{2}} (\tilde w+\tilde z) \Ncal _j 
+\tilde a_\gamma {\Tcal}^\gamma_j \right) \p_j \tilde a_\nu \notag\\
&\quad= - \alpha \tilde \sigma  {\Tcal}^\nu_i\p_i\tilde \sigma
+ ( \tilde u\cdot\Ncal  \Ncal_i  + \tilde a_\gamma {\Tcal}^\gamma_i) \dot{\Tcal}^\nu_i  \notag\\
&\quad\qquad  
+  \dot Q_{ij} (\tilde u\cdot\Ncal   \Ncal _j   + \tilde a_\gamma  {\Tcal}^\gamma_j ) {\Tcal}^\nu _i 
+  ( \tilde v_\mu + \tilde u \cdot \Ncal \Ncal _\mu + \tilde a_\gamma \Tcal^\gamma_\mu )  
( \tilde u\cdot\Ncal  \Ncal_i + \tilde a_\gamma  {\Tcal}^\gamma_i)   \Tcal^\nu_{i,\mu}   \,.
 \end{align}
\end{subequations}

Next, using the sheep change of coordinates $\tilde x \mapsto x$ defined in \eqref{x-sheep}, we have that the Riemann variables defined earlier in \eqref{tildeu-dot-T} may be written as
\begin{subequations} 
\label{wza-sheep}
\begin{align} 
w(x_1,x_2,x_3,t) & = \tilde w (x_1+ f(x_2,x_3,t),x_2,x_3,t) = \tilde w (\tilde x, t) \,, \\
z(x_1,x_2,x_3,t) & = \tilde z (x_1+ f(x_2,x_3,t),x_2,x_3,t) = \tilde z (\tilde x, t)\,, \\
a_\nu (x_1,x_2,x_3,t) & = \tilde a_\nu  (x_1+ f(x_2,x_3,t),x_2,x_3,t) = \tilde a (\tilde x, t)\,, 
\end{align} 
\end{subequations} 
in analogy to \eqref{usigma-sheep}. Using  the new $x$ variable and unknowns $(w,z,a_2,a_3)$, the system \eqref{eq:Euler-riemann} takes the form
\begin{subequations}
\label{eq:Euler-riemann-sheep0}
\begin{align}
& \tfrac{1+ \alpha }{2} \p_t  w +  \left(-\dot f+\Jcal v \cdot \nn   + {\tfrac{\Jcal}{2}} ( w+ z) 
+ {\tfrac{\alpha \Jcal }{2}} ( w- z)  \right) \p_1  w \notag\\
&\qquad + \left(v_\mu + {\tfrac{1}{2}} ( w+ z) \Ncal _\mu
+ {\tfrac{\alpha }{2}} ( w- z) \Ncal _\mu
+  a_\nu \Tcal^\nu_\mu\right) \p_\mu w \notag \\
&
= - \alpha\mrs \Tcal^\nu_\mu \p_\mu a_\nu +  a_\nu \Tcal^\nu_i   \dot{\Ncal_i} + \dot Q_{ij}  a_\nu \Tcal^\nu_j \Ncal _i  
+ \left( v_\mu + \mru\cdot \Ncal \Ncal _\mu +  a_\nu \Tcal^\nu_\mu\right)  a_\nu \Tcal^\nu_i 
\Ncal_{i,_\mu}  \notag\\
&\qquad 
- \alpha\mrs(  a_\nu \Tcal^\nu_{\mu,\mu}
+\mru\cdot \Ncal \Ncal_{\mu,\mu})  \,, \\
& \tfrac{1+ \alpha }{2} \p_t  z +  \left(-\dot f + \Jcal v \cdot \nn   + {\tfrac{\Jcal}{2}} ( w+ z) 
- {\tfrac{\alpha \Jcal}{2}} ( w- z)  \right) \p_1  z \notag\\
&\qquad + \left(v_\mu + {\tfrac{1}{2}} ( w+ z) \Ncal _\mu
- {\tfrac{\alpha }{2}} ( w- z) \Ncal _\mu
+  a_\nu \Tcal^\nu_\mu\right) \p_\mu z \notag \\
&
=  \alpha\mrs \Tcal^\nu_\mu \p_\mu a_\nu
+  a_\nu \Tcal^\nu_i   \dot{\Ncal_i} + \dot Q_{ij}  a_\nu \Tcal^\nu_j \Ncal _i  
+ \left( v_\mu + \mru\cdot \Ncal \Ncal _\mu +  a_\nu \Tcal^\nu_\mu\right)  a_\nu \Tcal^\nu_i \Ncal_{i,\mu}  \notag\\
& \qquad 
+ \alpha \mrs a_\nu( \Tcal^\nu_{\mu,\mu}
+ \mru\cdot \Ncal  \Ncal_{\mu,\mu}) \,, 
\\
& \tfrac{1+ \alpha }{2} \p_t  a_\nu +  \left(-\dot f+\Jcal v\cdot \nn  + {\tfrac{\Jcal}{2}} ( w+ z)  \right) \p_1  a_\nu 
 + \left(v_\mu + {\tfrac{1}{2}} ( w+ z) \Ncal _\mu
+ a_\gamma \Tcal^\gamma_\mu \right) 
\p_\mu a_\nu \notag\\
&= - \mrs \Tcal^\nu_\mu  \p_\mu \mrs 
+ ( \mru\cdot \Ncal  \Ncal_i + a_\gamma \Tcal^\gamma_i) \dot\Tcal^\nu_i  
 + \dot Q_{ij}  ( \mru\cdot \Ncal  \Ncal _j  + a_\gamma  \Tcal^\gamma_j) \Tcal^\nu _i   \notag \\
 &\qquad 
  +   \left( v_\mu + \mru\cdot \Ncal \Ncal _\mu 
 +  a_\gamma \Tcal^\gamma_\mu\right)  ( \mru\cdot \Ncal \Ncal_i + a_\gamma \Tcal^\gamma_i) \Tcal^\nu_{i,\mu}\,.
 \end{align}
\end{subequations}
The system \eqref{eq:Euler-riemann-sheep} now directly follows from \eqref{eq:Euler-riemann-sheep0},  and by appealing to the notation in 
\eqref{eq:various:beta}.

\subsection{Interpolation}
\label{sec:interpolation}
In this appendix we summarize a few interpolation inequalities that are used throughout the manuscript.

\begin{lemma}[Gagliardo-Nirenberg-Sobolev]\label{lem:GN} 
Let $u: \mathbb{R}  ^d \to \mathbb{R}  $.  Fix $1\le q , r \le \infty $ and $j,m \in \mathbb{N}  $,   and $ \tfrac{j}{m}  \le 
\alpha \le 1$.  Then,  if
$$
\tfrac{1}{p}  = \tfrac{j}{d} + \alpha \left( \tfrac{1}{r} - \tfrac{m}{d} \right)  + \tfrac{1- \alpha }{q} \,,
$$
then
\begin{align} 
\| D^ju\|_{L^p} \le C \| D^m u\|_{L^r}^ \alpha \|u\|^{1 - \alpha }_{L^q} \,. 
\label{eq:special0}
\end{align} 
\end{lemma}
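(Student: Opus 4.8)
The plan is to establish Lemma~\ref{lem:GN}, which is the classical Gagliardo--Nirenberg--Sobolev interpolation inequality, along the standard route. First I would reduce to the case in which $u$ is a Schwartz function: the general case then follows by mollification and cutoff, using that the three norms in \eqref{eq:special0} are stable under such approximations for the exponent ranges in question. Second, I record the scaling heuristic that explains the exponent identity: applying the sought inequality to the rescaled function $u_\lambda(x)=u(\lambda x)$ and matching powers of $\lambda$ forces exactly $\tfrac1p=\tfrac{j}{d}+\alpha(\tfrac1r-\tfrac md)+\tfrac{1-\alpha}{q}$, so this relation is not an extra hypothesis but a necessary consequence of homogeneity; in particular the only content of the lemma is the validity of the bound once this relation holds.

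The engine of the proof is a single integration-by-parts inequality combined with the (sub)critical Sobolev embedding. For the first tool, for any integer $\ell\ge 1$ and any exponents with $\tfrac{2}{p_0}=\tfrac{1}{p_-}+\tfrac{1}{p_+}$, writing $|D^\ell u|^{p_0}=|D^\ell u|^{p_0-2}\,D^\ell u\cdot D^\ell u$, integrating one derivative by parts, and applying H\"older with the exponents $\tfrac{p_0}{p_0-2},p_-,p_+$, one obtains
\begin{align*}
\norm{D^\ell u}_{L^{p_0}}^2 \les \norm{D^{\ell-1}u}_{L^{p_-}}\,\norm{D^{\ell+1}u}_{L^{p_+}}\,,
\end{align*}
with a constant depending only on $d,\ell,p_0$. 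For the second tool I would use $\dot W^{1,s}(\RR^d)\hookrightarrow L^{s^*}(\RR^d)$ with $\tfrac1{s^*}=\tfrac1s-\tfrac1d$ for $1\le s<d$ (itself provable from the Gagliardo representation of $u$ as an average of line integrals of $\nabla u$, or via the boundedness of Riesz potentials), iterated to $\dot W^{k,s}\hookrightarrow\dot W^{k-\ell,\tilde s}$ with $\tfrac1{\tilde s}=\tfrac1s-\tfrac{\ell}{d}$.

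With these in hand the proof is pure exponent bookkeeping. I would treat the endpoints of the allowed range of $\alpha$ first: for $\alpha=1$, \eqref{eq:special0} is precisely the iterated embedding $\dot W^{m-j,r}\hookrightarrow L^p$ applied to $D^j u$; for $\alpha=j/m$ one alternates the integration-by-parts inequality and a Sobolev embedding a bounded number of times to reach $\norm{D^j u}_{L^p}\les\norm{u}_{L^q}^{1-j/m}\norm{D^m u}_{L^r}^{j/m}$, the alternation being easiest to track by following the point $(\ell,\tfrac1p-\tfrac{\ell}{d})$ along straight segments as the number of derivatives changes. For intermediate $\alpha\in(j/m,1)$ one interpolates by H\"older between the $\alpha=j/m$ and the $\alpha=1$ estimates, which yields exactly the stated bound. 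The main obstacle is not any individual estimate but the edge cases of this bookkeeping: when $q=\infty$ or $r=\infty$, when $r(m-j)\ge d$ so that $\dot W^{m-j,r}$ embeds only into $L^\infty$ or $\mathrm{BMO}$ and the $\alpha=1$ case must be excluded, and when an intermediate exponent produced by the alternation leaves $(1,\infty)$. In every application in this paper one has $d=3$, $m=k\ge 18$, all Lebesgue exponents strictly inside $(1,\infty)$, and $\alpha$ bounded away from $1$, so these degeneracies never arise and it suffices to invoke the lemma in this safe range; the fully general statement is classical and can be cited from Nirenberg's original paper.
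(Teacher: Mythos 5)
The paper states Lemma~\ref{lem:GN} in Appendix~\ref{sec:interpolation} without proof---it is the classical Gagliardo--Nirenberg interpolation inequality, cited as known---so there is no in-paper argument with which to compare yours. Your sketch is a correct outline of the standard proof: reduce to Schwartz data; observe that the exponent identity is forced by dilation invariance; establish the endpoint $\alpha=1$ by iterated homogeneous Sobolev embedding and the endpoint $\alpha=j/m$ by iterating the integration-by-parts inequality; conclude by H\"older interpolation in between. One bookkeeping correction is worth flagging: when $\alpha=j/m$ the exponent identity degenerates to $\tfrac1p=\tfrac{j}{m}\cdot\tfrac1r+\tfrac{m-j}{m}\cdot\tfrac1q$, which carries no dependence on the dimension $d$, and that base case is produced by iterating your integration-by-parts inequality \emph{alone}; no Sobolev embedding is needed there. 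Sobolev embedding is precisely the device that raises $\alpha$ above $j/m$, so the two tools play disjoint roles and should not be alternated in the $\alpha=j/m$ step as you describe. Your identification of the degenerate cases and the remark that the paper invokes the lemma only away from them is apt, though the paper's actual applications (for instance \eqref{eq:special1}) do take $q=\infty$, so the safe regime is more precisely that $r$ lies strictly between $1$ and $\infty$, no critical Sobolev index is hit, and $\alpha$ stays bounded away from $1$.
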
 
We shall make use of \eqref{eq:special0} for the case that $p= \tfrac{2m}{j} $, $r=2$, $q= \infty $, which yields
\begin{align} 
\norm{D^j \varphi}_{L^{\frac{2m}{j}}} \les  \norm{\varphi}_{\dot{H}^m}^{\frac{j}{m}}\norm{\varphi}_{L^\infty}^{1 - \frac{j}{m}} \,,
\label{eq:special1}
\end{align} 
whenever $\varphi \in H^m(\RR^3)$ has compact support. The above estimate and the Leibniz rule  classically imply the Moser inequality
\begin{align}
\norm{ \phi \,  \varphi}_{\dot{H}^m}  \les  \norm{\phi}_{L^\infty} \norm{\varphi}_{\dot H^{m}} + 
 \norm{\phi}_{\dot H^{m}} \norm{\varphi}_{L^\infty}\,.
 \label{eq:Moser:inequality}
\end{align}
for all $\phi, \varphi \in H^m( \mathbb{R}^3  ) $ with compact support.
At various stages in the proof we also appeal to the following special case  of \eqref{eq:special0}
\begin{align} 
\snorm{\varphi}_{ \dot H^{k-2}} &\les \snorm{ \varphi}_{\dot H^{k-1} }^\frac{2k-7}{2k-5}  \snorm{  \varphi}_{L^\infty }^\frac{2}{2k-5}  \,, \label{eq:special3} 
\end{align} 
for $\varphi \in H^{k-1}( \mathbb{R}^3  ) $ with compact support.
Lastly, in Section~\ref{sec:energy} we make use of:
\begin{lemma}
\label{lem:tailored:interpolation}
Let $k\geq 4$ and $0 \le l \le k-3$.   Then for $\acal + \bcal = 1 - \frac{1}{2k-4} \in (0,1)$,  and  $q=\frac{6(2k-3)}{2k-1}$,
\begin{align} 
\snorm{D^{2+l} \phi \, D^{k-1-l} \varphi}_{L^2}  \les \snorm{D^k \phi}_{L^2}^\acal \snorm{D^k \varphi}_{L^2}^\bcal 
\snorm{D^2 \phi}_{L^q}^{1-\acal}  \snorm{D^2 \varphi}_{L^q}^{1-\bcal}  \,. \label{cor1}
\end{align} 
\end{lemma}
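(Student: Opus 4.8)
The plan is to interpolate each of the two factors $D^{2+l}\phi$ and $D^{k-1-l}\varphi$ separately between $D^2$ in $L^q$ and $D^k$ in $L^2$, then combine via H\"older's inequality, and finally check that the resulting exponents and Lebesgue indices are consistent. First I would apply the Gagliardo--Nirenberg--Sobolev inequality of Lemma~\ref{lem:GN} on $\RR^3$ (so $d=3$) to the function $D^2\phi$, with $j=l$, $m=k-2$, $r=2$, and the base space $L^q$; this gives
\begin{align*}
\norm{D^{2+l}\phi}_{L^{p_1}} \les \norm{D^2\phi}_{\dot H^{k-2}(L^2)}^{\theta_1} \norm{D^2\phi}_{L^q}^{1-\theta_1} = \norm{D^k\phi}_{L^2}^{\theta_1}\norm{D^2\phi}_{L^q}^{1-\theta_1},
\end{align*}
where $\theta_1 = \tfrac{l}{k-2}$ (which lies in $[\tfrac{l}{k-2},1]$, so the hypothesis $j/m\le\alpha\le1$ is met) and $p_1$ is determined by the GNS relation
$\tfrac{1}{p_1} = \tfrac{l}{3} + \theta_1\big(\tfrac12 - \tfrac{k-2}{3}\big) + \tfrac{1-\theta_1}{q}$. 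Symmetrically, apply Lemma~\ref{lem:GN} to $D^2\varphi$ with $j=k-3-l$, $m=k-2$, $r=2$, base space $L^q$, obtaining exponent $\theta_2=\tfrac{k-3-l}{k-2}$ and Lebesgue index $p_2$ with $\tfrac{1}{p_2} = \tfrac{k-3-l}{3} + \theta_2\big(\tfrac12-\tfrac{k-2}{3}\big) + \tfrac{1-\theta_2}{q}$.

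Next I would verify that $\tfrac{1}{p_1} + \tfrac{1}{p_2} = \tfrac12$, so that H\"older's inequality yields $\norm{D^{2+l}\phi\, D^{k-1-l}\varphi}_{L^2} \le \norm{D^{2+l}\phi}_{L^{p_1}}\norm{D^{k-1-l}\varphi}_{L^{p_2}}$, and then identify $\acal = \theta_1 = \tfrac{l}{k-2}$ and $\bcal=\theta_2 = \tfrac{k-3-l}{k-2}$, so that $\acal+\bcal = \tfrac{k-3}{k-2} = 1 - \tfrac{1}{k-2}$. Here is the one subtlety to be careful about: the claimed identity in the statement is $\acal+\bcal = 1-\tfrac{1}{2k-4}$, and indeed $\tfrac{1}{2k-4} = \tfrac{1}{2(k-2)}$, so the two expressions $1-\tfrac1{k-2}$ and $1-\tfrac1{2k-4}$ differ; this means one must choose $\theta_1,\theta_2$ slightly above the minimal GNS exponents $\tfrac{l}{k-2}, \tfrac{k-3-l}{k-2}$, distributing the extra $\tfrac{1}{2(k-2)}$ of ``room'' between them, which is permissible precisely because GNS holds for all $\alpha$ in the range $[\tfrac jm,1]$ and because the corresponding $p_1,p_2$ then shift so that the constraint $\tfrac1{p_1}+\tfrac1{p_2}=\tfrac12$ still pins down a unique admissible pair once $q$ is fixed. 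Concretely, the intended reading is: choose $q=\tfrac{6(2k-3)}{2k-1}$ first, then solve the linear system ``$\tfrac1{p_1}+\tfrac1{p_2}=\tfrac12$ together with the two GNS relations'' for the unknowns $(\theta_1,\theta_2,p_1,p_2)$; the solution has $\theta_1+\theta_2 = 1-\tfrac{1}{2k-4}$, and $\theta_i\ge \tfrac{j_i}{k-2}$ holds for $0\le l\le k-3$ and $k\ge 4$, which is exactly the admissibility one needs. I would also note $p_1,p_2\in[2,\infty]$ in this range, so all norms are finite for compactly supported $H^k$ functions.

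The main obstacle is bookkeeping rather than conceptual: one must carry out the linear algebra above and confirm (i) that the prescribed value of $q$ is exactly the one making the system solvable with $\tfrac1{p_1}+\tfrac1{p_2}=\tfrac12$, (ii) that the resulting $\theta_i$ satisfy the GNS admissibility constraint $\tfrac{j_i}{k-2}\le\theta_i\le 1$ uniformly for $0\le l\le k-3$, and (iii) that the endpoint conventions ($L^\infty$ when an index degenerates, and the use of $\norm{D^2\phi}_{\dot H^{k-2}}\simeq \norm{D^k\phi}_{L^2}$) are harmless since all functions in question are compactly supported. Once these checks are in place, combining the two GNS estimates with H\"older's inequality gives precisely \eqref{cor1}, with implicit constant depending only on $k$. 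This completes the proof.
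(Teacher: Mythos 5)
Your proposal is correct and follows essentially the same route as the paper: apply the Gagliardo--Nirenberg--Sobolev inequality (Lemma~\ref{lem:GN}) separately to $D^2\phi$ and $D^2\varphi$, each interpolated between $\dot H^{k-2}\simeq \dot H^k$ and $L^q$, then combine via H\"older. You correctly flag the subtlety that the minimal GNS exponents $l/(k-2)$ and $(k-3-l)/(k-2)$ sum to $1-\frac{1}{k-2}$ rather than $1-\frac{1}{2k-4}$, and correctly explain that one must take $\theta_1,\theta_2$ above the minimal values. One small imprecision: the system consisting of $\tfrac{1}{p_1}+\tfrac{1}{p_2}=\tfrac12$ and the two GNS relations is three equations in four unknowns $(\theta_1,\theta_2,p_1,p_2)$, so it does not pin down a unique admissible pair --- it leaves a one-parameter family, which the paper resolves by fixing $p=p(k,l)=\frac{2q(k-3)}{2(k-3)+(q-4)l}$ explicitly and then reading off $\acal,\bcal$ from the two GNS relations as in \eqref{alpha_beta}. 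This does not affect the argument, since the sum $\acal+\bcal$ is determined by those three equations alone (independently of the remaining freedom and of $l$), and the lemma only asserts the value of the sum.
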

\begin{proof}[Proof of Lemma~\ref{lem:tailored:interpolation}]
For $0 \le l \le k-3$, define $q=q(k) = \frac{6(2k-3)}{2k-1}$ and $p = p(k,l) = \frac{2 q (k-3)}{2(k-3) + (q-4)l}$. This is the only exponent $p$ such that $\frac{1}{p}$ is an affine function of $l$, and for $l=0$ we have $p=q$, while for $l = k-3$ we have that $p=\frac{2q}{q-2}$. By H\"older's inequality, 
we have
\begin{align*}
\norm{D^{2+l} \phi \, D^{k-1-l} \varphi}_{L^2} \leq  \norm{ D^{2+l} \phi}_{L^p}\norm{ D^{k-1-l} \varphi}_{L^{\frac{2p}{p-2}}  }\, .
\end{align*}
By the Gagliardo-Nirenberg-Sobolev interpolation inequality,
\begin{subequations} 
\begin{align} 
\norm{ D^{2+l} \phi}_{L^p} & \les \norm{D^k \phi}_{L^2}^ \acal \norm{ D^2 \phi}_{L^q}^{1- \acal } \,, \label{i1} \\
\norm{ D^{k-1-l} \varphi}_{L^{\frac{2p}{p-2}}  }&  \les \norm{D^k \varphi}_{L^2}^ \bcal \norm{ D^2 \varphi}_{L^q}^{1- \bcal } \,, \label{i2}
\end{align} 
\end{subequations} 
where the exponents $\acal$ and $ \bcal $ are given by
\begin{align} 
\acal = \frac{ \frac{1}{q} - \frac{1}{p}+ \frac{l}{3}  }{\frac{1}{q} - \frac{1}{2} + \frac{k-2}{3}} \,, \ \ \ 
\bcal = \frac{ \frac{1}{q} - \frac{p-2}{2p}+ \frac{k-3-l}{3}  }{\frac{1}{q} - \frac{1}{2} + \frac{k-2}{3}} \,.  \label{alpha_beta}
\end{align} 
Then,  $\acal + \bcal = 1 - \frac{1}{2k-4} \in (0,1)$, and \eqref{cor1} is established.
\end{proof}

\subsection*{Acknowledgments} 
T.B.\ was supported by the NSF grant DMS-1900149 and a Simons Foundation Mathematical and Physical Sciences Collaborative Grant.   S.S.\ was supported by the Department of Energy Advanced Simulation and Computing (ASC) Program. V.V.\ was supported by the NSF grant DMS-1911413.

\begin{bibdiv}
\begin{biblist}

\bib{Al1999b}{article}{
      author={Alinhac, S.},
       title={Blowup of small data solutions for a class of quasilinear wave
  equations in two space dimensions. {II}},
        date={1999},
        ISSN={0001-5962},
     journal={Acta Math.},
      volume={182},
      number={1},
       pages={1\ndash 23},
         url={https://doi.org/10.1007/BF02392822},
      review={\MR{1687180}},
}

\bib{Al1999a}{article}{
      author={Alinhac, S.},
       title={Blowup of small data solutions for a quasilinear wave equation in
  two space dimensions},
        date={1999},
        ISSN={0003-486X},
     journal={Ann. of Math. (2)},
      volume={149},
      number={1},
       pages={97\ndash 127},
         url={https://doi.org/10.2307/121020},
      review={\MR{1680539}},
}

\bib{BuShVi2019}{misc}{
      author={Buckmaster, T.},
      author={Shkoller, S.},
      author={Vicol, V.},
       title={Formation of shocks for {2D} isentropic compressible {E}uler},
        date={2019},
}

\bib{CaSmWa1996}{article}{
      author={Cassel, K.W.},
      author={Smith, F.T.},
      author={Walker, J.D.A.},
       title={The onset of instability in unsteady boundary-layer separation},
        date={1996},
     journal={Journal of Fluid Mechanics},
      volume={315},
       pages={223\ndash 256},
}

\bib{ChHo19}{article}{
      author={Chen, J.},
      author={Hou, T.Y.},
       title={Finite time blowup of {2D} {B}oussinesq and {3D} {E}uler
  equations with ${C}^{1,\alpha}$ velocity and boundary},
        date={2019},
     journal={arXiv preprint arXiv:1910.00173},
}

\bib{Ch2007}{book}{
      author={Christodoulou, D.},
       title={The formation of shocks in 3-dimensional fluids},
      series={EMS Monographs in Mathematics},
   publisher={European Mathematical Society (EMS), Z\"{u}rich},
        date={2007},
        ISBN={978-3-03719-031-9},
         url={https://doi.org/10.4171/031},
      review={\MR{2284927}},
}

\bib{Ch2019}{book}{
      author={Christodoulou, D.},
       title={The shock development problem},
      series={EMS Monographs in Mathematics},
   publisher={European Mathematical Society (EMS), Z\"{u}rich},
        date={2019},
        ISBN={978-3-03719-192-7},
         url={https://doi.org/10.4171/192},
      review={\MR{3890062}},
}

\bib{ChKl1993}{book}{
      author={Christodoulou, D.},
      author={Klainerman, S.},
       title={The global nonlinear stability of the {M}inkowski space},
      series={Princeton Mathematical Series},
   publisher={Princeton University Press, Princeton, NJ},
        date={1993},
      volume={41},
        ISBN={0-691-08777-6},
      review={\MR{1316662}},
}

\bib{ChMi2014}{book}{
      author={Christodoulou, D.},
      author={Miao, S.},
       title={Compressible flow and {E}uler's equations},
      series={Surveys of Modern Mathematics},
   publisher={International Press, Somerville, MA; Higher Education Press,
  Beijing},
        date={2014},
      volume={9},
        ISBN={978-1-57146-297-8},
      review={\MR{3288725}},
}

\bib{CoGhIbMa18}{article}{
      author={Collot, C.},
      author={Ghoul, T.-E.},
      author={Ibrahim, S.},
      author={Masmoudi, N.},
       title={On singularity formation for the two dimensional unsteady
  {P}randtl's system},
        date={2018},
     journal={arXiv:1808.05967},
}

\bib{CoGhMa2018}{article}{
      author={Collot, C.},
      author={Ghoul, T.-E.},
      author={Masmoudi, N.},
       title={Singularity formation for {B}urgers equation with transverse
  viscosity},
        date={2018},
     journal={arXiv:1803.07826},
}

\bib{Da2010}{book}{
      author={Dafermos, C.~M.},
       title={Hyperbolic conservation laws in continuum physics},
     edition={Third},
      series={Grundlehren der Mathematischen Wissenschaften [Fundamental
  Principles of Mathematical Sciences]},
   publisher={Springer-Verlag, Berlin},
        date={2010},
      volume={325},
        ISBN={978-3-642-04047-4},
         url={https://doi.org/10.1007/978-3-642-04048-1},
      review={\MR{2574377}},
}

\bib{DaMa19}{article}{
      author={Dalibard, A.-L.},
      author={Masmoudi, N.},
       title={Separation for the stationary {P}randtl equation},
        date={2019},
     journal={Publications math\'ematiques de l'IH\'ES},
      volume={130},
      number={1},
       pages={187\ndash 297},
}

\bib{EgFo2009}{article}{
      author={Eggers, J.},
      author={Fontelos, M.~A.},
       title={The role of self-similarity in singularities of partial
  differential equations},
        date={2009},
        ISSN={0951-7715},
     journal={Nonlinearity},
      volume={22},
      number={1},
       pages={R1\ndash R44},
         url={https://doi.org/10.1088/0951-7715/22/1/001},
      review={\MR{2470260}},
}

\bib{El19}{article}{
      author={Elgindi, T.M.},
       title={Finite-time singularity formation for ${C}^{1,\alpha}$ solutions
  to the incompressible {E}uler equations on ${R}^3$},
        date={2019},
     journal={arXiv:1904.04795},
}

\bib{ElGhMa19}{article}{
      author={Elgindi, T.M.},
      author={Ghoul, T.-E.},
      author={Masmoudi, N.},
       title={Stable self-similar blowup for a family of nonlocal transport
  equations},
        date={2019},
     journal={arXiv preprint arXiv:1906.05811},
}

\bib{John1974}{article}{
      author={John, F.},
       title={Formation of singularities in one-dimensional nonlinear wave
  propagation},
        date={1974},
        ISSN={0010-3640},
     journal={Comm. Pure Appl. Math.},
      volume={27},
       pages={377\ndash 405},
         url={https://doi.org/10.1002/cpa.3160270307},
      review={\MR{0369934}},
}

\bib{KlRo2003}{article}{
      author={Klainerman, S.},
      author={Rodnianski, I.},
       title={Improved local well-posedness for quasilinear wave equations in
  dimension three},
        date={2003},
        ISSN={0012-7094},
     journal={Duke Math. J.},
      volume={117},
      number={1},
       pages={1\ndash 124},
         url={https://doi.org/10.1215/S0012-7094-03-11711-1},
      review={\MR{1962783}},
}

\bib{Kl1984}{inproceedings}{
      author={Klainerman, S.},
       title={Long time behaviour of solutions to nonlinear wave equations},
        date={1984},
   booktitle={Proceedings of the {I}nternational {C}ongress of
  {M}athematicians, {V}ol. 1, 2 ({W}arsaw, 1983)},
   publisher={PWN, Warsaw},
       pages={1209\ndash 1215},
      review={\MR{804771}},
}

\bib{Lax1964}{article}{
      author={Lax, P.~D.},
       title={Development of singularities of solutions of nonlinear hyperbolic
  partial differential equations},
        date={1964},
        ISSN={0022-2488},
     journal={J. Mathematical Phys.},
      volume={5},
       pages={611\ndash 613},
         url={https://doi.org/10.1063/1.1704154},
      review={\MR{0165243}},
}

\bib{Li1979}{article}{
      author={Liu, T.~P.},
       title={Development of singularities in the nonlinear waves for
  quasilinear hyperbolic partial differential equations},
        date={1979},
        ISSN={0022-0396},
     journal={J. Differential Equations},
      volume={33},
      number={1},
       pages={92\ndash 111},
         url={https://mathscinet.ams.org/mathscinet-getitem?mr=540819},
      review={\MR{540819}},
}

\bib{LuSp2018}{article}{
      author={Luk, J.},
      author={Speck, J.},
       title={Shock formation in solutions to the 2{D} compressible {E}uler
  equations in the presence of non-zero vorticity},
        date={2018},
        ISSN={0020-9910},
     journal={Invent. Math.},
      volume={214},
      number={1},
       pages={1\ndash 169},
         url={https://doi.org/10.1007/s00222-018-0799-8},
      review={\MR{3858399}},
}

\bib{Ma1984}{book}{
      author={Majda, A.},
       title={Compressible fluid flow and systems of conservation laws in
  several space variables},
      series={Applied Mathematical Sciences},
   publisher={Springer-Verlag, New York},
        date={1984},
      volume={53},
        ISBN={0-387-96037-6},
         url={https://doi.org/10.1007/978-1-4612-1116-7},
      review={\MR{748308}},
}

\bib{Merle96}{article}{
      author={Merle, F.},
       title={Asymptotics for {$L^2$} minimal blow-up solutions of critical
  nonlinear {S}chr\"{o}dinger equation},
        date={1996},
        ISSN={0294-1449},
     journal={Ann. Inst. H. Poincar\'{e} Anal. Non Lin\'{e}aire},
      volume={13},
      number={5},
       pages={553\ndash 565},
         url={https://mathscinet.ams.org/mathscinet-getitem?mr=1409662},
      review={\MR{1409662}},
}

\bib{MeRa05}{article}{
      author={Merle, F.},
      author={Raphael, P.},
       title={The blow-up dynamic and upper bound on the blow-up rate for
  critical nonlinear {S}chr\"{o}dinger equation},
        date={2005},
        ISSN={0003-486X},
     journal={Ann. of Math. (2)},
      volume={161},
      number={1},
       pages={157\ndash 222},
         url={https://mathscinet.ams.org/mathscinet-getitem?mr=2150386},
      review={\MR{2150386}},
}

\bib{MeRaSz2018}{article}{    
    author = {Merle, F.} 
    author = {Rapha\"el, P.}
    author = {Szeftel, J.},
    title = {On Strongly Anisotropic Type I Blowup},
    journal = {International Mathematics Research Notices},
    volume = {2020},
    number = {2},
    pages = {541-606},
    year = {2018},
    issn = {1073-7928},
    doi = {10.1093/imrn/rny012},
    url = {https://doi.org/10.1093/imrn/rny012},
}

\bib{MeZa97}{article}{
      author={Merle, F.},
      author={Zaag, H.},
       title={Stability of the blow-up profile for equations of the type
  {$u_t=\Delta u+|u|^{p-1}u$}},
        date={1997},
        ISSN={0012-7094},
     journal={Duke Math. J.},
      volume={86},
      number={1},
       pages={143\ndash 195},
         url={https://mathscinet.ams.org/mathscinet-getitem?mr=1427848},
      review={\MR{1427848}},
}

\bib{Ri1860}{article}{
      author={Riemann, B.},
       title={{\"{U}}ber die {F}ortpflanzung ebener {L}uftwellen von endlicher
  {S}chwingungsweite},
        date={1860},
     journal={Abhandlungen der K\"oniglichen Gesellschaft der Wissenschaften in
  G\"ottingen},
      volume={8},
       pages={43\ndash 66},
         url={http://eudml.org/doc/135717},
}

\bib{Si1985}{article}{
      author={Sideris, T.~C.},
       title={Formation of singularities in three-dimensional compressible
  fluids},
        date={1985},
        ISSN={0010-3616},
     journal={Comm. Math. Phys.},
      volume={101},
      number={4},
       pages={475\ndash 485},
         url={http://projecteuclid.org/euclid.cmp/1104114244},
      review={\MR{815196}},
}

\end{biblist}
\end{bibdiv}

\end{document}